\newcommand{\Esp}[1]{\mathbb{E} \left[ #1\right]}
\newcommand{\Prob}[1]{\mathbb{P} \left( #1 \right)}
\newcommand{\ind}[1]{\mathds{1}_{#1}}
\newcommand{\fn}{f_n}
\newcommand{\forest}{f_{M,n}}
\newcommand{\infforest}{f_{\infty,n}}
\newcommand{\vfcrf}{f_{\infty,n}^{\mathrm{VF}}}
\newcommand{\medrf}{f_{\infty,n}^{\mathrm{MedRF}}}
\newcommand{\T}{\Theta}
\newcommand{\E}{\mathds{E}}
\renewcommand{\P}{\mathds{P}}
\newcommand{\N}{\mathds{N}}
\newcommand{\bX}{{X}}
\newcommand{\bx}{{x}}
\newcommand{\x}{{x}}
\newcommand{\X}{{X}}
\newcommand{\K}{K_k^{cc}(\x, \X)}
\newcommand{\multi}{ \sum\limits_{\substack{k_1,\hdots,k_d \\ \sum_{j=1}^d k_j = k }}}
\newcommand{\integer}[1]{\lfloor #1 \rfloor}
\newcommand{\diff}{\mbox{\normalfont d}}
\definecolor{Vert}{RGB}{0,128,0}
\newcommand{\replace}[2]{{#2}}
\theoremstyle{plain}
\newtheorem{theorem}{Theorem}[section]
\newtheorem{proposition}[theorem]{Proposition}
\newtheorem{lemma}[theorem]{Lemma}
\newtheorem{corollary}[theorem]{Corollary}
\theoremstyle{definition}
\newtheorem{definition}[theorem]{Definition}
\theoremstyle{remark}
\newtheorem{remark}[theorem]{Remark}
\title{Is interpolation benign for random forest regression?}
\author[1]{Ludovic Arnould}
\author[1,2]{Claire Boyer}
\author[3]{Erwan Scornet}
\affil[1]{LPSM, Sorbonne Universit\'e, Paris, France}
\affil[2]{MOKAPLAN, INRIA Paris}
\affil[3]{CMAP, Ecole Polytechnique, Paris, France}
\date{}
\begin{document}

\maketitle

\begin{abstract}
Statistical wisdom suggests that very complex models, interpolating training data, will be poor at predicting unseen examples.
Yet, this aphorism has been recently challenged by the identification of benign overfitting regimes, specially studied in the case of parametric models: generalization capabilities may be preserved despite model high complexity.
While it is widely known that fully-grown decision trees interpolate and, in turn, have bad predictive performances, the same behavior is yet to be analyzed for Random Forests (RF).
In this paper, we study the trade-off between interpolation and consistency for several types of RF algorithms. Theoretically, we prove that interpolation regimes and consistency cannot be achieved simultaneously for several non-adaptive RF.
Since adaptivity seems to be the cornerstone to bring together interpolation and consistency, we study interpolating Median RF which are proved to be consistent \replace{}{in the interpolating regime. This is the first result conciliating interpolation and consistency for RF, highlighting that the averaging effect introduced by feature randomization is a key mechanism, sufficient to ensure the consistency in the interpolation regime and beyond}.
Numerical experiments show that Breiman's RF are consistent while exactly interpolating, when no bootstrap step is involved.
We theoretically control the size of the interpolation area, which converges fast enough to zero, giving a necessary condition for exact interpolation and consistency to occur in conjunction.
\end{abstract}

\section{Introduction}

Random Forests \citep[RF, ][]{breiman2001random}  have proven to be very efficient algorithms, especially on tabular data sets. As any machine learning (ML) algorithm, Random Forests and Decision Trees have been analyzed and used according to the overfitting-underfitting trade-off. Regularization parameters have been introduced in order to control the variance while still reducing the bias. 
For instance, one can increase the variety of the constructed trees (by playing either with bootstrap samples or feature subsampling) or control the tree structure (by limiting either the number of points falling within each leaf or the maximum depth of all trees).

However, the paradigm stating that high model complexity leads to bad generalization capacity has been recently challenged: in particular, deeper and larger neural networks still empirically exhibit high predictive performances \citep{goodfellow2016deep}. In such situations, overfitting can be qualified as "benign":
complex models, possibly leading to interpolation of the training examples, still generalize well on unseen data \citep{bartlett2021deep}.

Regarding parametric methods, benign overfitting has been exhibited and well understood in linear regression \citep{bartlett2020benign,tsigler2020benign,liang2020multiple} and investigated in the context of neural networks \citep{belkin2019reconciling}.
Many researchers currently study the \emph{implicit bias} or \emph{implicit regularization} of stochastic gradient (SGD) strategies used during neural network training: the optimization of an over-parametrized one-hidden-layer neural network via SGD will converge to a minimum of minimal norm with good generalization properties in a regression setting \citep{bach2021gradient}, or with maximal margin in a classification setting \citep{chizat2020implicit}.
%This phenomenon, first empirically observed, is now abundantly studied from a theoretical perspective. Most papers focus on the parametric setting (especially linear regression and two-layer neural networks) where it is easier to quantify the \textit{complexity} of the model in terms of number or norm of the model's parameters. Learning the weights of an over-parametrized neural network via stochastic gradient descent produces an \textit{implicit regularisation} effect \cite{bach2021gradient}.

Regarding non-parametric methods, practitioners have noticed the good performances of high-depth RFs for a long time (by default, several ML libraries such as the popular Scikit-Learn grow trees until pure leaves are reached). More recently, the use of interpolating (or very deep) trees for boosting and bagging methods has been discussed by \cite{tang2018random} and \cite{wyner2017explaining}. 
While \cite{tang2018random} criticize the relevancy of interpolating random forests, \citet{wyner2017explaining} believe that the \textit{self-averaging} process at hand in RF (or in boosting methods) also produces an implicit regularization that prevents the interpolating algorithm from overfitting. Note that the regularization properties of RF have also been studied in the light of their complexity \citep{buschjager2021there} and tree depth \citep{zhou2021trees}. 
{This phenomenon can be} put in parallel with the results proved in \cite{devroye1998hilbert} and \cite{belkin2019does} where they show that an interpolating kernel method using a singular kernel (similar to $K(x) =||x||^{-\alpha} \mathds{1}_{||x|| \leq 1}$) is consistent, reaching minimax convergence rate for $\beta$-Hölder regular functions. \replace{}{More recently, \citet{wang2022consistent} showed the consistency of interpolating kernel methods, defined on Riemannian manifolds, whose kernels can be written as  weighted random partition kernels on the sphere (similarly to the kernel random forest methods defined in Section \ref{sec:KeRF}).}

\noindent \textbf{Contributions and outline} 
In this paper, we study the trade-off between interpolation and consistency {in the context of regression}, 
\replace{}{for different types of RF:
\begin{itemize}
    \item Centered RF (Section \ref{sec:CRF}). We prove theoretically that interpolation regimes and consistency cannot be achieved simultaneously for non-adaptive centered RF . The major problem arises from empty cells in tree partitions. Therefore, we also study a slightly modified Centered RF that does not take into account empty cells;
    \item Kernel RF (Section \ref{sec:KeRF}). We then study a more refined version of the CRF, the Kernel Random Forest (KeRF), built by averaging over all connected data points. By neglecting empty cells, this method is consistent for larger tree depths, but does not meet the exact interpolation requirement yet;
    \item Median RF (Section \ref{sec:AdaCRF}). Since adaptivity seems to be the cornerstone to conciliate interpolation and consistency, we study the interpolating Median RF, which is proved to be consistent \replace{}{in the exact interpolation regime . For the first time, it is shown that the averaging effect of the feature randomization inside RF (without boostrap) is sufficient to "average the noise out" (interpolating trees being sensitive to the noise), i.e.\ to decrease the variance towards 0. The bias of interpolating trees can be still classically controlled;}
    \item Breiman RF (Section \ref{sec:BreimanRF}). Numerical experiments show that Breiman RF are consistent when exactly interpolating, i.e.\ when the whole data set is used to build each fully-grown tree (no bootstrap). It seems that the key randomization mechanism at work in RF is sufficient to reach consistency in spite of interpolation. Finally, we prove that the volume of the interpolation zone (where noise sensitivity is maximum) for an infinite Breiman RF tends to 0 at an exponential rate in the dimension $d$. This supports the idea that the decay of the interpolation volume could be fast enough to retrieve consistency despite interpolation.
\end{itemize}
}

%We also prove that its variance decreases towards 0 in an asymptotic  high-dimensional setting (note that a control of the bias can be also established provided extra assumptions on the considered model). This highlights that a better understanding of the effect of the randomization of splitting directions on the forest partition might be the key to theoretically conciliate  exact interpolation and consistency
%\replace{}{In order to better understand the averaging effect of the RF, we also prove that the variance of the Median RF decreases towards 0 in a high-dimensional setting where tree diversity (and thus the consequent averaging effect) is enhanced}. 
 %\replace{If bootstrap is used instead, we numerically show that Breiman RF are consistent but do not interpolate anymore: however each weak learner in the forest is inconsistent while being an interpolator. }{}
%If bootstrap is often considered as a way to prevent overfitting when dealing with interpolators, 
\begin{figure}[h]
    \centering
    \includegraphics[width=0.7\textwidth]{RF_Interpolation_summary.png}
    \caption{Summary of theoretical contributions.}
    \label{fig:contribution_summary}
\end{figure}
{Please refer to Figure \ref{fig:contribution_summary} for an overview of theoretical contributions.} 
All proofs and details on numerical experiments are given in Appendix \ref{sec:proofs} and \ref{sec:exp_setting}.

% \es{à diviser par 2}
 %In this paper, we aim at understanding the mechanisms at hand allowing RFs composed of \textit{strong} learners (interpolating trees here) to achieve good performances or consistency from both a theoretical and an empirical points of view. 
%To this end, we introduce several RF models, both adaptive and non-adaptive. We begin by proving that non-adaptive RFs struggle reaching \textit{exact} interpolation and that it can be harmful for consistency (Section \ref{sec:CRF}).
%We also show that non-adaptive methods are still consistent when verifying a weaker notion of interpolation (Section \ref{sec:KeRF}).
% In Section \ref{sec:AdaCRF}, we present an adaptive and interpolating method that successfully attain consistency in a noiseless setting.
% Finally, we prove that the volume of an infinite Breiman RF tends to 0 at a polynomial rate in the number of samples $n$ and an exponential rate in the dimension $d$ (Section \ref{sec:BreimanRF}). We then discuss how the randomness bringing diversity between the trees of the RF counterbalances the potential overfitting of individual trees in line with \cite{wyner2017explaining} and \cite{mentch2019randomization}.
%\noindent \textbf{Related work.}   

\section{Setting}
\label{sec:setting}

\paragraph{Framework} In a general non-parametric regression framework, we assume to be given a \emph{training set} $\mathcal{D}_n := ((X_1, Y_1), ...,$  $ (X_n, Y_n))$, composed of i.i.d.\ copies of the generic random variable $(X,Y)$, where the input $X$ is assumed \textit{throughout the paper} to be uniformly distributed over $ [0,1]^d$, and $Y \in \mathds{R}$ is the output. The underlying model is assumed to satisfy $Y = f^\star(X) + \varepsilon$, where $f^\star(x) = \Esp{Y |X = x}$ is the regression function and $\varepsilon$ a random noise satisfying, almost surely, $\Esp{\varepsilon | X } = 0$ and $\mathbb{V}[\varepsilon|X] \leq \sigma^2 < \infty$, for some $\sigma^2 \geq 0$. Given an input $x \in [0,1]^d$, the goal is to estimate the associated response $f^\star(x)$.
We measure the performance of an estimator $\fn$ via its \textit{excess risk}, defined as $\mathcal{R}(\fn) := \Esp{(\fn(X) - f^\star(X))^2}$, and its consistency property. 
%The asymptotic performance of an estimator $\fn$ is assessed via its \textit{consistency}:
\begin{definition}[Consistency]
An estimator $f_n$ is \textbf{consistent} when  $\lim\limits_{n \to \infty} \mathcal{R}(\fn) = 0$.
\end{definition}

%via its Given an estimator , we are interested in evaluating
%\es{compacter un peu?}

\paragraph{Estimator} A Random Forest (RF) is a predictor consis\-ting of a collection of $M$ randomized trees \citep[see][for details about decision trees]{breiman1984classification}. To build a forest, we generate $M \in \mathds{N}^{\star}$  independent random variables $(\Theta_1, \hdots, \Theta_M)$, distributed as a generic random variable $\Theta$, independent of $\mathcal{D}_n$. In our setting, $\Theta_j$ actually represents the successive  random splitting directions and the resampling data mechanism in the $j$-th tree. The predicted value at the query point $x$ given by the $j$-th tree is defined as
% \begin{align*}
%     m_n(x, \Theta_j, \mathcal{D}_n) = \displaystyle \sum_{i=1}^n \frac{\ind{X_i \in A_n(x, \Theta_j, \mathcal{D}_n)} Y_i}{N_n(x, \Theta_j, \mathcal{D}_n)} \ind{\mathcal{E}}
% \end{align*}
\begin{align*}
    \fn(x, \Theta_j) = \displaystyle \sum_{i=1}^n \frac{\ind{X_i \in A_n(x, \Theta_j)} Y_i}{N_n(x, \Theta_j)} \ind{N_n(x, \Theta_j)>0}~ ,
\end{align*}
where $A_n(x, \Theta_j)$ is the cell containing $x$ and $N_n(x, \Theta_j)$ is the number of points falling into $A_n(x, \Theta_j)$. The (finite) forest estimate then results from the aggregation of $M$ trees:
\begin{align*}
    \forest (x, \boldsymbol{\Theta}_M) = \frac{1}{M} \displaystyle \sum_{m=1}^M \fn(x,\Theta_m)~, 
\end{align*}
where $\boldsymbol{\Theta}_M := (\Theta_1, ..., \Theta_M)$. 
By letting $M$ tending to infinity, we can consider the \emph{infinite} forest estimate, $\infforest(x) = \mathds{E}_\Theta[\fn(x, \Theta)],$ which has also played an important role in the theoretical understanding of random forests \citep[see][for more details]{scornet2016asymptotics}.
Here, $\mathds{E}_\Theta$ denotes the expectation w.r.t.\ $\Theta$, conditional on $\mathcal{D}_n$.   

Several random forests have been proposed depending on the type of randomness they contain (what $\Theta$ represents) and the type of decision trees they aggregate. Breiman forest is one of the most widely used RF, which exhibits excellent predictive performances. Unfortunately, its behavior is difficult to theoretically analyze, because of the numerous complex mechanisms involved in the predictive process (data resampling, data-dependent splits, split randomization). 
Therefore, in this paper, we simultaneously study the consistency and interpolation properties of different simplified versions of RF, both adaptive (i.e.\ when trees are built in a data-dependent manner) and non-adaptive. 
%the classical CRF and the Kernel Centered Random Forest (KeRF) \cite{scornet2016random}) and adaptive.

All forests include a \textit{depth} parameter, denoted $k_n$, which limits the maximum length of each branch in a tree, thus limiting the number of leaves (up to $2^{k_n}$). In this work, we analyze how the tuning of $k_n$ allows us to 
%use the depth parameter to control the number of leaves in each tree and to
adjust the \textit{consistency} and \textit{interpolation} characteristics of the forest.  
The classical notion of (exact) interpolation is defined below. 
%or \textit{exact interpolation} can be defined as follows: 
\begin{definition}[(Exact) interpolation] \label{def:exact_interpolation}
An estimator $\fn$ is said to \textit{interpolate} if for all training data $(X_i, Y_i)$, we have $\fn(X_i) = Y_i$ almost surely. 
\end{definition}

Recall that the prediction of a single tree at a point $x$ is given by the average of all $Y_i$ such that $X_i$ is contained in the leaf of $x$. Therefore, each tree within a forest can be parameterized in order to interpolate: it is sufficient to grow the tree until pure leaves (i.e.\ leaves containing labels of the same values) are reached. In any regression model with continuous random noise, we have $Y_i \neq Y_j$ for all $i\neq j$ almost surely. Therefore, an interpolating tree is a tree that contains at most one point per leaf.   

As the final prediction of the random forest is made by averaging the predictions of all its trees, if all trees interpolate, the random forest interpolates as well. Consequently, throughout all the theoretical analysis, we consider RF built without sub-sampling: each tree is built using the whole dataset instead of bootstrap samples as in standard RF. We will discuss the empirical effect of bootstrap in Section \ref{sec:BreimanRF}.   

% \cb{virer la remarque étant donné ce qui est raconté plus haut ? }

\begin{remark}
In a classification setting, it is possible to obtain pure leaves  with more than one point per cell (see \cite{mentch2019randomization} for more details).
%In a classification setting, in view of estimating the probabilities of being in a class, interpolation occurs as soon as there is no diversity within each leaf, i.e.\ each leaf is pure containing points from a single class. Indeed, consider a degenerated setting such as $X$ uniformly distributed on $[0,1]^d$ and $Y \sim \mathcal{B}(p), p\in [1/2, 1)$ independent from $X$, then as soon as there are $t > 1$ points in a cell, the probability of not interpolating is greater than $1- ((1-p)^t + p^t) > 0$. In such a setting, interpolation a.s.\ occurs when there is at most one data point per cell, similarly to the regression setting. For a more detailed analysis of the classification setting, see \cite{mentch2019randomization}.
\end{remark}

%The Adaptive Centered Random Forest (AdaCRF) introduced in Section \ref{sec:AdaCRF} easily verifies the interpolation property, we study its consistency property in the same section. On the opposite, non-adaptive forests such as the original Centered Random Forest (CRF) can hardly reach exact interpolation. 

\section{Centered RF}
\label{sec:CRF}

We start our analysis of interpolation and consistency of RF with the simple yet widely studied Centered Random Forest \citep[CRF, see][]{biau2012analysis}. CRF are ensemble methods said to be non-adaptive since trees are built independently of the data: 
%We will prove that within the general regression framework introduced in Section \ref{sec:setting} the interpolation property comes at the cost of their consistency.
%A Centered RF is built without taking into account the dataset: 
at each step of a centered tree construction, a feature is uniformly chosen among all  possible $d$ features and the split along the chosen feature is made at the center of the current cell. Then, the trees are aggregated to produce a CRF. 
\replace{}{Although simpler, the study of the mechanisms at hand in non-adaptive RF already provides good insights about the inner behaviour of more general RF.}

\subsection{Interpolation in CRF} 
%For CRF, forest interpolation is equivalent to tree interpolation, as shown below. 
\begin{lemma} \label{lem:forest_interp_iff_tree}
The CRF $\forest^{\mathrm{CRF}}$ interpolates if and only if all trees that form the CRF interpolate.
\end{lemma}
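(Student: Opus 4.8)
The plan is to prove the two implications separately, the forward direction being essentially trivial and the reverse direction requiring a short argument about how averaging of tree predictions behaves at the design points $X_i$.

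First I would dispatch the ``if'' direction, which was already observed in the discussion preceding the lemma: if every tree $\fn(\cdot,\Theta_m)$ interpolates, then for each training point $X_i$ we have $\fn(X_i,\Theta_m)=Y_i$ for all $m$, hence $\forest^{\mathrm{CRF}}(X_i,\boldsymbol{\Theta}_M)=\frac1M\sum_{m=1}^M \fn(X_i,\Theta_m)=Y_i$ almost surely, so the forest interpolates. This works for an arbitrary forest, adaptive or not, and needs no special structure of CRF.

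For the ``only if'' direction I would argue by contraposition, and this is where the centered structure matters. Suppose some tree, say the one indexed by $\Theta_{m_0}$, does not interpolate. Then there is a training index $i$ such that the leaf $A_n(X_i,\Theta_{m_0})$ contains at least two data points, so (since $Y_i\neq Y_j$ a.s.\ for $i\neq j$, as noted in the text) $\fn(X_i,\Theta_{m_0})\neq Y_i$ on an event of positive probability; more precisely $\fn(X_i,\Theta_{m_0})$ is a strict convex combination of distinct $Y$-values. The key point I would establish is that $\fn(X_i,\Theta_m)$ always stays in the convex hull of $\{Y_1,\dots,Y_n\}$ and in fact in the convex hull of the labels of points lying in the same leaf as $X_i$ in tree $m$; and crucially every leaf containing $X_i$ contains $X_i$ itself, so $\fn(X_i,\Theta_m)$ is always a convex combination of $Y_i$ and possibly other labels. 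Consequently $\forest^{\mathrm{CRF}}(X_i,\boldsymbol{\Theta}_M)-Y_i$ is an average of terms $\fn(X_i,\Theta_m)-Y_i$, each of which is, conditionally on the $X_j$'s, a fixed (deterministic-in-$Y$) linear combination of the centered variables $Y_j-Y_i$, $j\neq i$, with nonnegative weights; the $m_0$ term has at least one strictly positive weight. Since the $Y_j$ are drawn from a model with continuous noise, one shows that such a nontrivial nonnegative combination cannot be identically zero: the event that it vanishes has probability zero. Hence $\forest^{\mathrm{CRF}}(X_i,\boldsymbol{\Theta}_M)\neq Y_i$ with positive probability, so the forest does not interpolate.

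The main obstacle is making the last step fully rigorous: one must rule out accidental cancellation across trees, i.e.\ show that even though several trees may each be non-interpolating, their averaged prediction at $X_i$ cannot coincidentally equal $Y_i$ almost surely. The clean way is to condition on $(X_1,\dots,X_n)$ and on $\boldsymbol{\Theta}_M$ (which in the centered case are independent of the $Y$'s), so that $\forest^{\mathrm{CRF}}(X_i,\boldsymbol{\Theta}_M)-Y_i$ becomes a fixed affine function $\sum_{j\neq i} c_j (Y_j - Y_i)$ of the labels with $c_j\ge 0$ and $\sum_j c_j>0$; since conditionally the $Y_j$ have a joint density (continuous noise, $X$'s fixed), the zero set of this nonzero affine map has Lebesgue measure zero, hence probability zero, and integrating back gives the claim. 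This non-adaptivity of CRF — the partition not depending on the $Y_i$ — is exactly what makes the conditioning argument legitimate and is implicitly why the equivalence is stated for CRF rather than for general forests.
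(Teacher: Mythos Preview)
Your proof is correct and follows essentially the same strategy as the paper: assume some tree fails to interpolate at a training point $X_s$, condition on the inputs and on $\boldsymbol{\Theta}_M$ (legitimate precisely because CRF partitions are non-adaptive), and observe that the equality $\forest^{\mathrm{CRF}}(X_s,\boldsymbol{\Theta}_M)=Y_s$ then becomes a nontrivial affine constraint on the continuous noise variables, which holds with probability zero. Your presentation is arguably tidier---exploiting the convex-combination structure to see at once that the affine map has a nonzero coefficient---while the paper manipulates the equation algebraically, but the core idea is identical.
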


Since CRF construction is non-adaptive, it is impossible to enforce exactly one observation per leaf. Hence trees do not interpolate and in turn, the interpolation regime (De\-fi\-ni\-tion~\ref{def:exact_interpolation}) cannot be satisfied for CRF. This leads us to examine a weaker notion of interpolation in probability.

\begin{proposition}[Probability of interpolation for a centered tree]
\label{prop:tree_proba_interpolation}
    Denote $\mathcal{I}_T$ the event ``a centered tree of depth $k_n$ interpolates the training data". Then, for all $n\geq 3$, fixing $k_n = \lfloor {\log_2(\alpha_n n)} \rfloor$, with $\alpha_n \in \mathds{N}\setminus \{0,1\}$, one has
    %\replace{such that $\log_2(\alpha n) \in \mathds{N}$}{}, 
    \begin{align*}
        e^{-\frac{n}{\alpha_n - 1}}
        \leq 
        \Prob{\mathcal{I}_T}
        \leq 
        e^{-\frac{n}{2(\alpha_n+1)}}.
    \end{align*}
\end{proposition}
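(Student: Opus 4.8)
The idea is to condition on the tree randomness $\Theta$ and to observe that, for a centered tree, exact interpolation becomes a classical occupancy (``birthday'') problem whose probability turns out not to depend on $\Theta$.

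\emph{Step 1 (reduction to an occupancy problem).} Fix $\Theta$. A centered tree of depth $k_n$ is a complete binary tree of depth $k_n$, hence it has exactly $K := 2^{k_n}$ leaves; moreover, whatever the sequence of split directions, each leaf is obtained from $[0,1]^d$ by $k_n$ successive midpoint bisections, so it is a box of Lebesgue volume $2^{-k_n} = 1/K$. Since $X$ is uniform on $[0,1]^d$ and independent of $\Theta$, conditionally on $\Theta$ the leaf receiving $X_i$ is uniform among the $K$ leaves, independently across $i$. As the noise is continuous, the discussion preceding the statement shows that $\mathcal{I}_T$ is exactly the event ``no leaf receives two of the points $X_1,\dots,X_n$'', i.e.\ the no-collision event for $n$ balls thrown independently and uniformly into $K$ boxes. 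Hence
\begin{align*}
\Prob{\mathcal{I}_T \mid \Theta}
= \frac{K (K-1)\cdots(K-n+1)}{K^n}
= \prod_{i=0}^{n-1} \Bigl( 1 - \frac{i}{K} \Bigr),
\end{align*}
and since the right-hand side does not depend on $\Theta$, this is also the value of $\Prob{\mathcal{I}_T}$.

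\emph{Step 2 (two-sided bounds on the product).} From $k_n = \lfloor \log_2(\alpha_n n) \rfloor$ one gets $\alpha_n n / 2 < K \le \alpha_n n$. For the upper bound, apply $1+u \le e^{u}$ termwise to obtain $\Prob{\mathcal{I}_T} \le \exp\bigl( -\tfrac1K \sum_{i=0}^{n-1} i \bigr) = \exp\bigl( -\tfrac{n(n-1)}{2K} \bigr)$, and then bound the exponent from below using $K \le \alpha_n n$ and $n \ge 3$ to bring it into the form $\tfrac{n}{2(\alpha_n+1)}$. For the lower bound, use $\ln(1-u) \ge -u/(1-u)$ on $[0,1)$ to get $\ln \Prob{\mathcal{I}_T} \ge -\sum_{i=0}^{n-1} \tfrac{i}{K-i}$, bound $\tfrac{i}{K-i} \le \tfrac{i}{K-n}$, sum, and feed in $\alpha_n n/2 < K \le \alpha_n n$; exponentiating gives $\Prob{\mathcal{I}_T} \ge e^{-n/(\alpha_n-1)}$.

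\emph{Main difficulty.} The only non-routine point is Step 1: recognizing that the centered partition is data-independent with all cells of equal mass, which collapses ``exact interpolation'' into a pure balls-in-boxes count and, crucially, makes $\Prob{\mathcal{I}_T \mid \Theta}$ free of $\Theta$. Once this is in place, only elementary inequalities remain; the care needed there is to track both sides of the estimate $\alpha_n n / 2 < 2^{k_n} \le \alpha_n n$ (together with $n \ge 3$) so that the exponents come out precisely in the stated form $\tfrac{n}{\alpha_n - 1}$ and $\tfrac{n}{2(\alpha_n + 1)}$.
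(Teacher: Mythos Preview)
Your Step~1 matches the paper's reduction: the centered partition is data-independent with equal-volume cells, so interpolation becomes an occupancy problem. However, the explicit formulas differ. You (correctly, for i.i.d.\ uniform $X_i$) use the birthday probability $\Prob{\mathcal{I}_T}=\prod_{i=0}^{n-1}(1-i/K)$ with $K=2^{k_n}$. The paper instead writes $\Prob{\mathcal{I}_T}=\binom{K}{n}\big/\binom{K+n-1}{n}$, which is the no-collision probability for \emph{indistinguishable} balls with all multisets equally likely (Bose--Einstein statistics); this expression is strictly smaller than yours, and the constants in the Proposition are calibrated to it rather than to the birthday formula.

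This is exactly why your Step~2 cannot be completed as sketched. For the upper bound you reach $\Prob{\mathcal{I}_T}\le\exp\bigl(-n(n-1)/(2K)\bigr)$; passing to $\exp\bigl(-n/(2(\alpha_n+1))\bigr)$ requires $(n-1)(\alpha_n+1)\ge K$, which via $K\le\alpha_n n$ forces $n\ge\alpha_n+1$ --- not assumed. In fact the stated upper bound is \emph{false} for the birthday formula: with $n=3$, $\alpha_n=100$ one gets $k_n=8$, $K=256$, so $\Prob{\mathcal{I}_T}=255\cdot 254/256^{2}\approx 0.9883$, whereas $e^{-3/202}\approx 0.9853$. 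Your lower-bound sketch has the analogous issue: feeding $K>\alpha_n n/2$ into $\sum_i i/(K-n)$ yields the exponent $(n-1)/(\alpha_n-2)$, and $(n-1)/(\alpha_n-2)\le n/(\alpha_n-1)$ only when $\alpha_n\ge n+1$. Your inequalities are sound and deliver bounds of the right order, but the precise constants in the Proposition come from the paper's Bose--Einstein expression and are not attainable from the (correct) birthday one.
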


%See Appendix \ref{proof:ctree_proba_interpolation} for the proof. 
%Choosing $\alpha_n=1$, i.e.\ $k_n=\lfloor {\log_2 n} \rfloor$, is not sufficient to meet exact interpolation in the infinite sample size: indeed, the upper-bound on the probability of exact interpolation goes to $0$ as $n$ tends to infinity.
According to Proposition~\ref{prop:tree_proba_interpolation}, the probability that a tree interpolates tends to one if and only if $k_n = \lfloor {\log_2 (\alpha_n n)}\rfloor$ with $\alpha_n =\omega (n)$\footnote{i.e.\ $\alpha_n$ asymptotically dominates $n$.}. Consequently, the regime $\alpha_n =\omega (n)$ completely characterizes the interpolation of a centered tree.  
%The lower bound tells us that the choice $k_n = \lfloor {\log_2 (\alpha_n n)}\rfloor$ with $\alpha_n =\omega (n)$\footnote{i.e.\ $\alpha_n$ asymptotically dominates $n$.} makes the centered tree interpolate with probability one asymptotically, when $n$ tends to infinity. in the infinite sample size. This gives a necessary and sufficient condition for the interpolation of a centered tree in probability. 
Proposition \ref{prop:tree_proba_interpolation} can be in turn used to control the interpolation probability of a centered RF.

\begin{corollary}[Probability of interpolation for a CRF]
\label{cor:interpolation_CRF}
We denote by $\mathcal{I}_F$ the event ``a centered forest $\forest^{\mathrm{CRF}}(.,\boldsymbol{\Theta}_M)$ interpolates". Then, for $k_n = \integer{\log_2(\alpha_n n)}$ with $\alpha_n \geq 1$,
\begin{align}
\Prob{\mathcal{I}_F} \leq e^{-\frac{n}{2(\alpha_n + 1)}}. 
\end{align}
\end{corollary}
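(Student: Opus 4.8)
The plan is to derive the corollary directly from Lemma~\ref{lem:forest_interp_iff_tree} and the single-tree upper bound of Proposition~\ref{prop:tree_proba_interpolation}, so that the argument is essentially one line plus a short treatment of the boundary value $\alpha_n = 1$.

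First I would use Lemma~\ref{lem:forest_interp_iff_tree} to write $\mathcal{I}_F = \bigcap_{m=1}^{M}\{\fn(\cdot,\Theta_m)\text{ interpolates}\}$: the finite forest $\forest^{\mathrm{CRF}}(\cdot,\boldsymbol{\Theta}_M)$ interpolates exactly when each of its $M$ trees does. In particular $\mathcal{I}_F$ is contained in the event that the first tree interpolates. Since $\Theta_1$ is distributed as the generic randomization variable $\Theta$ and is independent of $\mathcal{D}_n$, and since $\fn(\cdot,\Theta_1)$ has the prescribed depth $k_n=\integer{\log_2(\alpha_n n)}$, this last event has exactly the probability $\Prob{\mathcal{I}_T}$ of Proposition~\ref{prop:tree_proba_interpolation}, both being computed over the joint law of the sample and of the tree construction. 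Monotonicity of $\P$ then gives $\Prob{\mathcal{I}_F}\le\Prob{\mathcal{I}_T}$, and the upper bound of Proposition~\ref{prop:tree_proba_interpolation} yields $\Prob{\mathcal{I}_F}\le e^{-n/(2(\alpha_n+1))}$ for every $\alpha_n>1$ (with $n\ge 3$, as inherited from that proposition).

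It then remains to cover the endpoint $\alpha_n=1$ allowed in the statement, for which Proposition~\ref{prop:tree_proba_interpolation} does not directly apply. I would argue by continuity: for every sufficiently small $\varepsilon>0$ one has $\integer{\log_2(n)}=\integer{\log_2((1+\varepsilon)n)}$, so a centered tree with parameter $\alpha_n=1$ and a centered tree with parameter $1+\varepsilon$ share the same depth $k_n$, the same law, and hence the same interpolation probability $\Prob{\mathcal{I}_T}$. Proposition~\ref{prop:tree_proba_interpolation} applied with parameter $1+\varepsilon$ gives $\Prob{\mathcal{I}_T}\le e^{-n/(2(2+\varepsilon))}$ for all such $\varepsilon$; letting $\varepsilon\to0^{+}$ produces $\Prob{\mathcal{I}_T}\le e^{-n/4}=e^{-n/(2(\alpha_n+1))}$, which combined with $\Prob{\mathcal{I}_F}\le\Prob{\mathcal{I}_T}$ finishes this case.

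There is no genuinely hard step here; the corollary is a routine consequence of the two earlier results. The only points deserving a line of care are (i) confirming that the single-tree event extracted from $\mathcal{I}_F$ is indeed distributed as the event $\mathcal{I}_T$ of Proposition~\ref{prop:tree_proba_interpolation} (identical depth, randomization law, and sample), and (ii) the short limiting argument above for $\alpha_n=1$. Note that no assumption on $M$ enters, so the bound holds for every forest size $M\ge1$, in agreement with the fact that, under the natural coupling, enlarging $M$ can only shrink $\mathcal{I}_F$.
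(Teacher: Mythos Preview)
Your proposal is correct and follows exactly the paper's route: use Lemma~\ref{lem:forest_interp_iff_tree} to get $\mathcal{I}_F\subset\mathcal{I}_T$ and then apply the upper bound of Proposition~\ref{prop:tree_proba_interpolation}. Your additional limiting argument for the boundary value $\alpha_n=1$ is a nice touch that the paper's one-line proof does not make explicit.
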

%See Appendix \ref{proof:crf_proba_interpolation} for the proof. 
%Similarly, the upper bound on the probability of interpolation in Corollary \ref{cor:interpolation_CRF} will tend to 1 if and only if $\alpha=\omega (n/2)$. 
Therefore, the condition $\alpha_n= \omega(n)$ (corresponding to the interpolation of a single centered tree with high probability) is necessary to ensure that w.h.p., the RF interpolates. %to Since a centered tree interpolates provided that $\alpha_n= \omega(n)$, 
%(with $k_n =  \log_2(\alpha_n n)$)
Our analysis stresses that a tree depth of at least $k_n=2\log_2(n)$ is required to obtain tree/forest interpolation. 
%Corollary~\ref{cor:interpolation_CRF} gives a necessary condition for CRF interpolation: 
%\replace{}{choosing $\alpha_n=n^{1+\eta}$ for $\eta >0$ leads to a depth $k_n = (2+\eta)\log_2(n)$. 
%This highlights the hinge value of the depth $k_n$ at $2\log_2(n)$ to meet the necessary condition of the CRF interpolation. }

In fact, choosing $k_n$ of the order of $\log_2(n)$ characterizes another type of interpolation regime. %as developed in what follows.
To see this, consider a centered tree of depth $k$, whose leaves are denoted $L_1,\dots,L_{2^k}$. The number of points falling into the leaf $L_i$ is denoted $N_n(L_i)$. Since $X$ is uniformly distributed over $[0,1]^d$, then, for all $i =1,\hdots , 2^k,$
\begin{align}
%\label{eq:proba_point_leaf_crf}
 \quad \Prob{X \in L_i} = \frac{1}{2^k} \quad
    \text{and} \quad \Esp{N_n(L_i)} = \frac{n}{2^k}. \label{eq:esp_point_leaf_crf}
\end{align}
%Therefore, if there are more leaves than the sample size $n$, the tree is expected to interpolate \textit{in expectation}, as specified by the following definition. %indeed, the mean number of points per cell is less than 1 (by Equation \ref{eq:esp_point_leaf_crf}). 
%This property is easier to satisfy than exact interpolation; it also allows more flexibility in the choice of the depth $k_n$ in order to reach consistency. 
%This motivates the introduction of the following weaker notion of interpolation.

\begin{definition}[Mean interpolation regime] A CRF $\forest^{\mathrm{CRF}}$ satisfies the {\emph{mean interpolation regime}} when each tree of $\forest$ has at least $n$ leaves, i.e.\  if and only if $k_n\geq \log_2 n$. 
% Indeed, when there are more leaves than the number of points, there is at most one point per leaf in expectation: for all $i\in \{1,\hdots , 2^k\}$, $\Esp{N_n(L_i)}\leq 1$.
\end{definition}

%%%%%%%%%%%%%%%%%%%%
%This phenomenon leads us to consider the following weaker notion of interpolation. 
%We will prove in Section \ref{sec:KeRF} the consistency of a non-adaptive tree-based method, the Kernel centered Random Forest (KeRF), under this regime of interpolation. \\

%The mean interpolation regime is met for CRF if and only if $k_n \geq \log_2 n$. 
By Equation \eqref{eq:esp_point_leaf_crf}, the mean interpolation regime implies that for all leaves $L_i$, $\Esp{N_n(L_i)}\leq 1$:
%, i.e., each leaf contains at most one point in expectation. 
one could say that trees interpolate in expectation, in the mean interpolation regime. 
%Equivalently, is met as soon as the depth $k$ of a tree is greater than $\log_2 n$ (by ).
%As the mean interpolation is less restrictive than the exact one (which occurs when each leaf contains at most one point), 
%this actually gives a lower-bound on the required depth for exact interpolation. 

%The upper bound of the probability that the CRF interpolates behaves similarly w.r.t $n$ and $\alpha$. Therefore, it is necessary (but maybe not sufficient) that $\alpha_n n \gg n$ for the CRF to interpolate with high probability, hence producing an infinite number of empty leaves which would increase the bias of the estimator as shown in the next proposition.
%This means that the trees should be 
%\label{section:CRF_inconsistency}

\subsection{Inconsistency of the standard CRF} 

In both interpolation regimes (mean and in probability), trees need to be very deep, with a growing number of empty cells as $n$ tends to infinity, eventually damaging the consistency of the overall CRF.

\begin{proposition}
\label{prop:crf_inconsistency}
Suppose that $\Esp{f^\star(X)^2} > 0$ and set $\alpha >0$. Then the infinite Centered Random Forest $\infforest^{\mathrm{CRF}}$ of depth {$k_n \geq {\log_2 \alpha n}$} is inconsistent.
\end{proposition}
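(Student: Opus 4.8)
The plan is to show that the generalization error of $\infforest^{\mathrm{CRF}}$ stays bounded away from zero by exhibiting a non-vanishing \emph{bias}, caused precisely by the empty cells that appear once $k_n \ge \integer{\log_2 n}$. For a fixed query point $x$, write $\bar f_n(x) := \Esp{\infforest^{\mathrm{CRF}}(x) \mid X = x}$, the expectation being over the training sample $\mathcal{D}_n$ and the noise, with the (independent) test point frozen at $x$. Jensen's inequality applied conditionally on $\{X = x\}$ gives $\Esp{(\infforest^{\mathrm{CRF}}(x) - f^\star(x))^2 \mid X = x} \ge (\bar f_n(x) - f^\star(x))^2$, and integrating over $x \sim \mathrm{Unif}[0,1]^d$ yields $\mathcal{R}(\infforest^{\mathrm{CRF}}) \ge \| \bar f_n - f^\star \|_{L^2}^2$. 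Since $\Esp{f^\star(X)^2} = \Esp{Y^2} - \sigma^2 < \infty$, we have $f^\star \in L^2$, so it suffices to lower bound $\| \bar f_n - f^\star \|_{L^2}$.

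Next I would compute $\bar f_n$ in closed form. As $\Theta$ (hence the centered partition) is independent of $\mathcal{D}_n$, Fubini gives $\bar f_n(x) = \E_\Theta\big[ \Esp{\fn(x,\Theta)\,\ind{N_n(x,\Theta) > 0} \mid X = x} \big]$. Fix $\Theta$ and set $A := A_n(x,\Theta)$, a fixed cell of Lebesgue volume $2^{-k_n}$, so that $N_n(x,\Theta) \sim \mathrm{Bin}(n, 2^{-k_n})$. Conditionally on $\{N_n(x,\Theta) = m\}$ with $m \ge 1$, the $m$ sample points lying in $A$ are i.i.d.\ uniform on $A$, and the centered noise contributes nothing; hence $\Esp{\fn(x,\Theta)\,\ind{N_n(x,\Theta) > 0} \mid X=x} = \big(1 - (1 - 2^{-k_n})^n\big)\, \overline{f^\star}(A)$, where $\overline{f^\star}(A) := 2^{k_n}\!\int_A f^\star$ is the cell-average of $f^\star$. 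Writing $q_n := 1 - (1 - 2^{-k_n})^n$ (the probability that a cell is non-empty) and $h_n(x) := \E_\Theta[\,\overline{f^\star}(A_n(x,\Theta))\,]$, we obtain the identity $\bar f_n = q_n\, h_n$: ignoring empty cells shrinks every prediction by the constant factor $q_n < 1$.

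Two elementary facts then conclude. First, for each $\Theta$ the map $g \mapsto \overline{g}(A_n(\cdot,\Theta))$ is the conditional expectation given the partition $\sigma$-algebra, hence an $L^2(\mathrm{Unif}[0,1]^d)$-contraction; averaging over $\Theta$ and using Jensen gives $\| h_n \|_{L^2} \le \| f^\star \|_{L^2}$ (alternatively one checks $\int \overline{f^\star}(A_n(x,\Theta))^2\,\diff x \le \|f^\star\|_{L^2}^2$ directly by Cauchy--Schwarz cell by cell). Second, since $k_n \ge \integer{\log_2 n} > \log_2 n - 1$, we have $2^{-k_n} < 2/n$, hence $1 - q_n = (1 - 2^{-k_n})^n \ge (1 - 2/n)^n$, and $(1 - 2/n)^n \to e^{-2}$, so $\liminf_n(1 - q_n) \ge e^{-2} > 0$. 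Combining, $\| \bar f_n - f^\star \|_{L^2} \ge \| f^\star \|_{L^2} - q_n \| h_n \|_{L^2} \ge (1 - q_n)\| f^\star \|_{L^2}$, so that $\mathcal{R}(\infforest^{\mathrm{CRF}}) \ge (1 - q_n)^2\, \Esp{f^\star(X)^2}$ and therefore $\liminf_n \mathcal{R}(\infforest^{\mathrm{CRF}}) \ge e^{-4}\, \Esp{f^\star(X)^2} > 0$: the forest is inconsistent, and the bound is uniform over all depths $k_n \ge \integer{\log_2 n}$ (larger $k_n$ only makes $q_n$ smaller).

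The second paragraph is routine once one is careful about the exchange of expectations (licit since $\Theta \perp \mathcal{D}_n$ and all quantities are integrable) and about the fact that, given the cell count, the points inside a cell are uniform. The main temptation to resist is trying to prove the sharper statement $h_n \to f^\star$ in $L^2$ -- which would identify the limiting risk as $(1 - e^{-2})^2\,\Esp{f^\star(X)^2}$ along $k_n = \integer{\log_2 n}$ -- because that refinement requires controlling the geometry of centered cells: their per-coordinate side lengths $2^{-k_j}$ (with $k_1 + \dots + k_d = k_n$) can be highly unbalanced, so cells need not shrink ``regularly'' around $x$; one would then bound $\E_\Theta[\mathrm{diam}(A_n(x,\Theta))]$ via $\E_\Theta[2^{-k_j}] = (1 - \tfrac{1}{2d})^{k_n} \to 0$ together with uniform continuity of $f^\star$ and a density argument. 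The contraction bound sidesteps this entirely, and is all that inconsistency requires.
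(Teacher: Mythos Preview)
Your proof is correct. The key move --- averaging over the whole sample $\mathcal{D}_n$ \emph{first} so that the bias collapses to the clean product $\bar f_n = q_n h_n$, then combining the $L^2$-contraction $\|h_n\|\le\|f^\star\|$ with the reverse triangle inequality --- is fully rigorous and yields $\mathcal{R}\ge (1-q_n)^2\,\Esp{f^\star(X)^2}$.

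The paper's argument proceeds differently: it conditions on the input positions $X,X_1,\dots,X_n$ (so only the label noise is integrated out), writes the bias as an approximation term minus $\ind{N=0}\,f^\star(X)$, and then drops the approximation term on the empty-cell event. That decomposition is exact for a single tree (or for $d=1$, where there is no $\Theta$-randomness), because on $\{N_n(X,\Theta)=0\}$ every weight $W_i$ vanishes; it then yields the slightly sharper factor $1-q_n$ instead of your $(1-q_n)^2$. For the \emph{infinite} forest in $d\ge 2$, however, the empty-cell event depends on $\Theta$, so after taking $\E_\Theta$ one is left with the probability $p_0(X,\mathcal{D}_n)=\P_\Theta(N_n(X,\Theta)=0)\in[0,1]$ rather than an indicator, and the paper's step from the squared bias to $\Esp{f^\star(X)^2\,\ind{\mathcal{E}^c}}$ is not literally justified as written. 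Your averaging-first route sidesteps this issue entirely: because every centered cell has the same volume $2^{-k_n}$, the occupancy probability $q_n$ is deterministic and factors out cleanly once you integrate over the data. What you lose is one power of $1-q_n$ in the constant; what you gain is a proof that needs no care about the $\Theta$-dependence of the empty-cell event and that transparently shows the bound is uniform over all $k_n\ge\lfloor\log_2 n\rfloor$.
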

%The proof is given in Appendix \ref{proof:crf_inconsitency}. 
Proposition~\ref{prop:crf_inconsistency} emphasizes the poor generalization capacities of the interpolating CRF (under any interpolating regime), which could be expected given its non-adaptive construction.  Indeed, the non-consistency of the CRF stems from the fact that the probability for a random point $X$ to fall in an empty cell does not converge to zero, introducing an irreducible bias in the excess risk.

\subsection{Consistency of void-free CRF under the mean interpolation regime} 
Since limiting the impact of empty cells seems crucial for consistency, we study a CRF that averages over non-empty cells only, which we call the \textit{Void-Free CRF}. Note that predictions in empty leaves are arbitrary set to 0. Denoting $\Lambda_n(x,\boldsymbol{\Theta}_M)$ the number of non-empty leaves containing $x$ in the forest with trees $\Theta_1, \hdots, \Theta_M$, the void-free CRF is written as 
\begin{align*}
    f_{M, n}^{\mathrm{VF}}(x,\boldsymbol{\Theta}_M) &= \frac{1}{\Lambda_n(x,\boldsymbol{\Theta}_M)}\sum_{m=1}^M f_n(x,\Theta_m) \ind{N_n(x,\Theta_m) >0}.
\end{align*}

The problematic terms that arise in the theoretical derivations of classical CRF vs.\ void-free CRF are of different natures: the probability $\Prob{N_n(X,\boldsymbol{\Theta}_M) = 0}$ of falling into an empty leaf in a random tree of an (infinite) CRF compared to the probability %$\Prob{\Prob{ N_n(X,\Theta) = 0 | X, \mathcal{D}_n } =1} =
$\mathbb{P} \left[{\forall m \in \{1, \hdots, M\}, N_n(X, \Theta_m) =0} \right]$ of falling into empty leaves in all trees in the (infinite) CRF. Lemma~\ref{lem:agg_crf_empty_cell} below controls this last term. 
\begin{lemma} \label{lem:agg_crf_empty_cell}
Consider a finite void-free CRF $f_{M, n}^{\mathrm{VF}}( \cdot ,\boldsymbol{\Theta}_M)$ of depth $k \in \mathds{N}$. Let $x \in [0,1]^d$ and denote $\mathcal{E}_{M,n}(x)$ the event ``for all $m \in \{1, \hdots, M\}, N_n(x, \Theta_m) =0$''.
%(i.e.\ $x$ falls into an empty cell in all trees). 
Then,
\begin{align}
    \Prob{\mathcal{E}_{M,n}(x)} \leq e^{-\frac{kn}{2^{k+1}}} + e^{-M d^{-k}}.
\end{align}
%It follows that for an infinite forest $f_{\infty, n}^{\mathrm{VF}}$, 
%\begin{align}
%    \Prob{\mathcal{E}_{\infty,n}(x)} \leq e^{\frac{- kn }{2^{k+1}}}.
%\end{align}
Consequently, if $k= \integer{\log_2(n)}$ and $\replace{}{M_n} = \omega(n^{\log_2 d})$, then $\lim\limits_{n \to \infty} \Prob{\replace{}{\mathcal{E}_{M_n,n}}(x)} = 0.  $
%\begin{align}
%\lim\limits_{n \to \infty} \Prob{\mathcal{E}_{\infty,n}} = 0.    
%\end{align}
\end{lemma}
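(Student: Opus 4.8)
The plan is to condition on the sample, reduce the problem to controlling the empty-cell probability of a \emph{single} tree, and then isolate a high-probability event on the data under which at least one tree configuration is guaranteed to put a data point in the cell of $x$. The two exponential terms will come out of these two ingredients.

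First I would condition on $\D$. Since $\Theta_1,\dots,\Theta_M$ are i.i.d.\ and independent of $\D$, the counts $N_n(x,\Theta_1),\dots,N_n(x,\Theta_M)$ are i.i.d.\ given $\D$, so writing $p(\D):=\Prob{N_n(x,\Theta)=0\mid\D}$ one gets $\Prob{\mathcal{E}_{M,n}}=\Esp{p(\D)^M}$. It then remains to exhibit an event $A$ on the data with $p(\D)\le 1-d^{-k}$ on $A$ and $\Prob{A^c}\le e^{-kn/2^{k+1}}$, since then $\Prob{\mathcal{E}_{M,n}}\le(1-d^{-k})^M\Prob{A}+\Prob{A^c}\le e^{-Md^{-k}}+e^{-kn/2^{k+1}}$.

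Next I would build this good event. For each $i$ and each coordinate $j$, let $\rho_{ij}$ be the dyadic agreement between $X_i$ and $x$ along coordinate $j$ (the largest $m$ with $X_i,x$ in a common length-$2^{-m}$ dyadic interval); by the centered construction, $X_i$ stays in the cell of $x$ along coordinate $j$ exactly when that coordinate is split at most $\rho_{ij}$ times. Let $U\subseteq[0,1]^d$ be the union, over all integer vectors $(k_1,\dots,k_d)$ with $k_j\ge0$ and $\sum_j k_j=k$, of the dyadic box of side lengths $(2^{-k_1},\dots,2^{-k_d})$ containing $x$; equivalently $U=\{u:\sum_j a_j(u)\ge k\}$ with $a_j(u)$ the dyadic agreement of $u$ and $x$ along $j$. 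Set $A:=\{X_i\in U\text{ for some }i\le n\}$. On $A$ one can pick a data point $X_i$ and an allocation $k_j\le\rho_{ij}$ with $\sum_j k_j=k$ (possible since $\sum_j\min(\rho_{ij},k)\ge k$); the split sequence that cuts coordinate $1$ exactly $k_1$ times, then coordinate $2$ exactly $k_2$ times, and so on, produces for $x$ precisely the box containing $X_i$, hence a non-empty cell; and along the root-to-leaf path of $x$ this fixed sequence of $k$ i.i.d.\ uniform directions occurs with probability exactly $d^{-k}$ conditionally on $\D$, so $p(\D)\le1-d^{-k}$ on $A$. Since the $X_i$ are i.i.d.\ uniform and $U$ is fixed given $x$, $\Prob{A^c}=(1-\mathrm{vol}(U))^n\le e^{-n\,\mathrm{vol}(U)}$.

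The key computation is then the lower bound $\mathrm{vol}(U)\ge k\,2^{-(k+1)}$ (using two coordinates, so $d\ge2$; the one-dimensional case being degenerate). I would prove it by exhibiting $k+1$ pairwise disjoint subsets of $U$: letting $I_m,J_m$ be the length-$2^{-m}$ dyadic intervals containing $x$ along coordinates $1,2$ with $I_0=J_0=[0,1]$, set $P_j:=(I_j\setminus I_{j+1})\times J_{k-j}\times[0,1]^{d-2}$ for $j=0,\dots,k$. Each $P_j$ lies in the box of shape $(2^{-j},2^{-(k-j)},1,\dots,1)$ through $x$, hence in $U$; each has volume $2^{-(j+1)}2^{-(k-j)}=2^{-(k+1)}$; and the $P_j$ are disjoint because the dyadic rings $I_j\setminus I_{j+1}$ around $x$ are. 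Summing gives $\mathrm{vol}(U)\ge(k+1)2^{-(k+1)}\ge k\,2^{-(k+1)}$, which produces the first term. For the limit statement, $k=\log_2 n$ gives $d^{-k}=n^{-\log_2 d}$, so the bound reads $e^{-(\log_2 n)/2}+e^{-M n^{-\log_2 d}}$, tending to $0$ as soon as $M=\omega(n^{\log_2 d})$; moreover $p(\D)^M\to\ind{p(\D)=1}$ and $p(\D)<1$ on $A$, so in fact $\Prob{\mathcal{E}_{\infty,n}}\le\Prob{A^c}\to0$. The main obstacle is really the identification in Step~2--3 that the right object is the union $U$ of \emph{all} admissible dyadic boxes rather than a single one: only this buys the factor $k$ in the exponent, which is exactly what makes the first term vanish at the critical depth $k=\log_2 n$, where the naive single-box estimate $\mathrm{vol}\ge2^{-k}$ would stall at $e^{-1}$.
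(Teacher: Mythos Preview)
Your proposal is correct and follows essentially the same route as the paper: you split on the data event $A=\{\exists i,\,X_i\in U\}$ (which is the paper's $\mathcal{E}_2$, with your $U$ equal to the paper's connection area $Z_{c,k}(x)$), bound $\Prob{A^c}$ via the volume estimate $\mathrm{vol}(U)\ge k\,2^{-(k+1)}$, and use $p(\D)\le 1-d^{-k}$ on $A$ to handle the other part. Your disjoint-ring argument for the volume, with $P_j=(I_j\setminus I_{j+1})\times J_{k-j}\times[0,1]^{d-2}$, is a clean repackaging of the paper's telescoping computation on the boxes $A(k{-}j,j,0,\dots,0,x)$; after reindexing, the pieces are literally the same sets.
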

As previously, the infinite void-free CRF is defined as 
%\begin{align*}
 $   f_{\infty, n}^{\mathrm{VF}}(x) = \mathbb{E}_{\Theta} \left[f_n(x,\Theta) | N_n(x,\Theta)>0 \right]
    %&=  \mathbb{E}_{\Theta} \left[ \frac{f_n(x,\Theta) \ind{N_n(x,\Theta)>0}}{\mathbb{P}_\Theta\left(N_n(x,\Theta)>0\right)} \right]
    $.
%\end{align*}
\begin{theorem} \label{th:agg_crf_consistency} \replace{}{Assume} that $f^\star$ has bounded partial derivatives. Then, the infinite void-free-CRF of depth $k=\integer{\log_2 n}$ is consistent in a noiseless setting ($\sigma=0$), and, for all $n >1$,
%\begin{align*}
 %   \mathcal{R}\left(f_{\infty, n}^{\mathrm{VF}}(X)\right) &\leq 16d \displaystyle \sum_{j=1}^d ||\partial f_j^\star ||_\infty^2 \left( n^{2\alpha \log(2) \log\left(1-\frac{1}{2d}\right)} + e^{-\frac{n^{1-\alpha}}{2}}\right) \\
 %   &+ 2 n^{-\frac{1}{\log(2)}}.
%\end{align*}
\begin{align*}
    & \mathcal{R}\left(f_{\infty, n}^{\mathrm{VF}}(X)\right) 
      \leq & C_d  \left( \frac{n}{\log_2 n} \right)^{2 \log_2 \left( 1 - \frac{1}{2d}\right)}  
     + \left(  C_d +2 \right) n^{-1/(2 \ln 2)}, 
\end{align*}
where $C_d = 4d  \left(\sum_{j=1}^d ||\partial f_j^\star ||_\infty^2 \right).$
\end{theorem}

The overall rate is of order $O\left(n^{2 \log(1-1/2d)}\right)$ which is a typical approximation rate for CRF, see \citet{klusowski2021sharp}. As a matter of fact, Theorem \ref{th:agg_crf_consistency} highlights that empty cells do not limit the performance of the void-free-CRF in the mean interpolation regime.

%Compared to the classical CRF that aggregates all leaves, void-free CRF only combine non-empty leaves. 

%The first term is shown to be of constant order (Proposition~\ref{prop:crf_inconsistency}), whereas the second term tends to zero (Lemma~\ref{lem:agg_crf_empty_cell}), which offers a more precise control of the void-free CRF, compared to classical CRF. This highlights the importance of the aggregation scheme for the RF consistency.

%Neglecting empty cells allows CRF to be consistent in the mean interpolation regime. 
However, this construction introduces a conditioning over $N_n(x,\Theta)>0 $ that prevents us from bounding the variance in the case of noisy samples. Therefore, in the next section, we analyze Centered Kernel RF (KeRF) with a different aggregation rule (empty cells still being neglected).
%empty cells are not taken into account for the prediction at $x$ (unless all cells containing $x$ are empty across all trees in the forest).
%for any tree of the forest, the cell containing $x$ is empty). 
%Regarding adaptive random forests, their adaptivity feature will inherently limit the number of empty cells as shown in Section \ref{sec:AdaCRF}.

\section{Centered kernel RF}
\label{sec:KeRF}

As formalized in \cite{geurts2006extremely} and developed in \cite{arlot2014analysis}, slightly modifying the aggregation rule of tree estimates provides a kernel-type estimator. Instead of averaging the predictions of all centered trees, the construction of a Kernel RF (KeRF) is performed by growing all centered trees and then averaging along all points contained in the leaves in which $x$ falls, i.e.\ 
\begin{align*}
    \forest^{\mathrm{KeRF}}(x, \boldsymbol{\Theta}_M) :=  \frac{\sum_{i=1}^n Y_i \sum_{m=1}^M \ind{X_i \in A_n(x,\Theta_m)}}{\sum_{i=1}^n \sum_{m=1}^M \ind{X_i \in A_n(x,\Theta_m)}}.
\end{align*}

One of the benefits of this construction is to limit the influence of empty cells, which can be harmful both for consistency and interpolation (see Section \ref{sec:CRF}). As earlier, the infinite KeRF is defined as, 
\begin{align*}
%\forall x \in [0,1]^d, \qquad
\infforest^{\mathrm{KeRF}}(x) = \frac{\sum_{i=1}^n Y_i K_{n}(x,X_i) }{\sum_{i=1}^n K_{n}(x,X_{i})}, 
\end{align*}
where $K_{n}(x,z) = \P_{\T}\left[ z \in A_n(x, \Theta) \right]$ is the probability that $x$ and $z$ are in the same cell w.r.t.\ a tree built according to $\Theta$ \citep[see][for details]{scornet2016random}.

%Letting $K_{M, n}$ be the connection function of the $M$ finite forest defined by
%\begin{align*}
%    K_{M, n}(\x,\z) := \frac{1}{M} \sum_{j=1}^M \mathds{1}_{ \z \in A_n(\x, \Theta_j)}, %\label{definition_noyau_fini}
%\end{align*}
%\cite{scornet2016random} shows that the KeRF can be rewritten as
%\begin{align*}
%    \forest^{\mathrm{KeRF}}(\x, \boldsymbol{\Theta}_M) = \frac{ \sum_{i=1}^n Y_i K_{M, n}(\x,{\bf X}_i) }{ \sum_{i=1}^n  K_{M, n}(\x,{\bf X}_{i})}, % \label{weightedapproximation}
%\end{align*}
%hence the name of kernel RF. In addition, it is shown that 
%\begin{align*}
%\lim\limits_{M \to \infty} K_{M, n}(x,z) := K_{n}(x,z),
%\end{align*}
%where $K_{n}(x,z) = \P_{\T}\left[ z \in A_n(x, \Theta) \right]$ which is the probability that $x$ and $z$ are in the same cell w.r.t.\ a tree built according to $\Theta$. %When $M$ tends to $\infty$, this empirical probability tends to the theoretical probability of connection of the infinite KeRF, so that 
%Consequently, for all $x \in [0,1]^d$, the infinite KeRF reads as
%\begin{align*}
%\infforest^{\mathrm{KeRF}}(x) = \frac{\sum_{i=1}^n Y_i K_{n}(x,X_i) }{\sum_{i=1}^n K_{n}(x,X_{i})}.
%\end{align*}
%where $K_{k}(x,z)$ is its connection function evaluated between $x$ and $z$.

%\subsection{Interpolation Conditions}

\paragraph{Interpolation conditions} Since KeRF aggregates centered trees as CRF (but in a different way), the results of Section \ref{sec:CRF} can be extended to KeRF: $(i)$  the mean interpolation regime is met for centered trees (hence for KeRF) when $k_n \geq \log_2n$; $(ii)$ a necessary condition to attain the KeRF interpolation in probability is  $k_n>2\log_2(n)$. One can note that the depths required for both interpolation regimes are still large, leading to as many empty cells for KeRF as for classical CRF but the aggregation rule is such that they are not taken into account in KeRF predictions, which gives hope that consistency could be preserved.
%\es{à compacter? enelver si pas interpolation en proba}

%\subsection{Consistency}

\paragraph{Consistency}
We study the convergence of the centered KeRF under the \emph{mean interpolation regime}. 
To this end, we consider extra hypotheses on the noise and on the regularity of $f^\star$. 
% \begin{description}
% \item[(H)] The additive noise $\varepsilon$ is Gaussian with a finite variance $\sigma^2 < \infty$ and $f^\star$ is Lipschitz continuous.
% \end{description}

%\begin{align}
%\label{hyp:setting}
%\tag{H}
%%Y = f^\star(X) + \varepsilon,
%end{align}
%where $\varepsilon$ is a centered Gaussian noise, independent of $\bX$, with finite variance $\sigma^2<\infty$. Moreover, $m$ is Lipschitz. The gaussian hypothesis is made to control the distribution queue when upper bounding the approximation error in \ref{proof:th_consistency_kerf_approx}.

%\cb{should be ok for $k_n=\Theta(\log_2 (n))$, i.e. s'il existe a et b tel $a\log_2(n) \leq k_n \leq b \log_2(n)$, alors telle vitesse de CV }
%\la{En inf on peut descendre jusqu'a la profondeur choisie par Erwan par exemple, mais en sup $b\log_2n$ est trop gros pour tout $b$, on peut rajouter ni facteur multiplicatif ni puissance.}
\begin{theorem}
\label{theoreme_consistency_centred_forest_approximation}
%Assume that \textbf{(H)} is satisfied. 
Assume that $f^\star$ is Lipschitz continuous and that the additive noise $\varepsilon$ is a centered Gaussian variable independent from $X$ with finite variance $\sigma^2$.
Then, the risk of the {infinite} centered KeRF of depth $k_n = \integer{\log_2(n)}$  verifies, for all $d > 5$, for all $n$ large enough,
%\begin{align*}%
%   \mathcal{R} &(\widetilde{m}_{\infty,n}(x)) \leq C_4 \Big(\log(n)^2 w_n + \frac{\log(n)^2 n^{-1/d}}{2}  \\
%   &+  n^{-\log_2(d-2)} (\log n)^{d+5/2}\Big)^{1/3} +  C_4 n^{2\log(1-\frac{1}{2d})}
%\end{align*}
\begin{align*}
    \mathcal{R}(\infforest^{\mathrm{KeRF}})
    %&\leq \E \Big[ \infforest^{\mathrm{KeRF}}(\x) - f^\star(x) \Big]^2 
    &\leq 8L^2 d^2 n^{2\log_2(1-\frac{1}{d})} 
     + C_d (\log_2 n)^{-\frac{d-5}{6}} (\log_2(\log_2 n))^{d/3},
    %\left(\log(n)^{-(d-5)/6} +  \log(n)^{2/3} n^{-1/3d}  \right. \\
    %&+ \left. \log(n)^{-\frac{d-11}{6}} + n^{-\log_2(d-2)} (\log n)^{d+5/2} \right)
    %&= O \left(\log(n)^{-\frac{d-5}{6}} \right)
\end{align*}
where $C_d>0$ is a constant dependent on $\sigma^2$ and made explicit in the proof. 
%with $C_d >0$ a constant depending on $\sigma, d, \|f^\star\|_{\infty}$. %and $w_n \sim \log(n)^{-(d-1)/2}$.
\end{theorem}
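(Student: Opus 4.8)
# Proof Proposal for Theorem~\ref{theoreme_consistency_centred_forest_approximation}

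The plan is to use the standard bias–variance decomposition of the risk of the infinite KeRF and control each term separately under the mean interpolation regime $k_n = \integer{\log_2 n}$. Write the infinite KeRF as a kernel-weighted average of the $Y_i$'s with weights $W_{n,i}(x) = K_n(x,X_i)/\sum_{j} K_n(x,X_j)$, and decompose
\begin{align*}
\infforest^{\mathrm{KeRF}}(x) - f^\star(x) = \underbrace{\sum_{i=1}^n W_{n,i}(x)(f^\star(X_i) - f^\star(x))}_{\text{approximation/bias term}} + \underbrace{\sum_{i=1}^n W_{n,i}(x)\varepsilon_i}_{\text{noise/variance term}}.
\end{align*}
For the bias term, I would exploit the fact that $K_n(x,z) = \P_\Theta[z \in A_n(x,\Theta)]$ is supported (in an effective sense) on points $z$ close to $x$: a centered tree of depth $k_n$ splits each coordinate roughly $k_n/d$ times, so the side length of $A_n(x,\Theta)$ in coordinate $j$ is $2^{-K_j}$ where $K_j \sim \mathrm{Bin}(k_n, 1/d)$. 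Using Lipschitz continuity of $f^\star$, $|f^\star(X_i) - f^\star(x)| \leq L\|X_i - x\|$ whenever $X_i$ and $x$ share a cell, and one bounds $\E\|X_i - x\|$ given $X_i \in A_n(x,\Theta)$ by $\sqrt{d}\,\E[\max_j 2^{-K_j}]$. A Chernoff/concentration argument on the binomials $K_j$ shows that with overwhelming probability all $K_j$ are of order $k_n/d = \Theta(\log_2(n)/d)$, giving cell diameters of polynomial order in $n$; the residual event contributes the $\log(n)^{-(d-11)/6}$-type term after optimizing the deviation level. This is essentially the KeRF approximation analysis of \citet{scornet2016random}, and it is where the peculiar exponent $(d-11)/6$ emerges from balancing the Chernoff bound against the number of coordinates.

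For the noise term, the key point is that the weights satisfy $\sum_i W_{n,i}(x) = 1$ and $W_{n,i}(x) \geq 0$, and crucially that each $W_{n,i}(x) \leq 1/S_n(x)$ where $S_n(x) = \sum_j K_n(x,X_j)$ is the (expected) number of points connected to $x$. Conditionally on $\mathcal{D}_n$ and on the $X_i$'s, the $\varepsilon_i$ are independent centered Gaussians, so $\sum_i W_{n,i}(x)\varepsilon_i$ is centered Gaussian with conditional variance $\sigma^2 \sum_i W_{n,i}(x)^2 \leq \sigma^2 \max_i W_{n,i}(x)$. Thus I need a lower bound on $S_n(x)$ that holds with high probability: since $\E_{X_i}[K_n(x,X_i)] = \E_\Theta \P_X[X \in A_n(x,\Theta)] = \E_\Theta[\mathrm{Leb}(A_n(x,\Theta))] = \E[2^{-\sum_j K_j}] = (1 - 1/(2d) + 1/(2d))^{\cdots}$... more precisely $\E[2^{-K_j}] = (1-1/(2d))^{k_n}$ per coordinate, so $\E[S_n(x)] = n(1-1/(2d))^{k_n} = n \cdot n^{\log_2(1-1/(2d))}$, which is a positive power of $n$ under $k_n = \log_2 n$. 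A concentration inequality (Bernstein, or a bounded-differences argument, noting each $K_n(x,X_i) \in [0,1]$) then shows $S_n(x) \gtrsim \E[S_n(x)]$ with probability at least $1 - e^{-c\,\E[S_n(x)]}$, so the conditional noise variance is at most $\sigma^2 / S_n(x) = O(n^{-\gamma})$ for some $\gamma > 0$ on this good event; on the complement, boundedness of $f^\star$ and the Gaussian tail of $\varepsilon$ make the contribution negligible.

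Combining the two bounds, taking expectations, and using boundedness of $f^\star$ (and hence of the truncated estimator, or of the approximation error which is always at most $2\|f^\star\|_\infty + \text{noise}$) on the various exceptional events yields $\mathcal{R}(\infforest^{\mathrm{KeRF}}) \leq C_d \log(n)^{-(d-11)/6}$. The main obstacle I anticipate is the bias term: getting the dimension-dependent exponent right requires carefully tracking the trade-off in the Chernoff bound for $\max_j 2^{-K_j}$ — one must split according to whether every $K_j$ exceeds a threshold $t k_n/d$, pay $d\,e^{-c(1-t)^2 k_n/d}$ for the bad event while $f^\star$ is merely controlled by its sup-norm there, and pay $2^{-t k_n/d}\cdot L\sqrt d$ on the good event, then optimize over $t$; since $k_n/d \asymp \log_2(n)/d$, the bad-event term decays only polylogarithmically in $n$, which is what caps the overall rate and forces the $\log(n)$-power form rather than a polynomial rate. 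A secondary technical nuisance is handling the ratio structure of the weights (the denominator $S_n(x)$ appearing both in bias and variance), which I would manage by conditioning on the high-probability event $\{S_n(x) \geq \tfrac12 \E[S_n(x)]\}$ uniformly and bounding the low-probability complement crudely.
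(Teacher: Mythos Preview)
Your proposal has a genuine gap in the variance analysis, and it misidentifies where the logarithmic rate comes from.

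First, the computation of $\E[S_n(x)]$ is wrong. For a centered tree of depth $k$, every leaf has Lebesgue measure exactly $2^{-k}$ (each of the $k$ cuts halves the volume, regardless of which coordinate is chosen). Hence $\E_{X_i}[K_n(x,X_i)] = \E_\Theta[\mu(A_n(x,\Theta))] = 2^{-k}$, and with $k_n=\integer{\log_2 n}$ one gets $\E[S_n(x)] = n\,2^{-k_n} \in [1,2]$, not a positive power of $n$. (You slipped from $\E[2^{-\sum_j K_j}]$, which equals $2^{-k}$ deterministically since $\sum_j K_j = k$, to the per-coordinate quantity $\E[2^{-K_j}]=(1-\tfrac{1}{2d})^{k}$.) As a consequence, your concentration argument for $S_n(x)$ cannot deliver $\sigma^2/S_n(x)=O(n^{-\gamma})$: in the mean interpolation regime the expected number of connected points is of order one, and a law-of-large-numbers argument on $S_n(x)$ gives no variance reduction.

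Second, the logarithmic rate does \emph{not} come from the bias. The bias is bounded by $Ld\,(1-\tfrac{1}{2d})^{k_n}$, which is a genuine polynomial power of $n$ and is never the bottleneck here. The $\log(n)^{-(d-11)/6}$ rate emerges from the fluctuation of the \emph{ratio} $S_n(x)^{-1}\sum_i Y_i K_n(x,X_i)$ around its population limit, and the controlling quantity is the second moment ratio
\[
\frac{\E\big[K_n(x,X)^2\big]}{\big(\E[K_n(x,X)]\big)^2}
\;\asymp\; \frac{2^{k_n}}{k_n^{(d-1)/2}}
\;\asymp\; \frac{n}{(\log n)^{(d-1)/2}},
\]
which follows from the asymptotic $\sum_{k_1+\cdots+k_d=k}\binom{k}{k_1,\dots,k_d}^{2}\asymp d^{2k+d/2}/k^{(d-1)/2}$ for sums of squared multinomial coefficients. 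Plugging this into Chebyshev bounds for the centered numerator and denominator (as in the $A_n/B_n$ decomposition of \citet{scornet2016random}), then optimizing the truncation level, is what produces the polylogarithmic rate. Your Chernoff argument on the $K_j$'s controls cell \emph{diameters}, not this second-moment ratio, and cannot recover the correct exponent.
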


%See Appendix \ref{proof:th_consistency_kerf_approx} for t
Theorem~\ref{theoreme_consistency_centred_forest_approximation} states that the infinite centered KeRF estimator is consistent as soon as $d > 5$, with a slow convergence rate of $\log(n)^{-(d-5)/6}$. The proof is based on the general paradigm of bias-variance trade-off and is adapted from \cite{scornet2016random}. 
At first sight, one might think that the rate becomes better as the dimension $d$ increases. However, the constant term highly depends on the dimension, so that the established bound should be regarded for a fixed $d$. 

%Remark that although the term limiting the rate decreases as $d$ increases, this is not the case for the other terms (such as $C_d$ for instance). Furthermore, as the constant is exponential in $d$, the bound is only valid for $d$ fixed.

Choosing $k_n =\integer{\log_2(n)}$ in Theorem \ref{theoreme_consistency_centred_forest_approximation} allows us to have a mean interpolation regime concomitant with consistency for KeRF, therefore highlighting that consistency and mean interpolation are compatible.
This is not the case for CRF for which the mean interpolation regime forbids convergence (Proposition \ref{prop:crf_inconsistency}).
If a ``mean'' overfitting regime is benign for the consistency of KeRF, 
it seems to be nonetheless malignant for the convergence rate.
Indeed, \citet{lin2006random} provides a lower bound on the convergence rate of a deep non-adaptive RF (such as the CRF), scaling in $(\log n)^{-(d-1)}$.
This leads us to believe that the convergence rate we obtain in Theorem~\ref{theoreme_consistency_centred_forest_approximation} is marginally improvable.
%has a convergence rate at least greater than $t \cdot \log(n)^{-(d-1)}$ where $t$ is the number of points within each leaf (in our case $t \approx 1$). This lower-bound is actually dictated by the variance term. Therefore, the \textit{optimal} convergence rate for an interpolating CRF (and KeRF) is slower than $(\log n)^{-(d-1)}$. 
%One may argue that the KeRF is expected to perform better in interpolating regimes than the standard CRF,  both in terms of bias (as the empty cells do not count) and variance (as the prediction at a point $x$ is made by averaging along all the training points connected to $x$ within all trees).
%This certainly suggests that the rate in Theorem \ref{theoreme_consistency_centred_forest_approximation} is marginally improvable.

Interpolation of kernel estimators has been recently studied with singular kernel by 
\cite{belkin2019does}. Since KeRF are kernel estimators, one can wonder how sharp is our bound (Theorem \ref{theoreme_consistency_centred_forest_approximation}) compared to that of \cite{belkin2019does}, which is minimax.
Due to the spikiness of the singular kernel studied in \cite{belkin2019does}, interpolation arises for any kernel bandwidth.  
The latter can be then tuned to reach minimax rates of consistency. 
The story is totally different for KeRF since interpolation occurs only for specific tree depths $k_n \geq \log(n)$ (where the depth parameter is closely related to the bandwidth of classical kernel estimates). Less latitude for choosing the depth then leads to sub-optimal rates of consistency (see Theorem \ref{theoreme_consistency_centred_forest_approximation}). 
Of course, a better rate of consistency in $O(n^{1/(3+d\log 2)})$ could be obtained as in \cite{scornet2016random} when optimizing this depth parameter, but leaving the interpolation world.

%\label{sec:kerf_empirical}
We numerically assess the performance of KeRF in the mean interpolation regime (see Appendix \ref{sec:exp_setting}).

\section{Semi-Adaptive RF: Median RF}
\label{sec:AdaCRF}
So far, consistency has been analyzed in the mean interpolation regime. What about consistency with exact RF interpolation? 
To analyze this phenomenon, we thus introduce semi-adaptive RF, Median RF, whose constructions depend on 
the training inputs $X_i$'s (and not on the outputs $Y_i$'s).

The Median RF, studied e.g.~in \cite{duroux2018impact, klusowski2021sharp}, is composed of median trees that first randomly choose the direction to cut over and then cut at the median of the data points contained in the current cells. \replace{}{In our analysis, for any cell containing $n_c$ observations, the median is set as the middle of the segment of two consecutive order statistics: $X_{(n_c/2)}$ and $X_{(n_c/2+1)}$ for an even number of observations, $X_{(\frac{n_c-1}{2})}$ and $X_{(\frac{n_c+1}{2})}$
otherwise.}

\subsection{Consistency}

\replace{}{In order to obtain consistency for an adaptive RF, one needs to control two terms: the bias and the variance terms. On the one hand, the bias is roughly controlled by the diameter of the leaf times the supremum of the derivatives of $f^\star$ in the leaf. In the interpolating regime, the depth is maximum so the diameter of the leaf is minimum and therefore the bias is smoothly upper bounded.}

{On the other hand, a ``low" depth regime is usually required to control the variance term, so that each leaf of each tree contains an infinite number of points when $n$ tends to $+\infty$. This directly ``averages the noise out" and decreases the variance towards 0 within each tree. However, in the interpolating case, each leaf contains only one point and we can only rely on the averaging effect of the RF, induced by the random splitting mechanism, to upper bound the variance.} 
\replace{}{ Studying the effect of the random splitting mechanism in full generality remains challenging. However,
as increasing the dimension also increases the diversity of the trees within the RF, it should naturally be easier to control the variance of an interpolating Median RF in an asymptotic high-dimensional setting, as we prove and discuss in Appendix \ref{subsec:var_high_dim}.}

\replace{}{The following theorem establishes the consistency of the interpolating Median RF in the general setting of noisy data and fixed input dimension.}

\begin{theorem}
\label{th:Median_RF_consistency}
\replace{}{Suppose that $f^\star$ has bounded partial derivatives and that $n$ is a power of two. % $\partial_j f^\star$ for all $j$
Then, the infinite interpolating Median RF $\infforest^{\mathrm{MedRF}}$ is consistent and verifies:
\begin{align*}
    \mathcal{R}\left(\medrf\right) &\leq C_{1} d \left(\sum_{\ell =1}^d||\partial_\ell f^\star||_\infty^2 \right) \left( 1 - \frac{3}{4d} \right)^{\log_2 n}  + \sigma^2  C_{2,d} (\log_2 n)^{- (d-1)/2},
\end{align*}
where 
$C_{1}$ and $C_{2,d}$ are explicit constants, the former being independent of the dimension $d$ (see the proof for the exact computations). 
%$ \leq 256 \exp \left( \frac{ 55 + \sqrt{5}}{2 - \sqrt{2}} \right)$ and $C_{2,d} = \left(32~\exp \left(  \frac{10}{\sqrt{2}-1} \right)~\right)^d d^{d/2}$. 
}
\end{theorem}
\replace{}{The control of the bias term follows the general approach used in \cite{duroux2018impact} with substantial technical refinements. On the other hand, we propose a more general approach for the control of the variance inspired by \cite{biau2012analysis,klusowski2021sharp}, where we derive explicit bounds specifically designed for Median RF.
Note that the consistency achieved by Median RF cannot be obtained for CRF under the interpolation regime due to the non-negligible probability of falling into empty cells (see Proposition \ref{prop:tree_proba_interpolation}).
%and \ref{prop:inconst_adacrf} resp.\ 
% and AdaCRF).
}

\replace{}{Theorem \ref{th:Median_RF_consistency} is the first result ensuring consistency of RF despite exact interpolation. It is even more impressive considering that bootstrap is off so that the averaging process in the RF is only due to feature subsampling. 
%This result clarifies the RF behavior by showing the power of the simple yet very efficient averaging procedure at hand in the RF.
More specifically, when dealing with interpolating trees, the variance reduction does not come from averaging many points in the leaf of a given tree anymore (since the tree depth is no longer limited), but results from averaging single points from the leaves of different trees. } 

\replace{}{
% Commentaire sur la vitesse de convergence
If interpolation remains compatible with consistency in the case of Median RF, it nevertheless damages the convergence rate. Indeed, it has been proved  that, for all $\alpha$ small enough, the convergence rate of Median RF with trees of depth $k = (1 - \alpha) \log_2(n)$ is $n^{-\alpha}$ \citep[see Theorem 3 in][]{klusowski2021sharp}. In the case of interpolating Median RF, Theorem~\ref{th:Median_RF_consistency} highlights a phase transition when $k = \log_2(n)$, as the convergence rate is driven by the variance term, which is of order $(\log_2 n)^{- (d-1)/2}$. While being very slow, this rate is close to the lower bound $(\log_2 n)^{- (d-1)}$ established for non-adaptive interpolating RF \citep[][]{lin2006random}. Actually, by assuming $\log_2(n) \geq d$, our proof can be directly modified so that our upper bound matches the lower bound of \citet{lin2006random} \citep[using the second statement of Lemma S.1 in][instead of the first one]{klusowski2021sharp}. 

Note that Theorem \ref{th:Median_RF_consistency} does not contradict Proposition 1 in \cite{tang2018random}, as the condition therein is not proved to be satisfied for interpolating median RF (nor for interpolating CRF). 
}

We also provide numerical experiments (resp.~Section \ref{sec:interp_zone_adacrf} and \ref{sec:median_rf_empirical}) that illustrate the consistency of the interpolating Median RF.
%On the theoretical side, when dealing with interpolating trees, the variance reduction cannot come from averaging many points in the leaf of a given tree anymore, but should result from averaging single points from the leaves of different trees. Hence, it requires to understand the geometry of the tree partitions and their intersections, which is an arduous task for generic adaptive RFs.
%even those with data-independent cuts.
%However, we reach a favorable conclusion regarding the variance reduction at work in interpolating median (and therefore semi-adaptive) RF, in the case of an asymptotic high-dimensional setting.

\subsection{Volume of the interpolation area}
\label{sec:interp_zone_adacrf}

%To go one step further in the analysis of Median RF, we introduce the notion of \textit{interpolation area} defined below.
In this section, we aim at quantifying the volume of the interpolation area of a Median RF, which is a prerequisite for the RF consistency. To pursue our analysis, we first give a rigorous definition of the interpolation area.   
\begin{definition}
The \textbf{\textit{interpolation area}} is the subspace of $[0,1]^d$ where the forest prediction depends only on one training point. For a given forest $\forest(., \boldsymbol{\Theta}_M)$, the interpolation area is denoted by\footnote{the symbol $\exists!$ means ``there exists a unique''.} 
\begin{align*}
\mathcal{A}(\forest(.,\boldsymbol{\Theta}_M)) 
&= \left\{x \in [0,1]^d, \exists! X_i \in \mathcal{D}_n, X_i \in \bigcap_{m=1}^M A_n(x, {\Theta}_m) \right\}.
\end{align*}
\end{definition}
The interpolation zone is highly dependent on both the geometry of the training points $X_i$'s and the construction of the trees. Analyzing the interpolation area for a finite Median RF turns out to be quite a challenging task. Therefore, we focus our study on the \emph{core interpolation area} $\mathcal{A}_{min}$ written as 
%$\mathcal{A}_{min}~=~\min_{\boldsymbol{\Theta}_M} \mathcal{A}(m_{M,n}(.,\boldsymbol{\Theta}_M))$, or equivalently,
\begin{align*}
    \mathcal{A}_{min} &= \bigcap_{M \in \mathds{N}, \boldsymbol{\Theta}_M} \mathcal{A}(\forest(.,\boldsymbol{\Theta}_M)).
\end{align*}
The area $\mathcal{A}_{min}$ is the intersection of the interpolation zones of all possible forests, or equivalently of a forest containing all possible trees (and therefore all possible cuts).
As an example note that 
in the case of median trees, every cut may occur with a positive probability.
Therefore, $\mathcal{A}_{min}$ matches the volume of the interpolation area of an infinite Median RF.
In the following proposition, we control the Lebesgue measure (denoted by $\mu$) of the core interpolation area $\mathcal{A}_{min}^{\mathrm{MedRF}}$ of an infinite Median RF. 
%defined by 
%$$\mu(\mathcal{A}_{min}) = \int_{[0,1]^d} \ind{\mathcal{A}_{min}}(x) dx_1 \hdots dx_d.$$
%The interpolation zone is the union of $n$ interpolation zones, each one containing a single $X_i$. We denote $\mathcal{A}(m_{M,n}(.,\boldsymbol{\Theta}_M)) = \mathcal{A}_{X_1,\boldsymbol{\Theta}_M} \cup  ... \cup \mathcal{A}_{X_n, \boldsymbol{\Theta}_M}$ with $\mathcal{A}_{X_i, \Theta_M} = \{x \in [0,1]^d, m_{M,n}(x,\boldsymbol{\Theta}_M) = Y_i\}$.
%We are interested in computing the volume of this area in order to quantify the \textit{direct influence} of the interpolation property on the estimation of the signal. 

\begin{proposition} \label{prop:vol_interp_zone_AdaCRF}
For all $n\geq 2$, for all $d\geq 2$, consider an infinite Median RF.
Then, 
%and its interpolation zone $\mathcal{A}_{m_{\infty,n}}$.
%\begin{enumerate}
    %\item \label{prop:vol_interp_AdaCRF1} $\mathcal{A}({m_{\infty,n}}) = \mathcal{A}_{min} \hspace{0.3cm}\text{a.s}.$
    \begin{align*}
        \displaystyle 
    \mathbb{E}_{\mathcal{D}_n}\left[\mu({\mathcal{A}}_{min}^{\mathrm{MedRF}})\right] \leq 2 \left(\frac{2}{n}\right)^{d-1}. \label{prop:vol_interp_AdaCRF2} 
    \end{align*} 
%\end{enumerate}
%Proof in Appendix \ref{proof:vol_interp_zone_AdaCRF}.

\end{proposition}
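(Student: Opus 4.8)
The plan is to upper bound $\mu(\mathcal{A}_{min}^{\mathrm{MedRF}})$ by the volume of the interpolation area of one cleverly chosen finite forest, and then to control that volume through the spacings of the order statistics of the sample. Since $\mathcal{A}_{min}$ is by definition the intersection of $\mathcal{A}(\forest(\cdot,\boldsymbol{\Theta}_M))$ over \emph{all} forests, it suffices to exhibit a single forest whose interpolation area is small. For each coordinate $j\in\{1,\dots,d\}$, let $T_j$ be the median tree that always splits along direction $j$ (a valid realization of $\Theta$). Almost surely the $j$-th coordinates of $X_1,\dots,X_n$ are pairwise distinct, so repeated median splits along direction $j$ eventually isolate every data point; hence $T_j$ is a legitimate fully grown median tree and all its leaves are slabs $\{x\in[0,1]^d : x_j\in I\}$, with $I$ ranging over a partition $\mathcal{P}_j$ of $[0,1]$ whose cells each contain at most one $X_i$. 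Denote by $D_i^j$ the cell of $\mathcal{P}_j$ that contains $(X_i)_j$. Applying the definition of the interpolation area to the forest $(T_1,\dots,T_d)$, one sees that if $x$ belongs to it there is a (unique) $X_i$ with $X_i\in\bigcap_{j=1}^d A_n(x,T_j)$; in coordinate $j$ this forces $x_j$ and $(X_i)_j$ to lie in the same cell of $\mathcal{P}_j$, i.e.\ $x_j\in D_i^j$. Therefore $\mathcal{A}_{min}^{\mathrm{MedRF}}\subseteq\bigcup_{i=1}^n B_i$ with $B_i:=\prod_{j=1}^d D_i^j$, so that $\mu(\mathcal{A}_{min}^{\mathrm{MedRF}})\le\sum_{i=1}^n\prod_{j=1}^d|D_i^j|$.

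The next step is a geometric control of the side lengths $|D_i^j|$. Fix $j$ and let $r$ be the rank of $(X_i)_j$ among the $j$-th coordinates of the sample. Unrolling the median-split recursion of $T_j$, the leaf containing $X_i$ has left endpoint equal to a split value lying in $(X_{(r-1)}^j,X_{(r)}^j)$ and right endpoint a split value in $(X_{(r)}^j,X_{(r+1)}^j)$, with the conventions $X_{(0)}^j=0$ and $X_{(n+1)}^j=1$: indeed, a data point of intermediate rank would otherwise still share the leaf of $X_i$, contradicting the fact that this leaf is pure. Consequently $D_i^j\subseteq(X_{(r-1)}^j,X_{(r+1)}^j)$, which is exactly the spacing of the \emph{other} $n-1$ points' $j$-th coordinates into which $(X_i)_j$ falls; in particular $|D_i^j|$ is at most the length of that spacing.

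Finally I would take expectations. For distinct coordinates $j$, the columns $((X_1)_j,\dots,(X_n)_j)$ are independent, hence $|D_i^1|,\dots,|D_i^d|$ are independent and, by exchangeability, $\mathbb{E}_{\mathcal{D}_n}[\mu(B_i)]=\prod_{j=1}^d\mathbb{E}[|D_i^j|]=(\mathbb{E}[|D_i^1|])^d$. Now $\mathbb{E}[|D_i^1|]$ is bounded by the expected length of the spacing of $n-1$ i.i.d.\ uniform points containing an independent uniform point: conditioning on the vector of $n$ spacing lengths $(S_1,\dots,S_n)\sim\mathrm{Dirichlet}(1,\dots,1)$, the spacing hit by the independent point is $S_k$ with probability $S_k$, so this expectation equals $\sum_{k=1}^n\mathbb{E}[S_k^2]=n\cdot\frac{2}{n(n+1)}=\frac{2}{n+1}\le\frac{2}{n}$. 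Summing over $i$,
\[
\mathbb{E}_{\mathcal{D}_n}\big[\mu(\mathcal{A}_{min}^{\mathrm{MedRF}})\big]\;\le\;\sum_{i=1}^n\Big(\frac{2}{n}\Big)^d\;=\;n\Big(\frac{2}{n}\Big)^d\;=\;2\Big(\frac{2}{n}\Big)^{d-1}.
\]

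The main obstacle is the geometric part of the first two paragraphs: one must check that the ``axis'' trees $T_j$ really are fully grown median trees, that the common refinement of their partitions collapses, around each $X_i$, to the product box $\prod_j D_i^j$, and that each factor $D_i^j$ is squeezed between consecutive order statistics so that $\mathbb{E}[|D_i^j|]=O(1/n)$. Once this is in place the probabilistic part is a one-line length-biasing computation, and it is precisely the independence across the $d$ coordinates that turns the product of $d$ factors of size $\approx 2/n$ into the announced rate; bounding only some coordinates trivially would degrade the exponent.
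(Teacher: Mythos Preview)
Your proof is correct and follows essentially the same strategy as the paper's own proof: bound the core interpolation area via the $d$ ``axis'' median trees that split exclusively along one coordinate, dominate the $j$-th side of the box around $X_i$ by the gap $Z_{(r+1)}-Z_{(r-1)}$ between the neighbouring order statistics, exploit coordinate-wise independence to factor the expectation, and sum over the $n$ data points. The only cosmetic differences are that the paper computes $\mathbb{E}[Z_{(k+1)}-Z_{(k-1)}]=\tfrac{2}{n+1}$ directly from the Beta means of order statistics, whereas you obtain the same bound through the length-biased Dirichlet argument; both give $\tfrac{2}{n+1}\le\tfrac{2}{n}$ and lead to the identical final estimate.
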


%With a non-zero probability, an adaptive centered tree can be fully grown by cutting along a single axis. Therefore, the volume of the minimum interpolation zone  can be computed by considering all trees that split along a single direction, therefore leading to the first statement in Proposition \ref{prop:vol_interp_zone_AdaCRF}.
The volume of the core interpolation area of an infinite Median RF tends to 0 polynomially in $n$ and exponentially in $d$.
%, and so does $\mathbb{E}_{\mathcal{D}_n} \left[\Prob{X\in \mathcal{A}_{min} } \right]$.

\begin{remark}
\label{rem:interpolationZone_medianRF}
Apart from a very restricted zone, the prediction of a Median RF 
%involves one single training point only in a negligible area and 
mostly relies on more than one training point.
More specifically, this is a \textit{necessary} condition for consistency: the volume of the area where the prediction involves only a finite number of points (\textit{a fortiori} the interpolation zone) should tend to 0. 
Indeed, by decomposing the risk as $R(f_n(X) \mathds{1}_{X \in{\mathcal{A}}_{min}^{\mathrm{MedRF}}} ) + R(f_n(X) \mathds{1}_{X \notin {\mathcal{A}}_{min}^{\mathrm{MedRF}}})$,  the first term is at least of the order $\sigma^2 \mu({\mathcal{A}}_{min}^{\mathrm{MedRF}})$. Therefore, 
it is not possible to cancel out the noise of the training dataset when only a finite number of points is used for the prediction. The noise in such an area remains of order $\sigma^2$.
Proposition \ref{prop:vol_interp_zone_AdaCRF} portends the predominant \textit{self-averaging} property of adaptive RF, and hence underpins the idea of good capabilities of Median RF in interpolation regimes.
\end{remark}

%except the first one which supports the claim that AdaCRF is consistent although converging at a slow rate. 

\section{Breiman RF}
\label{sec:BreimanRF}

The widely-used Breiman RF is composed of several CART \citep{breiman1984classification}, each one trained on a bootstrap sample, and for which the successive splitting directions and thresholds are chosen at each step (among a random subset of directions) in order to minimize the CART criterion. 
Breiman RF exhibit excellent predictive performance even if their adaptivity to the data remains a real hurdle to their theoretical analysis. 
%On the one hand, since their construction depends both on the input and output variables, this architecture is highly adaptive to the data. 
%On the other hand, 

From the interpolation perspective, each CART being trained on a bootstrap sample, the RF interpolation is not ensured when considering fully-grown trees. Indeed, a tree cannot interpolate a point that is not chosen in the bootstrap step. 
%These last two aspects represent a real hurdle for the theoretical analysis of Breiman forests in interpolation regimes. 
For this reason, we focus our study on the volume of interpolation areas for Breiman RF without bootstrap
and then analyze their empirical behavior in interpolating regimes through a battery of numerical experiments.

\paragraph{Interpolation}
As a Breiman RF is built using both the $X_i$'s and the $Y_i$'s, it is difficult to determine the depth necessary to reach the interpolation state. Depending on the data, the latter can be of the order $k \approx \log_2(n)$ in the best case, if each cut creates approximately two groups of the same size), or $k \approx n$ in the worst case, if only one point is separated from the others at each step \citep[low signal-to-noise ratios situations, see e.g.,][]{ishwaran2015effect}.
Note that by omitting the bootstrap in the RF construction, the interpolation of Breiman RF directly results from aggregating fully-grown trees. 

\paragraph{Volume of the interpolation zone}

As shown in the next proposition, the volume of the core interpolation area of Breiman RF tends to $0$ as $n$ tends to infinity.

\begin{proposition}
\label{prop:interpol_limit_zone} 
Consider an infinite Breiman forest constructed without bootstrap. 
Suppose that for a given configuration of the training data, all cuts have a probability strictly greater than 0 to appear.
Then, the volume of the minimal interpolation zone verifies
    \begin{align*}
        \Esp{\mu(\mathcal{A}_{min})} \leq \frac{1}{n^{d-1}} \left(1- 2^{-n} \right)^d.
    \end{align*}
\end{proposition}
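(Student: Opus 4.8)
The plan is to reduce the event $\{x\in\mathcal A_{min}\}$ to a purely combinatorial condition on the position of $x$ relative to the training points, and then integrate this condition. First I would record a basic structural fact: without bootstrap, every split of a (fully grown) CART tree is placed strictly inside a gap between two consecutive coordinate values of the points of the current cell, so no empty cell is ever created and every leaf of every admissible tree contains exactly one training point almost surely. Hence for any forest $\forest(\cdot,\boldsymbol{\Theta}_M)$ the box $\bigcap_{m=1}^M A_n(x,\Theta_m)$ is contained in each of the single–point leaves of $x$ and therefore contains at most one $X_i$; moreover concatenating the trees of two forests produces again an admissible forest. Combining these observations, $x\in\mathcal A_{min}$ if and only if there is a single training point $X_i$ lying in the leaf of $x$ in \emph{every} admissible fully grown tree, equivalently if and only if no admissible tree ever places a split strictly between $x$ and $X_i$ along the coordinate of that split.

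Next I would turn this ``no separating split ever'' condition into a geometric confinement of $x$. Fix the candidate representative $X_i$ and a coordinate $\ell$. Using the hypothesis that every cut occurs with positive probability, one may consider trees that split only along coordinate $\ell$: since the $\ell$-th coordinates of the $n$ points are pairwise distinct almost surely, such a tree fully separates the points, and in the gap of the $\ell$-th order statistics that contains $x_\ell$ the split may be steered either below or above $x_\ell$, which puts respectively the right-- or the left--neighbour of $x_\ell$ in the leaf of $x$. For both outcomes to be compatible with ``$X_i$ is always in the leaf of $x$'', the coordinate $x_\ell$ cannot lie strictly between two data values: it must sit in one of the two extreme gaps of the $\ell$-th order statistics, on the side of $X_{i,\ell}$, with $X_{i,\ell}$ realising that extreme. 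Running this over all $\ell$ shows that $\mathcal A_{min}$ is contained in the union, over the $2^d$ sign patterns $s\in\{-,+\}^d$, of the ``corner box'' whose $\ell$-th side is $(0,\min_k X_{k,\ell})$ or $(\max_k X_{k,\ell},1)$ according to $s_\ell$, a pattern contributing only when a single training point simultaneously realises all the corresponding coordinate extremes.

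Finally I would bound the expectation. Because $X$ is uniform on the product set $[0,1]^d$, the coordinate families $\{X_{k,\ell}\}_{k\le n}$ are independent across $\ell$, so the expected volume of a corner box factorises over $\ell$, each factor being $\mathbb{E}[\min_k X_{k,\ell}]=\mathbb{E}[1-\max_k X_{k,\ell}]=1/(n+1)$. Relaxing $d-1$ of these factors to $1/n$ produces the $n^{-(d-1)}$ term, while a sharper bookkeeping on the remaining coordinate and on the fact that the $2^d$ sign patterns cannot all be charged at once (at most one training point is $s$-extremal for a given $s$) yields the multiplicative refinement $(1-2^{-n})^d$; summing the contributions of the sign patterns gives $\mathbb{E}_{\mathcal{D}_n}[\mu(\mathcal A_{min})]\le n^{-(d-1)}(1-2^{-n})^d$, and the corresponding bound on $\mathbb{E}_{\mathcal{D}_n}[\mathbb{P}(X\in\mathcal A_{min}\mid \mathcal D_n)]$ is identical since $X$ is uniform.

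The delicate part is the second step. One has to argue carefully which trees belong to the support of the Breiman randomisation under the stated assumption — in particular that a tree splitting repeatedly along a single coordinate, with its threshold placed freely inside an empty gap where the CART criterion is flat and hence every threshold is optimal, has positive probability — and then extract from that family the exact coordinate-by-coordinate constraint on $x$. By comparison, the reduction to ``all trees agree on the representative of $x$'' and the final order-statistics computation are routine.
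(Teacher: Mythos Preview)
Your second step contains the genuine gap. You assume that an admissible Breiman tree may place its threshold \emph{freely} inside a gap between two consecutive data coordinates (arguing that the CART criterion is flat there), and you use this freedom to steer the split to either side of $x_\ell$, thereby forcing $x_\ell$ into an extreme gap of the $\ell$-th order statistics. But the CART convention --- which the paper adopts explicitly at the start of its proof --- is that the split is placed at the \emph{midpoint} of the two consecutive data values; the hypothesis ``all cuts have positive probability'' refers to these midpoint cuts (as the paper remarks right after the proposition, it is ensured by setting the number of eligible features to $1$, which randomises the \emph{direction}, not the threshold). Under the midpoint rule, whenever $x_\ell$ lies in $\big[\tfrac12(X_{(j-1),\ell}+X_{(j),\ell}),\,\tfrac12(X_{(j),\ell}+X_{(j+1),\ell})\big]$ no admissible tree can separate $x$ from the point of rank $j$ along coordinate $\ell$. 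Consequently the minimal interpolation zone around each $X_i$ is a genuine box --- the product over $\ell$ of the two half-gaps adjacent to $X_{i,\ell}$ --- and not merely a corner of $[0,1]^d$. Your corner boxes form a strict subset of the actual $\mathcal{A}_{min}$, so bounding their volume does not bound $\mathbb{E}[\mu(\mathcal{A}_{min})]$; and your final ``sharper bookkeeping'' to recover exactly $n^{-(d-1)}(1-2^{-n})^d$ from the corner picture is left vague precisely because that expression does not naturally arise there.

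The paper's argument is shorter once this is corrected. It writes the length of $\mathcal{A}_{min,X_i}$ along coordinate $\ell$ as one half the sum of the distances from $X_{i,\ell}$ to its two nearest neighbours among $\{X_{k,\ell}:k\neq i\}\cup\{0,1\}$, computes the conditional expectation of this length given $X_{i,\ell}=x$ to be $\tfrac{1}{2n}\big(2-(1-x)^{n}-x^{n}\big)\le \tfrac{1}{n}(1-2^{-n})$ via an elementary order-statistics calculation, multiplies across the $d$ independent coordinates to get $n^{-d}(1-2^{-n})^d$, and sums over the $n$ disjoint zones. Your first reduction (single-point leaves, disjointness of the $\mathcal{A}_{min,X_i}$, coordinate-wise factorisation by independence of the uniform coordinates) is correct and matches the paper's opening; only the one-dimensional length has to be ``half-distance to the two nearest neighbours'' rather than ``length of an extreme gap''.
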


%The proof is given in Appendix \ref{proof:interp_zone_breiman}.
Similarly to the Median RF, the bound on the interpolation volume for a Breiman forest enjoys the same order of decay, improved by a constant exponential in the dimension. 
Since predictions cannot be accurate in the interpolation area in a noisy setting, it is necessary that the volume of this area decreases to zero in order to ensure the RF consistency (see Remark \ref{rem:interpolationZone_medianRF}). 
Proposition \ref{prop:interpol_limit_zone} therefore suggests the good generalization properties of Breiman RF in interpolation regimes, as several training points are mostly used for prediction.
%Since the RF prediction suffers from a high sensitivity to the noise in the interpolation area, 

Setting the number of eligible features for splitting to $1$ is sufficient to ensure the hypothesis on cuts in Proposition \ref{prop:interpol_limit_zone}: one can obtain a tree in which all splits are performed along a single direction. This is a minor modification to the original algorithm,  easy to implement since most ML libraries have a ``max-feature" (as {scikit-learn} in Python) or ``mtry" (in {R}) parameter that can be set to 1.

In Appendix \ref{sec:exp_setting}, we numerically evaluate the volume of the interpolation zone and compare it to the theoretical bounds in Proposition \ref{prop:interpol_limit_zone}. 

%Similarly to the Adaptive Centered Random Forests, the volume of the interpolation zone of infinite Breiman RF decreases at a polynomial rate in $n$ and an exponential one in $d$. The same conclusions regarding its generalisation capacity and noise robustness hold: the influence of each data point remains local and interpolating should not harm the estimation of the signal globally.

\paragraph{Empirical study of consistency}
%For completeness, we also numerically assess the fast decay of the interpolation volume when the number $M$ of trees in the forest grows for a fixed sample size, see Figure \ref{fig:volume_interp_zone_tree_varying} in Appendix . 
 We now present an empirical study of Breiman RF consistency in interpolation regimes. 
In the theoretical analysis, we have focused on a specific type of Breiman RF (without bootstrap and a \textit{max-features} parameter equal to 1). We now examine the characteristics of Breiman forests with their default parameters and study the regularization processes that limit the noise sensitivity in the interpolation regime.
%To begin with, it is well known {\color{red} \textbf{[REF]}} that a single interpolating CART is inconsistent (see Figure \ref{fig:cart_inconsistency} in Appendix \ref{sec:exp_setting}).
%A popular saying says tree that interpolates, prediction that deteriorates.
In order to reach a better estimation of the regression function, Breiman RF average several CARTs while introducing randomness in the construction of each tree to diversify them. The first randomization comes from the bootstrap: each tree is trained on a bootstrap sample (selecting $n$ observations out of the $n$ original ones, with replacement). 
The other randomization results from a random selection of splitting directions: at each node, a subset of $\{1,\hdots, d\}$ of size max-features is randomly selected and the CART criterion is optimized along these directions only (setting \textit{max-features} to $1$ provides the maximum diversity whereas setting it to $d$ results in the construction of a unique tree).

The benefit of these two aspects in the construction of the Breiman RF is numerically analyzed when using interpolating Breiman trees. 
In Figure \ref{fig:default_breimadn_consistency}, we measure the excess risk of two RFs with {2000} trees and max-depth$=\texttt{None}$, where for the first one, bootstrap is used and the max-features parameter is set to 1, whereas the second one excludes bootstrap and sets the  max-features parameter to $\lceil d/3 \rceil$ (default value in \texttt{randomForest} in \texttt{R}).

\begin{figure}[ht]%[13]{r}{0.48\textwidth}
    \centering
    \includegraphics[width=0.8\textwidth]{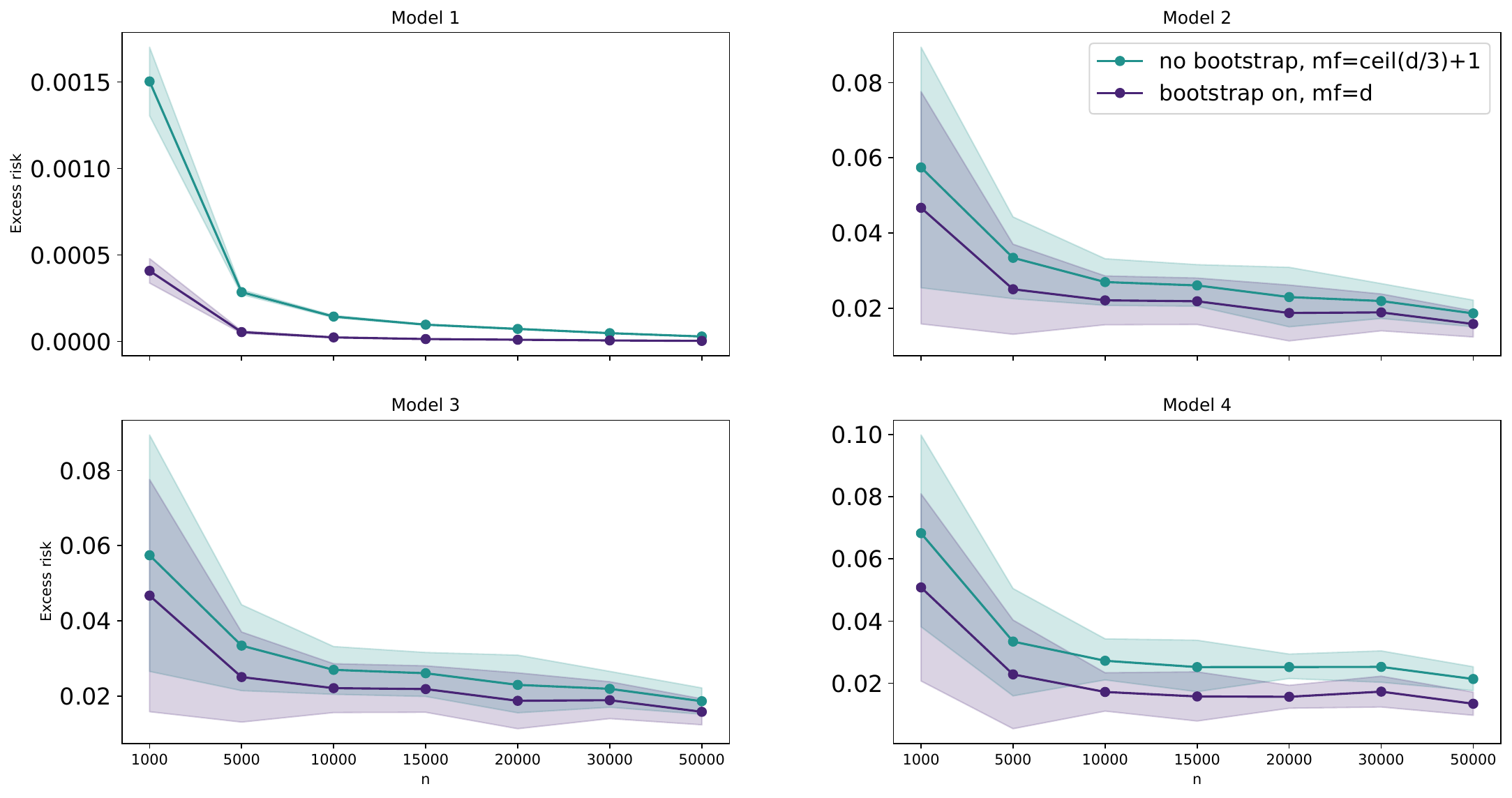}
    \caption{Consistency of two Breiman RF: excess risk w.r.t.\ sample size $n$.  %max-features$=d$ for the ``bootstrap on" RF, bootstrap off for the ``max-features$=\lceil d/3\rceil$" RF
    Mean over 10 tries (bold lines) and mean $\pm$ std (filled zone), when using 2000 trees per forest, and  max-depth$=$None. See Appendix \ref{sec:exp_setting} for the model definitions.}
    \label{fig:default_breimadn_consistency}
\end{figure}

In Figure \ref{fig:default_breimadn_consistency}, we observe that the excess risk decreases to $0$ for all models and for both forests. Indeed, each randomizing process alone induces enough diversity across trees for the self-averaging property to be efficient, resulting in the consistency of the overall forests \citep[see also][for insights about tree diversity in random forests]{scornet2016asymptotics, mentch2019randomization, mourtada2020minimax}.
%These results are in line with the message of \cite{} who explain that the success of RF estimators heavily relies on the randomisation processes at hand in the construction of the RF.

However, when using bootstrap, consistency comes at the cost of leaving the interpolation regime, as only $2/3$ of the data are used in average to build each tree (see Figures \ref{fig:breiman_train_interp_quant_n_var}, \ref{fig:breiman_train_interp_quant_tree_var} in Section \ref{sec:interp_loss_quant} for more details about the forest  non-interpolation).
In regards of this internal sampling selection, the aggregation of interpolating bagged trees results in smoothing the decision process of the entire forest, providing thereby a consistent but not interpolating estimate.

In turn, Breiman RF built with \textit{max-features}$=\lceil d/3\rceil$ seems consistent while preserving its interpolating behavior. 
%Diversity between the trees is only introduced by setting the max-feature parameter to $\lceil d/3 \rceil$ which seems to be a sufficient \textit{regularizer}: enough diversity between the trees allows the \textit{self-averaging} property of RF to be fully effective. 
Within this configuration, the final RF still interpolates the data but the volume of the interpolation zone is very small as shown in Figure \ref{fig:volume_interp_mf_tierce}. 
This is in line with the vision of a \textit{locally spiky} estimator developed in \cite{wyner2017explaining} and \cite{bartlett2021deep}. Indeed, the influence of the averaging effect is locally null near the data training points, but increases with the distance from these points.
Note that bootstrap and feature subsampling act differently. Bootstrap smoothens predictions by averaging different observations, even at points of the training set, which leads to an empty interpolation area. On the other hand, feature subsampling increases tree partition diversity, which reduces but does not annihilate the interpolation area of the overall forest.

\begin{remark}
One of the advantage of using deep (interpolating) trees is that it allows the RF to build more diversified trees. Indeed, the number of possible trees roughly grows exponentially with regard to the depth (also depending on $n,d$ and the \texttt{max-features} parameter). Especially when \texttt{max-features} is low, this should improve the averaging effect of the RF which is of particular interest when dealing with noisy data.
\end{remark}

In this regard, Breiman RF with \textit{max-features}$=\lceil d/3\rceil$ are similar to interpolating \textit{spiky} non-singular kernel methods, as studied in  \cite{belkin2019does}, %except for the tuned parameter. 
except for the leeway allowed for the hyperparameters tuning. 
Indeed, as underlined for non-adaptive centered forests, the depth $k_n$ (i.e.\ the tuned parameter) is constrained to a strict range to ensure both consistency and interpolation. This is not the case for singular kernel methods, as they interpolate regardless of the window parameter value. 

\section{Conclusion}

In this paper, we study both empirically and theoretically the tradeoff between interpolation and consistency of different types of random forests: when dealing with non-adaptive RF (CRF), empty cells prevent consistency; so that aggregating only non-empty leaves (void-free CRF) leads to convergence rates, only in a noiseless scenario. In a noisy setting, the kernel RF aggregates leaves differently (also avoiding empty ones). For kernel RF, we establish a (slow) consistency rate in the mean interpolation regime. 
%As shown in the first part, non-adaptive RF (forests whose construction is independent of the inputs) cannot achieve the exact interpolation regime. 
%We therefore created and analyzed a twisted adaptive version of CRF (called AdaCRF) whose construction is too rigid to reach consistency and exact interpolation simultaneously. 
\replace{}{We then study semi-adaptive RF that are closer to those used in practice and that present the advantage of being able to exactly interpolate the training data. The convergence of the median RF in the exact interpolation regime is established, showing the power of such architecture (even when used without bootstrap).}
%We also provide theoretical insights on the convergence of its variance term in a (noisy) high-dimensional setting. In this case, the split randomization indeed enhances the variability across trees.
Our study also shows that a prerequisite for consistency is that the minimal interpolation zone tends to zero as $n$ tends to infinity. We theoretically analyze this quantity for median and Breiman forests, emphasizing that interpolation might occur in conjunction with consistency if the volume of such areas vanishes fast enough. An experimental study supports the concomitance of consistency and interpolation in Breiman RF, when no bootstrap step is involved. 
%In this context, the consistency most probably results from the split randomization and occurs despite the exact interpolation regime.

Contrary to Nadaraya-Watson methods involving singular kernels that interpolate regardless of the bandwidth parameter, RF interpolate only for a specific choice of the depth, thus restricting the regime in which interpolation and consistency occur in concordance. 
Overall, most simple RF versions were relevant to study RF consistency when the tree depth was limited but are not actually sufficient to handle deeper trees corresponding to interpolation regimes. For adaptive forests, increasing the tree depth towards the interpolation regime results in a reduced bias, and the variance reduction phenomenon only results from the split randomization effect. The higher the dimension, the more diversified the trees, the stronger the averaging effect and the variance reduction.
Analyzing the strength of this phenomenon, which highly depends on the very shape of tree partitions, is the cornerstone to prove the consistency of adaptive RF in a general regression setting.
%\replace{}{It was successful in the case of Median RF but the case of Breiman RF remains very challenging.}
%
%Still, it remains a challenge to control this "variance reduction" as it requires to analyze complex tree partitions and their intersections in the case of adaptive RF.
%
We believe that interpolation remains benign for the consistency of adaptive RF, but can damage their convergence rate (this was the case for KeRF in the mean interpolation regime \replace{}{and for Median RF in the exact interpolation regime}), at least when bootstrap is not used.

%Breiman forests is also shown empirically to be consistent and interpolate when 
%{\color{gray}This analysis sheds a new light on the use of strong learners (i.e.\ fully grown trees) combined with the regularization mechanisms at hand in the RF. }
%In particular, we show that interpolation is harmless in the case of adaptive methods when the \textit{self-averaging} process in the forest is sufficient to restrain the interpolation effect to a local influence.

%Indeed, we prove that the Median RF reaches consistency and exact interpolation regimes in a noiseless scenario.\replace{}{ This is the first result to prove that consistency and interpolation are not irreconcilable for such powerful learners}. 
%This results from a fast decrease of the interpolation area, which limits the negative impact of interpolation on the overall consistency of the method.
%The Adaptive CRF which achieves a good compromise between data-dependence and simplicity of the construction should also be helpful for the theoretical analysis of more complex adaptive RFs such as Breiman RF.

The analysis of the interpolation zone of RF introduced in this article is an important tool for the understanding of RF prediction in interpolation regimes. Indeed the volume of the interpolation area is actually a roundabout way to measure the diversity in the constructed trees: if this volume is high, all trees end up building similar partitions. This diversity measure could also be used as a regularization tool to reduce the RF complexity by keeping only the most uncorrelated trees (in terms of partition) in a PCA fashion.    %\replace{}{We think that it could also be used as a way to measure the complexity of the RF.}

%Studying the impact of max-feature on the interpolation area volume is also a promising way to gain a better understanding of the role of this parameter in random forests. 
%This notion could be extended to study areas in which $q \in \{1, \hdots, n\}$ training points are averaged in the RF prediction, which could be used in turn to theoretically capture consistency of adaptive RF.

%\section*{Acknowledgements}

%The Ph.D of Ludovic Arnould is funded by SCAI (Sorbonne Center for Artificial Intelligence).

\bibliographystyle{plainnat}
\bibliography{biblio}

%%%%%%%%%%%%%%%%%%%%%%%%%%%%%%%%%%%%%%%%%%%%%%%%%%%%%%%%%%%%%%%%%%%%%%%%%%%%%%%
%%%%%%%%%%%%%%%%%%%%%%%%%%%%%%%%%%%%%%%%%%%%%%%%%%%%%%%%%%%%%%%%%%%%%%%%%%%%%%%
% APPENDIX
%%%%%%%%%%%%%%%%%%%%%%%%%%%%%%%%%%%%%%%%%%%%%%%%%%%%%%%%%%%%%%%%%%%%%%%%%%%%%%%
%%%%%%%%%%%%%%%%%%%%%%%%%%%%%%%%%%%%%%%%%%%%%%%%%%%%%%%%%%%%%%%%%%%%%%%%%%%%%%%
\newpage
\appendix
\onecolumn

\section{Summary of contributions}

\begin{figure}[h]
    \centering
    \includegraphics[clip, width=0.9\textwidth]{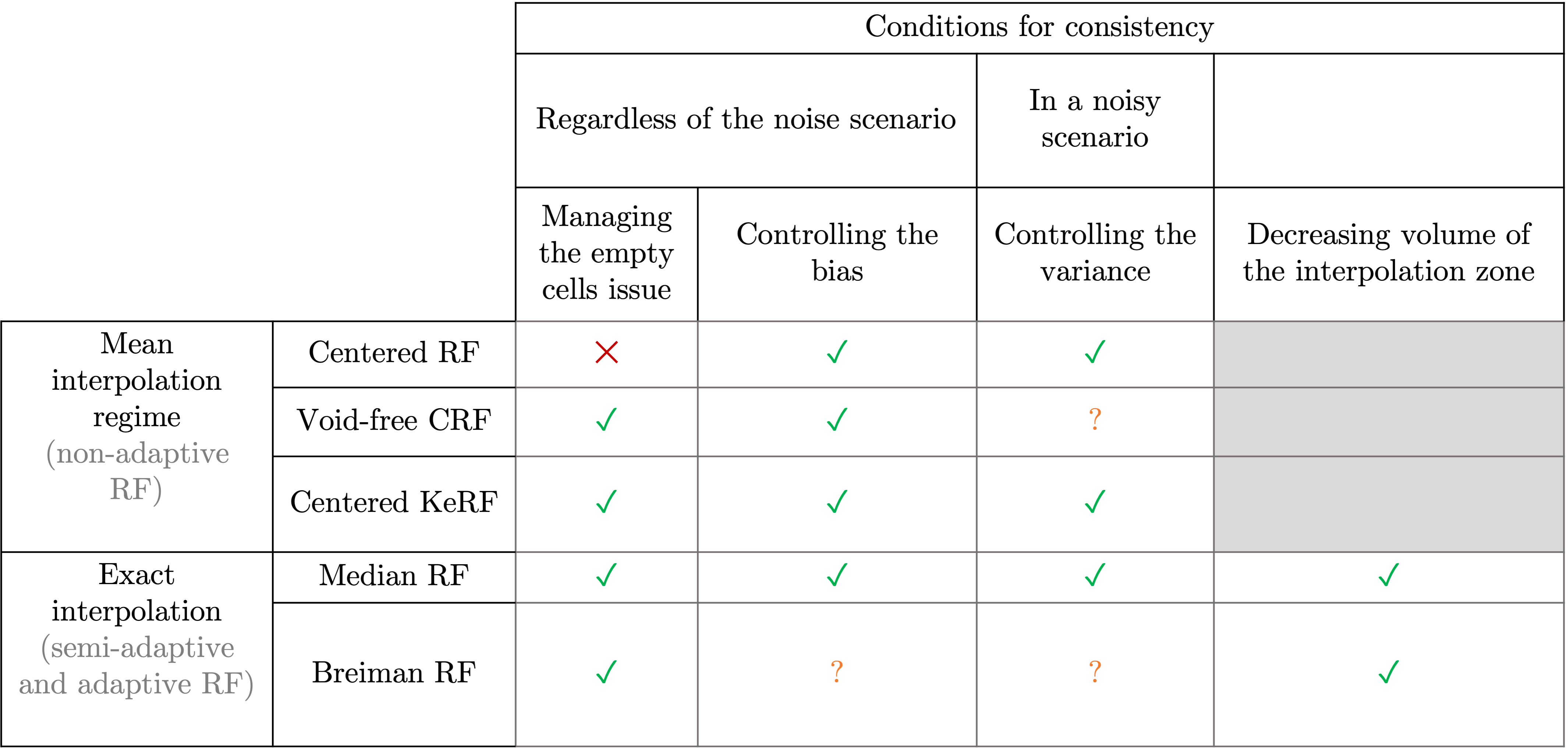}
    \caption{Summary of theoretical contributions}
\end{figure}

%%%%%%%%%%%%%%%%%%%%%%%%%%%%%%%%%%%%%%%%%%%%%%%%%%%%%%%%%%%%%%%%%%%%%%%%%
%%%%%%%%%%%%%%%%%%%%%%%%%%%%%%%%%%%%%%%%%%%%%%%%%%%%%%%%%%%%%%%%%%%%%%%%%

\section{Proofs}
\label{sec:proofs}

\subsection{Reminders and notations}

\textbf{Tree and RF estimator:}
We recall the prediction of the given by the $j$-th tree of the RF at point $x$:
% \begin{align*}
%     m_n(x, \Theta_j, \mathcal{D}_n) = \displaystyle \sum_{i=1}^n \frac{\ind{X_i \in A_n(x, \Theta_j, \mathcal{D}_n)} Y_i}{N_n(x, \Theta_j, \mathcal{D}_n)} \ind{\mathcal{E}}
% \end{align*}
\begin{align*}
    \fn(x, \Theta_j) = \displaystyle \sum_{i=1}^n \frac{\ind{X_i \in A_n(x, \Theta_j)} Y_i}{N_n(x, \Theta_j)} \ind{N_n(x, \Theta_j)>0}~ ,
\end{align*}
where $A_n(x, \Theta_j)$ is the cell containing $x$ and $N_n(x, \Theta_j)$ is the number of points falling into $A_n(x, \Theta_j)$. It is also written as follows:
\begin{align*}
    \fn(x, \Theta_j) = \displaystyle \sum_{i=1}^n W_{ni}(x,\Theta_j) Y_i,
\end{align*}
where $W_{ni}(x,\Theta_j) = \frac{\ind{X_i \in A_n(x, \Theta_j)} }{N_n(x, \Theta_j)} \ind{N_n(x, \Theta_j)>0}$.
The (finite) forest estimate then results from the aggregation of $M$ trees:
\begin{align*}
    \forest (x, \boldsymbol{\Theta}_M) = \frac{1}{M} \displaystyle \sum_{m=1}^M \fn(x,\Theta_m)~, 
\end{align*}
where $\boldsymbol{\Theta}_M := (\Theta_1, ..., \Theta_M)$.

\subsection{Proofs of Section \ref{sec:CRF} (Centered RF)}

\subsubsection{Proof of Lemma \ref{lem:forest_interp_iff_tree} (Link between tree and forest interpolation)}
%\begin{proof}[Proof of Corollary \ref{cor:interpolation_CRF}]

\label{proof:forest_interp_iff_tree}

First, it is clear that if all trees of a forest interpolate, the forest interpolates. Now, suppose that the forest $f_{M,n}^{\mathrm{CRF}}$ interpolates a training point 
$ X_s, s \in \{1,\hdots,n\}$.  
%Suppose that a tree $f_r$ in the forest does not interpolate for a given point $ X_s, s \in \{1,\hdots,n\}$, we write $f_r(X_s, \Theta_r) = Y_s + \xi$, $\xi \neq 0$. 
Then, by definition of $f_{M,n}^{\mathrm{CRF}}$,
\begin{align*}
        f_{M,n}^{\mathrm{CRF}}(X_s,\boldsymbol{\Theta_M}) &= \frac{1}{M}  \displaystyle \sum_{j=1}^M \sum_{i=1}^n Y_i  W_{n i}(X_s, \Theta_j) \\
        & = \sum_{i=1}^n Y_i \left( \frac{1}{M}  \displaystyle \sum_{j=1}^M   W_{n i}(X_s, \Theta_j) \right)\\
        & = Y_s,
        %&=  \frac{1}{M}  \displaystyle \sum_{j=1, j\neq r}^M \sum_{i=1}^n (f^\star(X_i) + \varepsilon_i)  W_{n i}(X_s, \Theta_j) + \frac{1}{M} (Y_s + \xi) 
\end{align*}
where $W_{n i}(X_s, \Theta_j) :=  \frac{\ind{X_i \in A_n(X_s, \Theta_j)}}{N_n(X_s, \Theta_j)} \ind{N_n(X_s, \Theta_j)>0}$.
Consequently, 
\begin{align}
  & f_{M,n}^{\mathrm{CRF}}(X_s,\boldsymbol{\Theta_M}) = Y_s\\
  & \Longleftrightarrow \quad  
  Y_s \left( \frac{1}{M}  \displaystyle \sum_{j=1}^M   W_{n s}(X_s, \Theta_j) - 1\right) + \sum_{i\neq s} Y_i \left( \frac{1}{M}  \displaystyle \sum_{j=1}^M   W_{n i}(X_s, \Theta_j) \right) = 0. \label{proof:forest_tree_interpolation_CRF}
\end{align}
For \eqref{proof:forest_tree_interpolation_CRF} to hold almost surely, it is necessary that it holds conditional on $X_1, \hdots, X_n, \Theta_1, \hdots, \Theta_M$. Since, for all $ j \in \{1, \hdots, M\}$, the terms $W_{ni}(X_s, \Theta_j)$ are measurable with respect to $X_1, \hdots, X_n, \Theta_1, \hdots, \Theta_M$ and $Y_s$ is independent of $(Y_i, i \neq s)$ given
$X_1, \hdots, X_n, \Theta_1, \hdots, \Theta_M$, equality \eqref{proof:forest_tree_interpolation_CRF} leads to, for all $i \neq s$, 
\begin{align}
 \frac{1}{M}  \displaystyle \sum_{j=1}^M   W_{n s}(X_s, \Theta_j) =1,  \quad \textrm{and} \quad  \frac{1}{M}  \displaystyle \sum_{j=1}^M   W_{n i}(X_s, \Theta_j) = 0.
\end{align}
Since all weights $W_{ni}(X, \Theta)$ take values in $[0,1]$, we have, for all $j \in \{1, \hdots, M\}$ and for all $i \neq s$
\begin{align}
W_{ns}(X_s, \Theta_j) = 1 \quad \textrm{and} \quad W_{ni}(X_s, \Theta_j) = 0.
\end{align}
Finally, for all $j \in \{1, \hdots, M\}$, the prediction of the $j$th tree at $X_s$ is given by 
\begin{align}
f_{n}^{\mathrm{CRF}}(X_s,\Theta_j) & = \sum_{i=1}^n W_{ni}(X_s, \Theta_j) Y_i \\
& = Y_s,
\end{align}
and therefore all trees of the forest interpolate the point $X_s$.

\subsubsection{Proof of Proposition \ref{prop:tree_proba_interpolation} (Probability of interpolation for a centered
tree)}
%\begin{proof}[Proof of Proposition \ref{prop:tree_proba_interpolation}]

\label{proof:ctree_proba_interpolation}
As all the leaves have the same volume and the data points are independent and uniformly distributed, having at most one point per leaf is equivalent to distribute $n$ balls into $2^k$ boxes containing at most one point with $2^k \geq n$ as can be seen on Figure \ref{fig:tree_balls}.
\begin{figure}[h]
    \centering
    \includegraphics[trim={2cm 15cm 2.5cm 3.6cm}, clip, width=0.6\textwidth]{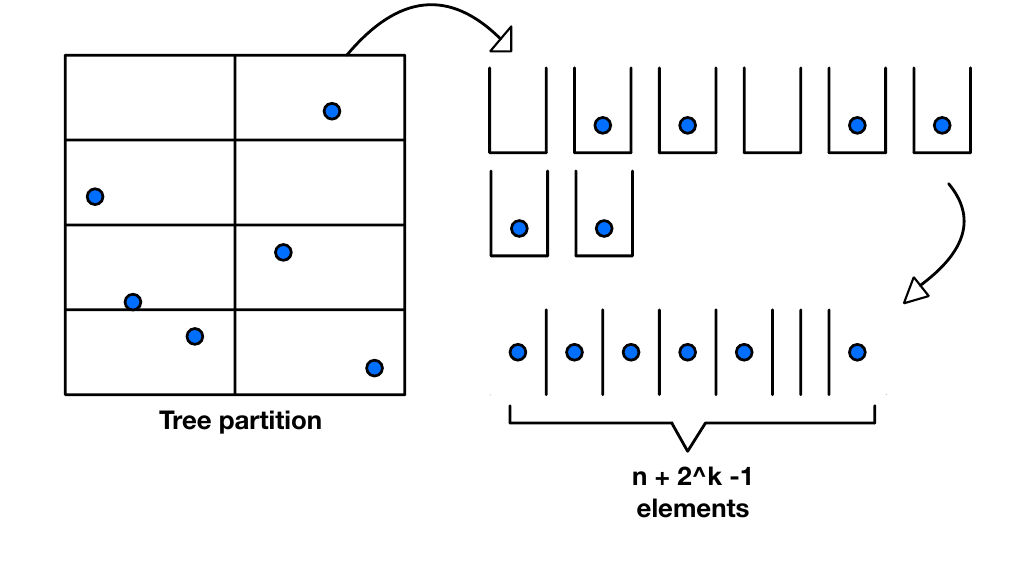}
    \caption{Computing the interpolation probability (depth $k=3$, $n=6$)}
    \label{fig:tree_balls}
\end{figure}
Recalling that $\mathcal{I}_T$ is the event ``a centered tree of depth $k_n$ interpolates the training data", we have 
\begin{align*}
\Prob{\mathcal{I}_T}
&= \frac{\binom{2^k}{n}}{\binom{n+2^k-1}{n}} \\
&= \frac{2^k!}{(2^k-n)!n!} \frac{n!(2^k-1)!}{(n+2^k-1)!} \\
& = \frac{2^k  \times (2^k - 1) \times \hdots \times (2^k - n +1)}{(2^k+n-1) \times (2^k + n - 2) \times \hdots 2^k}.
\end{align*}
If we have $k = \log_2 (\alpha_n n) \in \mathds{N}$, we have
\begin{align*}
\Prob{\mathcal{I}_T}
&= \frac{\alpha_n n}{(\alpha_n+1)n - 1} \cdot \frac{\alpha_n n-1}{(\alpha_n+1)n-2} \hdots \frac{(\alpha_n-1)n+1}{\alpha_n n}.
\end{align*}

In the general case where $k = \integer{\log_2 (\alpha_n n)}$, that is $\alpha_n n /2 \leq 2^k \leq \alpha_n n$, we can lower bound the probability of the event $\mathcal{I}_T$ as 
%\es{je ne suis pas sûr de la ligne suivante (j'ai plus de batterie!!)}
\begin{align*}
    \Prob{\mathcal{I}_T}& = \frac{2^k  \times (2^k - 1) \times \hdots \times (2^k - n +1)}{(2^k+n-1) \times (2^k + n - 2) \times \hdots 2^k}  \geq \left(\frac{2^k - n +1}{2^k + n - 1}\right)^n  \geq \left(\frac{2^k - n }{2^k + n }\right)^n \\
    & \geq \exp \left( n \log \left( \frac{2^k - n }{2^k + n }\right)\right) 
    \geq \exp \left( n \log \left(1 - \frac{2n }{2^k + n }\right)\right) 
    \geq \exp \left( - n  \left( \frac{2 }{\frac{2^k}{n} - 1 }\right)\right)\\
    & \geq \exp \left( -   \left( \frac{4n}{\alpha_n - 2 }\right)\right)\\
    \end{align*}
since $\log (1 - x) \geq - x / (1-x)$ and provided that $\alpha_n >2$ for the last inequality.
To upper bound the probability, note that, for all $r \in \{1, \hdots, \lfloor n/2\rfloor \}$
\begin{align*}
 \frac{2^k -n+r}{2^k + n -r } & \leq   \frac{2^k -n+\frac{n}{2}}{2^k + n -\frac{n}{2} - 1 } \leq \frac{2^k -\frac{n}{2}}{2^k + \frac{n}{2} - 1 }, 
\end{align*}
and, for all $r \in \{1,...,n\}$,
\begin{align*}
 \frac{2^k -n+r}{2^k + n -r } & \leq   1. 
\end{align*}
Therefore, one can also upper bound the probability as 
\begin{align*}
    \Prob{\mathcal{I}_T}& = \frac{2^k  \times (2^k - 1) \times \hdots \times (2^k - n +1)}{(2^k+n-1) \times (2^k + n - 2) \times \hdots 2^k} \\
    & \leq \left( \frac{2^k -\frac{n}{2}}{2^k + \frac{n}{2} - 1 }\right)^{\lfloor n/2\rfloor} \\
    & \leq \exp \left( \left\lfloor \frac{n}{2} \right\rfloor \log \left( 1 - \frac{n-1}{2^k + \frac{n}{2} - 1 } \right)\right)\\
    & \leq \exp \left( - \left\lfloor \frac{n}{2} \right\rfloor  \left( \frac{\frac{n}{2}}{2^k + \frac{n}{2} - 1 } \right)\right)\\
    & \leq \exp \left( - \left\lfloor \frac{n}{2} \right\rfloor  \left( \frac{\frac{1}{2}}{\frac{2^k}{n} + \frac{1}{2} } \right)\right)\\
    & \leq \exp \left( - \left\lfloor \frac{n}{2} \right\rfloor  \left( \frac{1}{2 \alpha_n  + 1 } \right)\right),
    \end{align*}
for all $n \geq 2$.
Finally, for all $n \geq 2$, and for all $\alpha_n > 2$, 
\begin{align*}
    \exp \left( -    \frac{4n}{\alpha_n - 2 }\right) \leq \Prob{\mathcal{I}_T} \leq \exp \left( - \left\lfloor \frac{n}{2} \right\rfloor  \left( \frac{1}{2 \alpha_n  + 1 } \right)\right).
\end{align*}
% sz\textcolor{blue}{calcul suivant à supprimer}
%     \begin{align*}
%     & \geq \frac{\alpha_n n}{(\alpha_n+1)n} \cdot \frac{\alpha_n n-1}{(\alpha_n+1)n-1} \hdots \frac{(\alpha_n-1)n+1}{\alpha_n n + 1} \\
%     &\geq \left(\frac{\alpha_n -1}{\alpha_n} \right)^n \\
%     &\geq e^{n \log(1-\frac{1}{\alpha_n})}  \\
%     &\geq e^{-\frac{n}{\alpha_n - 1}} \xrightarrow[\alpha_n \to \infty]{} 1.
%     %&= \frac{\alpha_n}{(\alpha_n+1)} \cdot \frac{\alpha_n -1/n}{(\alpha_n+1)-1/n} \hdots \frac{(\alpha_n-1)+1/n}{\alpha_n }
% \end{align*}
%     \begin{align*}
%         \frac{\alpha_n n -r}{(\alpha_n+1)n -r -1} \leq \frac{\alpha_n + 1/2}{\alpha_n + 1}.
%     \end{align*}
% \begin{align*}
%  \frac{2^k -r}{2^k + n -r -1} \leq \frac{2^k  + n/2}{2^k + n}   
% \end{align*} 
%     The computation of the upper bound is similar, note that 
%     It follows that
%     \begin{align*}
%         \Prob{\mathcal{I}_T} &\leq \left(\frac{\alpha_n + 1/2}{\alpha_n + 1} \right) n \\
%         &\leq e^{n \log(\frac{\alpha_n + 1/2}{\alpha_n + 1}) } \\
%         &\leq e^{-\frac{n}{2(\alpha_n +1)}}.
%     \end{align*}

% %\end{proof}

%%%%%%%%%%%%%%%%%%%%%%%%%%

\subsubsection{Proof of Corollary \ref{cor:interpolation_CRF} (Probability of interpolation for a CRF)}

As it is necessary for all trees to interpolation for the forest to interpolate, the probability that the forest interpolates is smaller than the probability that a single tree interpolates.

%%%%%%%%%%%%%%%%%%%%%%%%%%%%%%%%%%%%%%%%%%%%%%%%%%%%%%%%%%%%%%%%%%%%%%%%%%%%%%%%%%%%%%%%%%%%%%

%%%%%%%%%%%%%%%%%%%%%%%%%%%%

\subsubsection{Proof of Proposition \ref{prop:crf_inconsistency} (CRF inconsistency)}
%\begin{proof}[Proof of Proposition \ref{prop:crf_inconsistency}]
\label{proof:crf_inconsitency}
Let $\infforest^{\mathrm{CRF}}$ be an infinite CRF with each tree of depth $k_n \geq \log_2 (\alpha_n n)$, that is each tree has at  least $\alpha_n n$ leaves, with $\alpha_n n > 1$. Let $X$ be uniformly distributed on $[0,1]^d$. We write $\Bar{f}_{n,\infty}^{\mathrm{CRF}}(X) = \Esp{\infforest^{\mathrm{CRF}}(X) | X, X_1, ..., X_n}$. Then, denoting $\mathcal{E}$ the event ``$N_{n,\infty}(X) = 0$" (or equivalently, ``X falls into a non-empty leaf"),
\begin{align}
    \mathcal{R}(\infforest^{\mathrm{CRF}}(X)) &= \Esp{\left( \infforest^{\mathrm{CRF}}(X) - f^\star(X)\right)^2} \\
    &\geq \Esp{\left(\Bar{f}_{n,\infty}^{\mathrm{CRF}}(X) - f^\star(X) \right)^2} \\
    &= \Esp{\left(\displaystyle \sum_{i=1}^n \mathds{E}_\Theta\left[W_{ni}(X, \Theta) f^\star(X_i)\right] - \left(\ind{\mathcal{E}} + \ind{\mathcal{E}^c} \right)f^\star(X) \right)^2} \\
    &= \Esp{\left(\ind{\mathcal{E}^c} \displaystyle \sum_{i=1}^n \mathds{E}_\Theta\left[W_{ni}(X, \Theta) \left(f^\star(X_i) - f^\star(X) \right)\right] - \ind{\mathcal{E}}f^\star(X) \right)^2} \\
    &\geq \Esp{f^\star(X)^2 \ind{\mathcal{E}}} \\
    &\geq \Esp{f^\star(X)^2 \Prob{\mathcal{E}|X}}.
\end{align}

Besides,
\begin{align}
    \Prob{\mathcal{E}|X} &= \Prob{N_{n,\infty}(X) = 0 |X} \\
    & \geq \left(1 - \frac{1}{\alpha_n n} \right)^n,
\end{align}
and as $\log(1-1/x) \geq - \frac{1}{x-1}$ for $x>1$, 
\begin{align}
    \left(1 - \frac{1}{\alpha_n n} \right)^n &= e^{n \log\left( 1 - \frac{1}{\alpha_n n }\right)} \\
    &\geq e^{-\frac{n}{\alpha_n n - 1}}.
\end{align}
Thus, 
\begin{align}
\mathcal{R}(\infforest^{\mathrm{CRF}}(X)) & \geq e^{-\frac{n}{\alpha_n n - 1}} \Esp{f^\star(X)^2 },
\end{align}
which tends to 0 if and only if $\alpha_n$ tends to zero as  $n$ tends to infinity. Since, by assumptions, $\alpha_n$ does not tend to zero and $\Esp{f^\star(X)^2} > 0$, the infinite CRF is inconsistent. 
%\end{proof}

%%%%%%%%%%%%%%%%%%%%%%%%%%%%%%%%%%%%%%%%

%%%%%%%%%%%%%%%%%%%%%%%%%%%%%

\subsubsection{Proof of Lemma \ref{lem:agg_crf_empty_cell} (Probability of falling into an empty cell of the void-free CRF)}

Recall that  $\mathcal{E}_{M,n}(x)$ is the event ``for all $m \in \{1, \hdots, M\}, N_n(x, \Theta_m) =0$''. 
We have
\begin{align}
	\mathcal{E}_{M,n}(x) &= \bigcap_{j =1}^M \left\{ N_n(x,\Theta_j) = 0 \right\}.
\end{align}

Given a dataset, we distinguish two situations: either $x$ falls into an area where it cannot be connected to a point $X_i$ for any tree, or the dataset is such that $x$ could be connected to a point $X_i$ for a certain configuration of cuts within a tree.
We write $\mathcal{E}_{1,n}(x)$ the ($\mathcal{D}_n$-measurable) event $\{ \forall \theta, N_n(x,\theta)=0 \}$. Consequently, we have  $\mathcal{E}_{1,n}(x)^c = \{ \exists \hspace{0.1cm}\theta, N_n(x,\theta) \neq 0 \}$. Using these notations, we obtain
\begin{align}
    \Prob{\mathcal{E}_{M,n}(x)} &= \Prob{\mathcal{E}_{M,n}(x) \cap \mathcal{E}_{1,n}(x)} + \Prob{\mathcal{E}_{M,n}(X) \cap \mathcal{E}_{1,n}(x)^c} \\
    &= \Prob{\mathcal{E}_{1,n}(x)} + \Prob{\mathcal{E}_{M,n}(x) \cap \mathcal{E}_{1,n}(x)^c} \label{eq:proba_decompo}
\end{align}
where the first probability term of the second line is a probability taken over $\mathcal{D}_n$ only, since $\mathcal{E}_{1,n}(x)$ does not depend on $\Theta$. We control this probability thanks to the following Lemma.

\begin{lemma} \label{lem:proba_E1} For all $x \in [0,1]^d$, we let $\mathcal{E}_{1,n}(x)$ be the event $\{ \forall \theta, N_n(x,\theta)=0 \}$. Then, we have $$ \Prob{\mathcal{E}_{1,n}(x)} \leq e^{-\frac{n}{2^{k+1}}}.$$
\end{lemma}

\begin{proof}

Let $x \in [0,1]^d$. The event $\mathcal{E}_{1,n}(x)$ happens if all the points of the dataset fall into parts of the space that cannot connect to $x$ for any tree. In order to compute its probability, we compute the size of the \textit{connection area of $x$} for trees of depth $k$, denoted 
\begin{align}
Z_{c,k}(x) = \left\lbrace z \in [0,1]^d~: \exists \theta, z \in A_n(x,\theta) \right\rbrace.
\end{align}
We recall that trees are built independently from the dataset and that all cuts are made in the middle of the current node for a uniformly chosen feature at each step. 
%$x$ is connected to a cell obtained by cutting along feature $X^{(1)}$ $k$ times, to another cell obtained by cutting $k-1$ times along feature $X^{(1)}$ and once along another feature, etc, and similarly for all features (See Figure \textbf{[REF]}). The connection area $Z_{c,k}(x)$ is the union of all these cells.
We denote $A(k_1,...,k_d, x)$ the cell of $x$ obtained by cutting $k_j$ times along feature $X^{(j)}$ for all $j \in \{1, \hdots, d\}$. Then, the volume of the connection area $Z_{c,k}$ of $x$ is
\begin{align}
    \mu(Z_{c,k}(x)) &= \mu \left( \bigcup\limits_{\substack{0 \leq k_1, \hdots, k_d \leq k \\ \sum_j k_j = k}} A(k_1,..., k_d, x) \right) \\
    &\geq \mu \left( \bigcup\limits_{\substack{0 \leq k_1, k_2 \leq k \\ k_1 + k_2 = k}} A(k_1, k_2, 0,..., 0, x) \right).
    \label{eq:proof_vol_min}
\end{align}

By $\sigma$-additivity of $\mu$,
\begin{align}
    \notag
    \mu & \left( \bigcup\limits_{\substack{0 \leq k_1, k_2 \leq k \\ k_1 + k_2 = k}} A(k_1, k_2, 0,..., 0, x) \right) \\
    &= \mu\Big(A(k,0,...,0,x)\Big) 
    +  \sum_{j=1}^k \mu\left( A(k-j, j,0,...,0, x) \setminus \bigcup_{\ell=0}^{j-1} A(k-\ell, \ell, 0,...,0, x) \right). \label{eq:proof1}
\end{align}
Given the shape of the cells $A(k-j, j,0,...,0, x)$, for all $j \in \{1, \hdots, d\}$, we have (see Figure~\ref{fig:connection_volume})
\begin{align}
     & A(k-j, j,0,...,0, x) \setminus \bigcup_{\ell=0}^{j-1} A(k-\ell, \ell, 0,...,0, x) \nonumber \\
      =  & A(k-j, j,0,...,0, x) \setminus A(k-j+1, j-1, 0,...,0, x). \label{eq:proof2}
\end{align}
Furthermore, note that, for all $j \in \{1, \hdots, d\}$, the volume of each cell $A(k-j+1, j-1, 0,...,0, x)$ is $2^{-k}$ (since $k$ cuts have been performed). Therefore, for all $j \in \{1,\hdots, k\}$,
\begin{enumerate}
    \item $\mu(A(k-j, j,0,...,0, x)) = \mu(A(k-j+1, j-1,0,...,0, x)) = 2^{-k}$
    \item $\mu\big((A(k-j, j,0,...,0, x) \cap A(k-j+1, j-1,0,...,0, x)\big) = \frac{\mu(A(k-j, j,0,...,0, x))}{2}$ as can be seen on Figure \ref{fig:connection_volume}.
\end{enumerate}

\begin{figure}[h]
    \centering
    \includegraphics[trim={2cm 10.8cm 2.5cm 3cm}, clip, width=0.7\textwidth]{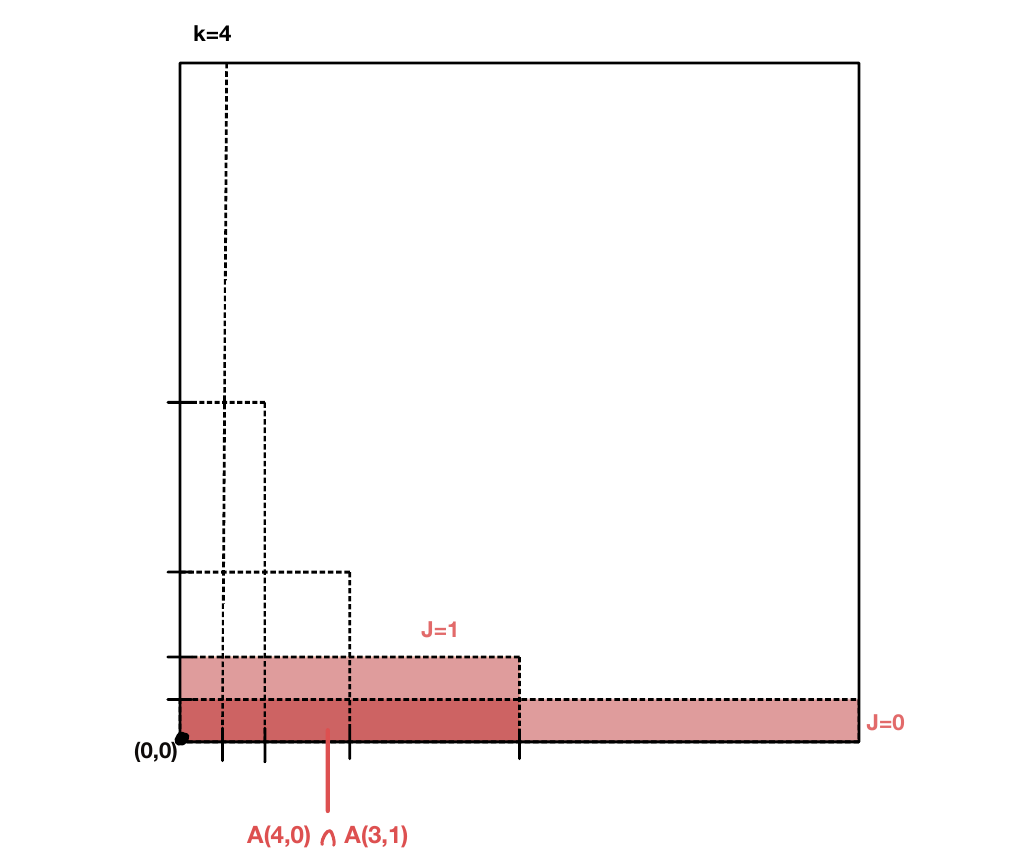}
    \caption{Volume of leaf intersection $\mu\big((A(k-j, j, x) \cap A(k-j+1, j-1, x)\big) $ in dimension 2 with $x=(0,0)$, $k=4$ cuts and $j\in \{0,1\}$.}
    \label{fig:connection_volume}
\end{figure}

We deduce from these facts that, for all $j$,
\begin{align}
\label{eq:proof3}
     \mu\big( A(k-j, j,0,...,0, x) \setminus A(k-j+1, j-1, 0,...,0, x)) 
     &= \frac{\mu(A(k-j, j,0,...,0, x))}{2}\\
     &= 2^{-(k+1)}
\end{align}

Hence, combining equations~\eqref{eq:proof1}, \eqref{eq:proof2} and \eqref{eq:proof3}, we have
\begin{align}
     \mu \left( \bigcup\limits_{\substack{0 \leq k_1, k_2 \leq k \\ k_1 + k_2 = k}} A(k_1, k_2, 0,..., 0, x) \right) & = 2^{-k} + k2^{-(k+1)}.
\end{align}
Consequently, using inequality~\eqref{eq:proof_vol_min}, 
\begin{align}
    \mu(Z_{c,k}(x)) \geq k2^{-(k+1)}.
\end{align}

Finally, as the $X_i$'s are uniformly distributed on $[0,1]^d$ and $\mathcal{E}_{1,n}(x)$ is realized when none of the $X_i$s fall into $Z_{c,k}(x)$, 
\begin{align}
    \Prob{\mathcal{E}_{1,n}(x)} &= \Prob{\forall i \in \{1, \hdots, n\}, X_i \notin Z_{c,k}(x)}\\
    & = \left(1 - \mu(Z_{c,k}(x)) \right)^n \\
    &\leq \left(1 - k2^{-(k+1)} \right)^n \\
    &= e^{n\log(1-k2^{-(k+1)})} \\
    &\leq e^{-\frac{k n}{ 2^{k+1}}}.
\end{align}

\end{proof}

Regarding the second term of \eqref{eq:proba_decompo}, we have
\begin{align}
  \Prob{\mathcal{E}_{M,n}(x) \cap \mathcal{E}_{2,n}(x)} &=  \Prob{ \left(\bigcap_{j=1}^M N_n(x,\Theta_j)= 0 \right) \bigcap \Big(\exists i \in \{1,\hdots,n\}, X_i \in Z_{c,k}(x) \Big)} \\
  &= \Esp{\Esp{\ind{\exists i \in \{1,\hdots,n\}, X_i \in Z_{c,k}(x)} \ind{\bigcap_{j=1}^M N_n(x,\Theta_j)= 0} | \mathcal{D}_n}} \\
  &= \Esp{\ind{\exists i \in \{1,\hdots,n\}, X_i \in Z_{c,k}(x)} \Prob{\bigcap_{j=1}^M N_n(x,\Theta_j)= 0 | \mathcal{D}_n}} \\
  &= \Esp{\ind{\exists i \in \{1,\hdots,n\}, X_i \in Z_{c,k}(x)} (1-p_n)^M}
\end{align}
where $p_n = \mathbb{P}_\Theta\left(N_n(x,\Theta) > 0 | \mathcal{D}_n\right)$ and where the last line is obtained by independence of the $\Theta_j$'s conditionally on $\mathcal{D}_n$.
Note that, if $\exists i \in \{1,\hdots,n\}, X_i \in Z_{c,k}(x)$, then $p_n \geq d^{-k}$ since a tree connects $x$ and a point in $Z_{c,k}(x)$ with probability at least $d^{-k}$ (i.e.\ by choosing the right cut at each step). Hence, 
\begin{align}
\ind{\exists i \in \{1,\hdots,n\}, X_i \in Z_{c,k}(x)}(1-p_n)^M \leq (1-d^{-k})^M, 
\end{align}
which leads to
\begin{align}
    \Prob{\mathcal{E}_{M,n}(x) \cap \mathcal{E}_{1,n}(x)^c} &\leq \left(1-d^{-k}\right)^M \\
    &\leq e^{-{M}{d^{-k}}}. \label{proof_eq_prob_empty_cells_void_crf}
\end{align}

Finally, gathering Lemma \ref{lem:proba_E1} and inequality~\eqref{proof_eq_prob_empty_cells_void_crf} yields
\begin{align}
    \Prob{\mathcal{E}_{M,n} (x)} \leq e^{-\frac{kn}{2^{k+1}}} + e^{-M d^{-k}}.
\end{align}

%%%%%%%%%%%%%%%%%%%%%%%%%%

%%%%%%%%%%%%%%%%%%%%%%%%%%%%%%%%%%%%%%%%%

\subsubsection{Proof of Proposition \ref{th:agg_crf_consistency} (Consistency of void-free-CRF in a noiseless setting)}

Recall that, in a noiseless setting (that is, for all $i$, $Y_i = f^{\star}(X_i)$), the risk of the Void-free CRF can be written as 
\begin{align*}
    \Esp{\left(\vfcrf(X) - f^\star(X) \right)^2 } &= \Esp{\left( \frac{\ind{\mathbb{P}_\Theta \left(N_n(X,\Theta)>0\right) >0 }}{\mathbb{P}_\Theta \left(N_n(X,\Theta) >0\right)} \sum_{i=1}^n f^\star(X_i)\mathbb{E}_\Theta[W_{ni}(X, \Theta) \ind{N_n(X,\Theta)>0}] - f^\star(X) \right)^2}.
\end{align*}

We decompose $f^\star(X)$ as
\begin{align*}
    f^\star(X) = \left(\ind{\mathbb{P}_\Theta \left(N_n(X,\Theta)>0\right) >0 } + \ind{\mathbb{P}_\Theta \left(N_n(X,\Theta)>0\right) =0 }\right)f^\star(X)
\end{align*}
in order to write
\begin{align}
    &\Esp{\left( \frac{\ind{\mathbb{P}_\Theta \left(N_n(X,\Theta)>0\right) >0 }}{\mathbb{P}_\Theta \left(N_n(X,\Theta >0)\right)} \sum_{i=1}^n f^\star(X_i)\mathbb{E}_\Theta[W_{ni}(X, \Theta) \ind{N_n(X,\Theta)>0}] - f^\star(X) \right)^2} \notag \\
    &= \Esp{\left( \frac{\ind{\mathbb{P}_\Theta \left(N_n(X,\Theta)>0\right) >0 }}{\mathbb{P}_\Theta \left(N_n(X,\Theta >0)\right)} \sum_{i=1}^n \left(f^\star(X_i) - f^\star(X)\right)\mathbb{E}_\Theta[W_{ni}(X, \Theta) \ind{N_n(X,\Theta)>0}] -  f^\star(X)\ind{\mathbb{P}_\Theta \left(N_n(X,\Theta)>0\right) =0} \right)^2} \notag \\
    &\leq 2  \Esp{\left( \frac{\ind{\mathbb{P}_\Theta \left(N_n(X,\Theta)>0\right) >0 }}{\mathbb{P}_\Theta \left(N_n(X,\Theta >0)\right)} \sum_{i=1}^n \left(f^\star(X_i) - f^\star(X)\right)\mathbb{E}_\Theta[W_{ni}(X, \Theta) \ind{N_n(X,\Theta)>0}] \right)^2} \notag \\
    & \quad + 2 \Esp{ \left(f^\star(X)\ind{\mathbb{P}_\Theta \left(N_n(X,\Theta)>0\right) =0}  \right)^2}
\end{align}

The second term of the last inequality verifies
\begin{align}
    \Esp{ \left( f^\star(X)\ind{\mathbb{P}_\Theta \left(N_n(X,\Theta)>0\right) =0}  \right)^2} & \leq ||f^\star||_\infty^2 \Prob{\mathbb{P}_\Theta \left(N_n(X,\Theta)>0\right) =0}.
\end{align}
The event $\{\mathbb{P}_\Theta \left(N_n(X,\Theta)>0\right) =0 \}$ is $(X,\mathcal{D}_n)$-measurable, it corresponds to the situation where for any $\theta$, $N_n(X,\theta)=0$, i.e.\ the dataset is such that it is impossible for a tree to connect $X$ with one of the $X_i$'s. This probability is controlled by Lemma \ref{lem:proba_E1}:
\begin{align*}
    \Prob{\mathbb{P}_\Theta \left(N_n(X,\Theta)>0\right) =0} \leq e^{-\frac{kn}{2^{k+1}}}.
\end{align*}

Denoting by $\mu \left(A_{n}^{(j)}(x, \T)\right)$ the length of the $j$th side of the cell containing $x$ and following a computation from \cite{klusowski2021sharp}, 
\begin{align}
	\sum_{i=1}^n W_{ni}(X, \Theta) |f^\star(X) - f^\star(X_i)| \ind{N_n(X,\Theta)>0} %&\leq \sum_{i=1}^n W_{ni}(X, \Theta) \ind{N_n(X,\Theta)>0} \sum_{j=1}^d||\partial_j f^\star||_\infty (b_j-a_j) \\
	& \leq \sum_{i=1}^n W_{ni}(X, \Theta) \left( \sum_{j=1}^d||\partial_j f^\star||_\infty |X_i^{(j)} - X^{(j)}| \right)  \ind{N_n(X,\Theta)>0} \\
	&\leq \sum_{i=1}^n W_{ni}(X, \Theta) \ind{N_n(X,\Theta)>0} \sum_{j=1}^d||\partial_j f^\star||_\infty (b_j-a_j) \\
	&\leq \ind{N_n(X,\Theta)>0} \sum_{j=1}^d||\partial_j f^\star||_\infty \mu \left(A_{n}^{(j)}(X, \T)\right).
\end{align}
%\begin{align*}
%    |f^\star(X) - f^\star(X_i)| &\leq \sum_{j=1}^d||\partia\ell_j f^\star||_\infty |X_i^{(j)} - X^{(j)}| \\
%    &\leq \sum_{j=1}^d||\partia\ell_j f^\star||_\infty (b_j -a_j)\\
%    \sum_{i=1}^n W_{ni}(X, \Theta) |f^\star(X) - f^\star(X_i)| \ind{N_n(X,\Theta)>0} &\leq \sum_{i=1}^n W_{ni}(X, \Theta) \ind{N_n(X,\Theta)>0} \sum_{j=1}^d||\partia\ell_j f^\star||_\infty (b_j-a_j)
%\end{align*}
%and
Therefore,
\begin{align}
    \Esp{\left(\vfcrf(X) - f^\star(X)\right)^2} &\leq 2 \Esp{\left(  \frac{1}{\mathbb{P}_\Theta(N_n(X,\Theta)>0)} \sum_{j=1}^d||\partial_j f||_\infty \mathbb{E}_\Theta\left[\ind{N_n(X,\Theta)>0}\mu \left(A_{n}^{(j)}(X, \T)\right)\right]\right)^2} \nonumber \\
    & \quad +2 e^{-\frac{kn}{2^{k+1}}} \\
    &\leq 2d \displaystyle \sum_{j=1}^d ||\partial f_j^\star ||_\infty^2 \Esp{\frac{1}{\mathbb{P}_\Theta(N_n(X,\Theta)>0)^2}\mathbb{E}_\Theta\left[\ind{N_n(X,\Theta)>0}\mu \left(A_{n}^{(j)}(X, \T)\right)\right]^2} \nonumber \\
    & \quad + 2 e^{-\frac{kn}{2^{k+1}}}. \label{eq:indep_to_cond}
\end{align}

{
Note that the length $\mu \left(A_{n}^{(j)}(X, \T)\right)$ of the $j$-th side of the cell $A_{n}(X, \T)$ and the event $\{ N_n(X,\Theta)>0\}$ are not independent conditional on $X_1,..., X_n, X$. Indeed, given the geometry of the dataset, it is possible that cutting along the $j$th direction  isolates $X$ from the dataset. Therefore its length should be computed conditional on the event $\{ N_n(X,\Theta)>0\}$.

%We denote $E_{n, \kappa}(j,  X, \Theta) := E_j(\kappa, X,n,\Theta)$ the event \{at depth $\kappa$, the node $A_{n,\kappa}(X,\Theta)$ contains at least two data points\}. 
To this aim, we denote for all $\kappa \in \mathbb{N}$, $A_{n,\kappa}(X,\Theta) $ the cell  containing $X$ at depth $\kappa$ in a centered tree built with the extra randomness $\Theta$. 
Conditional on $N_n(X,\Theta) >0$, the $j$th direction can be chosen to split along if and only if it does not isolate $X$ from the points of the dataset. 
Thus, we denote by $E_{n, \kappa}(j,  X, \Theta)$
the event "In a centered tree built with the randomized cuts $\Theta$, at depth $\kappa$, splitting the cell containing $X$ along the $j$th direction does not isolate $X$".
Then,
\begin{align}
    \mathbb{E}_\Theta\left[\ind{N_n(X,\Theta)>0}\mu \left(A_{n}^{(j)}(X, \T)\right)\right] &= \mathbb{E}_\Theta\left[\ind{N_n(X,\Theta)>0}\mu \left(A_{n}^{(j)}(X, \T)\right) (\ind{E_{n, \kappa}(j,  X, \Theta)^c}+\ind{E_{n, \kappa}(j,  X, \Theta)} )\right] \\
    &\leq \mathbb{E}_\Theta\left[\ind{N_n(X,\Theta)>0} \ind{E_{n, \kappa}(j,  X, \Theta)^c}\right]  \\
    & \quad + \mathbb{E}_\Theta\left[\ind{N_n(X,\Theta)>0} \ind{E_{n, \kappa}(j,  X, \Theta)}  \mu \left(A_{n}^{(j)}(X, \T) \right)\right],\label{eq:exp_0} 
\end{align}
since $\mu \left(A_{n}^{(j)}(X, \T) \right) \leq 1$. We denote $A_{n,\kappa}^{(j),\mathrm{left}}(X,\Theta)$ (resp.\  $A_{n,\kappa}^{(j),\mathrm{right}}(X,\Theta)$) the left (resp.\ right) daughter of the cell $A_{n, \kappa}(X, \T)$ that has been split along the $j$th direction
%. $$$  side of the cell w.r.t.\ direction $X^{(j)}$ 
(note that the whole cell is considered here, not only the projection on the $j$-th side). 
Then,
\begin{align}
    &\mathbb{E}_\Theta\left[\ind{N_n(X,\Theta)>0} \ind{E_{n, \kappa}(j,  X, \Theta)^c}\right] \nonumber \\
    &= \mathbb{P}_\Theta\left(E_{n, \kappa}(j,  X, \Theta)^c  \big| N_n(X,\Theta)>0  \right) \mathbb{P}_\Theta\left(N_n(X,\Theta)>0 \right) \\
    &= \mathbb{P}_\Theta\left( \left(N_n(A_{n,\kappa}^{(j),\mathrm{left}}(X,\Theta))=0\right) \cap \left(X \in A_{n,\kappa}^{(j),\mathrm{right}}(X,\Theta) \right) \big| N_n(X,\Theta)>0  \right) \mathbb{P}_\Theta\left(N_n(X,\Theta)>0 \right) \nonumber \\
    & \quad + \mathbb{P}_\Theta\left( \left(N_n(A_{n,\kappa}^{(j),\mathrm{right}}(X,\Theta))=0\right) \cap \left(X \in A_{n,\kappa}^{(j),\mathrm{left}}(X,\Theta) \right) \big| N_n(X,\Theta)>0  \right) \mathbb{P}_\Theta\left(N_n(X,\Theta)>0 \right)\\
  %  &= \left( \underbrace{\mathbb{P}_\Theta\left(E_{n, \kappa}(j,  X, \Theta)^c \cap \mathcal{E}(\kappa)  \big| N_n(X,\Theta)>0  \right)}_{=0} + \mathbb{P}_\Theta\left(E_{n, \kappa}(j,  X, \Theta)^c \cap \mathcal{E}^c(\kappa)  \big| N_n(X,\Theta)>0  \right) \right) \mathbb{P}_\Theta\left(N_n(X,\Theta)>0 \right) \\
  %  &= 2 \mathbb{P}_\Theta\left((N_n(A_l^j(\kappa))=0) \cap \mathcal{E}(\kappa)^c \big| N_n(X,\Theta)>0  \right) \mathbb{P}_\Theta\left(N_n(X,\Theta)>0 \right) \\
    &\leq 2\mathbb{P}_\Theta\left(N_n(A_{n,\kappa}^{(j),\mathrm{left}}(X,\Theta))=0 \big| N_n(X,\Theta)>0  \right) \mathbb{P}_\Theta\left(N_n(X,\Theta)>0 \right). \label{eq:exp_1} 
\end{align}
Moreover,
\begin{align}
    &\mathbb{E}_\Theta\left[\ind{N_n(X,\Theta)>0} \ind{E_{n, \kappa}(j,  X, \Theta)}  \mu \left(A_{n}^{(j)}(X, \T) \right)\right] \nonumber \\
    &\leq \mathbb{E}_\Theta\left[  \mu \left(A_{n}^{(j)}(X, \T)\right) \big| E_{n, \kappa}(j,  X, \Theta), N_n(X,\Theta)>0 \right]  \mathbb{P}_\Theta\left(N_n(X,\Theta)>0  \right).
    %&\leq \mathbb{E}_\Theta\left[  \mu \left(A_{n}^{(j)}(X, \T)\right) \big| E_{n, \kappa}(j,  X, \Theta), N_n(X,\Theta)>0 \right]  \mathbb{P}_\Theta\left(N_n(X,\Theta)>0  \right).
\end{align}

Denoting $K_{j,\kappa}(X,\Theta)$ the number of splits made on feature $j$ up to depth $\kappa$ to produce the cell containing $X$, we obtain
\begin{align}
    \mathbb{E}_\Theta\left[  \mu \left(A_{n}^{(j)}(X, \T)\right) \big| E_{n, \kappa}(j,  X, \Theta), N_n(X,\Theta)>0 \right] &\leq \mathbb{E}_\Theta\left[  2^{-K_{j,\kappa}(X,\Theta)} \big| E_{n, \kappa}(j,  X, \Theta), N_n(X,\Theta)>0 \right].
%&\leq \left(1 - \frac{1}{2d}\right)^\kappa.
\end{align}

We denote $\delta_j(X, \Theta) \in \{0,1\}^k$ the vector indicating at which depth the $j$th direction   is chosen for splitting, that is  $\delta_{j,\ell}(X, \Theta) = 1$ if and only if the $j$th feature is used for splitting at depth $\ell$. We have
\begin{align*}
    K_{j,\kappa}(X, \Theta) = \sum_{\ell=1}^\kappa \delta_{j,\ell}(X, \Theta).
\end{align*}
For $\ell = 1, \hdots, \kappa$, the random variables $\delta_{j,\ell}(X, \Theta)$ are distributed as Bernoulli random variables. Conditional on $E_{n, \kappa}(j,  X, \Theta)$ and $N_n(X, \Theta) >0$, we know that for all $\ell = 1, \hdots, \kappa$, the $j$th direction was eligible for splitting at level $\ell$. Therefore, the probability of selecting the $j$th direction at any level $1 \leq \ell \leq \kappa$, is $p_{\ell} \geq 1/d$ (at worst, all variables are eligible for splitting, leading to $p_{\ell} = 1/d$). Besides, conditional on  $E_{n, \kappa}(j,  X, \Theta)$  and $N_n(X, \Theta) >0$, the random variables $\delta_{j,\ell}(X, \Theta)$ are independent by construction of the centered forest. Indeed, conditional on $E_{n, \kappa}(j,  X, \Theta)$  and $N_n(X, \Theta) >0$, the $j$th direction  can be chosen up to depth $\kappa$ (independence is broken only when the direction cannot be chosen at a given depth as the following one will not be chosen either).
Then,
\begin{align}
    \mathbb{E}_\Theta\left[  2^{-K_{j,\kappa}(X,\Theta)} \big| E_{n, \kappa}(j,  X, \Theta), N_n(X,\Theta)>0 \right] &= \prod_{\ell=1}^\kappa  \mathbb{E}_\Theta\left[  2^{-\delta_{j,\ell}(X,\Theta)} \big| E_{n, \kappa}(j,  X, \Theta), N_n(X,\Theta)>0 \right] \\
    &= \prod_{\ell=1}^\kappa \left( \frac{p_\ell}{2} + (1-p_\ell) \right) \\
    &\leq \left(1 - \frac{1}{2d}\right)^\kappa. \label{eq:exp_2} 
\end{align}

Therefore, injecting Equations \eqref{eq:exp_1} and \eqref{eq:exp_2} into \eqref{eq:exp_0}, we get
\begin{align}
     \frac{\mathbb{E}_\Theta\left[\ind{N_n(X,\Theta)>0}\mu \left(A_{n}^{(j)}(X, \T)\right)\right]}{\mathbb{P}_\Theta\left(N_n(X,\Theta)>0 \right)} &\leq 2 \mathbb{P}_\Theta\left(N_n(A_{n,\kappa}^{(j),\mathrm{left}}(X,\Theta))=0 \big| N_n(X,\Theta)>0  \right) + \left(1 - \frac{1}{2d}\right)^\kappa, 
\end{align}
which implies
\begin{align}
     \left( \frac{\mathbb{E}_\Theta\left[\ind{N_n(X,\Theta)>0}\mu \left(A_{n}^{(j)}(X, \T)\right)\right]}{\mathbb{P}_\Theta\left(N_n(X,\Theta)>0 \right)} \right)^2 &\leq  4 \mathbb{P}_\Theta\left(N_n(A_{n,\kappa}^{(j),\mathrm{left}}(X,\Theta))=0 \big| N_n(X,\Theta)>0  \right) + 2 \left(1 - \frac{1}{2d}\right)^{2 \kappa}, 
\end{align}
using $(a+b)^2 \leq 2a^2+2b^2 \leq 2a^2 +2b$ if $b\leq1$. Plugging-in this expression into \eqref{eq:indep_to_cond} leads to 
\begin{align}
    \Esp{\left(\vfcrf(X) - f^\star(X)\right)^2} &\leq 4d \displaystyle \sum_{j=1}^d ||\partial f_j^\star ||_\infty^2 \left(1 - \frac{1}{2d}\right)^{2\kappa} + 2 e^{-\frac{kn}{2^{k+1}}} \nonumber \\
    & \quad + 8d \displaystyle \sum_{j=1}^d ||\partial f_j^\star ||_\infty^2 \Prob{N_n(A_{n,\kappa}^{(j),\mathrm{left}}(X,\Theta))=0 \big| N_n(X,\Theta)>0}.
\end{align}

Then,
\begin{align}
    %&\Esp{\Prob{N_n(A_{n,\kappa}^{(j),\mathrm{left}}(X,\Theta))=0 \big| N_n(X,\Theta)>0 , A_{n,\kappa}(X,\Theta)} | N_n(X,\Theta)>0} \\
    & \Prob{N_n(A_{n,\kappa}^{(j),\mathrm{left}}(X,\Theta))=0 \big| N_n(X,\Theta)>0} \\
    &= \Esp{\Prob{N_n(A_{n,\kappa}^{(j),\mathrm{left}}(X,\Theta))=0 \big| N_n(X,\Theta)>0, N_n(A_{n,\kappa}(X,\Theta)), X, \Theta } | N_n(X,\Theta)>0} \nonumber \\
    &= \Esp{2^{-N_n(A_{n,\kappa}(X,\Theta)) } | N_n(X,\Theta)>0}\\
    &\leq 2 \Esp{2^{-N_n(A_{n,\kappa}(X,\Theta))}}.
    %&=\frac{1}{2} \left(1 - \mu(A_l(\kappa))\right)^{n-1} \\
    %&= \frac{1}{2} \left(1 - 2^{-\kappa-1}\right)^{n-1} \\
    %&\leq \frac{1}{2} e^{-\frac{n-1}{2^{\kappa+1}}}.
\end{align}
The last line is obtained by making the expectation explicit and noting that $\Prob{N_n(X,\Theta)>0}^{-1} \leq 1/(1-e^{-1}) \leq 2$. 
Furthermore, conditional on $X, \Theta$, 
$N_n(A_{n,\kappa}(X,\Theta))$ is distributed as a binomial of parameters $n$ and $\mu\left(A_{n,\kappa}(X,\Theta)\right) = 2^{-\kappa}$. Thus, 
\begin{align}
    \Prob{N_n(A_{n,\kappa}^{(j),\mathrm{left}}(X,\Theta))=0 \big| N_n(X,\Theta)>0} 
    &\leq 2 \Esp{2^{-N_n(A_{n,\kappa}(X,\Theta))}}\\
    & \leq 2 \Esp{ \Esp{2^{-N_n(A_{n,\kappa}(X,\Theta))} | X, \Theta} }\\
    &\leq 2\left( 1 - \frac{\mu\left(A_{n,\kappa}(X,\Theta)\right)}{2}\right)^{n} \\
    &=  2\left( 1 - 2^{-\kappa-1}\right)^{n} \\
    &\leq 2\exp\left( -\frac{n}{2^{\kappa+1}} \right).
\end{align}

Overall,
\begin{align}
    \Esp{\left(\vfcrf(X) - f^\star(X)\right)^2} &\leq 4d \displaystyle \left(\sum_{j=1}^d ||\partial f_j^\star ||_\infty^2 \right) \left( \left(1 - \frac{1}{2d}\right)^{2\kappa} + 4 \exp \left( -\frac{n}{2^{\kappa+1}} \right) \right)  
     +  2 \exp \left( -\frac{kn}{2^{k+1}} \right).
\end{align}

%Then, Jensen inequality yields
%\begin{align*}
%    \Esp{\mathbb{E}_\Theta\left[(\mu \left(A_{n}^{(j)}(X, \T)\right))\right]^2} &\leq \Esp{\mu \left(A_{n}^{(j)}(X, \T)\right)}^2 \\
%    &= \Esp{2^{-K_j(X)}}^2 \\
%    &= \Esp{\Esp{2^{-K_j(X)}|X}}^2
%\end{align*}

%It is conditionally distributed as a binomial distribution of parameters $(k, 1/d)$. Hence,
%\begin{align*}
%    \Esp{2^{-K_j(X)} | X} &\leq \left(1-\frac{1}{2d}\right)^k.
%\end{align*}
Choosing $\kappa = \log_2(n) - \log_2(\log_2(n))$, that is $2^{\kappa} = n/(\log_2(n))$, we obtain 
\begin{align}
   \exp \left( 2 \kappa \log \left(1 - \frac{1}{2d}\right)\right)  + 4 \exp \left( -\frac{n}{2^{\kappa+1}} \right)   & \leq \left( \frac{n}{\log_2 n} \right)^{2 \log_2 \left( 1 - \frac{1}{2d}\right)} + 4 n^{-1/(2 \ln 2)}. 
\end{align}
Consequently, recalling that $k = \lfloor \log_2(n) \rfloor$, 
\begin{align}
    \Esp{\left(\vfcrf(X) - f^\star(X)\right)^2} &\leq 4d \displaystyle \left(\sum_{j=1}^d ||\partial f_j^\star ||_\infty^2 \right) \left( \left( \frac{n}{\log_2 n} \right)^{2 \log_2 \left( 1 - \frac{1}{2d}\right)} + 4 n^{-1/(2 \ln 2)} \right)  
     +  2 n^{-1/(2 \ln 2)}\\
     & \leq C_d  \left( \frac{n}{\log_2 n} \right)^{2 \log_2 \left( 1 - \frac{1}{2d}\right)}   
     + \left(  C_d +2 \right) n^{-1/(2 \ln 2)}, 
\end{align}
where $C_d = 4d \displaystyle \left(\sum_{j=1}^d ||\partial f_j^\star ||_\infty^2 \right).$
}

%Choosing $\kappa= \alpha \log_2 n $, $\alpha <1$, we obtain
%\begin{align}
%    \Esp{\left(\vfcrf(X) - f^\star(X)\right)^2} &\leq 4d \displaystyle \sum_{j=1}^d ||\partial f_j ||_\infty^2 \left( \left(1 - \frac{1}{2d}\right)^{2\alpha \log_2 n} + 
%   2 e^{-\frac{n^{1-\alpha}}{2}} \right) + 2 e^{-\frac{kn}{2^{k+1}}} \\
%    &= 4d \displaystyle \sum_{j=1}^d ||\partial f_j ||_\infty^2 \left( n^{2\alpha \log(2) \log\left(1-\frac{1}{2d}\right)} + 
%   2e^{-\frac{n^{1-\alpha}}{2}} \right) + 2 e^{-\frac{kn}{2^{k+1}}}
%\end{align}
%which ends the proof.

%To conclude,
%\begin{align}
%    \Esp{\left(\vfcrf(X) - f^\star(X)\right)^2} &\leq 2d \displaystyle \sum_{j=1}^d ||\partial f_j ||_\infty^2 \left(1-\frac{1}{2d}\right)^k + 2 e^{-\frac{kn}{2^{k+1}}}.
%\end{align}

%%%%%%%%%%%%%%%%%%%%%%%%%%%%%%%%%%%%%%%%%%%%%%%%%%%%%%%%%%%%%%%%%%%%%%%%%%%%%%%%%%%%%%%%%%%%%
%%%%%%%%%%%%%%%%%%%%%%%%%%%%%%%%%%%%%%%%%%%%%%%%%%%%%%%%%%%%%%%%%%%%%%%%%%%%%%%%%%%%%%%%%%%%%

\subsection{Proofs of Section \ref{sec:KeRF} (Theorem \ref{theoreme_consistency_centred_forest_approximation})}

In this section, we prove the consistency of the infinite KeRF estimator in the mean interpolating regime (Theorem \ref{theoreme_consistency_centred_forest_approximation}). We follow the proof given in \cite{scornet2016random} and first present two of its results.

\begin{lemma}[\cite{scornet2016random}]
\label{lemme_centred_random_forest}
Let $k \in \mathds{N}$ and consider an infinite centered random forest of depth $k$. Then, for all $\x, z \in [0,1]^d$,
\begin{align*}
K_{k}(\x,z) = \sum\limits_{\substack{k_{1},\hdots,k_{d} \\ \sum_{\ell=1}^d k_{\ell} = k}} 
\frac{k!}{k_{1}! \hdots k_{d} !} \left( \frac{1}{d}\right)^k \prod_{j=1}^d  \mathds{1}_{ \lceil 2^{k_j}x^{(j)} \rceil = \lceil 2^{k_j}z^{(j)} \rceil}.
\end{align*}
\end{lemma}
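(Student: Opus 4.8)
\textbf{Proof plan for Lemma \ref{lemme_centred_random_forest}.}

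The plan is to compute the connection function $K_n^{cc}(\x,\z) = \P_\Theta[\z \in A_n(\x,\Theta)]$ by decomposing over the possible "shapes" of the random tree, i.e.\ over how many times each coordinate was split. First I would note that a centered tree of depth $k$ is obtained by performing $k$ successive splits, where at each of the $k$ steps the splitting coordinate is drawn uniformly in $\{1,\dots,d\}$, independently of everything else and independently of the data (the tree is non-adaptive). Let $K_j$ denote the number of times coordinate $j$ was chosen among the $k$ splits, so $(K_1,\dots,K_d)$ is multinomial with parameters $k$ and $(1/d,\dots,1/d)$; the probability of a given vector $(k_1,\dots,k_d)$ with $\sum_\ell k_\ell = k$ is exactly $\frac{k!}{k_1!\cdots k_d!}(1/d)^k$. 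This explains the combinatorial prefactor in the statement, and reduces the problem to computing, conditionally on the split-count vector being $(k_1,\dots,k_d)$, the probability that $\z$ lands in the same cell as $\x$.

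Next I would argue that, conditionally on $(K_1,\dots,K_d) = (k_1,\dots,k_d)$, the event $\{\z \in A_n(\x,\Theta)\}$ is \emph{deterministic}, not random: because all cuts on a centered tree are made at the midpoint of the current cell, after $k_j$ dyadic midpoint cuts along coordinate $j$ the cell containing $\x$ is exactly the dyadic interval of length $2^{-k_j}$ in the $j$-th coordinate that contains $x_j$, regardless of the order in which the cuts along the various coordinates were interleaved. Hence the cell of $\x$ is the product box $\prod_{j=1}^d I_{k_j}(x_j)$ where $I_m(t)$ is the unique interval of the form $[(\ell-1)2^{-m}, \ell 2^{-m})$ containing $t$ (with the convention handling the endpoint $t=1$). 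Then $\z$ lies in this box if and only if for every $j$, $z_j$ lies in $I_{k_j}(x_j)$, i.e.\ $\lceil 2^{k_j}x_j\rceil = \lceil 2^{k_j}z_j\rceil$. So the conditional probability equals $\prod_{j=1}^d \mathds{1}_{\lceil 2^{k_j}x_j\rceil = \lceil 2^{k_j}z_j\rceil}$, which is precisely the product of indicators appearing in the formula.

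Finally I would assemble the two pieces via the law of total probability:
\begin{align*}
K_n^{cc}(\x,\z) &= \sum_{\substack{k_1,\dots,k_d \\ \sum_\ell k_\ell = k}} \P_\Theta\big[(K_1,\dots,K_d) = (k_1,\dots,k_d)\big]\, \P_\Theta\big[\z \in A_n(\x,\Theta) \,\big|\, (K_1,\dots,K_d) = (k_1,\dots,k_d)\big] \\
&= \sum_{\substack{k_1,\dots,k_d \\ \sum_\ell k_\ell = k}} \frac{k!}{k_1!\cdots k_d!}\Big(\frac{1}{d}\Big)^k \prod_{j=1}^d \mathds{1}_{\lceil 2^{k_j}x_j\rceil = \lceil 2^{k_j}z_j\rceil},
\end{align*}
which is the claimed identity. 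The one point that needs genuine care — the main (mild) obstacle — is the second step: justifying that the cell of $\x$ depends only on the \emph{numbers} $k_j$ of cuts per coordinate and not on their order or on the data, so that the conditional event becomes deterministic. This rests on the fact that midpoint cuts along different coordinates commute and each produces a nested dyadic subdivision; once this is established the rest is a routine multinomial bookkeeping, and one should also dispatch the boundary convention at $t=1$ so that $\lceil 2^{m}\cdot 1\rceil$ is interpreted consistently (e.g.\ assigning $1$ to the last dyadic interval), an event of probability zero under the uniform law that does not affect the $L^2$ statements later.
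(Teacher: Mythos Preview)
Your argument is correct and is exactly the standard derivation: condition on the multinomial vector of split counts, observe that midpoint splits along different coordinates commute so that the cell of $\x$ is the deterministic product of dyadic intervals $\prod_j I_{k_j}(x_j)$, and then sum over the multinomial weights. The paper does not actually prove this lemma itself; it is quoted from \cite{scornet2016random} as a known ingredient, and your write-up matches that original proof.
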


\begin{theorem}[\cite{scornet2016random}]
\label{bias_theorem_centred_forest} 
Let $f^{\star}$ be a $L$-Lipschitz function. Then, for all $k$,
\begin{align*}
\sup\limits_{\bx \in [0,1]^d} \left| \frac{\int_{[0,1]^d} k_{k}(\x, z) f^\star(z) \diff z_1 \hdots \diff z_d }{\int_{[0,1]^d} k_{k}(\x, z) \diff z_1 \hdots \diff z_d} - f^\star(\x) \right| \leq Ld \left(1 - \frac{1}{2d}\right)^k.
\end{align*}
\end{theorem}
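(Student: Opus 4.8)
The plan is to use the closed form of the connection function from Lemma~\ref{lemme_centred_random_forest}, which reduces the claim to a routine estimate on dyadic box averages of a Lipschitz function. Fix $\x \in [0,1]^d$. For a multi-index $\mathbf{k} = (k_1,\dots,k_d)$ with $k_j \ge 0$ and $\sum_{j=1}^d k_j = k$, set $w_{\mathbf{k}} = \frac{k!}{k_1!\cdots k_d!}\,d^{-k}$ and let $C_{\mathbf{k}}(\x) = \{\z \in [0,1]^d : \lceil 2^{k_j}z_j\rceil = \lceil 2^{k_j}x_j\rceil \text{ for all } j\}$, a dyadic rectangle with side $2^{-k_j}$ in direction $j$, of volume $\prod_j 2^{-k_j} = 2^{-k}$, and containing $\x$. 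By Lemma~\ref{lemme_centred_random_forest}, $K^{cc}_k(\x,\z) = \sum_{\mathbf{k}} w_{\mathbf{k}}\,\mathds{1}_{\z \in C_{\mathbf{k}}(\x)}$; integrating in $\z$ and using $\sum_{\mathbf{k}} w_{\mathbf{k}} = 1$ gives $\int_{[0,1]^d} K^{cc}_k(\x,\z)\,\diff\z = 2^{-k}$, whereas $\int_{[0,1]^d} K^{cc}_k(\x,\z) f^\star(\z)\,\diff\z = \sum_{\mathbf{k}} w_{\mathbf{k}}\int_{C_{\mathbf{k}}(\x)} f^\star(\z)\,\diff\z$. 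Dividing, the ratio appearing in the statement is the convex combination $\sum_{\mathbf{k}} w_{\mathbf{k}}\,\overline{f^\star}_{\mathbf{k}}(\x)$ of cell averages $\overline{f^\star}_{\mathbf{k}}(\x) := 2^{k}\int_{C_{\mathbf{k}}(\x)} f^\star(\z)\,\diff\z$.

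Since $\sum_{\mathbf{k}} w_{\mathbf{k}} = 1$, subtracting $f^\star(\x)$ yields
\begin{align*}
\frac{\int_{[0,1]^d} K^{cc}_k(\x,\z)f^\star(\z)\,\diff\z}{\int_{[0,1]^d} K^{cc}_k(\x,\z)\,\diff\z} - f^\star(\x) = \sum_{\mathbf{k}} w_{\mathbf{k}}\big(\overline{f^\star}_{\mathbf{k}}(\x) - f^\star(\x)\big).
\end{align*}
For each $\mathbf{k}$, every $\z \in C_{\mathbf{k}}(\x)$ satisfies $\sum_{j=1}^d |z_j - x_j| \le \sum_{j=1}^d 2^{-k_j}$ because $\x$ and $\z$ lie in the same box of side $2^{-k_j}$ in coordinate $j$; hence the $L$-Lipschitz property (in $\ell^1$ norm) gives $|\overline{f^\star}_{\mathbf{k}}(\x) - f^\star(\x)| \le 2^k\int_{C_{\mathbf{k}}(\x)}|f^\star(\z) - f^\star(\x)|\,\diff\z \le L\sum_{j=1}^d 2^{-k_j}$. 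Plugging this in and exchanging the finite sums,
\begin{align*}
\left| \frac{\int_{[0,1]^d} K^{cc}_k(\x,\z)f^\star(\z)\,\diff\z}{\int_{[0,1]^d} K^{cc}_k(\x,\z)\,\diff\z} - f^\star(\x) \right| \le L\sum_{j=1}^d \Big(\sum_{\mathbf{k}} w_{\mathbf{k}}\, 2^{-k_j}\Big).
\end{align*}

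It then remains to compute the inner sum. Grouping multi-indices by the value $k_j = m$ and using the multinomial identity $\sum_{\mathbf{k}: k_j = m} \frac{k!}{k_1!\cdots k_d!} = \binom{k}{m}(d-1)^{k-m}$, one gets $\sum_{\mathbf{k}} w_{\mathbf{k}}\, 2^{-k_j} = \sum_{m=0}^k \binom{k}{m}\big(\tfrac{1}{2d}\big)^m\big(\tfrac{d-1}{d}\big)^{k-m} = \big(1 - \tfrac{1}{2d}\big)^k$ --- equivalently, $\E[2^{-K_j}]$ for $K_j \sim \mathrm{Binomial}(k,1/d)$, evaluated via its probability generating function at $1/2$. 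Hence each of the $d$ terms equals $(1-\tfrac{1}{2d})^k$, giving the bound $Ld(1-\tfrac{1}{2d})^k$ for every $\x$; taking the supremum over $\x \in [0,1]^d$ concludes. I do not expect a genuine obstacle here: the only points requiring attention are the normalization $\sum_{\mathbf{k}} w_{\mathbf{k}} = 1$ (equivalently $\int K^{cc}_k(\x,\cdot) = 2^{-k}$), the identification of $\sum_{\mathbf{k}} w_{\mathbf{k}}\, 2^{-k_j}$ with the binomial moment, and matching the constant --- which is exactly what pins the Lipschitz hypothesis to the $\ell^1$ norm and renders the measure-zero boundary ambiguity in the ceiling-indicators harmless. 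The statement is, in essence, the bias bound of \cite{scornet2016random} recast in kernel form.
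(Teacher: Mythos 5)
Your proof is correct. The paper does not prove this statement itself---it imports it verbatim from \cite{scornet2016random}---but your argument is precisely the standard one for that result: rewrite the kernel ratio as a multinomially weighted convex combination of dyadic cell averages, bound each cell's contribution by $L\sum_j 2^{-k_j}$ via the ($\ell^1$-)Lipschitz property, and evaluate $\sum_{\mathbf{k}} w_{\mathbf{k}} 2^{-k_j}$ as the probability generating function of a $\mathrm{Binomial}(k,1/d)$ at $1/2$, yielding $(1-\tfrac{1}{2d})^k$; the same binomial-moment computation appears in the paper's own proof of Theorem~\ref{th:agg_crf_consistency}. No gaps.
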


%%%%%%%%%%%%%%%%%%%%%%%%%%%%%%%%%%%%%%%%%%%%%%%%%%%%%%%%%%%%%%%%%%%%%%%%%%%%%%%%%%%%%%%%%%%%%%%%
\begin{proof}[Proof of Theorem \ref{theoreme_consistency_centred_forest_approximation}]
\label{proof:th_consistency_kerf_approx}
Let $x \in [0,1]^d$ and recall that
\begin{align*}
\infforest^{\mathrm{KeRF}}(x) = & \frac{\sum_{i=1}^n Y_i K_k(x, X_i)}{\sum_{i=1}^n  K_k(x, X_i)}.
\end{align*}
Thus, letting 
\begin{align*}
& A_{n}(x) = \frac{1}{n}\sum_{i=1}^n \left( \frac{Y_i K_k(x, \X_i)}{\E \left[ K_k(x, \X) \right]} - \frac{\E \left[ YK_k(x, \X) \right]}{\E \left[ K_k(x, \X) \right]} \right),\\
& B_{n}(x) = \frac{1}{n} \sum_{i=1}^n \left( \frac{K_k(\x, \X_i)}{\E \left[ K_k(x, \X) \right]} - 1 \right),\\
\textrm{and}~ & M_n(x) = \frac{\E \left[ YK_k(x, \X) \right]}{\E \left[ K_k(\x, \X) \right]}, 
\end{align*}
the estimate $\infforest^{\mathrm{KeRF}}(\x)$ can be rewritten as 
\begin{align*}
\infforest^{\mathrm{KeRF}}(\x) = \frac{M_n(\x) + A_n(\x)}{1 + B_n(\x)},
\end{align*}
which leads to
\begin{align*}
\infforest^{\mathrm{KeRF}}(\x) - f^\star(\x)& = \frac{  M_n(\x) - f^\star(\x) + A_n(\x) - B_n(\x) f^\star(\x)}{1 + B_n(\x)}.
\end{align*}
According to Theorem \ref{bias_theorem_centred_forest}, we have
\begin{align*}
|M_n(\x) - f^\star(\x)| & = \left|\frac{\E \left[ f^\star(\bX) K_k(\x, \X) \right]}{\E \left[ K_k(\x, \X) \right]} + \frac{\E \left[ \varepsilon K_k(\x, \X) \right]}{\E \left[ K_k(\x, \X) \right]} - f^\star(\bx) \right|\\
& \leq  \left|\frac{\E \left[ f^\star(\bX) K_k(\x, \X) \right]}{\E \left[ K_k(\x, \X) \right]} - f^\star(\bx) \right|\\
& \leq C \left( 1 - \frac{1}{2d}\right)^k, 
\end{align*}
where $C=Ld$. Take $\alpha\in ]0, 1/2]$. Let $\mathcal{C}_{\alpha}(\bx)$ be the event $\big\lbrace |A_n(\x)|\leq \alpha \big\rbrace \cap \big\lbrace |B_n(\x)| \leq \alpha \big\rbrace$. On the event $\mathcal{C}_{\alpha}(\bx)$, we have
\begin{align*}
|\infforest^{\mathrm{KeRF}}(\x) - f^\star(\x) |^2 & \leq 8 |M_n(\x) - f^\star(\x) |^2 + 8 |A_n(\bx) - B_n(\bx) f^\star(\x) |^2 \nonumber\\
%& \leq 8 |M_n(\x) - f^\star(x) |^2 + 8\alpha^2 (1 + \|m\|_{\infty})^2 \nonumber\\
& \leq 8C^2 \left( 1 - \frac{1}{2d} \right)^{2k} + 8\alpha^2 (1 + \|f^\star\|_{\infty})^2.
\end{align*}

Thus, 
\begin{align}
\E [ |\infforest^{\mathrm{KeRF}}(\x) - f^\star(\x)|^2 \mathds{1}_{\mathcal{C}_{\alpha}(\bx)} ] & \leq 8C^2 \left( 1 - \frac{1}{2d} \right)^{2k} + 8\alpha^2 (1 + \|f^\star\|_{\infty})^2.\label{equation_proof_rate_consistency_centred}
\end{align}

Consequently, to find an upper bound on the rate of consistency of $\infforest^{\mathrm{KeRF}}$, we just need to upper bound 
\begin{align*}
\E \Big[ |\infforest^{\mathrm{KeRF}}(\x) - f^\star(x)|^2 \mathds{1}_{\mathcal{C}^c_{\alpha}(\bx)} \Big] 
& \leq \E \Big[ \Big|\max\limits_{1 \leq i \leq n} |Y_i|  + |f^\star(x)|\Big|^2 \mathds{1}_{\mathcal{C}^c_{\alpha}(\bx)} \Big] \nonumber \\
& \quad \textrm{(since  $\infforest^{\mathrm{KeRF}}$ is a local averaging estimate)}\nonumber\\
& \leq \E \Big[ \Big|2 \|f^{\star}\|_{\infty} + \max\limits_{1 \leq i \leq n} |\varepsilon_i| \Big|^2 \mathds{1}_{\mathcal{C}^c_{\alpha}(\bx)} \Big] \nonumber\\
& \leq \left( \E \left[ 2\|f^{\star}\|_{\infty} + \max\limits_{1 \leq i \leq n} |\varepsilon_i|\right]^4  \P \left[ \mathcal{C}^c_{\alpha}(\bx) \right] \right)^{1/2} \nonumber \\
& \quad \textrm{(by Cauchy-Schwarz inequality)} \nonumber \\
& \leq \left(  \left( 16 \|f^{\star}\|_{\infty}^4 + 8 \E \Big[ \max\limits_{1 \leq i \leq n} |\varepsilon_i|\Big]^4 \right) \P \left[ \mathcal{C}^c_{\alpha}(\bx) \right] \right)^{1/2}. \nonumber
\end{align*}

According to Lemma~\ref{lem:max_Gaussian_variables}, there exists a constant $C'>0$ such that, for all $n$, 
\begin{align}
\E \Big[ \max\limits_{1 \leq i \leq n} \varepsilon_i^4 \Big] \leq C' \sigma^4 (\log n)^2.
\end{align}
Thus, there exists $C''$ such that, for all $n >1$, 
\begin{align}
\E \Big[ |\infforest^{\mathrm{KeRF}}(\x) - f^\star(x)|^2 \mathds{1}_{\mathcal{C}^c_{\alpha}(\bx)} \Big] 
& \leq C'' \sigma^2 (\log n) (\P \left[ \mathcal{C}^c_{\alpha}(\bx) \right] )^{1/2}. \label{proba_centred_ineq}
\end{align}
The last probability $\P \left[ \mathcal{C}^c_{\alpha}(\bx) \right] $ can be upper bounded by using Chebyshev's inequality. Indeed, with respect to $A_n(\bx)$, 
\begin{align}
\P \big[ |A_n(\bx)| > \alpha \big] 
& \leq \frac{1}{n \alpha^2 }\E \bigg[ \frac{Y K_k(\x, \X)}{\E \left[ K_k(\x, \X) \right]} - \frac{\E \left[ YK_k(\x, \X) \right]}{\E \left[ K_k(\x, \X) \right]} \bigg]^2 \nonumber \\
& \leq \frac{1}{n \alpha^2 } \frac{1}{(\E \left[ K_k(\x, \X) \right])^2}\E \bigg[ Y^2 K_k(\x, \X)^2 \bigg] \nonumber \\
& \leq \frac{2}{n \alpha^2 } \frac{1}{(\E \left[ K_k(\x, \X) \right])^2}\bigg( \E \bigg[ f^\star(\bX)^2 K_k(\x, \X)^2 \bigg] \nonumber \\
& \qquad +\E \bigg[ \varepsilon^2 K_k(\x, \X)^2 \bigg]\bigg) \nonumber \\
&\leq \frac{2(\|f^\star\|_{\infty}^2+\sigma^2)}{n \alpha^2 } \frac{\E \left[ K_k(\x, \X)^2 \right]}{(\E \left[ K_k(\x, \X) \right])^2} \\
&= \frac{C_0}{n \alpha^2} \frac{\E \left[ K_k(\x, \X)^2 \right]}{(\E \left[ K_k(\x, \X) \right])^2}\label{eq:borne_an} 
\end{align}
with $C_0 = 2(\|f^\star\|_{\infty}^2+\sigma^2) $ a constant. Meanwhile with respect to $B_n(\bx)$, we obtain, still by Chebyshev's inequality, 
\begin{align}
\P \big[ |B_n(\bx)| > \alpha \big] & \leq \frac{1}{n \alpha^2} \frac{\E \left[ K_k(\x, \X)^2 \right]}{(\E \left[ K_k(\x, \X) \right])^2}\label{eq:borne_bn} 
\end{align}
which matches the control made by \cite{scornet2016random}. Consequently, 
\begin{align}
\P \left[ \mathcal{C}^c_{\alpha}(\bx) \right] & \leq \P \big[ |A_n(\bx)| > \alpha \big] + \P \big[ |B_n(\bx)| > \alpha \big]\\
& \leq \frac{C_0+1}{n \alpha^2} \frac{\E \left[ K_k(\x, \X)^2 \right]}{(\E \left[ K_k(\x, \X) \right])^2}. 
\end{align}
Besides, for all $x \in [0,1]^d$, for all $k$, $\Esp{\K} = \frac{1}{2^k}$ (see in \cite{scornet2016random} the proof of theorem VI.1 p.11). Since $K_k(\x, \X) \leq 1$, we know that
\begin{align}
    \Esp{\K} = \frac{1}{2^k} \hspace{0.1cm} \geq \hspace{0.1cm} \Esp{\K^2} \hspace{0.1cm} \geq  \hspace{0.1cm} (\Esp{\K})^2 = \frac{1}{2^{2k}}, \label{eq_kerf_kernel_centered}
\end{align}
which leads to 
\begin{align}
\P \left[ \mathcal{C}^c_{\alpha}(\bx) \right] & \leq 2^{2k} \left(\frac{C_0+1}{n \alpha^2}\right) \E \left[ K_k(\x, \X)^2 \right], \label{eq:borne_proba_bef_lem}
\end{align}
but to pursue, we need a tighter upper bound on $\Esp{\K^2}$ than that obtained from \eqref{eq_kerf_kernel_centered}. Such a control is provided in Lemma~\ref{lem:esp_square} below, which is original, and departs from the  
work of \cite{scornet2016random}. 
\begin{lemma}
\label{lem:esp_square}
For all $d \geq 2$, for all $k$ large enough, for all $x \in [0,1]^d$,  %$\geq \max (2d, 2^{6/(d-1)}),$
\begin{align}
     \Esp{\K^2} &  \leq  2^{-k} k^{- \frac{d-1}{2}} \left( C_1 + C_2\left(  \log_2(k)\right)^d \right),
\end{align}
where 
\begin{align}
C_1  = 1 + \frac{2 d^{d/2}}{(4 \pi )^{(d-1)/2}} \quad \textrm{and} \quad C_2 = 5^d \left( \frac{d-1}{2}\right)^d.
%C_1 = \frac{2 d^{d/2}}{(4 \pi )^{(d-1)/2}} \quad \textrm{and} \quad C_2 = 5^d \left( \frac{d-1}{2}\right)^d.
\end{align}
%$$\E \left[ K_k(\x, \X)^2 \right] \lesssim 2^{-k} \left( k^{-(d-1)/2} +2^{-k^{1/d}} + k^{-(d-3)/2} \right)$$ 
%where $C_2$ is a constant depending only on $d$ and %$v_k \approx  C_1/(2^k k^{(d-1)/2}) $ with $C_1$ also a constant depending only on $d$.
\end{lemma}

\begin{proof}[Proof of Lemma \ref{lem:esp_square}]
From Lemma \ref{lemme_centred_random_forest}, we know that
\begin{align}
     \Esp{\K^2} = \Esp{\left( \multi \frac{k!}{k_1!...k_d!} \left(\frac{1}{d}\right)^k \displaystyle \prod_{j=1}^d \ind{\lceil 2^{k_j}x^{(j)} \rceil = \lceil 2^{k_j}X^{(j)} \rceil}\right)^2}.
\end{align}

Developing the square within the expectation, we obtain two terms, the first one $A$ being the sum of squares and the second one, $B$, being the cross-product terms. The first term $A$ takes the form
\begin{align}
    A &:= \Esp{\multi \left( \frac{k!}{k_1!...k_d!}\right)^2 \left(\frac{1}{d}\right)^{2k} \displaystyle \prod_{j=1}^d \ind{\lceil 2^{k_j}x^{(j)} \rceil = \lceil 2^{k_j}x^{(j)} \rceil}} \\
    &= \multi \left( \frac{k!}{k_1!...k_d!}\right)^2 \left(\frac{1}{d}\right)^{2k} \displaystyle \prod_{j=1}^d \Prob{\lceil 2^{k_j}x^{(j)} \rceil = \lceil 2^{k_j}X^{(j)} \rceil}.
\end{align}
Note that, for all $j$, $\Prob{\lceil 2^{k_j}x^{(j)} \rceil = \lceil 2^{k_j}X^{(j)} \rceil} = 2^{-k_j},$ and
$\displaystyle \prod_{j=1}^d  2^{-k_j} = 2^{-k}.$
Therefore,
\begin{align}
    A &= \multi \left( \frac{k!}{k_1!...k_d!}\right)^2 \left(\frac{1}{d}\right)^{2k} 2^{-k}. 
    %&= \left(\frac{1}{d}\right)^{2k}  \left( \frac{1}{2} \right)^{k} \multi \left( \frac{k!}{k_1!...k_d!}\right)^2.
\end{align}

Thanks to \cite{richmond2008counting}, we know that, for all $d \geq 2$, 
\begin{align}
    \multi \left( \frac{k!}{k_1!...k_d!}\right)^2 \mathop{\sim}_{k \to + \infty} \frac{d^{2k+d/2}}{(4 \pi k)^{(d-1)/2}}.
\end{align}

Therefore, for all $k$ large enough, we have 
\begin{align}
    \multi \left( \frac{k!}{k_1!...k_d!}\right)^2 \leq  \frac{2 d^{2k+d/2}}{(4 \pi k)^{(d-1)/2}}.
\end{align}
 Thus, letting $C_1 = 2 d^{d/2}/(4 \pi )^{(d-1)/2}$, for all $k$ large enough, 
%\begin{align}
%    \multi \left( \frac{k!}{k_1!...k_d!}\right)^2 \leq  \frac{2 d^{2k+d/2}}{(4 \pi k)^{(d-1)/2}}.
%\end{align}
%Consequently, for all $k$, 
%Consequently, there exists a constant $C_1$ such that, for all $k$,
%\begin{align*}
%     \multi \left( \frac{k!}{k_1!...k_d!}\right)^2 \leq  \frac{C_1 d^{2k}}{k^{(d-1)/2}}
%\end{align*}
%and
\begin{align}
    A \leq C_1 2^{-k} k^{-(d-1)/2}.
\end{align}
%since the function $k \mapsto  \multi \left( \frac{k!}{k_1!...k_d!}\right)^2$ is non-decreasing, according to Lemma~\ref{lem_multi_carre}.
%\begin{align}
%C_1 = \frac{2 d^{d/2}}{(4 \pi )^{(d-1)/2}}.
%\end{align}

Regarding the second term $B$,  
\begin{align}
    B &:= \Esp{\sum\limits_{\substack{(k_{1},\hdots,k_{d}) \\ \neq (\ell_1, \hdots, \ell_d), \\ \sum_{j=1}^d k_{j} = \sum_{j=1}^d \ell_{j} =k } } \frac{k!}{k_{1}! \hdots k_{d} !} \frac{k!}{\ell_{1}! \hdots \ell_{d} !} \left( \frac{1}{d}\right)^{2k} \prod_{j=1}^d  \mathds{1}_{ \lceil 2^{k_j}x^{(j)} \rceil = \lceil 2^{k_j}X^{(j)} \rceil} \mathds{1}_{ \lceil 2^{\ell_j}x^{(j)} \rceil = \lceil 2^{\ell_j}X^{(j)} \rceil}} \\
    &= \sum\limits_{\substack{(k_{1},\hdots,k_{d}) \\ \neq (\ell_1, \hdots, \ell_d), \\ \sum_{j=1}^d k_{j} = \sum_{j=1}^d \ell_{j} =k } } \frac{k!}{k_{1}! \hdots k_{d} !} \frac{k!}{\ell_{1}! \hdots \ell_{d} !} \left( \frac{1}{d}\right)^{2k} \Prob{\displaystyle \bigcap_{j=1}^d \left(  (\lceil 2^{k_j}x^{(j)} \rceil = \lceil 2^{k_j}X^{(j)} \rceil) \cap  (\lceil 2^{\ell_j}x^{(j)} \rceil = \lceil 2^{\ell_j}X^{(j)} \rceil) \right)}. \nonumber
\end{align}

A small computation yields
\begin{align}
    &\Prob{\displaystyle \bigcap_{j=1}^d \left(  (\lceil 2^{k_j}x^{(j)} \rceil = \lceil 2^{k_j}X^{(j)} \rceil) \cap  (\lceil 2^{\ell_j}x^{(j)} \rceil = \lceil 2^{\ell_j}X^{(j)} \rceil) \right)} \nonumber \\
    &= \Prob{\displaystyle \bigcap_{j=1}^d \lceil 2^{\ell_j}x^{(j)} \rceil = \lceil 2^{\ell_j}X^{(j)} \rceil \bigg| \forall j,  \lceil 2^{k_j}x^{(j)} \rceil = \lceil 2^{k_j}X^{(j)} \rceil } 2^{-k} \\
    &= 2^{-k}\prod_{j=1}^d \Prob{\lceil 2^{\ell_j}x^{(j)} \rceil = \lceil 2^{\ell_j}X^{(j)} \rceil  \bigg| \lceil 2^{k_j}x^{(j)} \rceil = \lceil 2^{k_j}X^{(j)} \rceil} \\
    &= 2^{-k} 2^{-\sum_{j=1}^d (\ell_j-k_j) \ind{\ell_j \geq k_j}} \\
    &= 2^{- \sum_{j=1}^d k_j (\ind{\ell_j \geq k_j}+ \ind{\ell_j<k_j}) - \sum_{j=1}^d (\ell_j-k_j) \ind{\ell_j \geq k_j}} \\
    &= 2^{- \sum_{j=1}^d k_j \ind{\ell_j<k_j} - \sum_{j=1}^d \ell_j \ind{\ell_j \geq k_j}} \\
    &= 2^{- \sum_{j=1}^d \max(k_j, \ell_j)}.
\end{align}

Therefore,
\begin{align}
    B &= \left( \frac{1}{d}\right)^{2k} \sum\limits_{\substack{(k_{1},\hdots,k_{d}) \\ \neq (\ell_1, \hdots, \ell_d), \\ \sum_{j=1}^d k_{j} = \sum_{j=1}^d \ell_{j} =k } } \frac{k!}{k_{1}! \hdots k_{d} !} \frac{k!}{\ell_{1}! \hdots \ell_{d} !} \left(\frac{1}{2}\right)^{\sum_{j=1}^d \max(k_j,\ell_j)}. \label{eq:crossed_term_sum} \\
    &= \left( \frac{1}{d}\right)^{2k} \sum\limits_{\substack{(k_{1},\hdots,k_{d}) \\ \neq (\ell_1, \hdots, \ell_d), \\ \sum_{j=1}^d k_{j} = \sum_{j=1}^d \ell_{j} =k } } \frac{k!}{k_{1}! \hdots k_{d} !} \frac{k!}{\ell_{1}! \hdots \ell_{d} !} \left(\frac{1}{2}\right)^{ k + \frac{1}{2} \sum_{j=1}^d  |k_j-\ell_j|} \\
    &= \left( \frac{1}{2 d^2}\right)^{k} \sum\limits_{\substack{(k_{1},\hdots,k_{d}) \\ \neq (\ell_1, \hdots, \ell_d), \\ \sum_{j=1}^d k_{j} = \sum_{j=1}^d \ell_{j} =k } } \frac{k!}{k_{1}! \hdots k_{d} !} \frac{k!}{\ell_{1}! \hdots \ell_{d} !} \left(\frac{1}{2}\right)^{ \frac{1}{2} \sum_{j=1}^d  |k_j-\ell_j|}.  \label{eq:crossed_term_sum}
\end{align}

%We fix $k_1,\hdots, k_d$ such that $\sum_{j=1}^d k_j = k$.
%Then, considering the sum Line \eqref{eq:crossed_term_sum} over $\ell_1,\hdots, \ell_d$ for the previously fixed $k_1,\hdots, k_d$, 

For all $q >0$, define the set $\mathcal{K}_q = \{ \boldsymbol{\ell} = (\ell_1,\hdots, \ell_d), \boldsymbol{k} = (k_1,\hdots, k_d) | \sum\limits_{j=1}^d |k_j-\ell_j| \geq 2q\}$, so that
\begin{align}
    B &= \left( \frac{1}{2d^2}\right)^k \sum\limits_{\substack{(\boldsymbol{k,\ell}) \in \mathcal{K}_q \\ \boldsymbol{\ell} \neq \boldsymbol{k} \\ \sum_{j=1}^d k_{j} = \sum_{j=1}^d \ell_{j} =k } } \frac{k!}{k_{1}! \hdots k_{d} !} \frac{k!}{\ell_{1}! \hdots \ell_{d} !} \left(\frac{1}{2}\right)^{\frac{1}{2}\sum_{j=1}^d |k_j-\ell_j|} \nonumber \\
    & \quad + \left( \frac{1}{2d^2}\right)^k \sum\limits_{\substack{(\boldsymbol{k,\ell}) \notin \mathcal{K}_q \\ \boldsymbol{\ell} \neq \boldsymbol{k} \\ \sum_{j=1}^d k_{j} = \sum_{j=1}^d \ell_{j} =k } } \frac{k!}{k_{1}! \hdots k_{d} !} \frac{k!}{\ell_{1}! \hdots \ell_{d} !} \left(\frac{1}{2}\right)^{\frac{1}{2}\sum_{j=1}^d |k_j-\ell_j|} \nonumber \\
    &=  B_1 + B_2.
\end{align}

Regarding $B_1$, we have
\begin{align}
    B_1 &\leq \left( \frac{1}{2d^2}\right)^k \sum\limits_{\substack{(\boldsymbol{k,\ell}) \in \mathcal{K}_q \\ \boldsymbol{\ell} \neq \boldsymbol{k} \\ \sum_{j=1}^d k_{j} = \sum_{j=1}^d \ell_{j} =k } } \frac{k!}{k_{1}! \hdots k_{d} !} \frac{k!}{\ell_{1}! \hdots \ell_{d} !} 2^{-q} \\
    &\leq \left( \frac{1}{2d^2}\right)^k 2^{-q} \left(\sum\limits_{\substack{\boldsymbol{k},  \sum_{j=1}^d k_{j} = k } } \frac{k!}{k_{1}! \hdots k_{d} !}\right) \left( \sum\limits_{\substack{\boldsymbol{\ell}, \sum_{j=1}^d \ell_{j} =k} }\frac{k!}{\ell_{1}! \hdots \ell_{d} !}\right) \\
    &\leq 2^{-k -q}, 
\end{align}
as 
\begin{align}\sum\limits_{\substack{\boldsymbol{k},  \sum_{j=1}^d k_{j} = k } } \frac{k!}{k_{1}! \hdots k_{d} !} =d^k.
\end{align}

We now define, for all $\boldsymbol{k}, \mathcal{K}_q(\boldsymbol{k}) := \{ \boldsymbol{\ell} = (\ell_1,\hdots, \ell_d), \sum\limits_{j=1}^d \ell_j = k, \sum\limits_{j=1}^d |k_j-\ell_j| \geq 2q\}$.
Regarding $B_2$, we have
\begin{align}
    B_2 &\leq \left( \frac{1}{2d^2}\right)^k \sum\limits_{\substack{\boldsymbol{k,\ell} \notin \mathcal{K}_q \\ \boldsymbol{\ell} \neq \boldsymbol{k} \\ \sum_{j=1}^d k_{j} = \sum_{j=1}^d \ell_{j} =k } } \frac{k!}{k_{1}! \hdots k_{d} !} \frac{k!}{\ell_{1}! \hdots \ell_{d} !} \\
    &= \left( \frac{1}{2d^2}\right)^k \sum\limits_{\substack{\boldsymbol{k}, \sum_{j=1}^d k_{j} = k } } \frac{k!}{k_{1}! \hdots k_{d} !} \sum\limits_{\substack{\substack{\boldsymbol{\ell} \notin \mathcal{K}_q(\boldsymbol{k}) \\ \boldsymbol{\ell} \neq \boldsymbol{k} \\ \sum_{j=1}^d k_{j} = \sum_{j=1}^d \ell_{j} =k } }} \frac{k!}{\ell_{1}! \hdots \ell_{d} !}. \\
    %&\leq \left( \frac{1}{2d^2}\right)^k \sum\limits_{\substack{\boldsymbol{k}, \sum_{j=1}^d k_{j} = k } } \frac{k!}{k_{1}! \hdots k_{d} !} \sum\limits_{\substack{\substack{\boldsymbol{l} \notin \mathcal{K}_q(\boldsymbol{k}) \\ \boldsymbol{l} \neq \boldsymbol{k} \\ \sum_{j=1}^d k_{j} = \sum_{j=1}^d \ell_{j} =k } }} \frac{k!}{\Gamma(k/d)^d} \\
    %&\leq  \left( \frac{1}{2d}\right)^k \sum\limits_{\substack{\boldsymbol{k}, \sum_{j=1}^d k_{j} = k } } \frac{k!}{k_{1}! \hdots k_{d} !} \sum\limits_{\substack{\substack{\boldsymbol{l} \notin \mathcal{K}_q(\boldsymbol{k}) \\ \boldsymbol{l} \neq \boldsymbol{k} \\ \sum_{j=1}^d k_{j} = \sum_{j=1}^d \ell_{j} =k } }} C k^{-(d-1)/2}
\end{align}
%where the last inequality is obtained from \cite{batir2008inequalities}.

Note that for all $\boldsymbol{\ell}$, $ \frac{k!}{\ell_{1}! \hdots \ell_{d} !}$ is maximal when $\max_i \ell_i$ is minimal. Therefore, for all $k\geq 2d$,
\begin{align}
    \frac{k!}{\ell_{1}! \hdots \ell_{d} !} &= \frac{k!}{\Gamma(\ell_1+1) \hdots \Gamma(\ell_d+1)} \\
    &\leq \frac{k!}{\Gamma(\lfloor k/d \rfloor +1 ) \hdots \Gamma(\lfloor k/d \rfloor +1 )} \\
    &\leq \frac{k!}{\Gamma(k/d)^d}.
\end{align}
Using an inequality from \cite{batir2008inequalities}, we obtain
\begin{align*}
    \frac{k!}{\Gamma(k/d)^d} & \leq \frac{k^{k+1/2}e^{-k} }{k^k d^{-k} e^{-k} k^{d/2}} \\
    &\leq d^{k} k^{-(d-1)/2}.
\end{align*}

Overall, for all $k\geq 2d$,
\begin{align}
    B_2 & \leq \left( \frac{1}{2d}\right)^k \sum\limits_{\substack{\boldsymbol{k}, \sum_{j=1}^d k_{j} = k } } \frac{k!}{k_{1}! \hdots k_{d} !} \sum\limits_{\substack{\substack{\boldsymbol{\ell} \notin \mathcal{K}_q(\boldsymbol{k}) \\ \boldsymbol{\ell} \neq \boldsymbol{k} \\ \sum_{j=1}^d k_{j} = \sum_{j=1}^d \ell_{j} =k } }} k^{-(d-1)/2}\\
    & \leq  k^{-(d-1)/2} \left( \frac{1}{2d}\right)^k \sum\limits_{\substack{\boldsymbol{k}, \sum_{j=1}^d k_{j} = k } } \frac{k!}{k_{1}! \hdots k_{d} !}   \textrm{Card} (\mathcal{K}_q(\boldsymbol{k})).
\end{align}

We now want to upper bound the cardinal of $\mathcal{K}_q(\boldsymbol{k})$.
%As shown in \cite{chandrasekharan1967lattice}, in a ball of radius $c$ in dimension $d$, the number of integer lattice points scales in $O(c^d)$ as $c$ tends to infinity. Therefore, as $c=k^{1/d}$ here,
Denoting by $B_{L_1}(0,2q)$ the ball of radius $2q$ with respect to the $L_1$ norm, note that 
\begin{align}
\textrm{Card} (\mathcal{K}_q(\boldsymbol{k})) & \leq
\textrm{Card} (\{x \in \mathbb{N}^d \cap B_{L_1}(\boldsymbol{k},2q)\})\\
&  \leq \textrm{Card} (\{x \in \mathbb{N}^d \cap B_{L_1}(0,2q)\}).
\end{align}
Since, 
\begin{align*}
B_{L_1}(0,c) \subset  B_{L_{\infty}}(0,c)  \subset B_{L_{\infty}}(0, \lceil c \rceil),
\end{align*}
we have, 
\begin{align*}
\textrm{Card} (\mathcal{K}_q(\boldsymbol{k})) & \leq \textrm{Card} (\{x \in \mathbb{N}^d \cap B_{L_{\infty}}(0, \lceil 2q \rceil)\}) \\
& \leq \left( 2 \lceil 2q \rceil +1\right)^d\\
& \leq \left( 4q  +3\right)^d.
\end{align*}
Thus, we have, for all $k \geq 2d$, 
\begin{align}
    B_2 
    & \leq  k^{-(d-1)/2} \left( \frac{1}{2d}\right)^k \left( 4q  +3\right)^d\sum\limits_{\substack{\boldsymbol{k}, \sum_{j=1}^d k_{j} = k } } \frac{k!}{k_{1}! \hdots k_{d} !}\\
    & \leq k^{-(d-1)/2}  \left( 4q  +3\right)^d 2^{-k},
\end{align}
as 
\begin{align}\sum\limits_{\substack{\boldsymbol{k},  \sum_{j=1}^d k_{j} = k } } \frac{k!}{k_{1}! \hdots k_{d} !} =d^k.
\end{align}
Finally, for all $q$, we have 
\begin{align}
    B & = B_1 + B_2 \\
    & \leq 2^{-k-q} + k^{-(d-1)/2}  \left( 4q  +3\right)^d 2^{-k}.
\end{align}
Let $q = \left( \frac{d-1}{2}\right) \log_2(k)$. For all $q \geq 3$, that is for all $k \geq 2^{6/(d-1)}$, and for all $k \geq 2d$, 
\begin{align}
    B & \leq 2^{-k} \left( k ^{- \frac{d-1}{2}} + k^{-(d-1)/2}  C_2  \left(  \log_2(k)\right)^d\right),
\end{align}
where 
\begin{align}
C_2 = 5^d \left( \frac{d-1}{2}\right)^d.
\end{align}
Finally, for all $k$ large enough 
\begin{align}
     \Esp{\K^2} & \leq A + B_1 + B_2 \\
     & \leq  2^{-k} k^{- \frac{d-1}{2}} \left( C_1 +1 + C_2\left(  \log_2(k)\right)^d \right) .
     %&\leq 2^{-k} k^{- \frac{d-1}{2}} \left( C_1' + C_2\left(  \log_2(k)\right)^d \right)
\end{align}
%where $C'1 = C_1+1$

%Choosing $q = k^{1/d}$, we obtain, for all $k \geq 3^d$, 
%Thus, the number of points in $B_{L_2}(0,c)$ with integer coordinates is upper 
%\begin{align}
 %   B_2 
    %&\leq \left( \frac{1}{2d}\right)^k \sum\limits_{\substack{\boldsymbol{k}, \sum_{j=1}^d k_{j} = k } } \frac{k!}{k_{1}! \hdots k_{d} !}  k^{-(d-1)/2} k  \\
%    & \leq 5^d 2^{-k}k^{-(d-3)/2}
%\end{align}

%Finally, for $k$ large enough,
%\begin{align}
%    \E \left[ K_k(\x, \X)^2 \right] \leq C_1 2^{-k} \left( k^{-(d-1)/2} +2^{-k^{1/d}} + k^{-(d-3)/2} \right)
%\end{align}
%where $C_1 = \max\left(\frac{2d^{d/2}}{(4\pi)^{(d-1)/2}}, 5^d\right)$ is a constant depending only on $d$.
\end{proof}

According to inequality~\eqref{eq:borne_proba_bef_lem} and Lemma~\ref{lem:esp_square}, we have, for all $k$ large enough 
\begin{align}
\P \left[ \mathcal{C}^c_{\alpha}(\bx) \right] & \leq  \frac{C_0+1}{n \alpha^2} 2^{k} k^{- \frac{d-1}{2}} \left( C_1 + C_2\left(  \log_2(k)\right)^d \right). 
\end{align}
%\begin{align}
%    \Prob{|A_n(\bx)| > \alpha} &\leq  C_0 \frac{2^k}{n\alpha^2 }  k^{- \frac{d-1}{2}} \left( C_1' + C_2\left(  \log_2(k)\right)^d \right)
%\end{align}
%where $v_k \approx C_d/(k^{(d-1)/2})$, and
%and
%\begin{align*}
%    \Prob{|B_n(x)| > \alpha} &\leq \frac{2^{k}}{n\alpha^2}  k^{- \frac{d-1}{2}} \left( C_1' + C_2\left(  \log_2(k)\right)^d \right).
%\end{align*}
%Thus, 
%\begin{align*}
%\P \big[\mathcal{C}_{\alpha}^c(\bx) \big] & \leq   2 C_0 \frac{2^{k}}{\alpha^2 n} k^{- \frac{d-1}{2}} \left( C_1' + C_2\left(  \log_2(k)\right)^d \right).
%\end{align*}
Consequently, according to inequality (\ref{proba_centred_ineq}), we obtain, for all $k$ large enough 
\begin{align*}
\E \Big[ |\infforest^{\mathrm{KeRF}}(\x) - f^\star(x)|^2 \mathds{1}_{\mathcal{C}^c_{\alpha}(\bx)} \Big] 
& \leq C'' \sigma^2 \log n \left( \frac{C_0+1}{n \alpha^2} 2^{k} k^{- \frac{d-1}{2}} \left( C_1 + C_2\left(  \log_2(k)\right)^d \right) \right)^{1/2}\\
& \leq C'' \sigma^2 (C_0+1)^{1/2} (\max(C_1, C_2))^{1/2} \frac{\log n}{n^{1/2} \alpha} 2^{k/2} k^{- \frac{d-1}{4}} \left(    \left( 1 + \left(  \log_2(k)\right)^d \right) \right)^{1/2}\\
& \leq C_3  \frac{\log n}{n^{1/2} \alpha} 2^{k/2} k^{- \frac{d-1}{4}} \left(  \log_2(k)\right)^{d/2}  ,
%& \leq C'' \log n \frac{2^{k/2}}{\alpha n^{1/2}}\left( C_0 k^{- \frac{d-1}{2}} \left( C_1' + C_2\left(  \log_2(k)\right)^d \right)\right)^{1/2}. 
\end{align*}
where $C_3 = C'' \sigma^2 (C_0+1)^{1/2} (2 \max(C_1, C_2))^{1/2}$.
Then using inequality (\ref{equation_proof_rate_consistency_centred}), for all $k$ large enough 
\begin{align*}
& \E \Big[ \infforest^{\mathrm{KeRF}}(\x) - f^\star(x) \Big]^2 \\
& \leq \E \Big[ |\infforest^{\mathrm{KeRF}}(\x) - f^\star(x)|^2 \mathds{1}_{\mathcal{C}_{\alpha}(\bx)} \Big] + \E \Big[ |\infforest^{\mathrm{KeRF}}(\x) - f^\star(x)|^2 \mathds{1}_{\mathcal{C}^c_{\alpha}(\bx)} \Big] \\
& \leq 8L^2 d^2 \left( 1 - \frac{1}{2d} \right)^{2k} + 8 \alpha^2 (1 + \|f^{\star}\|_{\infty})^2 \\
& \quad +C_3 \sigma^2 (\log n) \frac{2^{k/2}}{\alpha n^{1/2}} k^{- \frac{d-1}{4}} (\log_2 k)^{d/2}.
\end{align*}
Optimizing the right hand side in $\alpha$, that is choosing 
% \begin{align}
%  \alpha^3= (\log n)  \frac{2^{k/2}}{ n^{1/2}}  k^{- \frac{d-1}{4}} (\log_2 k)^{d/2} \frac{C_3}{(1+\|f^{\star}\|_{\infty})^{2}},   
% \end{align}
\begin{align}
 \alpha^3= (\log n)  \frac{2^{k/2}}{ n^{1/2}}  k^{- \frac{d-1}{4}} (\log_2 k)^{d/2} \frac{C_3}{8(1+\|f^{\star}\|_{\infty})^{2}},   
\end{align}
%\cb{j'ai un 16 au dénominateur}
%\la{Pourquoi un 16 au dénominateur ?}
%\es{ici, je pense qu'on s'était contenté d'écrire l'égalité des 2e et 3e terme, ce qui est optimal à une constante près. Anyway, tout calcul me convient}
we get
% \begin{align*}
%  \E \Big[ \infforest^{\mathrm{KeRF}}(\x) - f^\star(x) \Big]^2
% &\leq 8L^2 d^2 \left( 1 - \frac{1}{2d} \right)^{2k} + 9  C_3^{2/3} (1+\|f^{\star}\|_{\infty})^{2/3} (\log n)^{2/3}  \frac{2^{k/3}}{n^{1/3}}k^{- \frac{d-1}{6}} (\log_2 k)^{d/3}.
% \end{align*}
\begin{align*}
 \E \Big[ \infforest^{\mathrm{KeRF}}(\x) - f^\star(x) \Big]^2
&\leq 8L^2 d^2 \left( 1 - \frac{1}{2d} \right)^{2k} + 4 C_3^{2/3} (1+\|f^{\star}\|_{\infty})^{2/3} (\log n)^{2/3}  \frac{2^{k/3}}{n^{1/3}}k^{- \frac{d-1}{6}} (\log_2 k)^{d/3}.
\end{align*}
Choosing $k_n = \log_2(n)$, we obtain, for all $n$ large enough, 
% \begin{align}
%     \E \Big[ \infforest^{\mathrm{KeRF}}(\x) - f^\star(x) \Big]^2 
%      \leq 8L^2 d^2 n^{2\log(1-\frac{1}{d})} + 9  C_3^{2/3} (1+\|f^{\star}\|_{\infty})^{2/3} (\log n)^{2/3} (\log_2 n)^{-\frac{d-1}{6}} (\log_2(\log_2 n))^{d/3}.
% \end{align}
\begin{align}
    \E \Big[ \infforest^{\mathrm{KeRF}}(\x) - f^\star(x) \Big]^2 
     \leq 8L^2 d^2 n^{2\log_2(1-\frac{1}{d})} + 4 C_3^{2/3} (1+\|f^{\star}\|_{\infty})^{2/3} (\log n)^{2/3} (\log_2 n)^{-\frac{d-1}{6}} (\log_2(\log_2 n))^{d/3}.
\end{align}
Finally,
\begin{align}
    \E \Big[ \infforest^{\mathrm{KeRF}}(\x) - f^\star(x) \Big]^2 &\leq 8L^2 d^2 n^{2\log_2(1-\frac{1}{d})} + C_4 (\log_2 n)^{-\frac{d-5}{6}} (\log_2(\log_2 n))^{d/3}.
\end{align}
with 
\begin{align}
 C_4 =  18 \times 2^{2/3} \times  (\log 2)^{2/3}  C''^{2/3} (\|f^\star\|_{\infty}^2+\sigma^2 +1) ( \max(C_1, C_2))^{1/3}.
\end{align}
% \begin{align}
%  C_4 =  4 \times  (\log 2)^{2/3}  C''^{2/3} (\|f^\star\|_{\infty} +1)^{2/3} ( \max(C_1, C_2))^{1/3}.
% \end{align}

\end{proof}

%\begin{lemma} \label{lem_multi_carre}
%The function $k \mapsto  \multi \left( \frac{k!}{k_1!...k_d!}\right)^2$ is non-decreasing on $\mathbb{N}^{\star}$. 
%\end{lemma}

%\begin{proof}[Proof of Lemma~\ref{lem_multi_carre}]
%Consider the set $\mathcal{S}_k:= \{\boldsymbol{k} = (k_1, \hdots, k_d)~: ~ \sum_{j=1}^d k_j=k \}$ and define the function $h : \mathcal{S}_k \to \mathcal{S}_{k+1}$ such that
%\[   
%h(\boldsymbol{k})_j = 
%     \begin{cases}
 %      k_j+1 &\quad\text{if j is the smallest index such that } k_j = \max\limits_{1\leq \ell\leq d} k_\ell \\
 %      k_j &\quad\text{otherwise}
%     \end{cases}
%\]
%It is clear that $h(\mathcal{S}_k) \subset \mathcal{S}_{k+1}$. Besides, the function $h$ is injective. Now, denote $g: \boldsymbol{k} \to \left(\frac{k!}{k_1!...k_d!}\right)^2$, and, without loss of generality, let $\boldsymbol{k}$ such that $k_1 = \max_{1 \leq \ell \leq d} k_{\ell}$. Then, we have
%\begin{align*}
 %   g(h(\boldsymbol{k})) &= g(k_1+1, k_2, ..., k_d) \\
 %   &= \left(\frac{(k+1)!}{(k_1+1)!k_2!...k_d!}\right)^2 \\
 %   &\geq \left(\frac{k!}{k_1!...k_d!}\right)^2 \\
%    &= g(\boldsymbol{k)}.
%\end{align*}

%Therefore,
%\begin{align*}
%    \sum_{\boldsymbol{k} \in \mathcal{S}_k} g(\boldsymbol{k}) &\leq \sum_{\boldsymbol{k} \in \mathcal{S}_k} g(h(\boldsymbol{k})) \\
%    &\leq \sum_{\boldsymbol{k} \in \mathcal{S}_{k+1}} g(\boldsymbol{k})
%\end{align*}
%by injectivity of $h$.
    
%\end{proof}

\begin{lemma} \label{lem:max_Gaussian_variables}
    Consider $n$ i.i.d.  random variables $\varepsilon_1, \hdots, \varepsilon_n$, distributed as $\mathcal{N}(0, 1)$. Then, for all $n \geq 21$, 
\begin{align*}
\E \Big[ \max\limits_{1 \leq i \leq n} \varepsilon_i^4 \Big] \leq 32 e (\log n)^2.
\end{align*}
\end{lemma}

\begin{proof}
 We have, for all $p \geq 1$, 
 \begin{align}
     \mathbb{E} \left[ \max_{1 \leq i \leq n}  |\varepsilon_i|^4 \right] 
     & \le \left(\mathbb{E} \left[ \max_{1 \leq i \leq n}  |\varepsilon_i|^{4p}\right] \right)^{1/p}  \leq \left(\mathbb{E} \left[ \sum_{i=1}^n |\varepsilon_i|^{4p} \right] \right)^{1/p},
 \end{align}
 using Jensen's inequality (by concavity of $x \mapsto x^{1/p}$ for $p \geq 1$).
 The $p$-th moment of a Gaussian variable $\mathcal{N}(0,1)$ can be computed as follows
\begin{align}
    \mathbb{E}\left[ |\varepsilon_1|^p \right] & = \int_0^{\infty} \mathbb{P} \left[ |\varepsilon|^p \geq u \right] \textrm{d}u \\
    & = \int_0^{\infty} \mathbb{P} \left[ |\varepsilon| \geq t \right] p t^{p-1} \textrm{d}t \\
    & \leq \int_0^{\infty} 2 \exp(-t^2/2) p t^{p-1} \textrm{d}t,
\end{align}
using classical tail inequalities for Gaussian variables. Now, setting $s = t^2/2$ and recalling that $\Gamma(z) = \int_0^{\infty} \exp(-t) t^{z-1} \textrm{d}t$, we have
\begin{align}
    \int_0^{\infty} 2 \exp(-t^2/2) p t^{p-1} \textrm{d}t & = 2p \int_0^{\infty} \exp(-s) (2s)^{\frac{p-2}{2}}  \textrm{d}s \\
    & = 2p 2^{\frac{p-2}{2}} \Gamma(p/2).
\end{align}
According to Theorem~2.2 in \citet{batir2008inequalities}, we have, for all $x > 0$
\begin{align}
    \Gamma(x+1) < \sqrt{2\pi} x^x \exp(-x) \left( x^2 +  \frac{x}{3} + \frac{1}{18} \right)^{1/4}.
\end{align}
Let 
\begin{align}
    f : x \mapsto \exp(-x) \left( x^2 +  \frac{x}{3} + \frac{1}{18} \right),
\end{align}
one can show that $f$ is non-increasing on $[1/2, \infty)$. Thus, for all $x \geq 1/2$,
\begin{align}
 \Gamma(x+1) & < \sqrt{2\pi} x^x f(1/2)^{1/4} \\
 & < \sqrt{2\pi} x^x \exp(-1/2) \left( \frac{1}{2} \right)^{1/4}\\
 & < 2 x^x.
\end{align}

Hence, for all $p \geq 3$, 
\begin{align}
    \mathbb{E}\left[ |\varepsilon_1|^p \right] & \leq 4p 2^{\frac{p-2}{2}} (p/2)^{p/2},
\end{align}
which leads to 
\begin{align}
     \mathbb{E} \left[ \max_{1 \leq i \leq n}  |\varepsilon_i|^4 \right] 
& \leq \left(\mathbb{E} \left[ \sum_{i=1}^n |\varepsilon_i|^{4p} \right] \right)^{1/p}\\
& \leq n^{1/p} \left(  16p 2^{\frac{4p-2}{2}} (2p)^{2p} \right)^{1/p}\\
& \leq 16 n^{1/p} p^2 \left( \frac{p}{2}\right)^{1/p}\\
& \leq 32 n^{1/p} p^2. 
 \end{align}
 Choosing $p = \log n$ yields, for all $ n \geq e^3,$
%  \begin{align}
% \mathbb{E} \left[ \max_{1 \leq i \leq n}  |\varepsilon_i|^4 \right] & \leq 32 (\log n)^2.
%  \end{align}
  \begin{align}
\mathbb{E} \left[ \max_{1 \leq i \leq n}  |\varepsilon_i|^4 \right] & \leq 32 e (\log n)^2.
 \end{align}

\end{proof}

%%%%%%%%%%%%%%%%%%%%%%%%%%%%%%%%%%%%%%%%%%%%%%%%%%%%%%%%%%%%%%%%%%%%%%%%%%%%%%%%%%%%%%%%%%%%%%%%%%%

%%%%%%%%%%%%%%%%%%%%%%%%%%%%%%%%%%%%%%%%%%%%%%%%%%%%%%%%%%%%%%%%%%%%%%%%%%%%%%%%%%%%%%%%%%%%%%
%%%%%%%%%%%%%%%%%%%%%%%%%%%%%%%%%%%%%%%%%%%%%%%%%%%%%%%%%%%%%%%%%%%%%%%%%%%%%%%%%%%%%%%%%%%%%%

\subsection{Proofs of Section \ref{sec:AdaCRF} (Semi-adaptive forests)}

\begin{lemma}%[Depth of an adaptive centered tree]
\label{lemma:control_proba}
For all $\alpha \in [0,1)$, the depth $k_n^{\mathrm{AdaCT}}$ of a semi-adaptive centered tree verifies
\begin{align*}
\lim\limits_{n \to \infty} \Prob{k_n^{\mathrm{AdaCT}}(X, \Theta) \in [\log_2(n) \pm  \log_2^{1-\alpha}(n)]} = 1.
\end{align*}
\end{lemma}

Lemma~\ref{lemma:control_proba} states that the asymptotic behavior of $k_n^{\mathrm{AdaCT}}(X, \Theta)$ is equivalent to $\log_2 n$ up to a negligible factor.
The $\log(n)$ equivalent matches the condition for the mean interpolation regime in the case of CRF exhibited in Section \ref{sec:CRF}.

\subsubsection{Proof of Lemma \ref{lemma:control_proba}}
\label{proof:lemma_adacrf_depth}
For all $0 \leq j \leq k$, we let $A_{j,n}(X, \Theta)$ be the cell containing $X$ in the tree truncated at level $j$. Similarly, we let $N_{j,n}(X, \Theta)$ the number of observations in this cell. Then, 
\begin{align}
    \Prob{k_n(X, \Theta) \geq k} %&= \Prob{\bigcap_{i=1}^{k-1} N_{i,\Theta}(X) \geq 2} \\
    %&= \Prob{N_{1, \Theta} \geq 2} \prod_{i=2}^{k-1} \Prob{N_{i,\Theta}(X) \geq 2 | N_{i-1,\Theta}(X) \geq 2} \\
    &= \Prob{N_{k-1,n}(X, \Theta) \geq 2} \\
    &= \Esp{\Prob{N_{k-1,n}(X, \Theta) \geq 2 | X, \Theta}} \\
    &= 1 - \left(1 - \frac{1}{2^{k-1}}\right)^n - \frac{n}{2^{k-1}}\left(1 - \frac{1}{2^{k-1}}\right)^{n-1}. \label{eq:proba_controled}
\end{align}

%\item 
%We denote $F_{k_n(X)}$ the CDF of $k_n(X)$. We have, for all $k$, for all $x \in [k-1, k[$,
%\begin{align}
%    F_{k_n(X)} (x) &=  \left(1 - \frac{1}{2^{k-1}}\right)^n + \frac{n}{2^{k-1}}\left(1 - \frac{1}{2^{k-1}}\right)^{n-1}  \\*
%    &\leq  \left(1 - \frac{1}{2^{x}}\right)^n + \frac{n}{2^{x}}\left(1 - \frac{1}{2^{x}}\right)^{n-1}.
%\end{align}
%We denote $G : x \in [0, + \infty) \to (1-2^{-x})^{n} - n2^{-x}(1-2^{-x})^{n-1}$. Using that %$\Esp{k_n(X)} = \int 1 - F_{k_n(X)}$, we have
%\begin{align}
%    \Esp{k_n(X)} &= \int_0^\infty 1 - F_{k_n(X)}(x) dx \\
%    &\geq  \int_0^\infty 1 - G(x) dx \\
%    &= \int_0^\infty 1 - \left(1 - \frac{1}{2^{x}}\right)^n + \frac{n}{2^{x}}\left(1 - \frac{1}{2^{x}}\right)^{n-1} dx . 
%\end{align}
%Setting $u = 2^{-x}$ yields
%\begin{align}
%    \Esp{k_n(X)} &\leq \frac{1}{\log 2} \int_0^1 \frac{1}{u} - \frac{1}{u} \left(1 - u\right)^n - n ( 1-u)^{n-1} du \\
%    &= \frac{1}{\log 2} \int_0^1 \frac{1 - \left(1 - u\right)^n}{u} du - \frac{1}{\log 2} \\
%    &= \frac{s(n+1,2)}{n! \log 2}  - \frac{1}{\log 2}
%\end{align}
%where $s(.,.)$ denotes the Stirling number of the first kind.
Using the inequality $\log(1-x) \leq - x$ for all $x \in [0, 1)$ yields, 
\begin{align}
	 \Prob{k_n(X, \Theta) \geq k} & \geq  1 -  \exp \left(-\frac{n}{2^{k-1}} \right) - \frac{n}{2^{k-1}} \exp \left(-\frac{n-1}{2^{k-1}} \right)\\
	 & \geq  1 -  \left( 1 + \frac{n}{2^{k-1}} \right) \exp \left(-\frac{n}{2^{k-1}} \right). \label{eq_adaCRF_depth_lowbound}
\end{align}
Letting $k = (1-\varepsilon_n)\log_2(n)$ in \eqref{eq_adaCRF_depth_lowbound}
yields
\begin{align}
	 \Prob{k_n(X, \Theta) \geq k} 
	& \geq  1 -  \left( 1 + 2n^{\varepsilon_n}  \right) \exp \left(-2n^{\varepsilon_n}  \right).
\end{align}
Note that, setting $\varepsilon_n = c_1 (\log_2 n)^{- \alpha}$ for any $\alpha \in [0,1)$ implies that
\begin{align}
n^{\varepsilon_n} = \exp \left( \varepsilon_n \log n\right)
\end{align}
tends to infinity. Therefore, for all $c_1 > 0$ and all $\alpha _in [0,1)$,  
 \begin{align}
 	\lim\limits_{n \to \infty} \Prob{k_n(X, \Theta) \geq \log_2(n) - c_1 (\log_2 n)^{-\alpha}} = 1.
 \end{align}
Besides, 
\begin{align}
\Prob{k_n(X, \Theta) \leq k} 
&= 1 - \Prob{k_n(X, \Theta) > k}\\
& =  \left(1 - \frac{1}{2^{k}}\right)^n - \frac{n}{2^{k}}\left(1 -+ \frac{1}{2^{k}}\right)^{n-1}. 
\end{align}
Using the inequality $\log (1-  x) \geq - x / (1-x)$ for all $x \in [0,1)$, we have
\begin{align}
	\Prob{k_n(X, \Theta) \leq k} & \geq  \exp \left(- \frac{n}{2^{k} - 1} \right) + \frac{n}{2^{k} }\exp \left( - \frac{n-1}{2^{k} - 1}\right)\\
	& \geq \left( 1 + \frac{n}{2^{k} } \right) \exp \left(- \frac{n}{2^{k} - 1} \right). \label{eq_adaCRF_depth_uppbound}
\end{align}
Letting $k = (1+\varepsilon_n)\log_2(n)$ in \eqref{eq_adaCRF_depth_uppbound}
yields
\begin{align}
	\Prob{k_n(X, \Theta) \geq k} 
	& \geq   \left( 1 + 2n^{-\varepsilon_n}  \right) \exp \left(-\frac{n}{n^{1+ \varepsilon_n}- 1}  \right)\\
	& \geq \left( 1 + 2n^{-\varepsilon_n}  \right) \exp \left(-\frac{n^{-\varepsilon_n}}{1 - \frac{1}{n^{1 + \varepsilon_n}}}  \right),
\end{align}
which tends to $1$ for the choice $\varepsilon_n = c_2 (\log_2 n)^{- \alpha}$, for any $\alpha \in [0,1)$ and any $c_2 >0$.

%
%
%\begin{align}
%	\log (1 - x) \geq - x / (1-x)\\
%	\log(1 - \frac{1}{2^{k-1}}) \geq - \frac{\frac{1}{2^{k-1}}}{1 - \frac{1}{2^{k-1}}}\\
%	n \log(1 - \frac{1}{2^{k-1}}) \geq - \frac{n}{2^{k-1} - 1}
%	\label{eq_adaCRF_depth_uppbound}
%\end{align}
%
%Letting $k = (1-\log_2^{-\alpha} (n))\log_2(n)$ in \eqref{eq_adaCRF_depth_lowbound} and \eqref{eq_adaCRF_depth_uppbound} yields the result.   %$$\exp\left(-\frac{n}{2^{k-1}-1} \right) \leq \exp\left(n \log \left(1-\frac{1}{2^{k-1}}\right)\right) \leq \exp \left(-\frac{n}{2^{k-1}} \right)$$ yields the result.

%%%%%%%%%%%%%%%%%%%%%%%%%%%%%%%%%%%%%%%%%%%%%%%%%%%%%%%%%%%%%%%%%%%%%%%%%%%%%%%%%%%%%%%%%%%%%%%%%%

%%%%%%%%%%%%%%%%%%%%%%%%%%%%%%%%%%%%%%%%%%%%%%%%%%%%%%%%%%%%%%%

\subsubsection{Proof of Theorem \ref{th:Median_RF_consistency} (Consistency of Median RF)}

\paragraph{Preliminary results} 

In all the preliminary results, we use the fact that the spacing between two consecutive order statistics, that originate from an i.i.d.\ sample uniformly distributed on $[0,1]$ of size $n_j$ is distributed as a beta distribution $\mathcal{B}(1, n_j)$. We also recall that, for all $\alpha, \beta$, 
\begin{align}
    \mathds{V}\left[ \mathcal{B}(\alpha, \beta) \right] = \frac{\alpha \beta}{(\alpha + \beta)^2 (\alpha + \beta + 1)} \quad \textrm{and} \quad \E\left[ \mathcal{B}(\alpha, \beta) \right] = \frac{\alpha}{\alpha + \beta}.
\end{align}

\begin{lemma}[Control of a cell side of a fully-developed median RF]
\label{Lemme_cell_length_beta} 
Assume that $n \geq 16$ is a power of two. For all $\bx \in [0,1]^d$, for all $\ell \in \{1, \hdots, d\}$ and depth $k \in \N^*$, with $k \leq \lfloor \log_2 n \rfloor$, we have
\begin{align}
\E \left[ \mu \left( A_{k,n}^{(\ell)}(\bx, \T) \right)^2 \right] 
& \leq C_1 \left( 1 - \frac{3}{4d} \right)^k,
\end{align}
with $C_1 \leq 256 \exp \left( \frac{ 42 + \sqrt{5}}{2 - \sqrt{2}} \right)$.
\end{lemma}

\begin{proof}[Proof of Lemma \ref{Lemme_cell_length_beta}]
Fix $x \in [0,1]^d$. For all $\ell$, let $\bm{\delta}_{\ell}(x, \Theta)$ be the vector whose components are defined as $\delta_{j,\ell}(x, \Theta) = 1 $ if the $j$-th cut is made along direction $\ell$ and $0$ otherwise.
Without loss of generality, we let $\ell =1$ and fix $\bx \in [0,1]^d$. For all $j \in \{0, \hdots, k\}$, we denote $A_{j,n}^{(1)}(\bx, \T)$ the cell containing $\bx$ at level $j$, projected onto the first direction, and $n_j = n 2^{-j}$ the number of observations falling into this cell.

Recall that we consider the median forest in which splits are performed at the middle of two consecutive order statistics in a cell, so that each resulting cell contains exactly the same number of observations. 
With these notations in mind, we want to upper bound, for all $j$,
\begin{align*}
\E \bigg[ \mu \left(A_{j,n}^{(1)}(\bx, \T)\right)^2 | \bm{\delta}_{1}(x, \Theta) \bigg],
\end{align*}
where, for now, the split randomization $\bm{\delta}_1(x, \Theta)$ is considered fixed and may be omitted in the notations.
Let us fix $j \leq k-1$, define 
\begin{align*}
A_{j,n}^{(1)}(\bx, \T) = [M_{1,j}, M_{2,j}], %\times \hdot
\end{align*}
%with $j_1, j_2 < j$. 
and assume that the next cut is made along the first axis at position $M_j$. Then, 
\begin{align}
& \mu \left( A_{j+1,n}^{(1)}(\bx, \T) \right)^2 \nonumber \\
&  = (M_j - M_{1,j})^2 \ind{\bx \in [M_{1,j}, M_j]} + ( M_{2,j} - M_j)^2 \ind{\bx \in [M_{j}, M_{2,j}]}\\
&  = (M_j - M_{1,j})^2  + \left( ( M_{2,j} - M_j)^2 - (M_j - M_{1,j})^2 \right)  \ind{\bx \in [M_{j}, M_{2,j}]}\\
&  = (M_j - M_{1,j})^2  +   \left( M_{1,j} + M_{2,j}  - 2M_j \right) (M_{2,j} - M_{1,j})   \ind{\bx \in [M_{j}, M_{2,j}]}. \label{lemma_bias_eq3}
\end{align}

We denote $X_1', ..., X_{n_j}'$ the points contained in the cell $A_{j,n}^{(1)}(\bx, \T)$.
Note that the second term in \eqref{lemma_bias_eq3} can be decomposed as
\begin{align}
& \left( M_{1,j} + M_{2,j}  - 2M_j \right) (M_{2,j} - M_{1,j})   \ind{\bx \in [M_{j}, M_{2,j}]} \nonumber \\
= &\left( X'_{(1)} + X'_{(n_j)}    -  X'_{(n_j/2)} - X'_{(n_j/2 + 1)} + M_{1,j} - X'_{(1)} + M_{2,j} - X'_{(n_j)} \right) (M_{2,j} - M_{1,j})   \ind{\bx \in [M_{j}, M_{2,j}]}\\
 = &\left( \frac{X'_{(1)} + X'_{(n_j)}}{2}    -  X'_{(n_j/2)} + \frac{X'_{(1)} + X'_{(n_j)}}{2}  -  X'_{(n_j/2 + 1)} + M_{1,j} - X'_{(1)} + M_{2,j} - X'_{(n_j)} \right) \nonumber \\
 & \quad \times (M_{2,j} - M_{1,j})   \ind{\bx \in [M_{j}, M_{2,j}]}\\
  \leq & \left( \frac{X'_{(1)} + X'_{(n_j)}}{2}    -  X'_{(n_j/2)} + \frac{X'_{(1)} + X'_{(n_j)}}{2}  -  X'_{(n_j/2 + 1)} +  M_{2,j} - X'_{(n_j)} \right) (M_{2,j} - M_{1,j}). \label{eq:lemma_bias_decomp1}
\end{align}

Injecting \eqref{eq:lemma_bias_decomp1} into \eqref{lemma_bias_eq3}, taking the expectation and  using Cauchy-Schwarz inequality leads to
\begin{align}
\E \left[ \mu \left( A_{j+1,n}^{(1)}(\bx, \T) \right)^2 \right] &\leq  \E \left[ (M_j - M_{1,j})^2 \right]  \nonumber\\
& \quad + \left( \E\left[ \left( \frac{X'_{(1)} + X'_{(n_j)}}{2}    -  X'_{(n_j/2)} \right)^2 \right] \E \left[ (M_{2,j} - M_{1,j})^2 \right]\right)^{1/2} \nonumber \\
& \quad + \left( \E\left[ \left( \frac{X'_{(1)} + X'_{(n_j)}}{2}    -  X'_{(n_j/2+1)} \right)^2 \right] \E \left[ (M_{2,j} - M_{1,j})^2 \right]\right)^{1/2} \nonumber \\
& \quad + \left( \E\left[ \left( M_{2,j} - X'_{(n_j)} \right)^2 \right] \E \left[ (M_{2,j} - M_{1,j})^2 \right]\right)^{1/2}.
\end{align}

Considering the second term, we have
\begin{align}
  \E \left[ \left( \frac{X'_{(1)} + X'_{(n_j)}}{2}    -  X'_{(n_j/2)} \right)^2 \right] 
  & = \E \left[ \left( \frac{ X'_{(n_j)} - X'_{(1)}}{2}    -  (X'_{(n_j/2)} - X'_{(1)}) \right)^2 \right] \\
  & = \E \left[ (X'_{(n_j)} - X'_{(1)})^2 \E \left[ \left(\frac{1}{2} -  \frac{(X'_{(n_j/2)} - X'_{(1)})}{(X'_{(n_j)} - X'_{(1)})} \right)^2 | X'_{(1)}, X'_{(n_j)} \right] \right].
  \end{align}
where  
\begin{align*}
\frac{(X'_{(n_j/2)} - X'_{(1)})}{(X'_{(n_j)} - X'_{(1)})} | X'_{(1)}, X'_{(n_j)} \sim \mathcal{B} \left(\frac{n_j}{2}-1,  \frac{n_j}{2} \right),
\end{align*}
with $\E [\mathcal{B}(\frac{n_j}{2}-1,  \frac{n_j}{2})] = \frac{n_j - 2}{2(n_j - 1)}$. 

Thus, 
\begin{align}
\E \left[ \left(\frac{1}{2} -  \frac{(X'_{(n_j/2)} - X'_{(1)})}{(X'_{(n_j)} - X'_{(1)})} \right)^2 | X'_{(1)}, X'_{(n_j)} \right]  & = \left( \frac{1}{2} -  \frac{n_j - 2}{2(n_j - 1)}\right)^2 + \mathds{V}\left[ \mathcal{B}\left(\frac{n_j}{2}-1,  \frac{n_j}{2}\right) \right]\\
& = \frac{1}{4(n_j -1)^2}   + \frac{1}{4} \frac{n_j - 2   }{(n_j-1)^2}\\
& = \frac{1}{4(n_j -1)}.
\end{align}

Consequently, 
\begin{align}
  \E \left[ \left( \frac{X'_{(1)} + X'_{(n_j)}}{2}    -  X'_{(n_j/2)} \right)^2 \right] 
  & = \frac{1}{4(n_j -1)} \E \left[ (X'_{(n_j)} - X'_{(1)})^2 \right]\\
  & \leq \frac{1}{4(n_j -1)} \E \left[ (M_{2,j} - M_{1,j})^2 \right].
\end{align}
  
Similarly, 
\begin{align}
  \E \left[ \left( \frac{X'_{(1)} + X'_{(n_j)}}{2}    -  X'_{(n_j/2+1)} \right)^2 \right] 
  & = \frac{1}{4(n_j -1)} \E \left[ (X'_{(n_j)} - X'_{(1)})^2 \right]\\
  & \leq \frac{1}{4(n_j -1)} \E \left[ (M_{2,j} - M_{1,j})^2 \right].
\end{align}

%\begin{align}
 % \E \left[ \left( \frac{X'_{(1)} + X'_{(n_j)}}{2}    -  X'_{(n_j/2+1)} \right)^2 \right] 
%  & = \E \left[ (X'_{(n_j)} - X'_{(1)})^2 \E \left[ \left(\frac{1}{2} -  \frac{(X'_{(n_j/2+1)} - X'_{(1)})}{(X'_{(n_j)} - X'_{(1)})} \right)^2 | X'_{(1)}, X'_{(n_j)} \right] \right],
 % \end{align}
%where  
%\begin{align*}
%\frac{(X'_{(n_j/2+1)} - X'_{(1)})}{(X'_{(n_j)} - X'_{(1)})} | X'_{(1)}, X'_{(n_j)} \sim \mathcal{B} \left(\frac{n_j}{2},  \frac{n_j}{2}-1 \right),
%\end{align*}
%with $\E [\mathcal{B}(\frac{n_j}{2},  \frac{n_j}{2}-1)] = \frac{n_j}{2(n_j - 1)}$. Thus, 
%\begin{align}
%& \E \left[ \left(\frac{1}{2} -  \frac{(X'_{(n_j/2+1)} - X'_{(1)})}{(X'_{(n_j)} - X'_{(1)})} \right)^2 | X'_{(1)}, X'_{(n_j)} \right]  \\
%& = \left( \frac{1}{2} -  \frac{n_j}{2(n_j - 1)} \right)^2 + \mathds{V}\left[ \mathcal{B}(\frac{n_j}{2},  \frac{n_j}{2}-1) \right]\\
%& = \frac{1}{4(n_j -1)^2}   + \frac{1}{4} \frac{n_j - 2   }{(n_j-1)^2}\\
%& = \frac{1}{4(n_j -1)}.
%\end{align}
By Lemma ~\ref{lem:tech_lemma1}, 
\begin{align*}
    \E\left[\left(M_{2,j} - X'_{(n_j)} \right)^2\right]  \leq \frac{5}{(n_j-1)^2} \E\left[  \left( M_{2,j} - M_{1,j} \right)^2 \right].
\end{align*}
Gathering all previous inequalities into \eqref{lemma_bias_eq3} yields
\begin{align}
\E \left[ \mu \left( A_{j+1,n}^{(1)}(\bx, \T) \right)^2 \right] & \leq  \E \left[ (M_j - M_{1,j})^2 \right] + \frac{1}{\sqrt{n_j -1}}    \E \left[ (M_{2,j} - M_{1,j})^2 \right] \nonumber\\
 & \quad + \frac{\sqrt{5}}{(n_j-1)} \E\left[  \left( M_{2,j} - M_{1,j} \right)^2 \right].
\label{lemma_eq_bias_6}
\end{align}

Considering the first term in \eqref{lemma_eq_bias_6}, we have
\begin{align}
(M_j - M_{1,j})^2 & = \left( \frac{X'_{(n_j/2)} + X'_{(n_j/2 + 1)}}{2} - X'_{(1)} + X'_{(1)} - M_{1,j}\right)^2 \\
& \leq \left( X'_{(n_j/2 + 1)} - X'_{(1)} + X'_{(1)} - M_{1,j}\right)^2 \\
& \leq \left( X'_{(n_j/2 + 1)} - X'_{(1)} \right)^2 + \left( X'_{(1)} - M_{1,j}\right)^2  + 2 \left( X'_{(n_j/2 + 1)} - X'_{(1)} \right) \left( X'_{(1)} - M_{1,j}\right). \nonumber
\end{align}

Taking the expectation and using Cauchy-Schwarz inequality, we obtain
\begin{align}
\E \left[ (M_j - M_{1,j})^2  \right] & \leq \E \left[ \left( X'_{(n_j/2 + 1)} - X'_{(1)} \right)^2 \right] + \E \left[ \left( X'_{(1)} - M_{1,j}\right)^2 \right] \nonumber \\
& \quad + 2 \left( \E \left[ \left( X'_{(n_j/2 + 1)} - X'_{(1)} \right)^2 \right] \E \left[ \left( X'_{(1)} - M_{1,j}\right)^2 \right] \right)^{1/2}. \label{eq_lemma_bias_1}
\end{align}

Now, 
\begin{align}
\E \left[ \left( X'_{(n_j/2 + 1)} - X'_{(1)} \right)^2 \right] & = \E \left[ \left( X'_{(n_j)} - X'_{(1)} \right)^2 \E \left[ \left( \frac{X'_{(n_j/2 + 1)} - X'_{(1)}}{X'_{(n_j)} - X'_{(1)} } \right)^2 | X'_{(1)}, X'_{(n_j)}\right] \right], 
\end{align}
where
\begin{align}
\E \left[ \left( \frac{X'_{(n_j/2 + 1)} - X'_{(1)}}{X'_{(n_j)} - X'_{(1)} } \right)^2 | X'_{(1)}, X'_{(n_j)}\right] %& = \E \left[ \mathcal{B} \left(\frac{n_j}{2},  n_j - 2 - \frac{n_j}{2} +1 \right)^2 \right] \\ 
 & = \E \left[ \mathcal{B} \left(\frac{n_j}{2},   \frac{n_j}{2} - 1 \right)^2 \right] \\ 
 & = \mathds{V} \left[ \mathcal{B} \left(\frac{n_j}{2},   \frac{n_j}{2} - 1 \right)\right] + \left(\E \left[\mathcal{B} \left(\frac{n_j}{2},   \frac{n_j}{2} - 1 \right) \right] \right)^2 \\
 &= \frac{\frac{n_j}{2} \left( \frac{n_j}{2} - 1\right)}{(n_j - 1)^2 n_j} + \left(\frac{n_j/2}{n_j-1}\right)^2\\
 & = \frac{1}{4} \frac{n_j-2}{(n_j - 1)^2} + \left( \frac{1}{2} \frac{n_j}{n_j-1}\right)^2\\
 & = \frac{1}{4} \frac{n_j^2 +n_j - 2}{(n_j - 1)^2}\\
 & \leq \frac{1}{4} \frac{\left( n_j + 1/2\right)^2}{(n_j -1)^2}.
\end{align}

Therefore, 
\begin{align}
\E \left[ \left( X'_{(n_j/2 + 1)} - X'_{(1)} \right)^2 \right] & \leq \frac{1}{4} \frac{\left( n_j + 1/2\right)^2}{(n_j -1)^2} \E \left[ \left( X'_{(n_j)} - X'_{(1)} \right)^2  \right].
\end{align}
Injecting this expression into \eqref{eq_lemma_bias_1}, we have
\begin{align}
\E \left[ (M_j - M_{1,j})^2  \right] & \leq \frac{1}{4} \frac{\left( n_j + 1/2\right)^2}{(n_j -1)^2} \E \left[ \left( X'_{(n_j)} - X'_{(1)} \right)^2  \right] + \E \left[ \left( X'_{(1)} - M_{1,j}\right)^2 \right] \nonumber \\
& \quad + \frac{\left( n_j + 1/2\right)}{(n_j -1)} \left( \E \left[ \left( X'_{(n_j)} - X'_{(1)} \right)^2 \right] \E \left[ \left( X'_{(1)} - M_{1,j}\right)^2 \right] \right)^{1/2}. 
\end{align}
According to Technical Lemma~\ref{lem:tech_lemma1}, 
we have
\begin{align*}
    \E\left[\left(X'_{(1)} - M_{1,j}\right)^2\right]  \leq \frac{5}{(n_j-1)^2} \E\left[  M_{2,j} - M_{1,j}\right].
\end{align*}

Hence, 
\begin{align}
& \E \left[ (M_j - M_{1,j})^2  \right] \nonumber \\
& \leq \frac{1}{4} \frac{\left( n_j + 1/2\right)^2}{(n_j -1)^2} \E \left[ \left( M_{2,j} - M_{1,j} \right)^2  \right] + \frac{5}{(n_j-1)^2} \E\left[  \left( M_{2,j} - M_{1,j} \right)^2 \right] \nonumber \\
& \quad + \frac{\left( n_j + 1/2\right)}{(n_j -1)} \left( \E \left[ \left( M_{2,j} - M_{1,j}  \right)^2 \right] \frac{5}{(n_j-1)^2} \E\left[  \left( M_{2,j} - M_{1,j} \right)^2  \right] \right)^{1/2}\\
& \leq \left(  \frac{1}{4} \frac{\left( n_j + 1/2\right)^2}{(n_j -1)^2} + \frac{5}{(n_j-1)^2} + \frac{\left( n_j + 1/2\right) \sqrt{5}}{(n_j -1)^2} \right)\E\left[  \left( M_{2,j} - M_{1,j} \right)^2  \right] \\
& \leq \frac{1}{4} \frac{\left( n_j + 1/2\right)^2}{(n_j -1)^2} \left(  1 + \frac{20}{\left( n_j + 1/2\right)^2} + \frac{ 4 \sqrt{5}}{\left( n_j + 1/2\right)} \right)\E\left[  \left( M_{2,j} - M_{1,j} \right)^2  \right] \\
& \leq \frac{1}{4} \left( 1 + \frac{3}{2(n_j - 1)} \right)^2   \left(  1 + \frac{20}{\left( n_j + 1/2\right)^2} + \frac{ 4 \sqrt{5}}{\left( n_j + 1/2\right)} \right)\E\left[  \left( M_{2,j} - M_{1,j} \right)^2  \right] \\
& \leq \frac{1}{4} \left( 1 + \frac{9}{2(n_j - 1)} \right)   \left(  1 + \frac{20}{\left( n_j + 1/2\right)^2} + \frac{ 4 \sqrt{5}}{\left( n_j + 1/2\right)} \right)\E\left[  \left( M_{2,j} - M_{1,j} \right)^2  \right],
\end{align}
for all $n_j \geq 4$, since $(1+x)^2 \leq 1+3x$ if $x \leq 1$.

Consequently, 
\begin{align}
 & \E \left[ (M_j - M_{1,j})^2   \right] \nonumber \\  
 & \leq \frac{1}{4} \left( 1 + \frac{9}{2(n_j - 1)} \right)   \left(  1 +   \frac{ 30}{ n_j - 1  } \right)\E\left[  \left( M_{2,j} - M_{1,j} \right)^2  \right] \\
 & \leq \frac{1}{4} \left( 1 + \frac{69}{2(n_j - 1)} + \frac{90}{(n_j-1)^2} \right) \E\left[  \left( M_{2,j} - M_{1,j} \right)^2  \right] \\
  & \leq \frac{1}{4} \left( 1 + \frac{35 + 6}{n_j - 1}   \right) \E\left[  \left( M_{2,j} - M_{1,j} \right)^2  \right] \\
   & \leq \frac{1}{4} \left( 1 + \frac{41}{n_j - 1}   \right) \E\left[  \left( M_{2,j} - M_{1,j} \right)^2  \right],
\end{align}
for all $n_j \geq 16$. 
Recall that, until now, we have fixed $\bm{\delta}_1(x, \Theta)$ and omitted the explicit conditioning in the proof to lighten notations. Thus, plugging-in the previous  inequality into \eqref{lemma_eq_bias_6} yields, for all $n_j \geq 16$, 
\begin{align}
\E \left[ \mu \left( A_{j+1,n}^{(1)}(\bx, \T) \right)^2 | \bm{\delta}_1(x, \Theta) \right] & \leq  \frac{1}{4} \left( 1 + \frac{41}{n_j - 1}   \right) \E\left[  \left( M_{2,j} - M_{1,j} \right)^2 | \bm{\delta}_1(x, \Theta) \right] \nonumber \\
& \quad + \frac{1}{\sqrt{n_j -1}}    \E \left[ (M_{2,j} - M_{1,j})^2 \big| \boldsymbol{\delta}_1(x,\Theta) \right] + \frac{\sqrt{5}}{(n_j-1)} \E\left[  \left( M_{2,j} - M_{1,j} \right)^2 \big| \boldsymbol{\delta}_1(x,\Theta) \right] \\
& \leq \frac{1}{4} \left( 1 + \frac{42 + \sqrt{5}}{\sqrt{n_j - 1}}   \right) \E\left[  \mu \left( A_{j,n}^{(1)}(\bx, \T) \right)^2 \big| \boldsymbol{\delta}_1(x,\Theta)  \right].
\end{align}

Recall that $\bm{\delta}_1(x, \Theta)$ is the vector whose components are defined as $\delta_{j,1}(x, \Theta) = 1 $ if the $j$-th cut is made along the first direction  and $0$ otherwise. We let $K_1 = \|\bm{\delta}_1(x, \Theta) \|_1$ be the number of times the first direction is split. By induction, we have
\begin{align}
\E \left[ \mu \left( A_{k,n}^{(1)}(\bx, \T) \right)^2 \right] 
& = \E \left[ \E \left[ \mu \left( A_{k,n}^{(1)}(\bx, \T) \right)^2 | \bm{\delta}_{1}(x, \Theta) \right] \right] \\
& \leq \E \left[  \prod_{j :  \delta_{j,1}=1, \atop j \leq k-4} \frac{1}{4} \left( 1 + \frac{42 + \sqrt{5}}{\sqrt{n_j - 1}}   \right)  \right]\\
& \leq 4^4 \E \left[ 4^{-K_1} \prod_{j: \delta_{j,1}=1,\atop j \leq k-4}  \left( 1 + \frac{42 + \sqrt{5}}{\sqrt{n_j - 1}}   \right)  \right].
\end{align}

The product can be upper bounded as follows, with $C = 42 + \sqrt{5}$,
\begin{align}
\log \left( \prod_{j, \delta_{j,l}=1, j \leq k-4} \left( 1 + \frac{C}{\sqrt{n_j - 1}}   \right) \right) & \leq  \log \left( \prod_{j: \delta_{j,1}=1, \atop j \leq k-4} \left( 1 + \frac{C}{\sqrt{n_{j+1}}}   \right) \right)\\
& = \sum_{j: \delta_{j,1}=1, \atop j \leq k - 4} \log \left( 1 + \frac{ C\sqrt{2} \cdot 2^{j/2}}{n^{1/2}}  \right)\\
& \leq \frac{C \sqrt{2}}{n^{1/2}}  \sum_{j=0}^{k-4}  2^{j/2}\\
%& \leq \frac{C \sqrt{2}}{n^{1/2}} \frac{1 - 2^{(k-3)/2}}{1 - \sqrt{2}} 
& \leq \frac{C \sqrt{2}}{\sqrt{2}-1} \frac{2^{(k-3)/2}}{n^{1/2}}\\
& \leq \frac{C}{2\sqrt{2} - 2}.
\end{align}

Thus,
\begin{align}
\E \left[ \mu \left( A_{k,n}^{(1)}(\bx, \T) \right)^2 \right] 
& \leq 4^4 \exp\left( \frac{C}{2\sqrt{2} - 2} \right) \E \left[ 4^{-K_1}   \right].
\end{align}
Since $K_1 \sim \textrm{Bin} (k, 1/d)$, we have
\begin{align}
 \E \left[ 4^{-K_1}   \right] & = \left( 1 - \frac{1}{d} + \frac{1}{4d} \right)^k \\
 & = \left( 1 - \frac{3}{4d} \right)^k.
\end{align}

Finally, 
\begin{align}
\E \left[ \mu \left( A_{k,n}^{(1)}(\bx, \T) \right)^2 \right] 
& \leq 4^4 \exp\left( \frac{C}{2\sqrt{2} - 2} \right) \left( 1 - \frac{3}{4d} \right)^k,
\end{align}
with $C = 42 + \sqrt{5}$.

\end{proof}

\begin{lemma}[Technical Lemma]
\label{lem:tech_lemma1}

\smallskip 

% With the assumptions of Lemma~\ref{Lemme_cell_length_beta} and the notations of the proof of Lemma~\ref{Lemme_cell_length_beta}, we have, for all $j\leq k-1$,
\begin{enumerate}
\item Let $x\in[0,1]^d$ and consider the cell $A_{n,j}(x,\Theta)$ containing $x$ at depth $j\leq k-1$. W.l.o.g.\ restrict the study to the one-dimensional cell $A_{n,j}^{(1)}(x,\Theta)$ corresponding to the cell $A_{n,j}(x,\Theta)$ along the first dimension only, and set $A_{n,j}^{(1)}(x,\Theta) = [M_{1,j};M_{2,j}]$. The one-dimensional cell 
$A_{n,j}^{(1)}(x,\Theta)$ contains $n_j$ points denoted $X'_1, \hdots X'_{n_j}$ (random subsample of the initial training sample). Call $X'_{(1)},\hdots , X'_{(n_j)}$ the ordered version of $X'_1, \hdots X'_{n_j}$.
Then,
%\es{il me semble qu'on n'a pas besoin du premier point}
%\begin{enumerate}
%    \item \begin{align*}
%    \E\left[X'_{(1)} - M_{1,j}\right] + \E\left[M_{2,j} - X'_{(n_j)}  \right]   
%    & \leq \frac{2}{n_{j}+1} \E\left[  M_{2,j} - M_{1,j}\right],
%\end{align*}
\begin{align*}
    \E\left[\left(X'_{(1)} - M_{1,j}\right)^2\right]  \leq \frac{5}{(n_j-1)^2} \E\left[  \left( M_{2,j} - M_{1,j} \right)^2 \right],
\end{align*}
and
\begin{align*}
    \E\left[\left(M_{2,j} - X'_{(n_j)}\right)^2\right]  \leq \frac{5}{(n_j-1)^2} \E\left[  \left( M_{2,j} - M_{1,j} \right)^2 \right].
\end{align*}

\item Consider now the cell $A_{n,j}(X_1,\Theta)$ containing $X_1$ at depth $j\leq k-1$. W.l.o.g.\ restrict the study to the one-dimensional cell $A_{n,j}^{(1)}(X_1,\Theta)$ corresponding to the cell $A_{n,j}(X_1,\Theta)$ along the first dimension only, and set $A_{n,j}^{(1)}(X_1,\Theta) = [M_{1,j};M_{2,j}] = [M_{1,j}(X_1,\Theta);M_{2,j}(X_1,\Theta)]$. The one-dimensional cell 
$A_{n,j}^{(1)}(X_1,\Theta)$ contains $n_j$ points denoted $\{X'_1, \hdots X'_{n_j-1}\} \cup \{X_1\}$ (random subsample of the initial training sample containing $X_1$ and projected on the first axis). Call $X'_{(1)},\hdots , X'_{(n_j-1)}$ the ordered version of $X'_1, \hdots X'_{n_j-1}$.
Then,
    \begin{align*}
    \E\left[X'_{(1)} - M_{1,j}|X_1\right] 
    &\leq  \frac{1}{n_j}\E\left[  (M_{2,j}-M_{1,j}) | X_1 \right],
\end{align*}
and
\begin{align*}
    \E\left[M_{2,j} - X'_{(n_j-1)}|X_1\right] 
    &\leq  \frac{1}{n_j}\E\left[  (M_{2,j}-M_{1,j}) | X_1 \right].
\end{align*}
\end{enumerate}
\end{lemma}

\begin{proof}[Proof of Lemma~\ref{lem:tech_lemma1}]\hfill 

\textbf{Notations} 
{
W.l.o.g.\ consider the following development according to the first direction only. Let $x\in [0,1]^d$. Recall that we consider the cell $A_{j,n}^{(1)}(x,\Theta) = [M_{1,j} ; M_{2,j}]$ containing $x$ at depth $j\leq k-1$.
The cut at $M_{1,j}$ (resp.\ $M_{2,j}$) has been obtained at an anterior depth $j_1\leq j$ (resp.\ $j_2\leq j$), as the middle of two order statistics of a previous subsample:
\begin{align*}
    M_{1,j} = \frac{M_{1,j,-} + M_{1,j,+}}{2} \quad \text{and} \quad
    M_{2,j} = \frac{M_{2,j,-} + M_{2,j,+}}{2},
\end{align*}
with $M_{1,j,-} < M_{1,j,+}$ and $M_{2,j,-} < M_{2,j,+}$.  The following computations can be also conducted in a similar way when $M_{1,j}=0$ or $M_{2,j}=1$.
The current cell $A^{(1)}_{j,n}(x,\Theta)$, includes now $n_j$ points of the original training sample, which are denoted by $X'_1, \hdots , X'_{n_j}$. Remark that as 
$M_{1,j,-}$ and $M_{2,j,+}$ refer to anterior order statistics of a previous subsample (including the points $X'_1, \hdots , X'_{n_j}$), then $X'_1, \hdots , X'_{n_j}$ are i.i.d.\ uniformly distributed in $[M_{j,1,-}; M_{2,j,+}]$. Denote by $X'_{(1)}, \hdots , X'_{(n_j)}$, the ordered statistics of the current subsample $X'_1, \hdots , X'_{n_j}$ in $A^{(1)}_{j,n}(x,\Theta)$ for some fixed $x$.

% Control of $\E\left[X'_{(1)} - M_{1,j}\right]$.
%Remark that
%\begin{align}
 %   \E\left[X'_{(1)} - M_{1,j}\right] + \E\left[X'_{(n_j)} - X'_{(1)}\right] + 
 %   \E\left[M_{2,j} - X'_{(n_j)}  \right] &= \E\left[M_{2,j} - M_{1,j}\right].
%\end{align}
%Therefore,
%\begin{align}
 %   \E\left[X'_{(1)} - M_{1,j}\right] + 
  %  \E\left[M_{2,j} - X'_{(n_j)}  \right] 
   % &=  \E\left[M_{2,j} - M_{1,j}\right] - \sum_{\ell=2}^{n_j} \E\left[X'_{(\ell)} - X'_{(\ell-1)}\right]
  %  \\
   % &= \E\left[M_{2,j} - M_{1,j}\right] - (n_j-1) \E\left[X'_{(2)} - X'_{(1)}\right]
   % .
%\end{align}
%We focus now on the term $\E\left[X'_{(2)} - X'_{(1)}\right]$. One has
%\begin{align}
 %   \E\left[X'_{(2)} - X'_{(1)}\right] &= \E\left[ \E \left[\frac{X'_{(2)} - X'_{(1)}}{M_{2,j,+}-M_{1,j,-}} | M_{1,j,-} , M_{2,j,+} \right] (M_{2,j,+}-M_{1,j,-})\right]. 
%\end{align}
%As, conditional on $M_{1,j,-} , M_{2,j,+}$, $\frac{X'_{(2)} - X'_{(1)}}{M_{2,j,+}-M_{1,j,-}}$ follows a beta distribution $\mathcal{B}(1,n_j)$, then
%\begin{align}
 %   \E\left[X'_{(2)} - X'_{(1)}\right] &=
%\E\left[ \frac{1}{1+n_j} (M_{2,j,+}-M_{1,j,-})\right]\\
%&\geq \frac{1}{1+n_j} \E\left[ M_{2,j}-M_{1,j}\right],
%\end{align}
%which gives
%\begin{align}
%    \E\left[X'_{(1)} - M_{1,j}\right] + \E\left[M_{2,j} - X'_{(n_j)}  \right]  
%    &\leq \left( 1-\frac{n_j-1}{n_j+1} \right)\E\left[M_{2,j} - M_{1,j}\right] \\
%    &\leq \frac{2}{n_j+1} \E\left[M_{2,j} - M_{1,j}\right].
%\end{align}
\paragraph{First statement - Control of $\E[(X'_{(1)} - M_{1,j})^2]$.}

We have
\begin{align}
\E\left[\left(X'_{(1)} - M_{1,j}\right)^2\right] \leq 2 \E\left[ \left(M_{1,j,+} - M_{1,j}\right)^2\right] + 2 \E\left[\left(X'_{(1)} - M_{1,j,+}\right)^2\right]. \label{tech_lemma_eq2} 
\end{align}
Note that, by definition of $ M_{1,j}$, the quantity $M_{1,j,+} - M_{1,j}$ corresponds to a half spacing between two points in the cell previously built by cutting on the first direction at depth $j_1$, denoted $A_{j_1,n}^{(1)}(x, \Theta)$.
By construction, the spacings between two consecutive points in $A_{j_1,n}^{(1)}(x, \Theta)$ were the same in distribution. Since points have been removed between $A_{j,n}^{(1)}(x, \Theta)$ and $A_{j_1,n}^{(1)}(x, \Theta)$, the spacings are larger between consecutive points in $A_{j,n}^{(1)}(x, \Theta)$ than between consecutive points in $A_{j_1,n}^{(1)}(x, \Theta)$. This leads to 
\begin{align*}
M_{1,j,+} - M_{1,j} 
 =   \frac{M_{1,j,+} - M_{1,j,-}}{2} \leq  \frac{X'_{(2)} - X'_{(1)}}{2}.
\end{align*}
Therefore, since all variables are bounded, 
\begin{align*}
\E\left[ \left( M_{1,j,+} - M_{1,j} \right)^2 \right] & \leq  \frac{1}{4}  \E \left[ \left(X'_{(2)} - X'_{(1)} \right)^2 \right]  \\
& \leq  \frac{1}{4} \E\left[ (X'_{(n_j)} - X'_{(1)})^2 \E \left[ \frac{\left( X'_{(2)} - X'_{(1)} \right)^2}{\left( X'_{(n_j)} - X'_{(1)} \right)^2} | X'_{(1)}, X'_{(n_j)}\right] \right].
\end{align*}

Regarding the inner expectation, 
\begin{align}
\E \left[ \frac{\left( X'_{(2)} - X'_{(1)} \right)^2}{\left( X'_{(n_j)} - X'_{(1)} \right)^2} | X'_{(1)}, X'_{(n_j)}\right] 
 & = \E \left[ \mathcal{B} \left(1,   n_j-2 \right)^2 \right] \\ 
 & = \mathds{V} \left[ \mathcal{B} \left(1,   n_j-2 \right)\right] + \left(\E \left[\mathcal{B} \left(1,   n_j-2 \right) \right] \right)^2\\
&= \frac{n_j-2}{(n_j - 1)^2 n_j} + \left(\frac{1}{n_j-1} \right)^2\\
%& = \frac{2n_j - 2}{(n_j - 1)^2 n_j}\\
& \leq \frac{2}{(n_j-1)^2}.
\end{align}

Finally, 
\begin{align}
\E\left[ \left( M_{1,j,+} - M_{1,j} \right)^2 \right] & \leq  \frac{1}{2(n_j-1)^2}  \E\left[ \left(X'_{(n_j)} - X'_{(1)} \right)^2  \right]\\
& \leq \frac{1}{2(n_j-1)^2}  \E\left[ \left(M_{2,j} - M_{1,j} \right)^2  \right].
\end{align}

Regarding the second term in \eqref{tech_lemma_eq2}, we have
\begin{align*}
\E\left[ \left( X'_{(1)} - M_{1,j,+} \right)^2 \right] & =  \E \left[ \E\left[ \left(X'_{(1)} - M_{1,j,+}\right)^2 | M_{1,j,+} , X_{M_{2,j}, -}\right]\right]   \\
& = \E \left[ \left(M_{2,j, -} - M_{1,j,+}\right)^2 \E\left[ \left(\frac{X'_{(1)} - M_{1,j,+}}{M_{2,j, -}  - M_{1,j,+}} \right)^2 | M_{1,j,+} , M_{2,j, -}\right]\right]\\
& \leq \E \left[ \left(M_{2,j, -}  - M_{1,j,+}\right)^2 \E\left[  \mathcal{B} (1, n_j-1)^2 \right]\right]\\
& \leq \frac{2}{n_j^2} \E \left[ (M_{2,j, -}  - M_{1,j,+})^2 \right]\\
& \leq \frac{2}{n_j^2} \E \left[ \left(M_{2,j}  -M_{1,j}\right)^2 \right].
\end{align*}

Finally,
\begin{align*}
    \E\left[\left(X'_{(1)} - M_{1,j}\right)^2\right] & \leq \left( \frac{1}{(n_j-1)^2}  +\frac{4}{n_j^2}  \right) \E\left[  \left( M_{2,j} - M_{1,j} \right)^2\right]\\
    & \leq \frac{5}{(n_j-1)^2} \E\left[  \left( M_{2,j} - M_{1,j} \right)^2 \right].
\end{align*}
The second point of the first statement can be proved in the exact same manner. 

\paragraph{Second statement - Control of $\E[X'_{(1)} - M_{1,j} | X_1]$.}  

In this part, we study the cell $A_{n,j}^{(1)}(X_1,\Theta)$. 
The cell $A_{n,j}^{(1)}(X_1,\Theta)$ contains $n_j$ data points (including $X_1$). We denote by $X_1', \hdots, X_{n_j-1}'$ the observations falling into $A_{n,j}^{(1)}(X_1,\Theta)$, different from $X_1$. Note that, these $n_j-1$ observations are still i.i.d.\ uniformly distributed in $[M_{1,j,-}; M_{2,j,+}]$. We denote by $X'_{(1)}, \hdots , X'_{(n_j-1)}$, the  subsample $X'_1, \hdots , X'_{n_j-1}$. We have
\begin{align}
    M_{2,j} - M_{1,j} = M_{2,j} - X_{(n_j-1)}' + \sum_{q=1}^{n_j-2} \left( X_{(n_j-q)}' - X_{(n_j-q - 1)}' \right) + X_{(1)}' - M_{1,j}.
\end{align}
Thus, 
    \begin{align}
    \E\left[X'_{(1)} - M_{1,j}|X_1\right] + \E \left[ M_{2,j} - X_{(n_j-1)}' | X_1 \right]
    &=  \E\left[M_{2,j} - M_{1,j} |X_1\right] - (n_j-2) \E\left[X'_{(2)} - X'_{(1)} |X_1 \right].
\end{align}
The variables $X'_{(1)}$ and $X'_{(2)}$ being order statistics of a subsample independent of $X_1$, one gets
\begin{align}
    \E\left[X'_{(2)} - X'_{(1)} |X_1 \right] &= 
    \E\left[ \E \left[\frac{X'_{(2)} - X'_{(1)}}{M_{2,j,+}-M_{1,j,-}}  | X_1, M_{1,j,-} , M_{2,j,+} \right] (M_{2,j,+}-M_{1,j,-}) | X_1 \right] \\
    &= \E\left[ \E \left[\mathcal{B}(1,n_j-1)\right] (M_{2,j,+}-M_{1,j,-}) | X_1 \right] \\
    &= \frac{1}{n_j} \E\left[  (M_{2,j,+}-M_{1,j,-}) | X_1 \right] \\
    &\geq \frac{1}{n_j} \E\left[  (M_{2,j}-M_{1,j}) | X_1 \right].
\end{align}
Finally,
\begin{align}
    \E\left[X'_{(1)} - M_{1,j}|X_1\right] + \E \left[ M_{2,j} - X_{(n_j-1)}' | X_1 \right]
    &\leq \left( 1 - \frac{n_j-2}{n_j}\right)\E\left[  (M_{2,j}-M_{1,j}) | X_1 \right], \\
    &= \frac{2}{n_j}\E\left[  (M_{2,j}-M_{1,j}) | X_1 \right],
\end{align}
and, by symmetry, 
\begin{align}
    \E\left[X'_{(1)} - M_{1,j}|X_1\right] 
    &\leq \frac{1}{n_j}\E\left[  (M_{2,j}-M_{1,j}) | X_1 \right],
\end{align}
and
\begin{align}
    \E \left[ M_{2,j} - X_{(n_j-1)}' | X_1 \right]
    &\leq \frac{1}{n_j}\E\left[  (M_{2,j}-M_{1,j}) | X_1 \right].
\end{align}
}

\end{proof}

%\begin{lemma}
%Let $X_1,...,X_n$ be i.i.d. with distribution $\mathcal{U}([0,1])$. Then, denoting $X'_{(\frac{n}{2})}$ the median of the sample, we have, for all $\varepsilon >0$, for all $\lambda >0$,
%\begin{align*}
%    \Prob{X'_{(\frac{n}{2})} - \Esp{X'_{(\frac{n}{2})}} > \varepsilon } \leq e^{[...] -\lambda \varepsilon}.
%\end{align*}
%\end{lemma}

\begin{lemma}[Control of the leaf side and volume of a fully developed median RF] \label{lem:leaf_median_tree}

Assume that $n \geq 4$ is a power of two. Consider a median tree of depth $k$ and denote $A_{n,k}(X_1,\Theta)$ the leaf containing $X_1$. For all $\ell \in \{1, \hdots, d\},$ we denote $K_{\ell}$ the number of splits along the $\ell$-th direction. Let also $\bm{\delta}_{\ell}(X_1, \Theta)$ be the vector whose components are defined as $\delta_{j,\ell}(X_1, \Theta) = 1 $ if the $j$-th cut of the cell $A_n^{(\ell)}(X_1, \Theta)$ is made along direction $\ell$ and $0$ otherwise. Then,
\begin{align}
    \Esp{\mu(A_{n,k}^{(\ell)}(X_1, \Theta)) | X_1, \bm{\delta}_{\ell}(X_1, \Theta)} &\leq 2^{-K_\ell+2} \prod_{j : \delta_{j,\ell}=1, \atop j\leq k-2} \left( 1 + \frac{2}{\sqrt{n_j-1}}\right).
\end{align}
In particular, letting $C_2 = 4 \exp (5/(\sqrt{2}-1))$, we have 
\begin{align}
 \Esp{\mu(A_{n,k}^{(\ell)}(X_1, \Theta)) | X_1, \bm{\delta}_{\ell}(X_1, \Theta)} &\leq C_2 ~ 2^{-K_{\ell}},   
\end{align}
and
\begin{align}
    \label{eq:volume_cell_medianRF}\Esp{\mu(A_{n,k}(X_1, \Theta)) | X_1, \bm{\delta}_1(X_1, \Theta), \hdots, \bm{\delta}_d(X_1, \Theta)} &\leq C_2 2^{-k}.
\end{align}

\end{lemma}

\begin{proof}

We write $A_{n,j}^{(\ell)}(X_1, \Theta) = [M_{1,j}, M_{2,j}]$ the cell of the RF containing $X_1$ along the direction $\ell$, at depth $j$. To lighten the notations, we omit the dependencies in $X_1$, in $\Theta$ and in $\ell$. We also write $X'_1, ..., X'_{n_j}$ the data points contained in the cell $A_{n,j}^{(\ell)}(X_1, \Theta)$, and we denote by $X'_{(1)}, ..., X'_{(n_j-1)}$ the ordered version of $\{X'_1, ..., X'_{n_j}\}\setminus \{X_1\}$.
We suppose that the next cut is occurring on the $\ell$-th direction and  compute the size of the new cell containing $X_1$, $A_{n,j+1}^{(\ell)}(X_1, \Theta)$, so that 4 different events are possible:
\begin{enumerate}
    \item $X_1$ is in the first "part" of the cell, i.e.\ $X_1 \in \left[M_{1,j}, X'_{(n_j/2 -1)}\right]$;
    \item $X_1$ is in the second "part" of the cell, i.e.\ ,  $X_1 \in \left[X'_{(n_j/2+1)}, M_{2,j}\right]$;
    \item $X_1$ is in the "middle (left)" of the cell, i.e.\ $X_1 \in \left[X'_{(n_j/2 -1)}, X'_{(n_j/2)} \right]$;
    \item $X_1$ is in the "middle (right)" of the cell, $X_1 \in \left[X'_{(n_j/2)}, X'_{(n_j/2 +1)} \right]$.
\end{enumerate}

The length of the following cell can be therefore decomposed with respect to the previous events:
\begin{align}
    & \nonumber
    \mu\left(A_{n,j+1}^{(\ell)}(X_1, \Theta)\right) \\
    = & \nonumber
    \left(\frac{X'_{(n_j/2 -1)} + X'_{(n_j/2)}}{2} - M_{1,j} \right) \ind{X_1 \in \left[M_{1,j}, X'_{(n_j/2 -1)}\right]} \\
    &+ \left(M_{2,j} - \frac{X'_{(n_j/2)} + X'_{(n_j/2+1)}}{2} \right) \ind{X_1 \in \left[X'_{(n_j/2+1)}, M_{2,j}\right]} \nonumber \\
    & + \left(\frac{X_{1} + X'_{(n_j/2)}}{2} - M_{1,j} \right) \ind{X_1 \in \left[X'_{(n_j/2 -1)}, X'_{(n_j/2)} \right]} \nonumber\\
    &+ \left(M_{2,j} - \frac{X'_{(n_j/2)} + X_{1}}{2} \right) \ind{X_1 \in \left[X'_{(n_j/2)}, X'_{(n_j/2 +1)} \right]} \\
    \leq &  \left(X'_{(n_j/2)} - M_{1,j}\right) \ind{X_1 \in \left[M_{1,j}, X'_{(n_j/2 -1)}\right]} + \left(M_{2,j} - X'_{(n_j/2)}\right) \ind{X_1 \in \left[X'_{(n_j/2+1)}, M_{2,j}\right]}  \\
    &  + \left(X'_{(n_j/2)} - M_{1,j}\right) \ind{X_1 \in \left[X'_{(n_j/2 -1)}, X'_{(n_j/2)} \right]} + \left(M_{2,j} - X'_{(n_j/2)}\right) \ind{X_1 \in \left[X'_{(n_j/2)}, X'_{(n_j/2 +1)} \right]} \\
    \end{align}
    \begin{align}
    %&\leq \left(\frac{X'_{(n_j/2 -1)} + X'_{(n_j/2)}}{2} - M_{1,j} \right) \ind{X_1 \in [M_{1,j}, X'_{(n_j/2)}]} + \left(M_{2,j} - \frac{X'_{(n_j/2)} + X'_{(n_j/2+1)}}{2} \right) \ind{X_1 \in [X'_{(n_j/2)}, M_{2,j}]}  \nonumber \\
    %&+ \left(\frac{X_{1} + X'_{(n_j/2)}}{2} - M_{1,j} \right) \ind{E_2} + \left(M_{2,j} - \frac{X'_{(n_j/2)} + X_{1}}{2} \right) \ind{E_2'} \\
    & \nonumber
    \mu\left(A_{n,j+1}^{(\ell)}(X_1, \Theta)\right)\\
    \leq &  \left(X'_{(n_j/2)} - M_{1,j}\right) \ind{X_1 \in [M_{1,j}, X'_{(n_j/2)}]}  + \left(M_{2,j} - X'_{(n_j/2)}\right) \ind{X_1 \in [X'_{(n_j/2)}, M_{2,j}]} \\
    = &  \left(X'_{(n_j/2)} - M_{1,j}\right) + 2 \left(\frac{M_{1,j} + M_{2,j}}{2} - X'_{(n_j/2)} \right) \ind{X_1 \in [X'_{(n_j/2)}, M_{2,j}]} \\
    = & \left(X'_{(n_j/2)} - X'_{(1)}\right) + \left( X'_{(1)} - M_{1,j}\right) + 2 \left(\frac{X'_{(1)} + X'_{(n_j-1)}}{2} - X'_{(n_j/2)} \right) \ind{X_1 \in [X'_{(n_j/2)}, M_{2,j}]} \nonumber \\
    & -  \left( \left(X'_{(1)} + X'_{(n_j-1)}\right)) - (M_{1,j} + M_{2,j}) \right) \ind{X_1 \in [X'_{(n_j/2)}, M_{2,j}]}\\
    \leq &  \left(X'_{(n_j/2)} - X'_{(1)}\right) + \left( X'_{(1)} - M_{1,j}\right) + 2 \left(\frac{X'_{(1)} + X'_{(n_j-1)}}{2} - X'_{(n_j/2)} \right) \ind{X_1 \in [X'_{(n_j/2)}, M_{2,j}]} \nonumber\\
    & + (M_{2,j} - X'_{(n_j-1)}).
\end{align}

Therefore,
\begin{align}
    &\Esp{ \mu\left(A_{n,j+1}^{(\ell)}(X_1, \Theta)\right) \bigg| X_1, \bm{\delta}_{\ell}(X_1, \Theta) } \nonumber \\
    &\leq \Esp{ (X'_{(n_j-1)} - X'_{(1)}) \Esp{\frac{X'_{(n_j/2)} - X'_{(1)}}{X'_{(n_j-1)} - X'_{(1)}} | X_1, \bm{\delta}_{\ell}(X_1, \Theta), X'_{(1)}, X'_{(n_j-1)}} \big| X_1, \bm{\delta}_{\ell}(X_1, \Theta) } \nonumber \\
    & \quad + 2 \Esp{\Esp{ \left| \frac{X'_{(n_j-1)} + X'_{(1)}}{2} - X'_{(n_j/2)} \right| \bigg| X_1, \bm{\delta}_{\ell}(X_1, \Theta), X_{(1)}', X_{(n_j-1)}' }\bigg| X_1, \bm{\delta}_{\ell}(X_1, \Theta) } \nonumber \\
    & \quad + \Esp{ X'_{(1)} - M_{1,j} \bigg| X_1, \bm{\delta}_{\ell}(X_1, \Theta) } + \Esp{  M_{2,j} - X'_{(n_j-1)} \bigg| X_1, \bm{\delta}_{\ell}(X_1, \Theta) }. \label{eq:volume_leaf_repartition}
\end{align}
Regarding the first term of \eqref{eq:volume_leaf_repartition}, remark that by property of the uniform distribution, conditional on $X'_{(1)}$ and $X'_{(n_j-1)}$, the order statistics  between $1$ and $n_j-1$ follow Beta distributions independently from $X_1$ and $ \bm{\delta}_{\ell}(X_1, \Theta)$. Therefore,
\begin{align*}
    \frac{X'_{(n_j/2)} - X'_{(1)}}{X'_{(n_j-1)} - X'_{(1)}} \big| X'_{(1)}, X'_{(n_j-1)} \sim \mathcal{B}(n_j/2-1, n_j/2-1),
\end{align*}
so that
\begin{align*}
    \Esp{\frac{X'_{(n_j/2)} - X'_{(1)}}{X'_{(n_j-1)} - X'_{(1)}} \bigg| X_1, \bm{\delta}_{\ell}(X_1, \Theta), X'_{(1)}, X'_{(n_j-1)}} &= \frac{n_j/2-1}{2(n_j/2-1)} = \frac{1}{2}.
\end{align*}

Overall the first term 
of \eqref{eq:volume_leaf_repartition} verifies
\begin{align}
    &\Esp{(X'_{(n_j-1)} - X'_{(1)}) \Esp{\frac{X'_{(n_j/2)} - X'_{(1)}}{X'_{(n_j-1)} - X'_{(1)}} \big| X_1, \bm{\delta}_\ell, X'_{(1)}, X'_{(n_j-1)}} \bigg| X_1, \bm{\delta}_{\ell}(X_1, \Theta)} \nonumber \\
    &= \Esp{ \frac{X'_{(n_j -1)} - X'_{(1)}}{2} \big| X_1, \bm{\delta}_{\ell}(X_1, \Theta)} \\
    &\leq \Esp{\frac{M_{2,j} - M_{1,j}}{2} \big| X_1, \bm{\delta}_{\ell}(X_1, \Theta)} \label{eq:first_term}.
\end{align}

Regarding the second term of \eqref{eq:volume_leaf_repartition}, we have
\begin{align}
    &  \Esp{ \left| \frac{X'_{(n_j-1)} + X'_{(1)}}{2} - X'_{(n_j/2)} \right| \bigg| X_1, \bm{\delta}_{\ell}, X'_{(1)}, X'_{(n_j-1)}}  \nonumber \\
    &= \Esp{ \left| \frac{X'_{(n_j-1)} - X'_{(1)}}{2} - \left(X'_{(n_j/2)} - X'_{(1)}\right) \right| \bigg| X_1, \bm{\delta}_{\ell}, X'_{(1)}, X'_{(n_j-1)}} \\
    &= \left(X'_{(n_j-1)} - X'_{(1)} \right) \Esp{  \left| \frac{1}{2} - \frac{X'_{(n_j/2)} - X'_{(1)}}{X'_{(n_j-1)} - X'_{(1)}} \right| \bigg| X_1, \bm{\delta}_{\ell}, X'_{(1)}, X'_{(n_j-1)}} \\
    &= (X'_{(n_j-1)} - X'_{(1)}) \Esp{ \left| \frac{1}{2}- \mathcal{B}(n_j/2-1,n_j/2-1)  \right|} \\
    &\leq (X'_{(n_j-1)} - X'_{(1)}) \sqrt{\Esp{ \left|\mathcal{B}(n_j/2-1,n_j/2-1) - \frac{1}{2} \right|^2}} \\
    &\leq \frac{M_{2,j} - M_{1,j}}{2 \sqrt{n_j-1}},
\end{align}
where the last inequality is simply obtained by computing the variance of a Beta distribution.

Therefore,
\begin{align}
    &2 \Esp{\Esp{ \left| \frac{X'_{(n_j-1)} + X'_{(1)}}{2} - X'_{(n_j/2)} \right| \bigg| X_1, \bm{\delta}_{\ell}(X_1, \Theta), X_{(1)}', X_{(n_j-1)}' }\bigg| X_1, \bm{\delta}_{\ell}(X_1, \Theta) } \nonumber \\
    &\leq \frac{1}{2 \sqrt{n_j-1}} \Esp{ M_{2,j} - M_{1,j}  \bigg| X_1,  \bm{\delta}_{\ell}(X_1, \Theta) } \label{eq:second_term}.
\end{align}

The third and fourth terms of \eqref{eq:volume_leaf_repartition} have the same expression, controlled by Lemma \ref{lem:tech_lemma1}:
\begin{align}
     \Esp{ X'_{(1)} - M_{1,j} \bigg| X_1, \bm{\delta}_{\ell}(X_1, \Theta)} &= \Esp{  M_{2,j} - X'_{(n_j-1)} \bigg| X_1, \bm{\delta}_{\ell}(X_1, \Theta)} \\
     &\leq \frac{1}{n_j} \Esp{M_{2,j} - M_{1,j} \big| X_1, \bm{\delta}_{\ell}(X_1, \Theta) } \label{eq:third_term}.
\end{align}
Finally, gathering \eqref{eq:first_term}, \eqref{eq:second_term} and \eqref{eq:third_term} yields
\begin{align}
    \Esp{ \mu\left(A_{n,j+1}^{(\ell)}(X_1, \Theta)\right) \bigg| X_1, \bm{\delta}_{\ell}(X_1, \Theta)}
    &\leq \Esp{M_{2,j} - M_{1,j} \big| X_1, \bm{\delta}_{\ell}(X_1, \Theta) } \left( \frac{1}{2} + \frac{1}{2\sqrt{n_j-1}} + \frac{2}{n_j}\right) \\
    &\leq \frac{1}{2} \left( 1 + \frac{5}{\sqrt{n_j-1}}\right) \Esp{M_{2,j} - M_{1,j} \big| X_1, \bm{\delta}_{\ell}(X_1, \Theta)} \\
    &= \frac{1}{2} \left( 1 + \frac{5}{\sqrt{n_j-1}}\right) \Esp{ \mu\left(A_{n,j}^{(\ell)}(X_1, \Theta)\right) \big| X_1, \bm{\delta}_{\ell}(X_1, \Theta)}
\end{align}
for all $n_j \geq 4$. An iterative product yields
\begin{align}
    \Esp{\mu\left(A_{n,k}^{(\ell)}(X_1, \Theta)\right) \big| X_1, \bm{\delta}_{\ell}(X_1, \Theta)} &\leq \Esp{\prod_{j : \delta_{j,\ell}=1, \atop j\leq k-2} \frac{1}{2} \left( 1 + \frac{5}{\sqrt{n_j-1}}\right) \big| X_1, \bm{\delta}_{\ell}(X_1, \Theta)} \\
    & = \prod_{j : \delta_{j,\ell}=1, \atop j\leq k-2} \frac{1}{2} \left( 1 + \frac{5}{\sqrt{n_j-1}}\right) \\
    &= 2^{-K_\ell+2} \prod_{j : \delta_{j,\ell}=1, \atop j\leq k-2} \left( 1 + \frac{5}{\sqrt{n_j-1}}\right),
    %\\
    %&\leq 2^{-K_\ell+2} \prod_{j \leq k-2} \left( 1 + \frac{8}{\sqrt{n_j-1}}\right),
\end{align}
which proves the first statement. Recalling that $n_j=n2^{-j}$,
\begin{align}
    \sum_{j=0}^k \log\left( 1 + \frac{5}{\sqrt{n}}2^{j/2}\right)
    &\leq \frac{5}{\sqrt{n}} \frac{2^{(k+1)/2}-1}{\sqrt{2}-1} \\
    &\leq \frac{5}{\sqrt{2}-1} \frac{2^{(\log_2 n) /2}}{\sqrt{n}} \\
    &= \frac{5}{\sqrt{2}-1},
\end{align}
we have
\begin{align}
    \Esp{\mu\left(A_{n,k}^{(\ell)}(X_1, \Theta)\right) \big| X_1, \bm{\delta}_{\ell}(X_1, \Theta)} &\leq  2^{-K_\ell+2} \exp\left( \frac{5}{\sqrt{2}-1}\right), 
    %\\
    %&\leq 2^{-K_\ell+2} \prod_{j \leq k-2} \left( 1 + \frac{8}{\sqrt{n_j-1}}\right),
\end{align}
which proves the second statement. Note that 
\begin{align}
    & \Esp{\mu\left(A_{n,k}(X_1, \Theta)\right) \big| X_1, \bm{\delta}_1(X_1, \Theta), \hdots, \bm{\delta}_d(X_1, \Theta)}\\
    = & \E \left[ \prod_{\ell=1}^d \mu\left( A_{n,k}^{(\ell)}(X_1, \Theta) \right) \big| X_1, \bm{\delta}_1(X_1, \Theta), \hdots, \bm{\delta}_d(X_1, \Theta) \right]\\
    = & \prod_{\ell=1}^d \E \left[  \mu\left( A_{n,k}^{(\ell)}(X_1, \Theta) \right) \big| X_1, \bm{\delta}_{\ell}(X_1, \Theta) \right]\\
     \leq & \prod_{\ell = 1}^d \prod_{j : \delta_{j,\ell}=1, \atop j\leq k-2} \frac{1}{2} \left( 1 + \frac{5}{\sqrt{n_j-1}}\right)  \\
     \leq & \prod_{j \leq k-2}  \frac{1}{2} \left( 1 + \frac{5}{\sqrt{n_j-1}}\right)  \\
     \leq & 4 \times 2^{-k} \prod_{j \leq k-2} \left( 1 + \frac{5}{\sqrt{n_j-1}}\right)\\
     \leq & 4 \times 2^{-k} \exp \left(  \frac{5}{\sqrt{2}-1} \right).
\end{align}

\end{proof}

\subsection{Proof of the main result (median RF consistency)}

\begin{theorem}[Upper bound on the risk of the median forest]
Consider a generic pair $(X,Y)$ of random variables such that $Y = f^{\star}(X) + \varepsilon$, where $||\partial_\ell f^\star||_\infty^2$ exists for all $\ell \in \{1,\dots , d\}$, X is uniformly distributed on $[0,1]^d$ and the noise $\varepsilon$ satisfies, almost surely,   $\E[\varepsilon|X] = 0$  and $\mathds{V}[\varepsilon| X] \leq \sigma^2$. Consider $n \geq 16$ i.i.d.\ observations, where $n$ is a power of two, distributed as the generic pair $(X,Y)$. Then, the risk of the infinite median forest trained on this data set satisfies
\begin{align}
    \Esp{\left(\medrf(X) - f^\star(X) \right)^2 } \leq C_{1} d \left(\sum_{\ell =1}^d||\partial_\ell f^\star||_\infty^2 \right) \left( 1 - \frac{3}{4d} \right)^{\log_2 n} + \sigma^2  C_{2,d} (\log_2 n)^{- (d-1)/2},
\end{align}
with 
\begin{align}
C_1 =  1024 \exp \left( \frac{ 42 + \sqrt{5}}{2 - \sqrt{2}} \right) \quad \textrm{and} \quad C_{2,d} = 2 \left(32~\exp \left(  \frac{5}{\sqrt{2}-1} \right)~\right)^d d^{d/2}.
\end{align}
%, where $C_2 = 4 \exp \left(  \frac{10}{\sqrt{2}-1} \right) $ is given by Lemma~\ref{lem:leaf_median_tree}.
In particular, the infinite median forest is consistent, that is 
\begin{align}
\lim\limits_{n \to \infty} \Esp{\left(\medrf(X) - f^\star(X) \right)^2 } = 0.   
\end{align}
\end{theorem}

\begin{proof}

We begin with a simple bias/variance decomposition:
\begin{align*}
    & \Esp{\left(\medrf(X) - f^\star(X) \right)^2 } \\
    & = \Esp{\left( \E_{\Theta} \left[ \sum_{i=1}^n W_{ni}(X, \Theta) Y_i \right] - f^\star(X) \right)^2 } \\
    & = \Esp{\left(  \sum_{i=1}^n \E_{\Theta} \left[ W_{ni}(X, \Theta) \right] (f^\star(X_i) + \varepsilon_i) - f^\star(X) \right)^2 } \\
    & = \Esp{\left(  \sum_{i=1}^n \E_{\Theta} \left[ W_{ni}(X, \Theta) \right] (f^\star(X_i) - f^{\star}(X)) + \sum_{i=1}^n \E_{\Theta} \left[ W_{ni}(X, \Theta) \right]\varepsilon_i)  \right)^2 } \\
    & = \Esp{\left(  \sum_{i=1}^n \E_{\Theta} \left[ W_{ni}(X, \Theta) \right] (f^\star(X_i) - f^{\star}(X)) \right)^2} + \Esp{ \left( \sum_{i=1}^n \E_{\Theta} \left[ W_{ni}(X, \Theta) \right]\varepsilon_i)  \right)^2 }, 
\end{align*}
where the penultimate line comes from the fact that 
\begin{align}
\sum_{i=1}^n \E_{\Theta} \left[ W_{ni}(X, \Theta) \right] =  \E_{\Theta} \left[ \sum_{i=1}^n W_{ni}(X, \Theta) \right] = 1,
\end{align}
(since all leaves contain exactly one observation), and the last line results from a null cross product. 

\paragraph{Controlling the bias}
We have,
\begin{align}
& \Esp{\left(  \sum_{i=1}^n \E_{\Theta} \left[ W_{ni}(X, \Theta) \right] (f^\star(X_i) - f^{\star}(X)) \right)^2} \nonumber \\
& = \Esp{\left(  \E_{\Theta} \left[ \sum_{i=1}^n  W_{ni}(X, \Theta) (f^\star(X_i) - f^{\star}(X)) \right]  \right)^2}\\
& \leq \Esp{\left(   \sum_{i=1}^n  W_{ni}(X, \Theta) (f^\star(X_i) - f^{\star}(X))   \right)^2}\\
& \leq \Esp{\left(   \sum_{i=1}^n  \ind{X \in A_n(X_i, \Theta)} (f^\star(X_i) - f^{\star}(X))   \right)^2}\\
& \leq \Esp{  \sum_{i=1}^n  \ind{X \in A_n(X_i, \Theta)} \left(f^\star(X_i) - f^{\star}(X) \right)^2 },
\end{align}
because $ W_{ni}(X, \Theta)=  \ind{X \in A_n(X_i, \Theta)}$ and by applying twice Jensen inequality (third and fifth lines). Noticing that, 
\begin{align*}
    \ind{X \in A_n(X_i, \Theta)} |f^\star(X) - f^\star(X_i)| &\leq \sum_{\ell =1}^d||\partial_\ell f^\star||_\infty |X_i^{(\ell)} - X^{(\ell)}| \ind{X \in A_n(X_i, \Theta)}\\
    & \leq \sum_{\ell =1}^d||\partial_\ell f^\star||_\infty \mu(A_n^{(\ell)}(X, \Theta)) \ind{X \in A_n(X_i, \Theta)},
\end{align*}
we get, 
\begin{align}
 \Esp{  \sum_{i=1}^n  \ind{X \in A_n(X_i, \Theta)} \left(f^\star(X_i) - f^{\star}(X) \right)^2 } & \leq \Esp{  \sum_{i=1}^n  \ind{X \in A_n(X_i, \Theta)} \left(\sum_{\ell =1}^d||\partial_\ell f^\star||_\infty \mu(A_n^{(\ell)}(X, \Theta)) \right)^2 } \nonumber \\
& \leq \Esp{  \left(\sum_{\ell =1}^d||\partial_\ell f^\star||_\infty \mu(A_n^{(\ell)}(X, \Theta)) \right)^2 } \\
& \leq \left(\sum_{\ell =1}^d||\partial_\ell f^\star||_\infty^2 \right) \sum_{\ell =1}^d \Esp{  \mu(A_n^{(\ell)}(X, \Theta))^2 }.
\end{align}
where the last inequality directly results from Cauchy-Schwarz inequality. By Lemma~\ref{Lemme_cell_length_beta}, since $k = \lfloor \log_2 n \rfloor$,
\begin{align}
& \left(\sum_{\ell =1}^d||\partial_\ell f^\star||_\infty^2 \right) \sum_{\ell =1}^d \Esp{  \mu(A_n^{(\ell)}(X, \Theta))^2 } \leq C d \left(\sum_{\ell =1}^d||\partial_\ell f^\star||_\infty^2 \right) \left( 1 - \frac{3}{4d} \right)^{\log_2 n}, 
\end{align}
with 
\begin{align}
C =  1024 \exp \left( \frac{ 42 + \sqrt{5}}{2 - \sqrt{2}} \right).
\end{align}

\paragraph{Controlling the variance} Following \citet{biau2012analysis}, the variance term of the median forest writes
\begin{align}
    \E \left[ \left( \sum_{i=1}^n \E_{\Theta} \left[ W_{ni}(X, \Theta) \right] \varepsilon_i \right)^2 \right]
    & = \E \left[  \sum_{i=1}^n \left(\E_{\Theta} \left[ W_{ni}(X, \Theta) \right]\right)^2 \varepsilon_i^2 \right]\\
    & = \E \left[  \sum_{i=1}^n \left(\E_{\Theta} \left[ W_{ni}(X, \Theta) \right]\right)^2 \E \left[ \varepsilon_i^2 | X, X_1, \hdots, X_n \right] \right]\\
    & \leq \E \left[  \sum_{i=1}^n \left(\E_{\Theta} \left[ W_{ni}(X, \Theta) \right]\right)^2 \sigma^2 \right]\\
    & \leq  \sigma^2 n \E \left[   \left(\E_{\Theta} \left[ W_{n1}(X, \Theta) \right]\right)^2 \right],
\end{align}
where we have used the fact that the cross products are null (since $\E[\varepsilon_i|X_i]=0$). 
Since each leaf of the median tree contains exactly one observation, denoting $\Theta'$ an i.i.d. copy of $\Theta$, we have
\begin{align*}
    \left(\E_{\Theta} \left[ W_{n1}(X, \Theta) \right]\right)^2 & = \E_{\Theta} \left[ W_{n1}(X, \Theta) \right] \E_{\Theta'} \left[ W_{n1}(X, \Theta') \right]\\
    & = \E_{\Theta, \Theta'} \left[ W_{n1}(X, \Theta)  W_{n1}(X, \Theta') \right]\\
    & = \E_{\Theta, \Theta'} \left[ \ind{X \in A_n(X_1, \Theta)} \ind{X \in A_n(X_1, \Theta')}  \right].
\end{align*}
Consequently, 
\begin{align*}
    \E \left[ \left( \sum_{i=1}^n \E_{\Theta} \left[ W_{ni}(X, \Theta) \right] \varepsilon_i \right)^2 \right] & \leq  \sigma^2 n \E \left[ \ind{X \in A_n(X_1, \Theta)} \ind{X \in A_n(X_1, \Theta')}  \right].
\end{align*}
For all $\ell$, we let $A_n^{(\ell)}(X_1, \Theta)$ be the cell $A_n(X_1, \Theta)$ projected onto the $\ell$-th dimension. %$\mu\left(A_n(X_1,\Theta) \cap A_n(X_1,\Theta')\right)$ be the  and $\nu_\ell$ the length of the $\ell$-th side. %
Let also $\bm{\delta}_{\ell}(X_1, \Theta)$ be the vector whose components are defined as $\delta_{j,l} = 1 $ if the $j$-th cut of the cell $A_n^{(\ell)}(X_1, \Theta)$ is made along direction $\ell$ and $0$ otherwise. We define similarly $\bm{\delta}_{\ell}(X_1, \Theta')$ for the cell $A_n^{(\ell)}(X_1, \Theta')$.  We also let $K_{\ell} = \|\bm{\delta}_{\ell}(X_1, \Theta) \|_1$ (resp. $K_{\ell}'$) be the number of times the $\ell$-th direction is split in the tree built with $\Theta$ (resp. $\Theta'$). Then, 
\begin{align*}
    & \E \left[ \left( \sum_{i=1}^n \E_{\Theta} \left[ W_{ni}(X, \Theta) \right] \varepsilon_i \right)^2 \right] \\
    & \leq  \sigma^2 n \E \left[ \ind{X \in A_n(X_1, \Theta) \cap A_n(X_1, \Theta')}  \right]\\
    &= \sigma^2 n \E \left[ \prod_{\ell=1}^d \mu\left( A_n^{(\ell)}(X_1, \Theta) \cap A_n^{(\ell)}(X_1, \Theta')\right)\right]\\
    &= \sigma^2 n \E \left[ \E \left[ \prod_{\ell=1}^d \mu\left( A_n^{(\ell)}(X_1, \Theta) \cap A_n^{(\ell)}(X_1, \Theta')\right) \Bigg| X_1, \bm{\delta}_{\ell}(X_1, \Theta), \bm{\delta}_{\ell}(X_1, \Theta')\right]\right]\\
    &= \sigma^2 n \E \left[ \prod_{\ell=1}^d \E \left[  \mu\left( A_n^{(\ell)}(X_1, \Theta) \cap A_n^{(\ell)}(X_1, \Theta')\right) \Bigg| X_1, \bm{\delta}_{\ell}(X_1, \Theta), \bm{\delta}_{\ell}(X_1, \Theta')\right]\right].
\end{align*}

%\begin{align*}
%    V(m_n) &\leq \sigma^2 n  \Prob{X \in A_n(X_1,\Theta) \cap A_n(X_1,\Theta')} \\
%    &= \sigma^2 n  \Esp{\nu} \\
%    &= \sigma^2 n  \Esp{\Esp{\prod_{l=1}^d \nu_\ell \bigg| K_1,...,K_d, K_1',...,K_d'}} \\
%sigma^2 n  \Esp{\prod_{l=1}^d \Esp{ \nu_\ell \bigg| K_\ell, K_\ell'}}
%\end{align*}
%where $K_\ell$ (resp $K_\ell'$) denotes the number of splits made along the $l$-th direction in the tree $\Theta$ (resp. $\Theta'$) on the path to $X_1$. 
The last equality is obtained by conditional independence: indeed, as the $X_i$s are uniformly distributed, the positions of the coordinates do not influence each others. Therefore only the number of cuts along the other directions will influence the length the cell along a given direction, hence the conditional independence. Now,
\begin{align*}
    &\E \left[  \mu\left( A_n^{(\ell)}(X_1, \Theta) \cap A_n^{(\ell)}(X_1, \Theta')\right) \Bigg| X_1, \bm{\delta}_{\ell}(X_1, \Theta), \bm{\delta}_{\ell}(X_1, \Theta')\right] \\
    &\leq \Esp{ \min(\mu(A_n^{(\ell)}(X_1,\Theta)),\mu(A_n^{(\ell)}(X_1,\Theta')) \bigg| X_1,  \bm{\delta}_{\ell}(X_1, \Theta), \bm{\delta}_{\ell}(X_1, \Theta')} \\
    &=  \frac{1}{2} \left( \E\left[ \mu(A_n^{(\ell)}(X_1,\Theta)) \bigg| X_1, \bm{\delta}_{\ell}(X_1, \Theta) \right] + \E\left[ \mu(A_n^{(\ell)}(X_1,\Theta')) \bigg| X_1,  \bm{\delta}_{\ell}(X_1, \Theta') \right] \right) \\
    & \quad - \frac{1}{2} \Esp{ | \mu(A_n^{(\ell)}(X_1,\Theta)) - \mu(A_n^{(\ell)}(X_1,\Theta'))| \bigg| X_1, \bm{\delta}_{\ell}(X_1, \Theta), \bm{\delta}_{\ell}(X_1, \Theta')}.
\end{align*}
%by Lemma \textbf{[REF]}.
Moreover,
\begin{align*}
    &\Esp{ | \mu(A_n^{(\ell)}(X_1,\Theta)) - \mu(A_n'^{(\ell)}(X_1,\Theta))| \bigg| X_1, \bm{\delta}_{\ell}(X_1, \Theta), \bm{\delta}_{\ell}(X_1, \Theta')} \\
    &= \Esp{ | \mu(A_n^{(\ell)}(X_1,\Theta)) - \mu(A_n^{(\ell)}(X_1,\Theta'))| \left(\ind{K_\ell < K_\ell'} + \ind{K_\ell\geq K_\ell'} \right) \bigg| X_1, \bm{\delta}_{\ell}(X_1, \Theta), \bm{\delta}_{\ell}(X_1, \Theta')} \\
    &\geq \Esp{ \left(\mu(A_n^{(\ell)}(X_1,\Theta)) - \mu(A_n^{(\ell)}(X_1,\Theta'))\right)  \bigg| X_1, \bm{\delta}_{\ell}(X_1, \Theta), \bm{\delta}_{\ell}(X_1, \Theta')} \ind{K_\ell < K_\ell'}\\
    & \quad + \Esp{ \left(\mu(A_n^{(\ell)}(X_1,\Theta')) - \mu(A_n^{(\ell)}(X_1,\Theta))\right)  \bigg| X_1, \bm{\delta}_{\ell}(X_1, \Theta), \bm{\delta}_{\ell}(X_1, \Theta')} \ind{K_\ell\geq K_\ell'} \\
    & \geq \left( \E \left[ \mu(A_n^{(\ell)}(X_1,\Theta)) | X_1, \bm{\delta}_{\ell}(X_1, \Theta) \right] - \E \left[ \mu(A_n^{(\ell)}(X_1,\Theta')) |  X_1, \bm{\delta}_{\ell}(X_1, \Theta') \right] \right) \ind{K_\ell < K_\ell'}\\
    & \quad + \left( \E \left[ \mu(A_n^{(\ell)}(X_1,\Theta')) | X_1,  \bm{\delta}_{\ell}(X_1, \Theta') \right] - \E \left[ \mu(A_n^{(\ell)}(X_1,\Theta)) | X_1, \bm{\delta}_{\ell}(X_1, \Theta) \right] \right)  \ind{K_\ell\geq K_\ell'}.
    %\\
    %&= 2 \left(2^{-K_\ell} - 2^{-K_\ell'}\right)\ind{K_\ell' > K_\ell} + 2\left(2^{-K_\ell'} - 2^{-K_\ell}\right)\ind_{K_\ell \geq K_\ell'}
\end{align*}
Letting $B_\ell = \E \left[ \mu(A_n^{(\ell)}(X_1,\Theta)) | X_1, \bm{\delta}_{\ell}(X_1, \Theta) \right]$ and $B_{\ell}' = \E \left[ \mu(A_n^{(\ell)}(X_1,\Theta')) | X_1, \bm{\delta}_{\ell}(X_1, \Theta') \right]$, we have
\begin{align*}
& \E \left[  \mu\left( A_n^{(\ell)}(X_1, \Theta) \cap A_n^{(\ell)}(X_1, \Theta')\right) \Bigg| X_1, \bm{\delta}_{\ell}(X_1, \Theta), \bm{\delta}_{\ell}(X_1, \Theta')\right]\\
\leq & ~ \frac{1}{2} \left( B_\ell + B_{\ell}'\right) - \frac{1}{2}(B_{\ell} - B_{\ell}') \ind{K_\ell < K_\ell'} - \frac{1}{2}(B_{\ell}' - B_{\ell}) \ind{K_\ell \geq K_\ell'}\\
 \leq & ~ B_{\ell} \ind{K_\ell \geq K_\ell'} + B_{\ell}' \ind{K_\ell < K_\ell'}.
\end{align*}
Now, according to Lemma~\ref{lem:leaf_median_tree}, letting $C_2 = 4 \exp (5/(\sqrt{2}-1))$, we have $B_{\ell} \leq C_2 2^{-K_{\ell}}$ and $B_{\ell}' \leq C_2 2^{-K_{\ell}'}$. Therefore,
\begin{align*}
\E \left[  \mu\left( A_n^{(\ell)}(X_1, \Theta) \cap A_n^{(\ell)}(X_1, \Theta')\right) \Bigg| X_1, \bm{\delta}_{\ell}(X_1, \Theta), \bm{\delta}_{\ell}(X_1, \Theta')\right] 
&\leq    C_2 2^{-K_{\ell}} \ind{K_\ell \geq K_\ell'} + C_2 2^{-K_{\ell}'}  \ind{K_\ell < K_\ell'} \\
& \leq C_2 2^{-\max(K_\ell, K_\ell')}.
\end{align*}

%\begin{align*}
%    \Esp{ \nu_\ell \bigg| K_\ell, K_\ell'} &\leq  2^{-K_\ell} + 2^{-K_\ell'} - \left(2^{-K_\ell} - 2^{-K_\ell'}\right)\ind{K_\ell' > K_\ell} -  \left(2^{-K_\ell'} - 2^{-K_\ell}\right)\ind_{K_\ell \geq K_\ell'} \\
%    &= 2\cdot 2^{-K_\ell} \ind{K_\ell\geq K_\ell'} + 2\cdot 2^{-K_\ell'}\ind{K_\ell' >K_\ell} \\
%    &= 2\cdot 2^{-\max(K_\ell, K_\ell')}.
%\end{align*}

Overall,
\begin{align}
    \E \left[ \left( \sum_{i=1}^n \E_{\Theta} \left[ W_{ni}(X, \Theta) \right] \varepsilon_i \right)^2 \right] & \leq  \sigma^2 n \E \left[ \prod_{\ell=1}^d \E \left[  \mu\left( A_n^{(\ell)}(X_1, \Theta) \cap A_n^{(\ell)}(X_1, \Theta')\right) \Bigg| X_1, \bm{\delta}_{\ell}(X_1, \Theta), \bm{\delta}_{\ell}(X_1, \Theta')\right]\right] \\
    & \leq \sigma^2 n \Esp{\prod_{\ell=1}^d C_2 2^{-\max(K_\ell, K_\ell')}} \\
    & \leq \sigma^2 C_2^{d} n ~ \Esp{2^{-\sum_{\ell=1}^d \max(K_\ell, K_\ell')}} \\
    & \leq \sigma^2 C_2^{d} n ~~ 2^{-k_n} \Esp{2^{-\sum_{\ell=1}^d |K_\ell-K_\ell'|}}, \label{eq:th_penultimate}
\end{align}
since 
\begin{align*}
\sum_{\ell=1}^d \max(K_\ell, K_\ell') &= \frac{1}{2} \sum_{\ell = 1}^d K_{\ell} + \frac{1}{2} \sum_{\ell = 1}^d K_{\ell}' + \frac{1}{2} \sum_{\ell = 1}^d |K_{\ell} - K_{\ell}'|\\
& = k_n + \frac{1}{2} \sum_{\ell = 1}^d |K_{\ell} - K_{\ell}'|.
\end{align*}
According to Lemma~S.1 from \citet{klusowski2021sharp} (see Supplementary Materials), one has 
\begin{align}
\Esp{2^{-\sum_{\ell=1}^d |K_\ell-K_\ell'|}} \leq \frac{8^d~ d^{d/2}}{k_n^{(d-1)/2}}. \label{eq:th_penultimate1}
\end{align}
Finally, combining \eqref{eq:th_penultimate} and \eqref{eq:th_penultimate1},  the variance of the median forest is upper bounded by 
\begin{align}
    \E \left[ \left( \sum_{i=1}^n \E_{\Theta} \left[ W_{ni}(X, \Theta) \right]  \varepsilon_i \right)^2 \right] 
    %& \leq \sigma^2 C_2^{d} n ~ 2^{-k_n} \Esp{2^{-\sum_{\ell=1}^d |K_\ell-K_\ell'|}}\\
    & \leq \sigma^2 C_2^{d} n ~ 2^{-k_n} \frac{8^d~ d^{d/2}}{k_n^{(d-1)/2}}\\
    & \leq 2 \sigma^2  \left(8~C_2~d^{1/2} \right)^d (\log_2 n)^{- (d-1)/2},
\end{align}
since $k_n = \lfloor \log_2 (n) \rfloor.$ All in all,
\begin{align*}
    & \Esp{\left(\medrf(X) - f^\star(X) \right)^2 } \\
    & \leq C d \left(\sum_{\ell =1}^d||\partial_\ell f^\star||_\infty^2 \right) \left( 1 - \frac{3}{4d} \right)^{\log_2 n} + 2 \sigma^2  \left(8~C_2~d^{1/2} \right)^d (\log_2 n)^{- (d-1)/2}
\end{align*}
with $C_2 = 4 \exp (5/(\sqrt{2}-1))$ and
\begin{align}
C =  1024 \exp \left( \frac{ 42 + \sqrt{5}}{2 - \sqrt{2}} \right).
\end{align}
%\begin{align}
%    \Esp{\nu} &= \Esp{\prod_{l=1}^d \nu_\ell} \\
%    &\leq 2^d\Esp{2^{-\sum_{l=1}^d \max(K_\ell, K_\ell')}} \\
%    &= 2^d2^{-k_n} \Esp{2^{-\sum_{l=1}^d |K_\ell-K_\ell'|}}.
%\end{align}

%The above quantity is controlled in Lemma 1 of  \textbf{[KLUSOWSKI]}:
%\begin{align*}
%    \Esp{\nu} &\leq 2^d 2^{-k_n} \frac{d^{d/2}8^d}{k_n^{(d-1)/2}}.
%\end{align*}
%Choosing $k_n = \log_2 n $ yields
%\begin{align*}
%    V(m_n) & \leq \frac{\sigma^2 d^{d/2}16^d}{(\log_2 n)^{(d-1)/2}}.
%\end{align*}

\end{proof}

\subsubsection{Controlling the variance of an interpolating Median RF in an asymptotic high-dimensional setting}
\label{subsec:var_high_dim}

The following result shows the decrease of the variance of the Median RF under an asymptotic high-dimensional framework. It is also numerically illustrated in Section \ref{sec:exp_median_rf_high_dim}.

\begin{proposition} \label{prop:var_high_dim}

For all $d > \log_2 n$, the variance of the infinite interpolating Median RF $\infforest^{\mathrm{MedRF}}$ verifies
\begin{align*}
    V(\infforest^{\mathrm{MedRF}})  = \E \left[ \left( \sum_{i=1}^n \E_{\Theta} \left[ W_{ni}(X, \Theta) \right]  \varepsilon_i \right)^2 \right]  
    & \leq \frac{4 C_2^2\sigma^2}{n}  + 2 C_2\sigma^2 \left(1- \exp \left( -\frac{\log_2^2 n}{d-\log_2 n} \right) \right),
\end{align*}
where $C_2 = 4 \exp{\left(5/(\sqrt{2}-1)\right)}$.
Suppose that the input dimension $d$ dominates $\log_2 ^2 n$ asymptotically  ($d \gg \log_2 ^2 n$), then the variance tends to 0 (as $n,d$ tends to infinity), with a rate of the order of $\max(\frac{\log^2 n}{d}, \frac{1}{n})$. 
\end{proposition}
The proof is given below. 
This results shows that the Median RF benefits from an increase of the dimension as it will improve its averaging effect and help to reduce the variance. 
Of course, in such a setting, 
the variance is only one part of the story, and a control on the bias  becomes a real hindrance (as the approximation error may explode), unless extra model assumptions are formulated.
For instance, consider for any input dimension $d$ the case of a linear model, i.e.\  $Y = X^\top \theta+ \varepsilon$ for $\theta \in \mathds{R}^d$ and such that $ \|\theta\|_2 \leq  C /  \sqrt{d}$, with $C>0$ a constant.
{ 
One can actually show that in such a setting, the bias term remains bounded as $n$ (and $d$) grows towards infinity (using for example the analysis conducted in the next theorem).}
This echoes in particular the behavior of ridgeless least squares estimator in modern interpolation regimes \citep[see, ][]{hastie2022surprises}.

\begin{proof}[Proof of Proposition~\ref{prop:var_high_dim}]
\label{proof:var_high_dim}
A typical bias-variance decomposition yields (see e.g.\ \cite{biau2012analysis})
\begin{align}
    V(\infforest^{\mathrm{MedRF}}) \leq \sigma^2 n \Prob{X \in A_n(X_1,\Theta) \cap A_n(X_1, \Theta')}
\end{align}
with $\Theta'$ an independent copy of $\Theta$. Recalling that the depth is chosen as $k = \lfloor \log_2 n \rfloor$. Consider the event 
$$E = E(\Theta, \Theta', X_1, k):=\{ \Theta \text{ and } \Theta' \text{ do not cut on common directions on the path to } X_1 \}.$$
Denote $M(\Theta, X_1)$ the number of distinct directions chosen by the tree $\Theta$ to produce the leaf containing $X_1$ (upper bounded by $\log_2n$). Then,
\begin{align}
    \Prob{E} &\geq \Esp{\left(\frac{d - M(\Theta,X_1)}{d}\right)^{\log_2n}} \\
    &\geq \left(\frac{d - \log_2 n}{d} \right)^{\log_2 n} \\
    &= \exp \left(  \log_2 n \log \left(1 - \frac{\log_2 n}{d} \right) \right) \\
    &\geq \exp \left( -\frac{\log_2^2n}{d - \log_2 n } \right),
\end{align}
using, for all $x \in [0,1)$, $\log (1 - x) \geq - x / (1-x)$. The above probability tends to 1 as soon as $d \gg \log_2^2 n $.
Then,
\begin{align}
    & \Prob{X \in A_n(X_1,\Theta) \cap A_n(X_1, \Theta')} \\
    = & ~\Prob{\{X \in A_n(X_1,\Theta) \cap A_n(X_1, \Theta')\} \cap  E}  + \Prob{\{X \in A_n(X_1,\Theta)\cap A_n(X_1, \Theta') \} \cap E^c} \nonumber\\
    \leq & ~ \Prob{X \in A_n(X_1,\Theta) | X \in A_n(X_1, \Theta'), E} \Prob{X \in A_n(X_1, \Theta')}   + \Prob{\{X \in A_n(X_1,\Theta)\} \cap E^c}.
%    \leq & ~C_2 ~ 2^{-k_n} \Prob{X \in A_n(X_1,\Theta) | X \in A_n(X_1, \Theta'), E} + \left(1 - \Prob{E}\right) C_2 ~ 2^{-k_n}\\
%    \leq & ~C_2^{2} ~ 2^{-2 k_n} + \left(1- e^{-\frac{\log_2^2n}{d - \log_2 n }}\right) C_2 ~ 2^{-k_n},
\end{align}

Applying Lemma \ref{lem:leaf_median_tree} (Line \eqref{eq:volume_cell_medianRF}) yields
\begin{align}
    \Prob{X \in A_n(X_1, \Theta')} &= \Esp{\mu\left(A_n(X_1,\Theta'\right)} \\
    & = \Esp{\Esp{\mu(A_{n}(X_1, \Theta)) | X_1, \bm{\delta}_1(X_1, \Theta), \hdots, \bm{\delta}_d(X_1, \Theta)}}\\
    &\leq C_2 2^{-k}
\end{align}
with $C_{2} =  4 \exp \left(  \frac{5}{\sqrt{2}-1} \right)$. Moreover, conditional on $E$, $\{X \in A_n(X_1,\Theta)\}$ and $\{X \in A_n(X_1,\Theta') \}$ are independent as $\Theta$ and $\Theta'$ do not share any common direction on the path to $X_1$ and therefore the splits in $\Theta$ and $\Theta'$ are performed on independent sample components (by uniformity of $X$ and $X_1$). Therefore, also by Lemma \ref{lem:leaf_median_tree}, 
\begin{align}
    \Prob{X \in A_n(X_1,\Theta) | X \in A_n(X_1, \Theta'), E} &= \Esp{\mu\left(A_n(X_1,\Theta)\right)} \\
    & = \Esp{\Esp{\mu(A_{n}(X_1, \Theta)) | X_1, \bm{\delta}_1(X_1, \Theta), \hdots, \bm{\delta}_d(X_1, \Theta)}}\\
    &\leq C_2 2^{-k}.
\end{align}

Similarly, the volume $\mu\left(A_n(X_1,\Theta)\right)$ is independent of the directions chosen to build the leaf, therefore
\begin{align*}
     \Prob{\{X \in A_n(X_1,\Theta)\} \cap E^c} &=  \Prob{X \in A_n(X_1,\Theta)} \Prob{E^c} \\
     &\leq C_2 2^{-k} \left(1- \exp \left( -\frac{\log_2^2n}{d - \log_2 n }\right) \right).
\end{align*}

Overall,
\begin{align*}
    \Prob{X \in A_n(X_1,\Theta) \cap A_n(X_1, \Theta')} &\leq C_2^2 2^{-2k} + C_2 2^{-k} \left(1- \exp \left( -\frac{\log_2^2n}{d- \log_2 n} \right) \right)
\end{align*}
and
\begin{align}
    V(\infforest^{\mathrm{MedRF}}) \leq C_2^2 n \sigma^2 2^{-2k} + n\sigma^2 C_2 \left(1- \exp \left( -\frac{\log_2^2n}{d- \log_2 n} \right) \right) 2^{-k}.
\end{align}

Since $k = \lfloor \log_2 n \rfloor$, we have $2^{-k} \leq 2/n$ and 
\begin{align}
     V(\infforest^{\mathrm{MedRF}}) \leq \frac{4C_2^2 \sigma^2}{n}  + 2C_2 \sigma^2 \left(1- e^{-\frac{\log_2^2 n}{d-\log_2 n}}\right).
\end{align}
\end{proof}

\subsubsection{Proof of Proposition \ref{prop:vol_interp_zone_AdaCRF} (Interpolation volume of Median RF)}

It is possible to conduct a one-dimensional analysis and then to extend the result to the multi-dimensional case by a simple multiplication. Indeed all the leaves are determined coordinate per coordinate, therefore the interpolation area is the product of all interpolation areas along each direction.

Let $Z_1,\hdots, Z_n$ be $n$ i.i.d.\ random variables uniformly distributed over $[0,1]$.
As in the infinite Median RF, the univariate trees, i.e., built by cutting along one direction only, appear almost surely. Then, the length of a leaf of such tree is bounded in expectation by $Z_{(k+1)} - Z_{(k-1)}$ where $Z_{(i)}$ indicates the $i$-the statistical order. Moreover, it is known that $Z_{(k)}$ follows a Beta distribution of parameters $(k, n-k+1)$. Therefore,
\begin{align}
    \Esp{Z_{(k+1)} - Z_{(k-1)}} &= \frac{k+1}{n+1} - \frac{k-1}{n+1} \\
    &\leq \frac{2}{n}.
\end{align}

Now, as $X_1, ..., X_n$ are i.i.d.\ and uniformly distributed over $[0,1]^d$, for any data point $x \in [0,1]^d$ we simply have that
\begin{align*}
    \Esp{\mu(\mathcal{A}_{min, x})} \leq \frac{2^d}{n^d}.
\end{align*}

Finally, since by definition all interpolation zones are disjoint and the interpolation area is the union of $n$ interpolation areas, we have
\begin{align*}
    \Esp{\mu(\mathcal{A}_{min})} \leq \frac{2^d}{n^{d-1}}
\end{align*}
which ends the proof.

%%%%%%%%%%%%%%%%%%%%%%%%%%%%%%%%%%%%%%%%%%%%%%%%%%%%%%%%%%%%%%%%%%

%%%%%%%%%%%%%%%%%%%%%%%%%%%%%%%%%%%%%%%%%%%%%%%%%%%%%%%%%%%%%%%%%%%

\subsection{Proofs of Section \ref{sec:BreimanRF} (Interpolation volume of Breiman RF)}

\begin{proof}[Proof of Proposition \ref{prop:interpol_limit_zone}]
\label{proof:interp_zone_breiman}

Before diving into the computations, let us recall two facts about Breiman RF construction.
First, in CART, each cut is made at the middle of two consecutive points in a given direction.  
Second, considering all univariate trees (trees whose splits are performed along one single direction), the probability of cutting between all pairs of successive points along all dimensions is strictly positive. Therefore, for a given point $X_i$, one can define the minimal interpolation zone around $X_i$ as \begin{align}
\mathcal{A}_{min, X_i} := \bigcap_{M \in \mathds{N}, \boldsymbol{\Theta}_M} \mathcal{A}_{X_i, \boldsymbol{\Theta}_M}.
\end{align}
The boundaries of this area are given for each direction by the cuts between $X_i$ and its \textit{neighbor points} respectively to the considered direction, as illustrated on Figure \ref{fig:interp_zone}. 
%More rigorously said, it is built by cutting along each dimension between $X_i$ and its closest point from below respectively to this dimension as well as between $X_i$ and the closest point from above.

\begin{figure}[h]
    \centering
    \includegraphics[trim={2cm 18.4cm 4cm 2.5cm}, clip, width=0.6\textwidth]{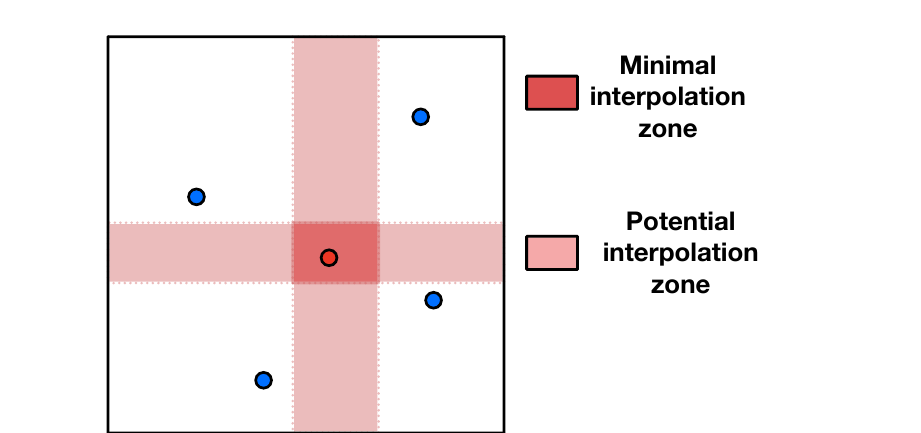}
    \caption{Different interpolation zones of a data point (in red).}
    \label{fig:interp_zone}
\end{figure}

\begin{enumerate}
\item The interpolation zone is the union of $n$ interpolation zones, each one containing a single $X_i$. We denote $\mathcal{A}(m_{M,n}(.,\boldsymbol{\Theta}_M)) = \mathcal{A}_{X_1,\boldsymbol{\Theta}_M} \cup  ... \cup \mathcal{A}_{X_n, \boldsymbol{\Theta}_M}$ with $\mathcal{A}_{X_i, \Theta_M} = \{x \in [0,1]^d, m_{M,n}(x,\boldsymbol{\Theta}_M) = Y_i\}$.
We begin with a one-dimensional analysis, and consider, without loss of generality, the first variable. We let $Z_1 := X_1^{(1)}, ..., Z_n := X_n^{(1)}$ the first components of the observations $X_1, \hdots, X_n$. 
%We denote $X_i^{(j)}$ the $j$-th feature of $X_i$, for all $j \in \{1,\hdots, d\}$ and $i \in \{1, \hdots, n\}$ and we focus on the first variable $X^{(1)}$. 
As $X_1, \hdots, X_n$ are i.i.d.\ and follow a uniform distribution over $[0,1]^d$, $Z_1, ..., Z_n$ are i.i.d.\ and uniformly distributed on $[0,1]$. 
We consider the interpolation area at $x = Z_n$ and we reason conditional on $Z_n$ in the following. The length (volume) of $\mathcal{A}_{min,x}$ restricted to the first dimension is simply given by the sum of the distance from $x$ to its closest point on the left side and to its closest point on the right side (divided by 2 as the cut are made in the middle of two points).
Therefore,
\begin{align}
\mu(A_{min, x}) = \frac{1}{2} \left( x - \underset{\{Z_i, Z_i < x\} \cup \{0\}}{\max} Z_i + \underset{\{Z_i, Z_i > x\} \cup \{1\}}{\min} Z_i - x \right).
\end{align}

All computations are made conditionally on $x$. Denoting $N_x$ the cardinal of the set $\{Z_i: Z_i < x \text{ with } 1 \leq i <n\}$, we have for any $t \in [0,x/2)$,
\begin{align}
    &\Prob{ \frac{1}{2} \left( x- \underset{\{Z_i, Z_i < x\} \cup \{0\} }{\max} Z_i\right)  \leq t \hspace{0.1cm} \big| x} \\
    &= 1 - \Prob{\underset{\{Z_i, Z_i < x\} \cup \{0\} }{\max} Z_i < x- 2t \hspace{0.1cm} \big| x} \\
    %&= 1 - \Prob{\displaystyle \bigcup_{k=0}^n \left( \left(N_{Z_i \leq x} = k \right)  \cap \left( (Z_{i_1} < x- 2t) \cap \hdots (Z_{i_k} < x-2t) | Z_{i_1} \leq x, \hdots, Z_{i_k} \leq x \right) \right)} \\
    &= 1 - \Esp{\Esp{\Prob{(Z_{i_1} < x -2t) \cap ... \cap (Z_{i_{N_x}} < x-2t) \big| N_x, Z_{i_1} < x, ..., Z_{i_{N_x}} < x, x}} \big| x}  \\
    &= 1 - \Esp{\mathbb{P}\left(Z_1 < x-2t | Z_1 \leq x, x\right)^{N_x} \big|x} \\
    &= 1- \displaystyle \sum_{k=0}^{n-1} \Prob{N_{x} = k | x} \Prob{Z_1 < x-2t | Z_1 < x, x}^k \\
    &= 1 - \displaystyle \sum_{k=0}^{n-1} \Prob{N_{x} = k | x} \left(\frac{x-2t}{x}\right)^k  \\
    &= 1 - \left( (1-x) + x \left( \frac{x-2t}{x} \right)\right)^{n-1} \\
    &= 1 - (1-2t)^{n-1}
\end{align}
where the penultimate equality is obtained by noticing that $N_{x}$ is a binomial of parameters $(n-1,x)$ and computing its probability-generating function.
So for all $t \geq 0$,
\begin{align*}
    \Prob{ \frac{1}{2} \left(x- \underset{\{Z_i, Z_i < x\} \cup \{0\} }{\max} Z_i\right)  \leq t |x} = 1 - (1-2t)^{n-1} \ind{t< x/2} .
\end{align*}

By symmetry,
\begin{align*}
    \Prob{ \frac{1}{2} \left( \underset{\{Z_i, Z_i > x\} \cup \{1\} }{\min} Z_i - x \right)  \leq t | x} = 1 - (1-2t)^{n-1} \ind{t> (1-x)/2} .
\end{align*}

Overall, using the fact that for any positive variable $Z$ with cumulative function $F_Z$, $\Esp{Z} = \int (1-F_Z)$, we have
\begin{align*}
    \Esp{\mu(\mathcal{A}_{min,x}) | x} &= \int_0^{x/2} (1-2u)^{n-1} du + \int_0^{(1-x)/2} (1-2u)^{n-1} du \\
    &= \frac{1}{2n} \left( 2 - (1-x)^{n} -x^{n} \right) \\
    &\leq \frac{1}{n} \left( 1 - \frac{1}{2^{n}} \right) .
\end{align*}

Now, as $X_1, ..., X_n$ are i.i.d.\ and uniformly distributed over $[0,1]^d$, for any data point $x \in [0,1]^d$ we simply have that $$\mathcal{A}_{min,x} = \displaystyle \bigtimes_{j=1}^d \mathcal{A}_{min, x^{(j)}}.$$
Therefore, 
\begin{align*}
    \Esp{\mu(\mathcal{A}_{min, x})} \leq \frac{1}{n^d} \left(1- 2^{-n} \right)^d.
\end{align*}

Finally, since by definition all interpolation zones are disjoint, we have
\begin{align*}
    \Esp{\mu(\mathcal{A}_{min})} \leq \frac{1}{n^{d-1}} \left(1- 2^{-n} \right)^d.
\end{align*}
\item It is enough to notice that the minimal interpolation zone is the intersection of all the potential interpolation zones. It is reached when the forest contains all the possible cuts. Then, as the probability of any given cut appearing is strictly greater than 0 by hypothesis, the probability of its appearance in the infinite forest is one. Therefore almost surely, when $M$ grows to infinity, the interpolation zone of the forest reaches the minimal interpolation zone.
\end{enumerate}
\end{proof}

\newpage

\section{Experiments}
\label{sec:exp_setting}

For all experiments, we consider four different regression models, most of which have been already con\-si\-dered in \cite{van2007super}: Model 1 is additive without noise ($d=2$), Model 2 is polynomial with interactions ($d=8$), Model 3 is the sum of elementary terms that contain non-polynomial interactions ($d=6$) and Model 4 ($d=5$) corresponds to a generalized linear model:
\begin{itemize}
    \item \textbf{Model 1:} $d=2$, $Y = 2X_1^2 + \exp(-X_2^2)$
    \item \textbf{Model 2:} $d=6$, $Y = X_1^2 + X_2^2X_3e^{-|X_4|} + X_5 - X_6 + \mathcal{N}(0, 0.5)$
    \item \textbf{Model 3:} $d=8$, $Y = X_1X_2 + X_3^2 - X_4X_5 + X_6X_7 -X_8^2 + \mathcal{N}(0, 0.5)$
    \item \textbf{Model 4:} $d =5$, $Y = 1 / (1+\exp(-10 (\sum_{i=1}^d X_i - 1/2))) + \mathcal{N}(0,0.05)$
    \replace{}{\item \textbf{Model 5}: $d=4$, $Y= - \sin(2X_1 X_2) + X_2^2 + X_3 - e^{X_4} + \mathcal{N}(0,0.5)$ 
    \item \textbf{Model 6}: $d=8$, $Y= \mathds{1}_{\{X_1 \geq 0 \}} + X_2^3 + \mathds{1}_{\{X_3 + X_5 - X_6 - X_7 -X_8\geq 1 \}} + e^{-X_2^2} +  \mathcal{N}(0,0.5)$  
    \item \textbf{Model 7}: $d=4$, $Y= X_1 + 2(X_2-1)^2 + \frac{\sin(2\pi X_3)}{2- \sin(2\pi X_3)} + 2 \sin(2\pi X_4) + 2 \cos(2\pi X_4) + 4 \sin(2 \pi X_4)^2 + 4 \cos(2 \pi X_4)^2 +  \mathcal{N}(0,0.5)$
    \item \textbf{Model 8}: $d=4$, $Y= X_1 + 3 X_2^2 - 2 e^{X_3} + X_4 $.}
\end{itemize}

All the experiments are conducted using Python3. We use Scikit-learn RandomForestRegressor class to implement the Breiman RF model. We coded CRF, KeRF and AdaCRF models ourselves, mainly relying on \textit{numpy} and \textit{joblib} libraries for computation optimisation. Experiments were run on 4 16-cores CPU and took at most a few hours to run.

\subsection{Consistency experiments}

For all consistency experiments, the dataset was divided into a train dataset ($80\%$ of the data) and a test dataset ($20\%$) of the data.

The parameters of the estimators were set as follows:
\begin{itemize}
    \item all RF estimators have 500 \textit{trees} to mimic the behavior of the infinite RF.
    \item parameter \textit{bootstrap} is set to \textit{False} for all estimators in order preserve the interpolation property, or set to \textit{True} when specified.
    \item all other parameters are set to default value.
\end{itemize}

%\subsubsection{Inconsistency of interpolating CART }

%\begin{figure}[h]
%    \centering
%    \includegraphics[clip, width=0.6\textwidth]{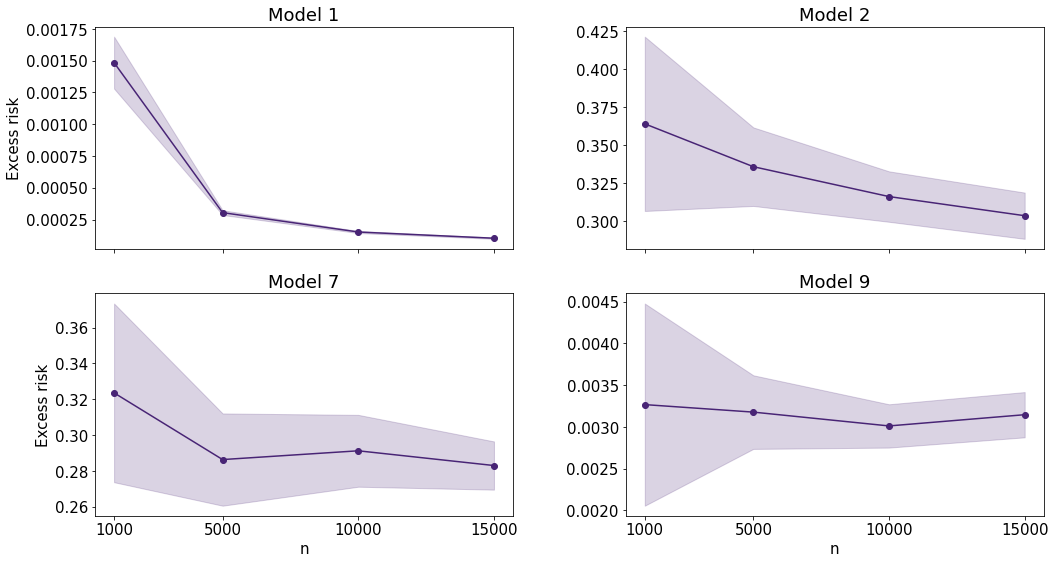}
%    \caption{CART consistency results: excess risk w.r.t.\ sample sizes. For each sample size, the experiment is repeated 30 times: we represent the mean over the 30 tries (dot line) and the std (filled zone).}
%    \label{fig:cart_inconsistency}
%\end{figure}
%\la{Voit-on vraiment que CART est inconsistent sur cette figure ? Ca ne saute pas aux yeux...}
%\la{Est-ce qu'on ne ferait pas sauter cette image + section ?}

%Figure \ref{fig:cart_inconsistency} provides an empirical verification of the theoretical inconsistency of an interpolating CART. We can see that the excess risk of CART is high for Models 2 and 3 and non-decreasing for the Model 4.

\subsubsection{Consistency of KeRF in the mean interpolation regime}
\label{sec:kerf_consistency_exp}

We train a centered KeRF (with $M=500$) of depth fixed to $\integer{\log_2 n} +1$ (mean interpolation regime) for different sample sizes $n$ and evaluate the empirical quadratic risk on the test set. %Details about the implementation and supplementary experiments can be found in Appendix \ref{sec:exp_setting}.
\begin{figure}[h]
    \centering
    \includegraphics[clip, width=0.8\textwidth]{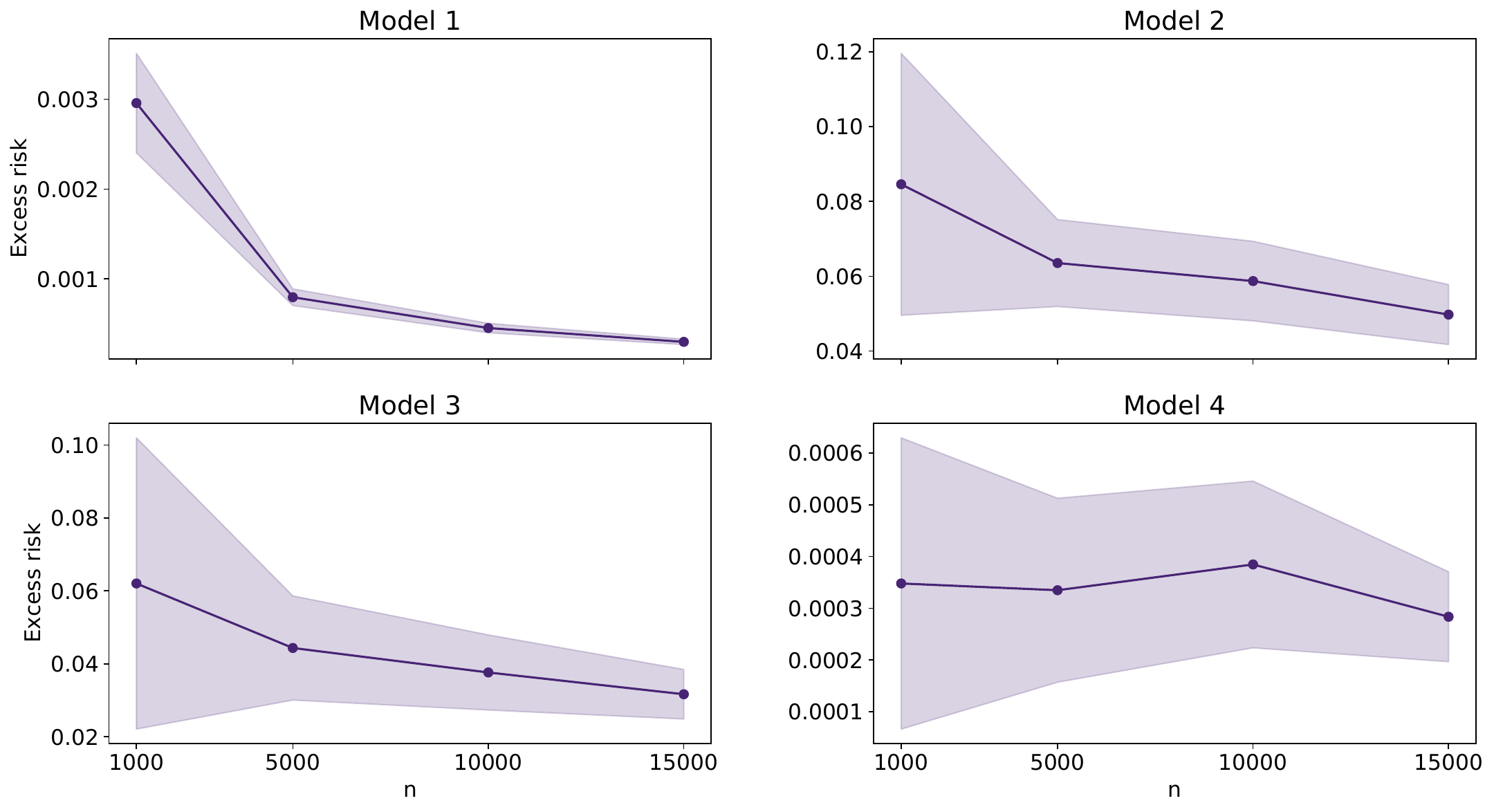}
    \caption{KeRF consistency results: excess risk w.r.t.\ sample sizes. For each sample size $n$, the experiment is repeated 30 times: we represent the mean over the 30 tries (bold line) and the mean $\pm$ std (filled zone).}
    \label{fig:kerf_consistency}
\end{figure}

\paragraph{Results} 
On Figure \ref{fig:kerf_consistency}, for all models, the risk decreases toward zero as the number of samples $n$ increases (with slow convergence rates).
These numerical results, even though obtained for a finite KeRF with a large number $M=500$ of centered trees, support the theoretical consistency of the infinite KeRF in the mean interpolation regime (see Theorem \ref{theoreme_consistency_centred_forest_approximation}).

\subsubsection{Consistency of Median RF in the interpolation regime}
\label{sec:median_rf_empirical}

We analyze the empirical performances of {Median RF} in noiseless and noisy settings on the models specified above.
For each model, given a training set, we train Median RF (with $M=500$ trees) until pure leaves are reached, and measure its excess risk on a test set. 
\begin{figure}[h]
    %\vspace{-1.2cm}
    \centering
    \includegraphics[clip, width=0.8\textwidth]{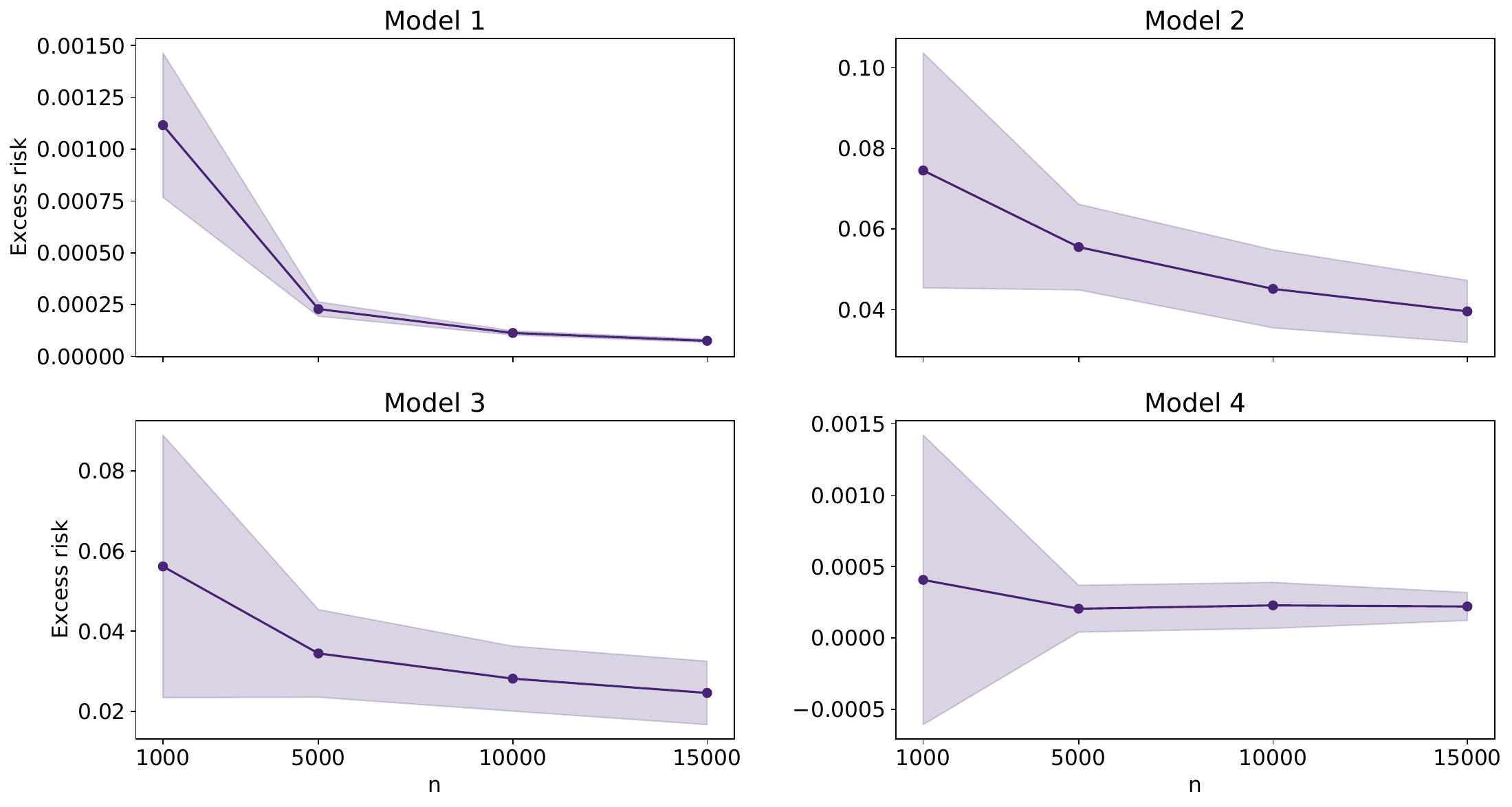}
    \caption{Consistency results for a Median RF with $M=500$ trees: excess risk w.r.t.\ the sample size $n$.
    For each sample size, the experiment is repeated 30 times: we represent the mean over the 30 tries (bold line) and the mean $\pm$ std (filled zone).}
    \label{fig:adacrf_consistency}
\end{figure}

Figure \ref{fig:adacrf_consistency} shows that the excess risk of a Median RF decreases as $n$ grows. These em\-pi\-ri\-cal performances lend support to the idea that Median RF are consistent even with a finite number of trees and beyond the noiseless setting.

%\subsubsection{Decrease of the variance of the Median RF in a high-dimensional setting} \label{sec:exp_median_rf_high_dim}

%Numerical experiments (Figure \ref{fig:median_high_dim}) corroborate the decrease of the variance of interpolating Median RF when $d$ increases. The model involves no signal and only noise (with specified variance $\sigma^2$).
%
%\begin{figure}[ht]
    %\vspace{-1.2cm}
%    \centering
%    \includegraphics[clip, width=0.5\textwidth]{high_dim_MEDIAN_sigma10-100.pdf}
%    \caption{Decrease of the variance of the interpolating Median RF w.r.t.\ dimension $d$. 10 repetitions per boxplot, 5000 training points and 50000 testing points were used for each repetition. The Median RF contains 500 trees.}
%    \label{fig:median_high_dim}
%\end{figure}

\subsubsection{Consistency of Breiman RF, additional models to Figure 1}

\begin{figure}[h]
    \centering
    \includegraphics[clip, width=0.8\textwidth]{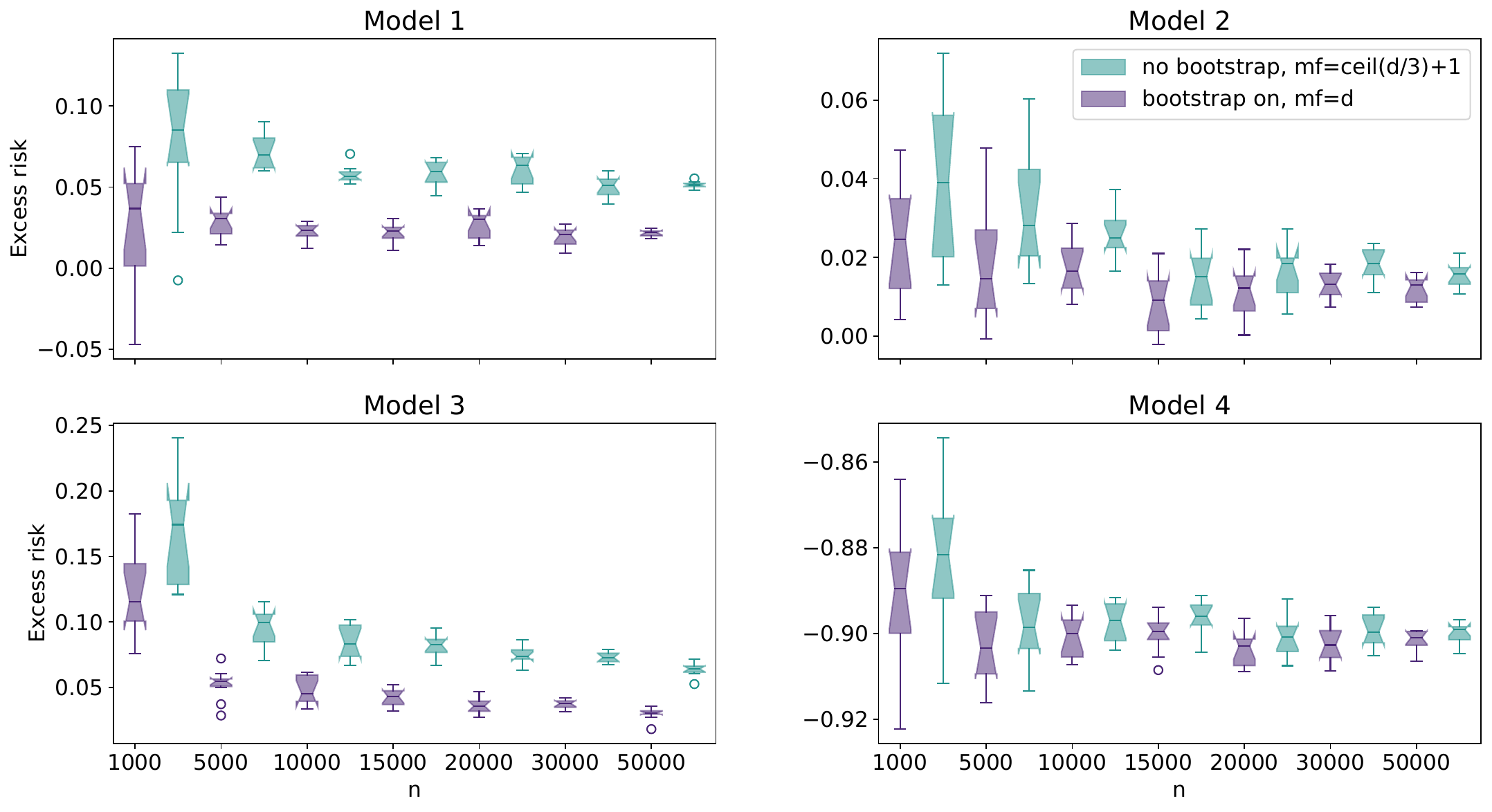}
    \caption{\replace{}{Consistency of Breiman RF: excess risk w.r.t\ the sample size $n$. RF parameters: 2000 trees, max-depth set to None, max-features$=1$. Boxplots over 10 tries.}}
    \label{fig:my_label}
\end{figure}

\begin{figure}[h]
    \centering
    \includegraphics[clip, width=0.8\textwidth]{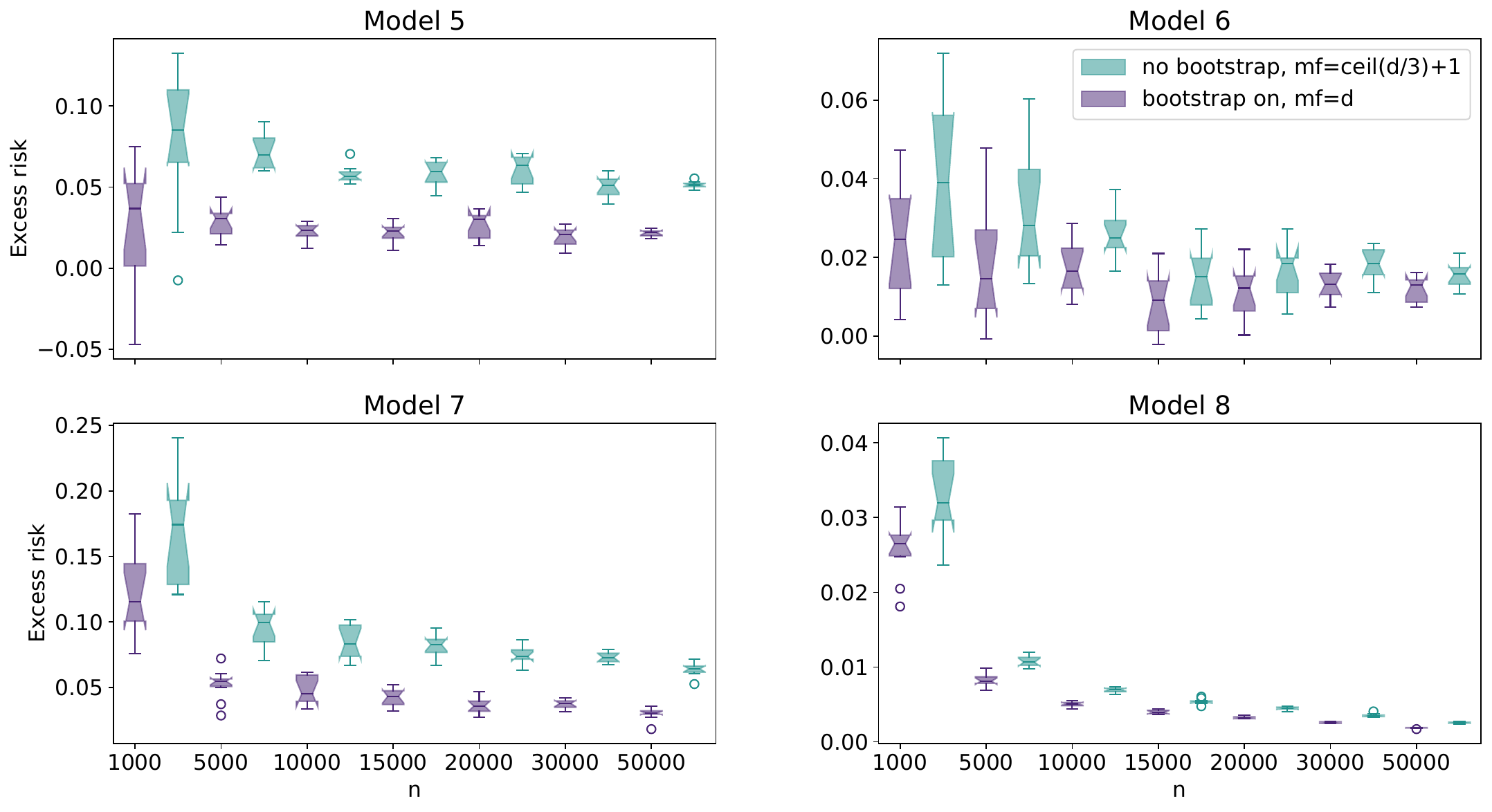}
    \caption{\replace{}{Consistency of Breiman RF: excess risk w.r.t\ the sample size $n$. RF parameters: 2000 trees, max-depth set to None, max-features$=1$. Boxplots over 10 tries.}}
    \label{fig:my_label}
\end{figure}

\FloatBarrier

\subsubsection{Consistency of Breiman RF with max-feature$=1$}

On Figure \ref{fig:breiman_consistency_mtry_one}, we see that the excess risk of a Breiman RF with the max-features parameter set to $1$ is decreasing towards $0$ as $n$ increases. This RF seems consistent for all models.

\begin{figure}[h]
    \centering
    \includegraphics[clip, width=0.8\textwidth]{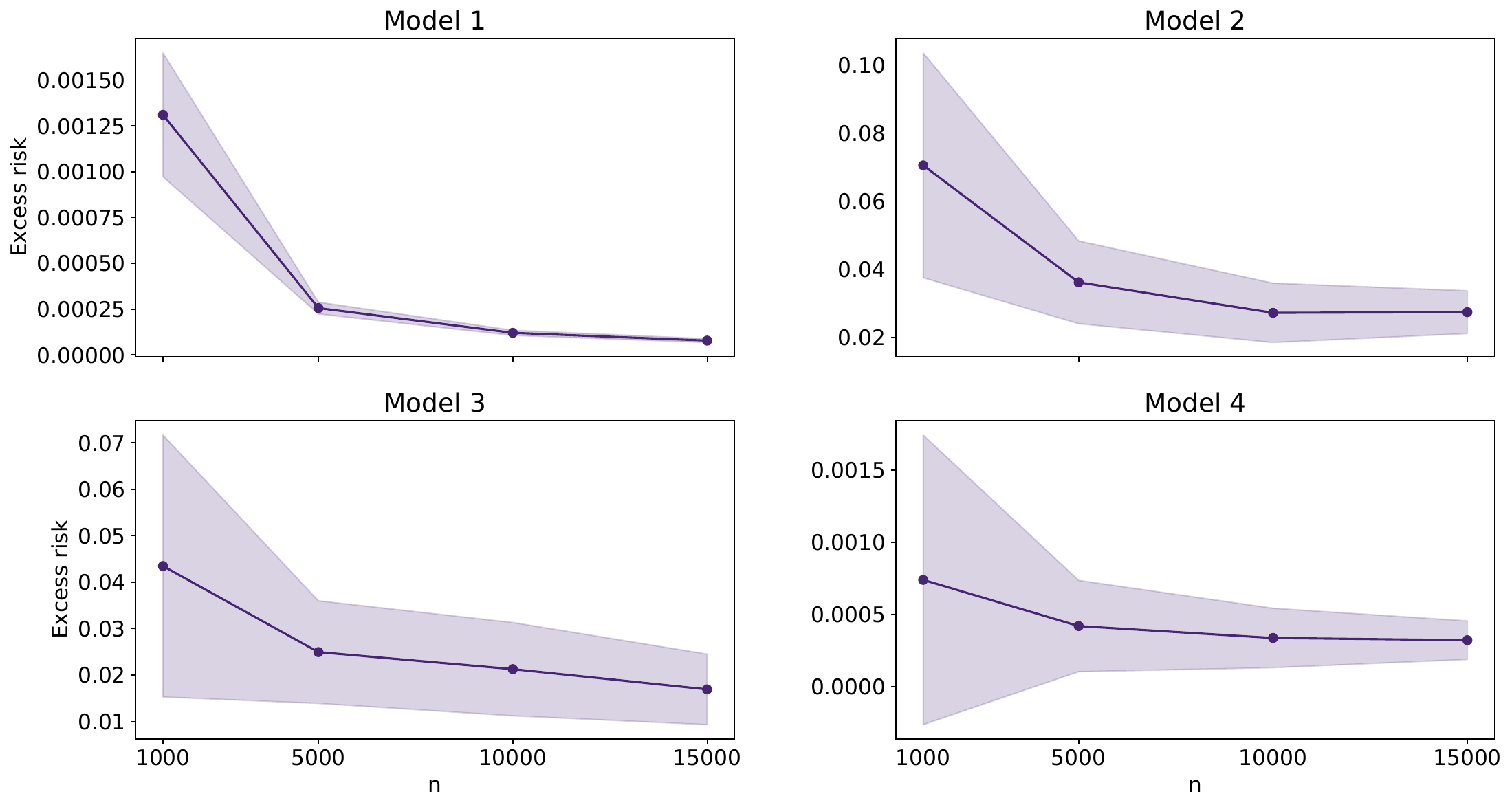}
    \caption{Consistency of Breiman RF: excess risk w.r.t\ sample size. RF parameters: 500 trees, max-depth set to None, max-features$=1$, no bootstrap. Mean over 30 tries (doted line) and std (filled zone).}
    \label{fig:breiman_consistency_mtry_one}
\end{figure}

\subsubsection{Decrease of the variance of the Breiman RF in a high-dimensional setting} \label{sec:exp_median_rf_high_dim}

Numerical experiments show the decrease of the variance of interpolating Breiman RF when $d$ increases. The model involves no signal and only noise (with specified variance $\sigma^2$).

\begin{figure}[ht]
    %\vspace{-1.2cm}
    \centering
    \includegraphics[clip, width=0.5\textwidth]{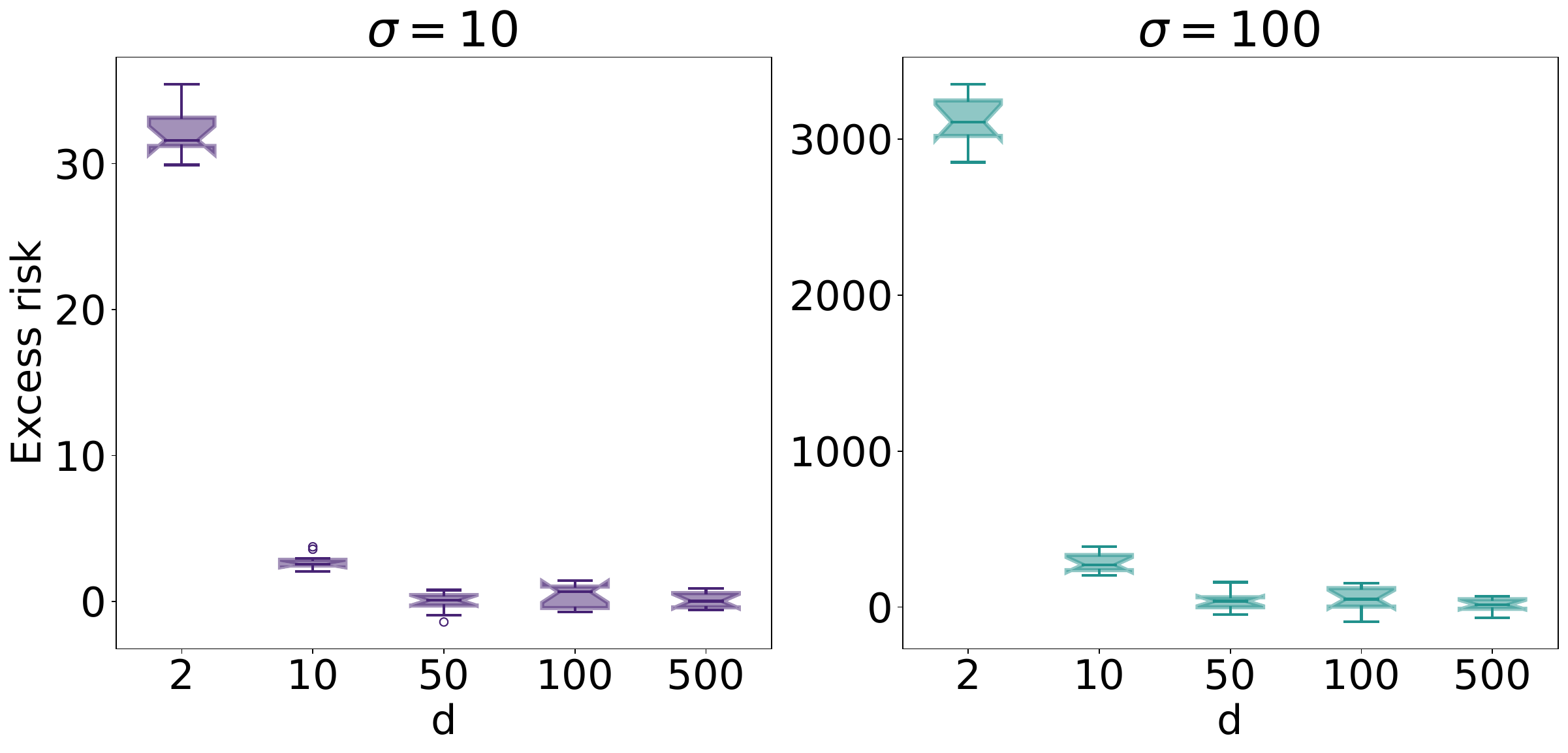}
    \caption{Decrease of the variance of an interpolating Breiman RF with \texttt{max-features=$1$} w.r.t.\ dimension $d$. 10 repetitions per boxplot, 5000 training points and 50000 testing points were used for each repetition. The Breiman RF contains 1000 trees.}
    \label{fig:breiman_high_dim_mf1}
\end{figure}

\begin{figure}[ht]
    %\vspace{-1.2cm}
    \centering
    \includegraphics[clip, width=0.5\textwidth]{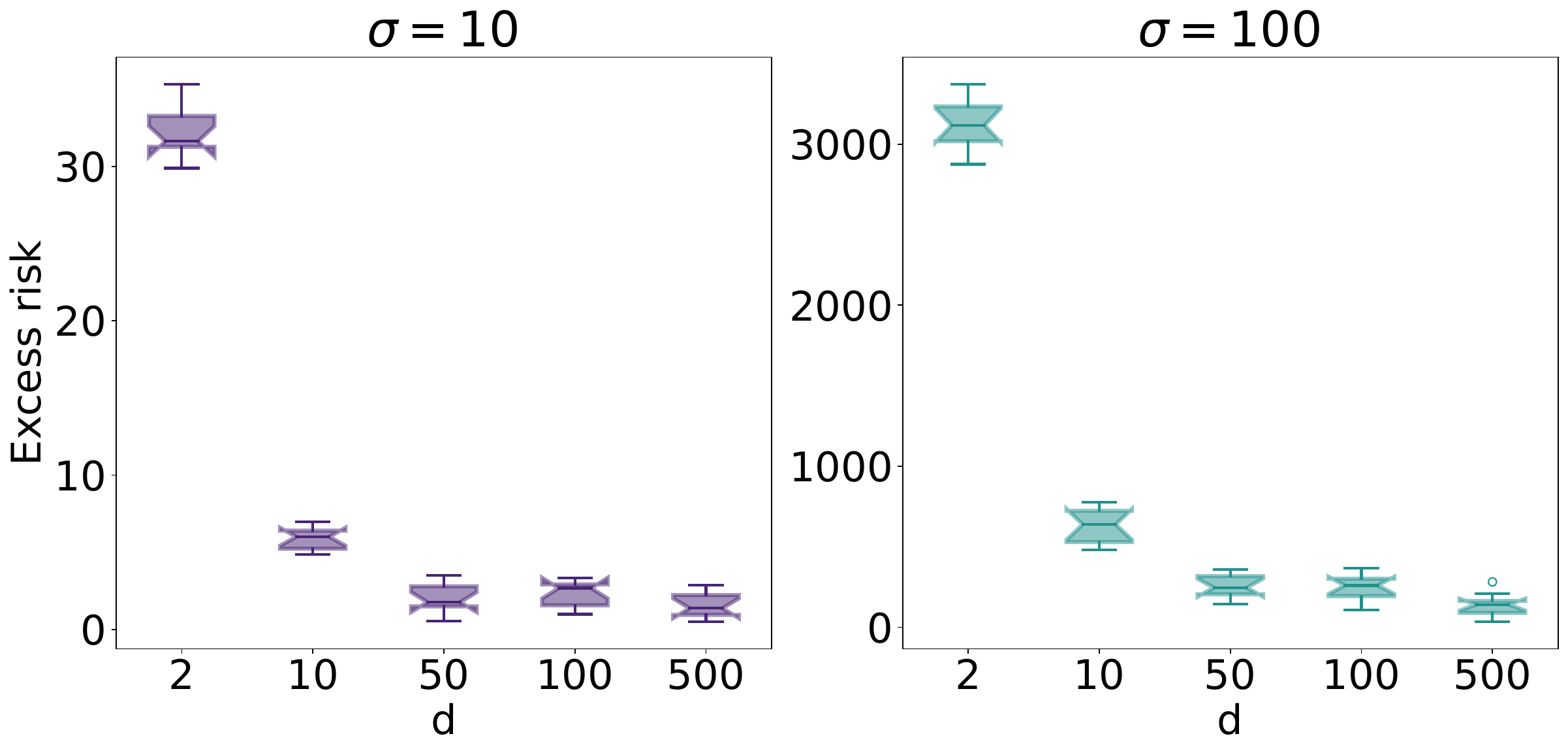}
    \caption{Decrease of the variance of an interpolating Breiman RF with \texttt{max-features=$\lfloor d/3 \rfloor$} w.r.t.\ dimension $d$. 10 repetitions per boxplot, 5000 training points and 50000 testing points were used for each repetition. The Breiman RF contains 1000 trees.}
    \label{fig:breiman_high_dim_mf3}
\end{figure}

\subsubsection{Comparison of Breiman RF with and without bootstrap}

\begin{figure}[h]
    \centering
    \includegraphics[clip, width=0.8\textwidth]{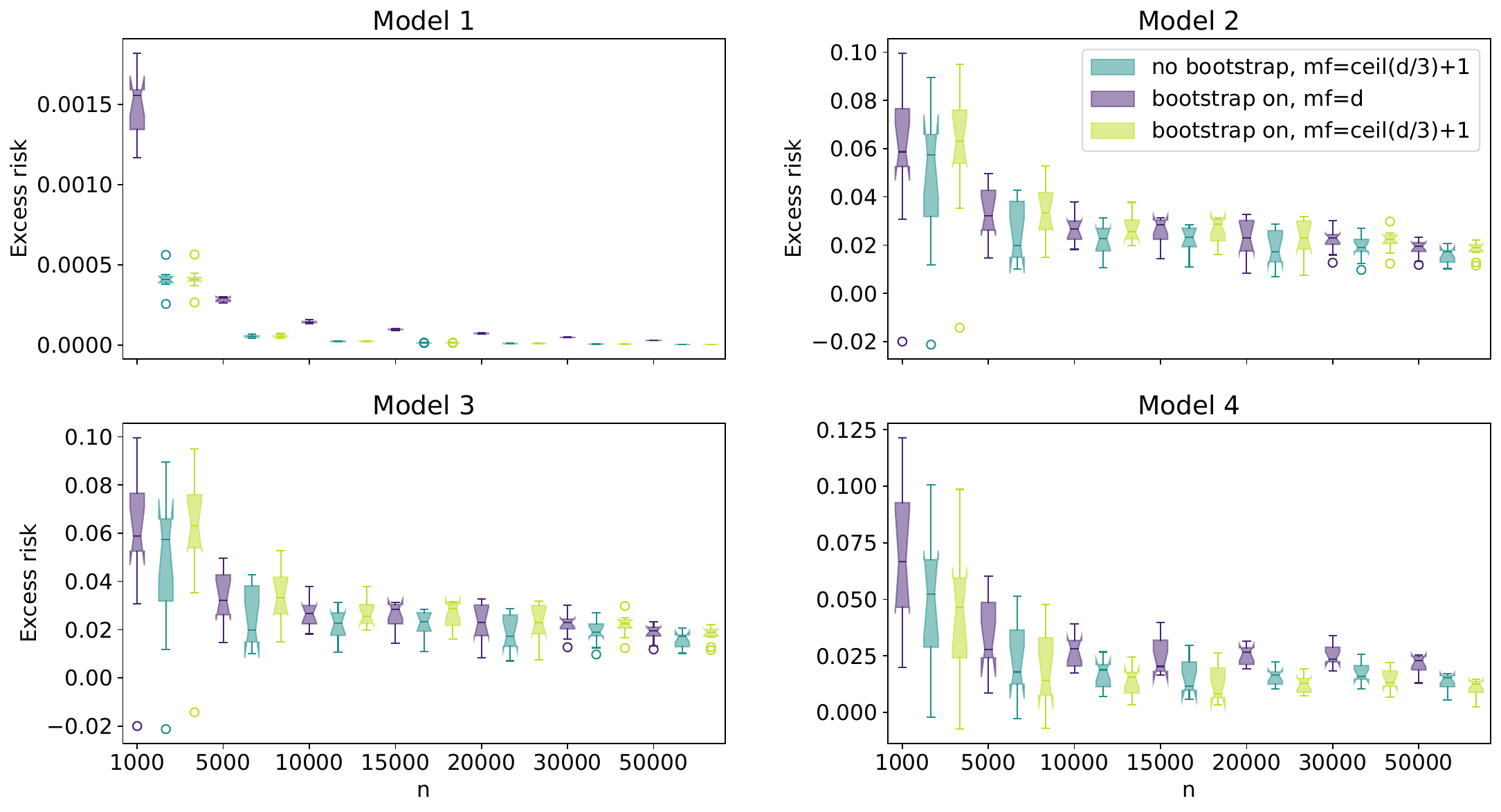}
    \caption{\replace{}{Consistency of Breiman RF: excess risk w.r.t\ sample size. RF parameters: 2000 trees, max-depth set to None, max-features$=1$. Boxplots over 10 tries.}}
    \label{fig:comp_breiman_w_without_bootstrap}
\end{figure}

\subsection{Interpolation experiments}
\label{app:interpolation_exp}

\subsubsection{Volume of the interpolation zone w.r.t\ sample size $n$}
%The Breiman RF used to compute the volume of the interpolation zone have 5000 \textit{trees}, \textit{max features} set to $1$ or specified, \textit{max depth} set to \textit{None} and \textit{bootstrap} set to \textit{false}.

We numerically evaluate the volume of the interpolation area of a Breiman RF (with 5000 trees, see Figure~\ref{fig:volume_interp_zone_tree_varying} in Appendix \ref{app:interpolation_exp} for details about this choice) when the sample size $n$ increases.

\begin{figure}[h]
    %\vspace{-1.8cm}
    \centering
    \includegraphics[clip, width=0.7\textwidth]{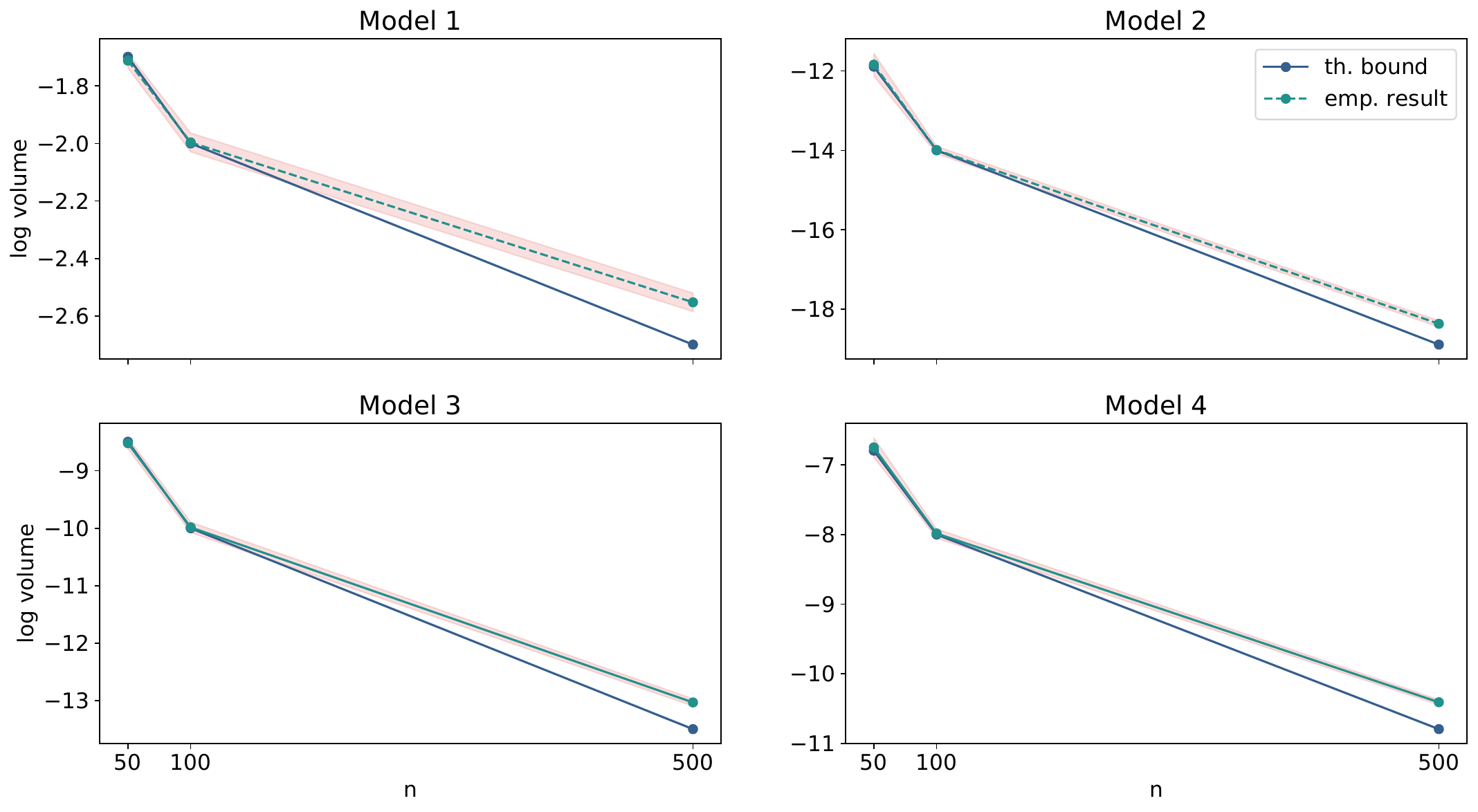}
    \caption{Log volume of the interpolation zone of a Breiman RF with 5000 Trees, max features set to 1, no bootstrap. Mean over 10 tries (red line) and mean $\pm$ std (filled zone). The theoretical bound (Proposition \ref{prop:interpol_limit_zone}) is represented in green.}
    \label{fig:volume_interpolation_zone}
\end{figure}
 
In Figure \ref{fig:volume_interpolation_zone}, the volume of the minimal interpolation zone is shown to tend polynomially fast to 0 (linear in the logarithmic scale) for all considered models as the dataset size increases, matching the behavior of the theoretical bound established in Proposition \ref{prop:interpol_limit_zone}.

One could notice the slight gap between the theoretical and experimental curves, which actually reflects the gap between an infinite forest (for which Proposition \ref{prop:interpol_limit_zone} holds) and its approximation by a finite forest (5000 trees here). This gap naturally tends to increase with $n$ (when the number of trees is fixed) as the approximation of the infinite RF by a finite one deteriorates with $n$.

\paragraph{Increasing \texttt{max-feature} parameter}
We plot on Figure \ref{fig:volume_interp_mf_tierce} the log-volume of the interpolation zone of a Breiman RF with the max-features parameter set to $\lceil d/3 \rceil$ (the default value proposed in \texttt{R} \texttt{randomForest} package). The volume decreases polynomially in $n$ but slower than when max-features$=1$ (Figure \ref{fig:volume_interpolation_zone}) which is to be expected: choosing max-features$=1$ should increase the diversity of the splits and therefore reduce the volume of the interpolation zone.

\begin{figure}[ht]
    \centering
    \includegraphics[clip, width=0.7\textwidth]{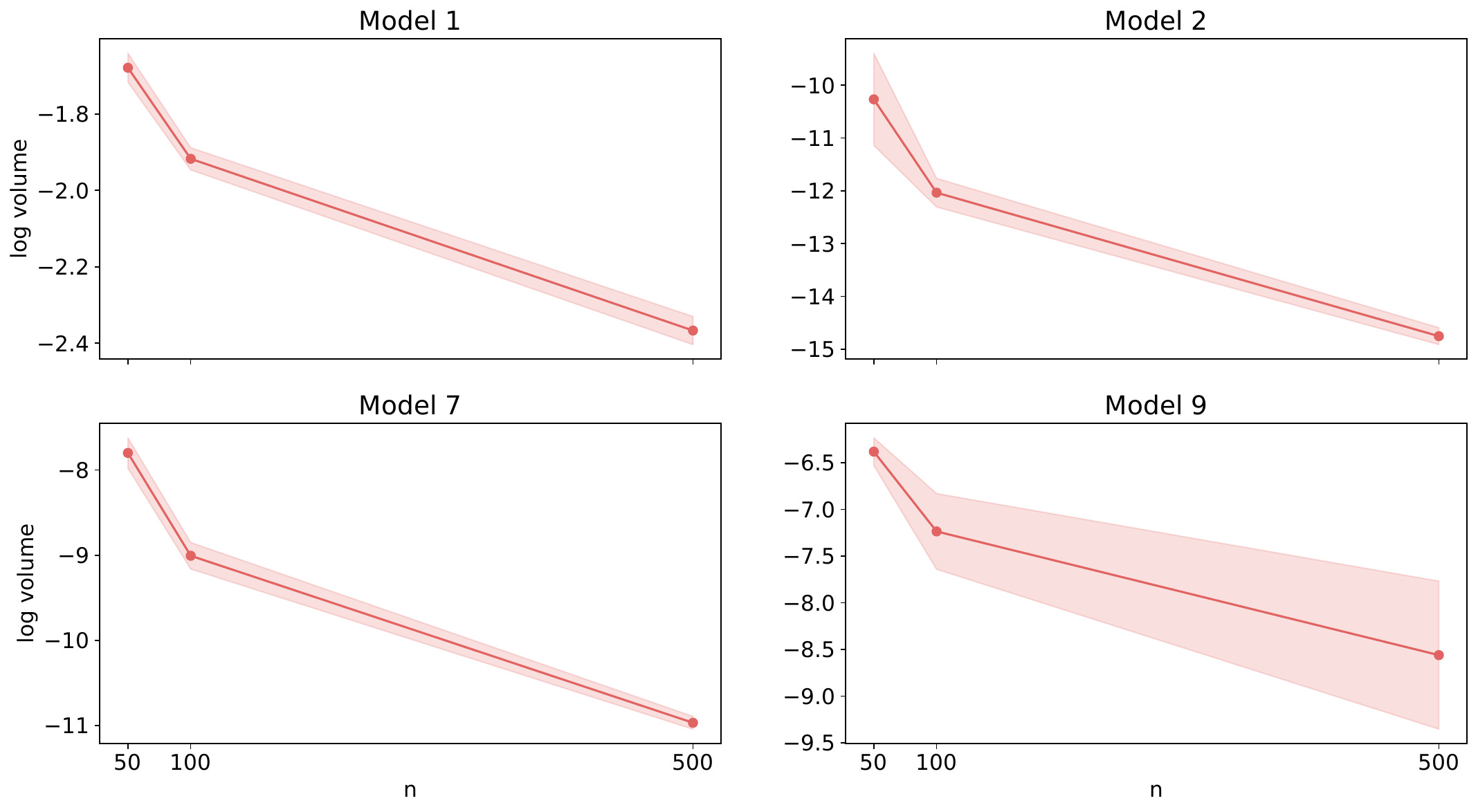}
    \caption{Log volume of Breiman RF interpolation zone w.r.t.\ sample size $n$. RF parameters: 500 trees, no bootstrap, max features $= \lceil d/3 \rceil$. Mean over 10 tries (bold line) and std (filled zone).}
    \label{fig:volume_interp_mf_tierce}
\end{figure}

\subsubsection{Volume of the interpolation zone w.r.t\ number of trees $M$}

In this section, we empirically measure how fast decreases the volume of the interpolation zone of a Breiman RF when its number of trees $M$ increases, and how close the interpolation zone gets from the minimal interpolation zone. 

To this end, for a fixed sample size $n=500$, we numerically evaluate the volume of the interpolation area when the number $M$ of trees in the forest grows. 
This volume is anticipated to be a non-increasing function of $M$ (for $M=1$, note that the interpolation volume is 1, the volume of $[0,1]^d$), but its decrease rate highly depends on the data geometry, making its theoretical evaluation difficult. 
The numerical results in Figure \ref{fig:volume_interp_zone_tree_varying} show a fast decay towards zero of the interpolation volume for all models, already tiny from $M=500$ trees.
Furthermore, it seems to converge to the theoretical bound (dotted line) derived in Proposition \ref{prop:interpol_limit_zone} for an infinite RF with a max-feature parameter equal to $1$.

\begin{figure}[!h]
    \centering
    \includegraphics[clip, width=0.8\textwidth]{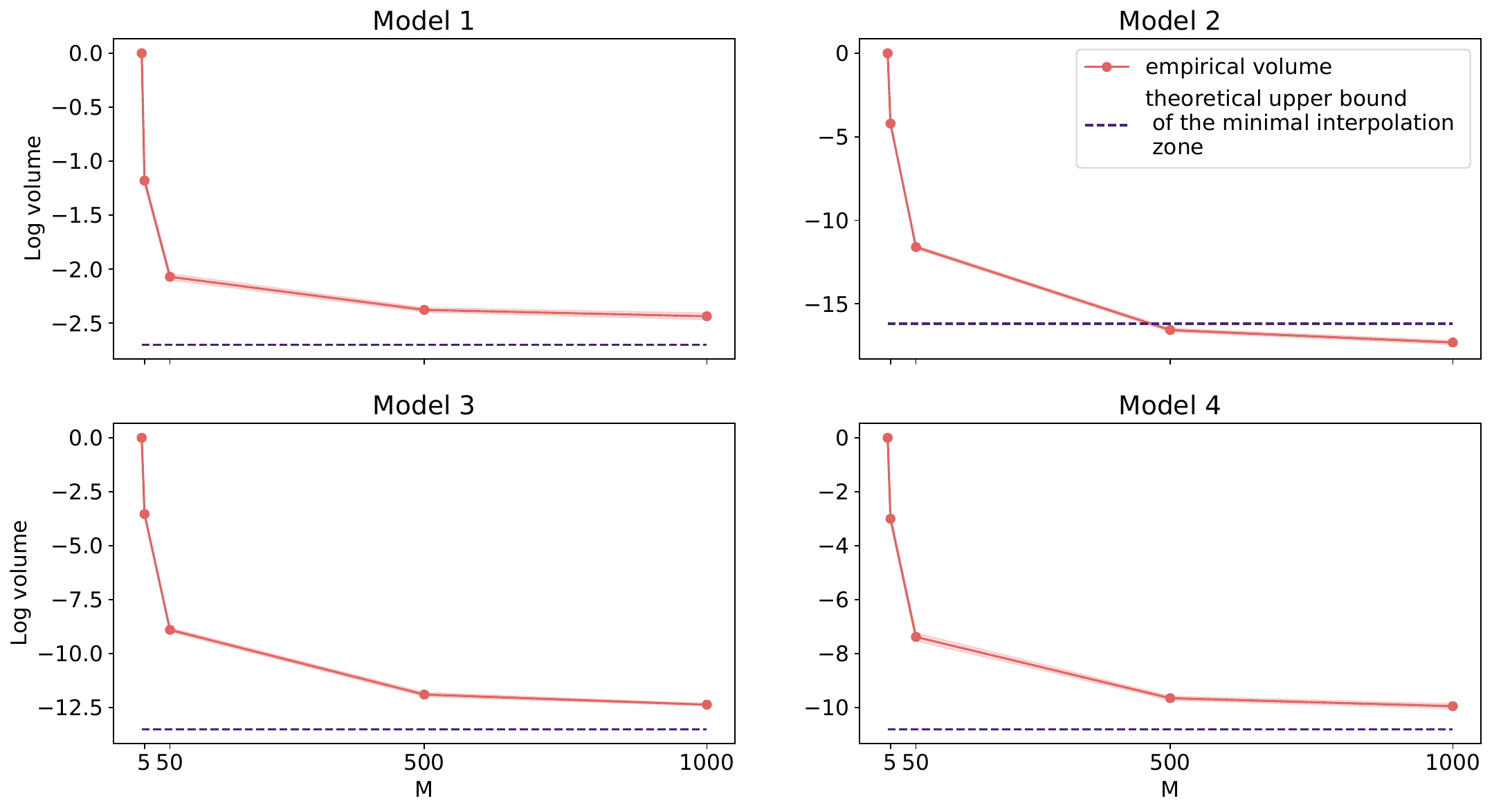}
    \caption{Log volume of Breiman RF interpolation zone w.r.t.\ the number $M$ of trees. RF parameters: no bootstrap, max features $= 1$. Mean over 10 tries (bold line) and std (filled zone). Sample size $n=500$.}
    \label{fig:volume_interp_zone_tree_varying}
\end{figure}

%For the four different models already considered and a sample size of $500$, we numerically measure the volume of interpolation for an increasing number of trees in the forest. 

%As shown by Figure \ref{fig:volume_interp_zone_tree_varying}, for all models, the volume of the interpolation zone decreases as the number of trees increases. 

\subsubsection{Analysis of the interpolation property of Breiman RF with bootstrap}
\label{sec:interp_loss_quant}

In this experiment, we try to measure how close a Breiman RF with bootstrap on is from exactly interpolating (with other parameters being 500 trees, max-depth set to None, max-features$=d$). To this end, we measure the difference between the true train labels (the $Y_i$s) and the predicted ones (the $\hat{Y}_i$s) by computing $$ I_{\text{loss}} := \frac{1}{n} \displaystyle \sum_{i=1}^n \frac{ |Y_i - \hat{Y}_i|}{Y_i}.$$
The closer is this quantity to 0, the closer is the forest from interpolating. On Figure \ref{fig:breiman_train_interp_quant_n_var}, we plot different quantiles of the above quantity as $n$ varies.

\begin{figure}[h]
    \centering
    \includegraphics[clip, width=0.8\textwidth]{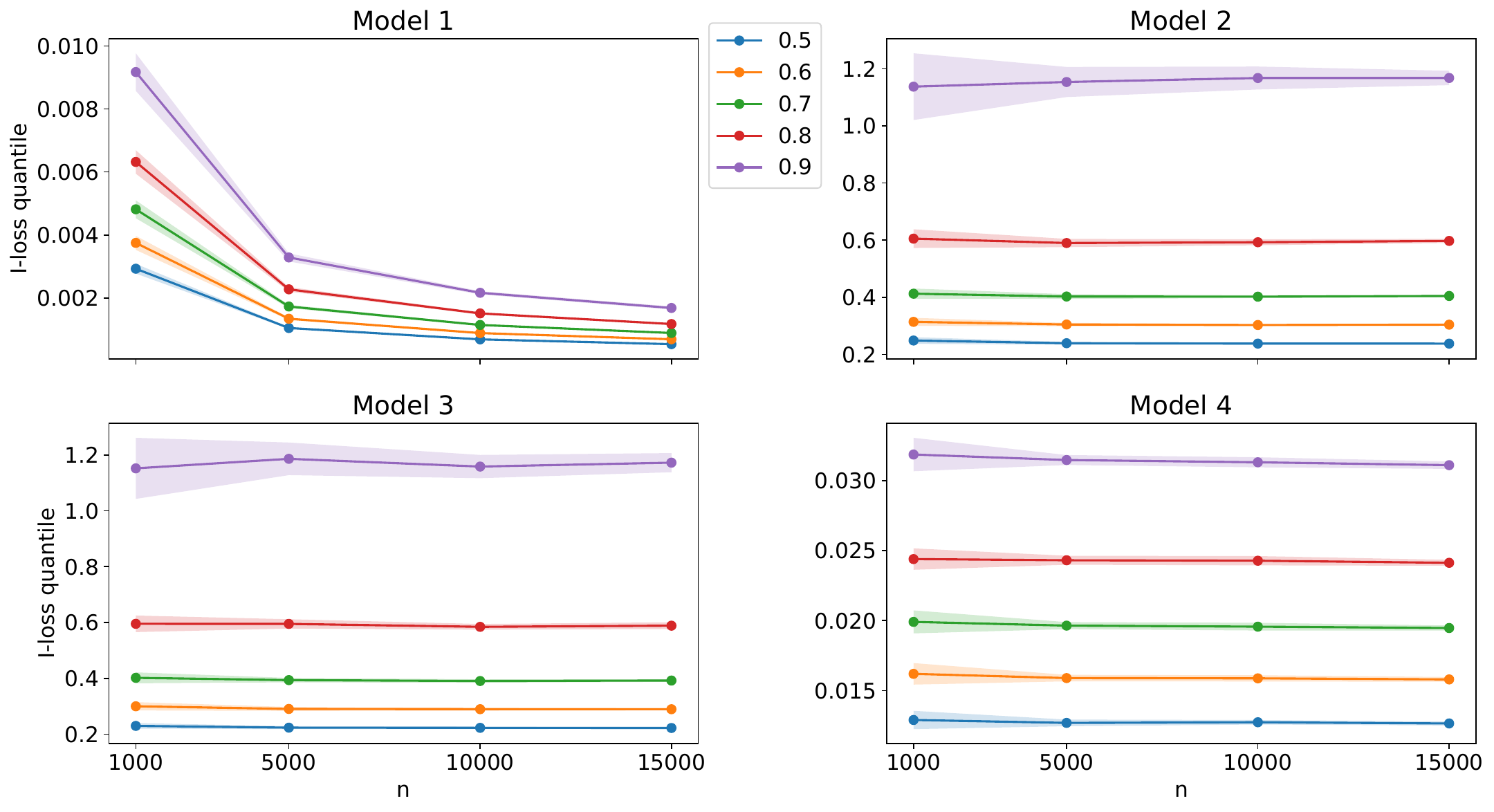}
    \caption{$I_{\text{loss}}$ of a Breiman RF w.r.t\ sample size $n$. RF parameters: 500 trees, bootstrap on, max-features$=d$, max-depth set to None. Mean over 30 tries (doted lines) and std (filled zones).}
    \label{fig:breiman_train_interp_quant_n_var}
\end{figure}

For instance, if we take the $0.8$-quantile in red on Figure \ref{fig:breiman_train_interp_quant_n_var} and look at the upper-right plot (model 2), we read that the $I_{\text{loss}}$ roughly equals $0.6$ for $80\%$ of the points. This quantity seems globally constant in $n$. Finally, the quantiles are smaller in the case of a strong signal-to-noise ratio (models 1 and 4) than in the case of a bigger one (models 2 and 3). 

\begin{figure}[h]
    \centering
    \includegraphics[clip, width=0.8\textwidth]{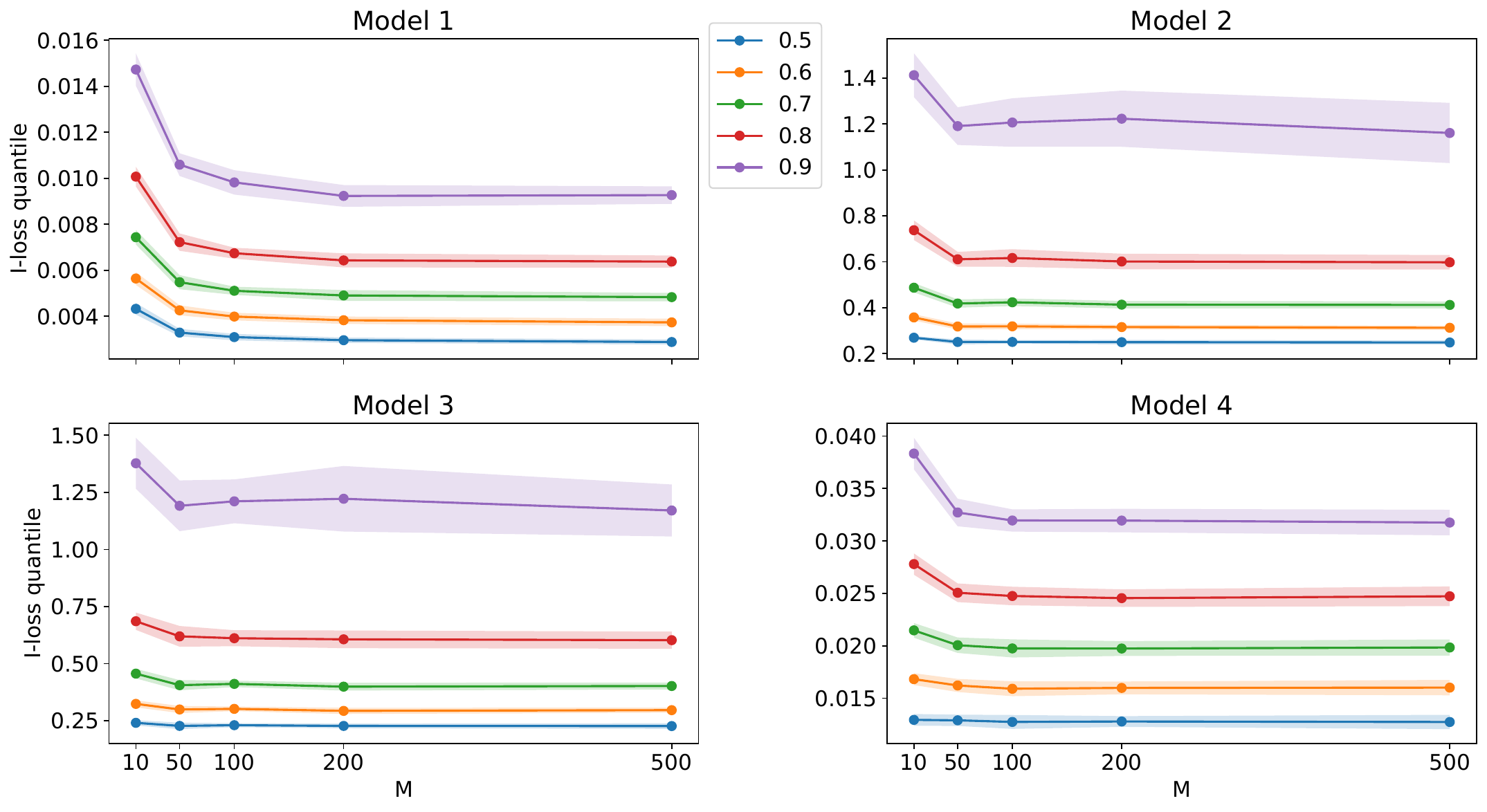}
    \caption{$I_{\text{loss}}$ of a Breiman RF w.r.t\ number of trees. Parameters: bootstrap on, max-features$=d$, max-depth set to None. Sample size $n=1000$. Mean over 30 tries (doted lines) and std (filled zones). }
    \label{fig:breiman_train_interp_quant_tree_var}
\end{figure}

On Figure \ref{fig:breiman_train_interp_quant_tree_var}, we also plot the quantiles of the $I_{\text{loss}}$ for the four different models while the number of trees varies. Adding trees does not significantly change the value of the different quantiles.

\end{document}